\let\subsection\Subsection
\def\Ddots{\mathinner{\mkern1mu\raise\p@
\vbox{\kern7\p@\hbox{.}}\mkern2mu
\raise4\p@\hbox{.}\mkern2mu\raise7\p@\hbox{.}\mkern1mu}}
\begin{document}
\frontmatter

\title{Une introduction aux périodes}
\author{Javier Fres\'an}
\address{CMLS, \'Ecole polytechnique\\ F-91128 Palaiseau cedex, France}
\email{javier.fresan@polytechnique.edu}
\urladdr{http://javier.fresan.perso.math.cnrs.fr/}

\thanks{L'auteur a bénéficié du soutien du projet ANR-18-CE40-0017 \og Périodes en Géométrie Arithmétique et Motivique\fg de l'Agence Nationale de la Recherche.}

\begin{abstract}
Les périodes sont des nombres complexes dont la partie réelle et la partie imaginaire s'écrivent comme des intégrales d'une fonction rationnelle sur un domaine défini par des inégalités polynomiales, le tout à coefficients rationnels. Selon une conjecture de Kontsevich et Zagier, n'importe quelle relation algébrique entre ces nombres devrait pouvoir se déduire des règles évidentes du calcul intégral : l'additivité, le changement de variables et la formule de Stokes. Dans un premier temps, j'explique la définition des périodes et quelques propriétés élémentaires qui s'ensuivent, en les illustrant par maints exemples. Ensuite, je me dirige doucement vers l’interprétation de ces nombres comme les coefficients de l'accouplement d'intégration entre la cohomologie de de~Rham algébrique et l'homo\-logie singulière des variétés algébriques définies sur $\QQ$, point de vue qui est à l'origine de toutes les percées récentes dans leur étude.
\end{abstract}

\maketitle
\vspace*{-1.5\baselineskip}
\enlargethispage{2\baselineskip}
{\let\\\relax\tableofcontents}

\mainmatter

\section{Un peu d'histoire}

Il sera question dans ces notes d'une classe de nombres, les \textit{périodes}, dont l'origine remonte aux premières heures du calcul intégral, car avant de devenir objets du désir des arithméticiens ce furent surtout les périodes de révolution des planètes ou le temps que met un pendule à revenir à sa position initiale...

\subsection{Les ovales de Newton} On prendra pour point de départ le lemme XXVIII du premier livre des \textit{Principia} de Newton, qui s'énonce ainsi dans la traduction française de la marquise du Châtelet~\cite{chatelet}:
\begin{quote}
Les parties quelconques de toute figure ovale, déterminées par les coordonnées ou par d'autres droites tirées à volonté, ne peuvent jamais être trouvées par aucune équation d'un nombre fini de termes et de dimensions.
\end{quote}
Autrement dit: si l'on se donne un ovale dans le plan, l'aire~$S(a, b, c)$ du segment délimité par une droite d'équation~\hbox{$ax+by=c$} n'est pas une fonction algébrique des paramètres, en ce sens qu'il n'existe pas de polynôme non nul à coefficients complexes en quatre variables~$P$ satisfaisant à l'égalité $P(a, b, c, S(a, b, c))=0$ pour toutes valeurs~$a, b, c$.
\begin{figure}[ht]
\begin{center}
\includegraphics[width=0.65\textwidth]{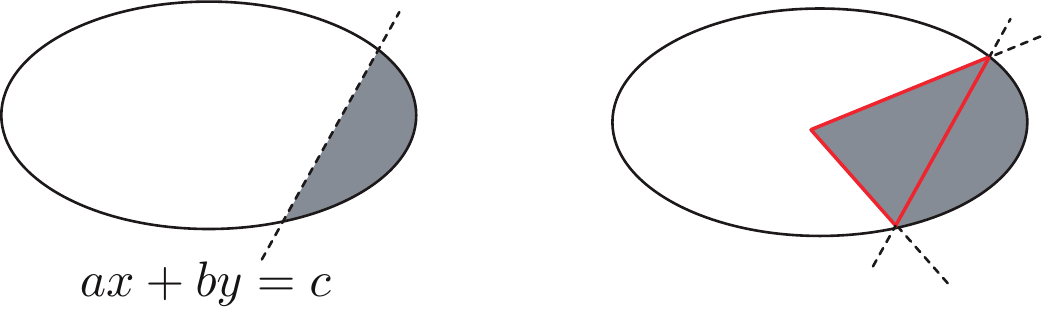}
\end{center}
\caption{Aires d'un segment et d'un secteur d'ovale}
\label{fig:1}
\end{figure}

Il revient au même de dire que l'aire du secteur coupé par deux demi\nobreakdash-droites quelconques émanant d'un point à l'intérieur de l'ovale n'est pas une fonction algébrique des données: comme le montre la figure \ref{fig:1}, on peut passer de l'une à l'autre
en retranchant un triangle dont l'aire s'exprime algébriquement en fonction des paramètres.

\begin{figure}[ht]
\includegraphics[width=0.48\textwidth]{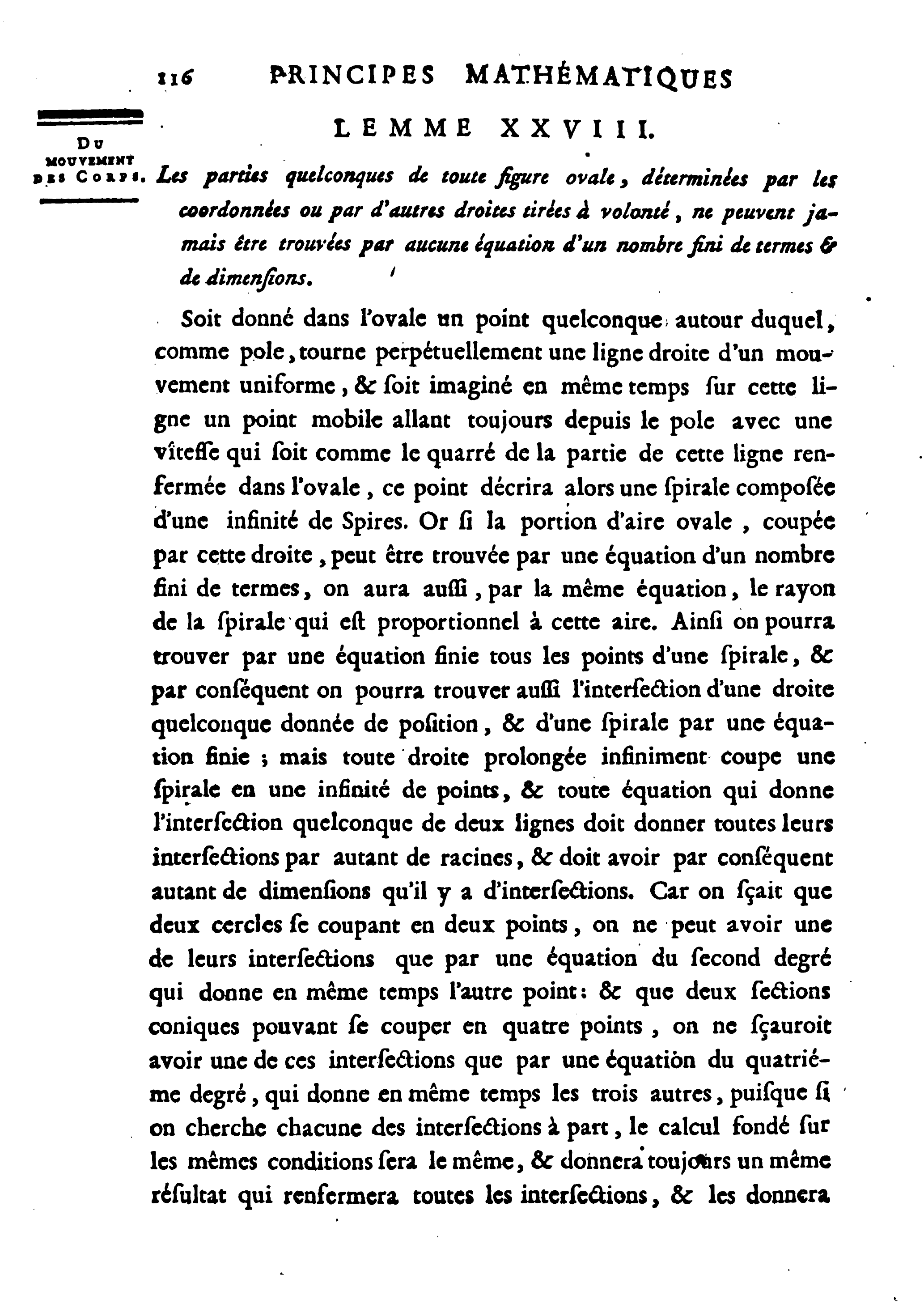}
\includegraphics[width=0.48\textwidth]{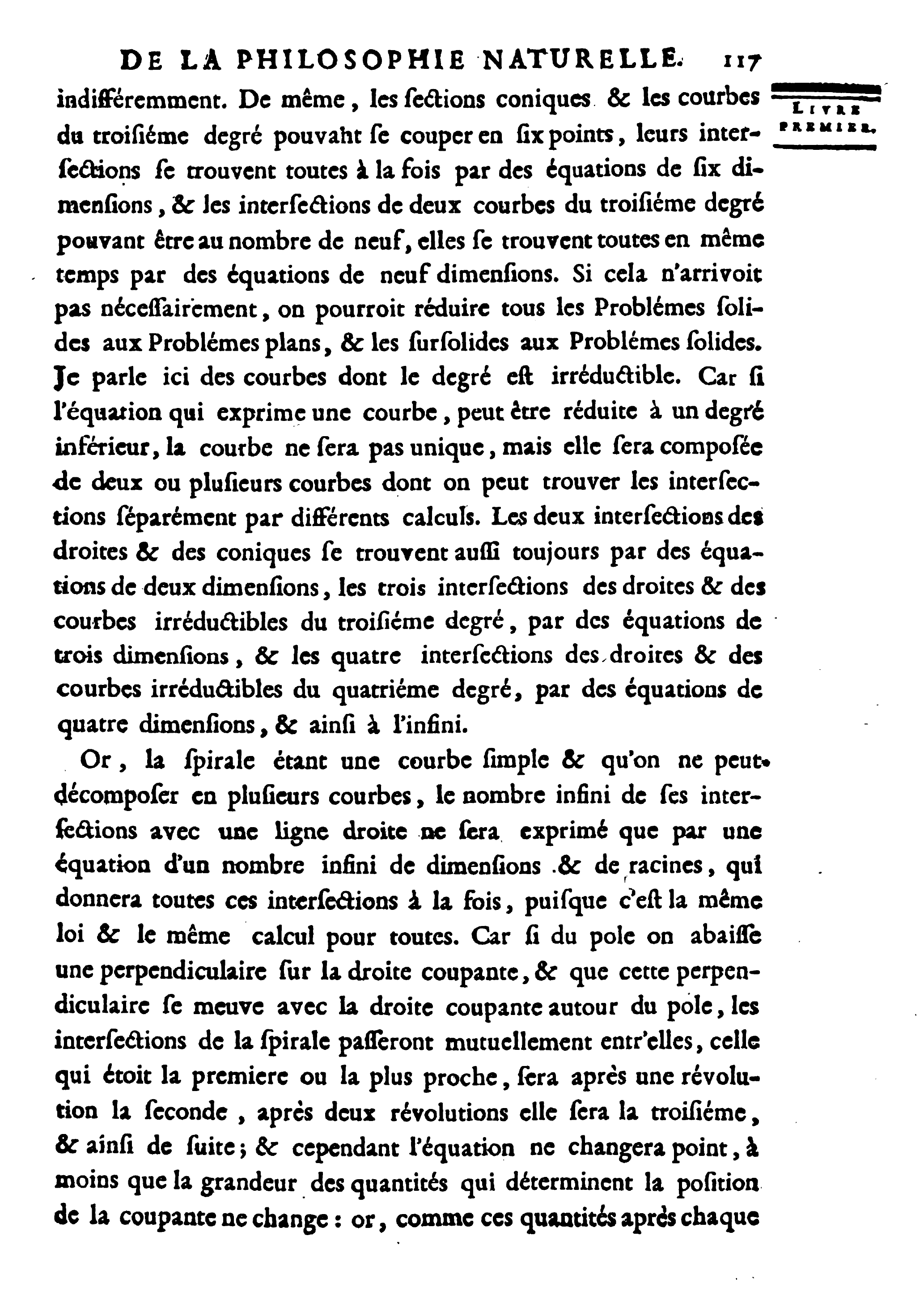}
\caption{Lemme XXVIII des \emph{Principia} de Newton}
\end{figure}

Par l'intermédiaire des lois de Kepler, selon lesquelles les planètes tracent des orbites elliptiques autour du soleil en balayant des aires égales dans des temps égaux, Newton en déduit que leur position ne peut pas être déterminée algébriquement en fonction du temps:
\begin{quote}
De là on voit que l'aire elliptique décrite autour du foyer ne peut pas être exprimée dans un temps donné par une équation finie [...]
\end{quote}
Son lemme de transcendance lui sert ainsi à justifier que la solution qu'il apporte au problème de \og trouver pour un temps donné le lieu d'un corps qui se meut dans une trajectoire elliptique donnée\fg ne soit pas ce qu'il appelle une courbe \og géométriquement rationnelle\fg, c'est-à-dire \emph{algébrique} dans la terminologie actuelle, mais \og géométriquement irrationnelle\fg, c'est-à-dire \emph{transcendante}\footnote{À savoir, la cycloïde. Et \og comme la description de cette courbe est difficile\fg, il invente dans la foulée la méthode d'approximation... de Newton!}.

Fixons un point $O$ à l'intérieur de l'ovale et une demi-droite l'ayant pour origine. Il s'agit de voir que la fonction $S(\ell)$ qui, à une autre demi\nobreakdash-droite~$\ell$ partant de ce même point, associe l'aire du secteur qu'elles découpent sous l'ovale est transcendante: si l'on exprime tout en termes de l'angle~$\theta$ décrivant le secteur, les fonctions $S(\theta)$ et~$\tan(\theta)$ ne satisfont à aucune relation~polynomiale non triviale.
\begin{figure}[ht]
\begin{center}
\includegraphics[width=0.35\textwidth]{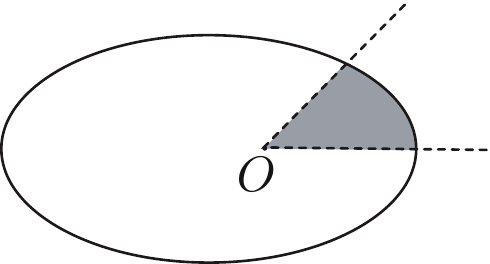}
\end{center}
\caption{Partie de l'ovale délimitée par deux demi-droites}
\label{fig2}
\end{figure}

Voici la démonstration que propose Newton. Faisons tourner \og perpétuellement d'un mouvement uniforme\fg la demi-droite $\ell$ et considérons sur elle \og un point mobile allant toujours depuis le pôle avec une vitesse qui soit comme le carré de la partie de cette ligne renfermée dans l'ovale\fg. La première condition signifie que l'angle~$\theta(t)$ est proportionnel au temps $t$ (on les supposera égaux pour simplifier) et la seconde se traduit par l'équation différentielle
\begin{displaymath}
\frac{dR}{d\theta}=r^2,
\end{displaymath}
où $R(\theta)$ désigne la position du point qui bouge et $r(\theta)$ la longueur de la partie de la droite renfermée dans l'ovale. \og Ce point décrira alors une spirale composée d'une infinité de spires\fg. Par exemple, si l'ovale est un cercle de centre $O$, alors la fonction $r(\theta)$ est constante et l'équation différentielle décrit une spirale d'Archimède~\hbox{$R=r^2 \theta+\mathrm{cst}$.} Par ailleurs, comme l'aire d'un triangle rectangle d'hypoténuse $r$ et d'angle infinitésimal $d\theta$ vaut $r^2d\theta/2$ à l'ordre un, la fonction $S(\theta)$ satisfait à l'équation différentielle
\begin{displaymath}
\frac{dS}{d\theta}=\frac{r^2}{2}.
\end{displaymath}
Les fonctions $R(\theta)$ et $2S(\theta)$ diffèrent donc par une constante: si~l'une est algébrique, l'autre aussi. Or, la spirale n'est pas une courbe algébrique car elle coupe toute droite en une infinité de points\footnote{\og Or si la portion d'aire ovale, coupée par cette droite, peut être trouvée par une équation d'un nombre fini de termes, on aura aussi, par la même équation, le rayon de la spirale qui est proportionnel à cette aire. Ainsi on pourra trouver par une équation finie tous les points d'une spirale, et par conséquent on pourra trouver aussi l'intersection d'une droite quelconque donnée de position, et d'une spirale par une équation finie; mais toute droite prolongée infiniment coupe une spirale en une infinité de points, et toute équation qui donne l'intersection quelconque de deux lignes doit donner toutes leurs intersections par autant de racines, et doit avoir par conséquent autant de dimensions qu'il y a d'intersections\fg. La discussion qui suit ce passage chez Newton laisse entendre qu'il connaissait une forme faible du théorème de Bézout, notamment l'énoncé que deux courbes planes de degrés~$m$ et $n$ sans composante commune s'intersectent en au plus $mn$ points (pour une reconstitution de la preuve qu'il aurait pu~donner, voir~\cite[note~(48),~p.\,116]{Arnold}).}.
\begin{figure}[ht]
\begin{center}
\includegraphics[width=0.75\textwidth]{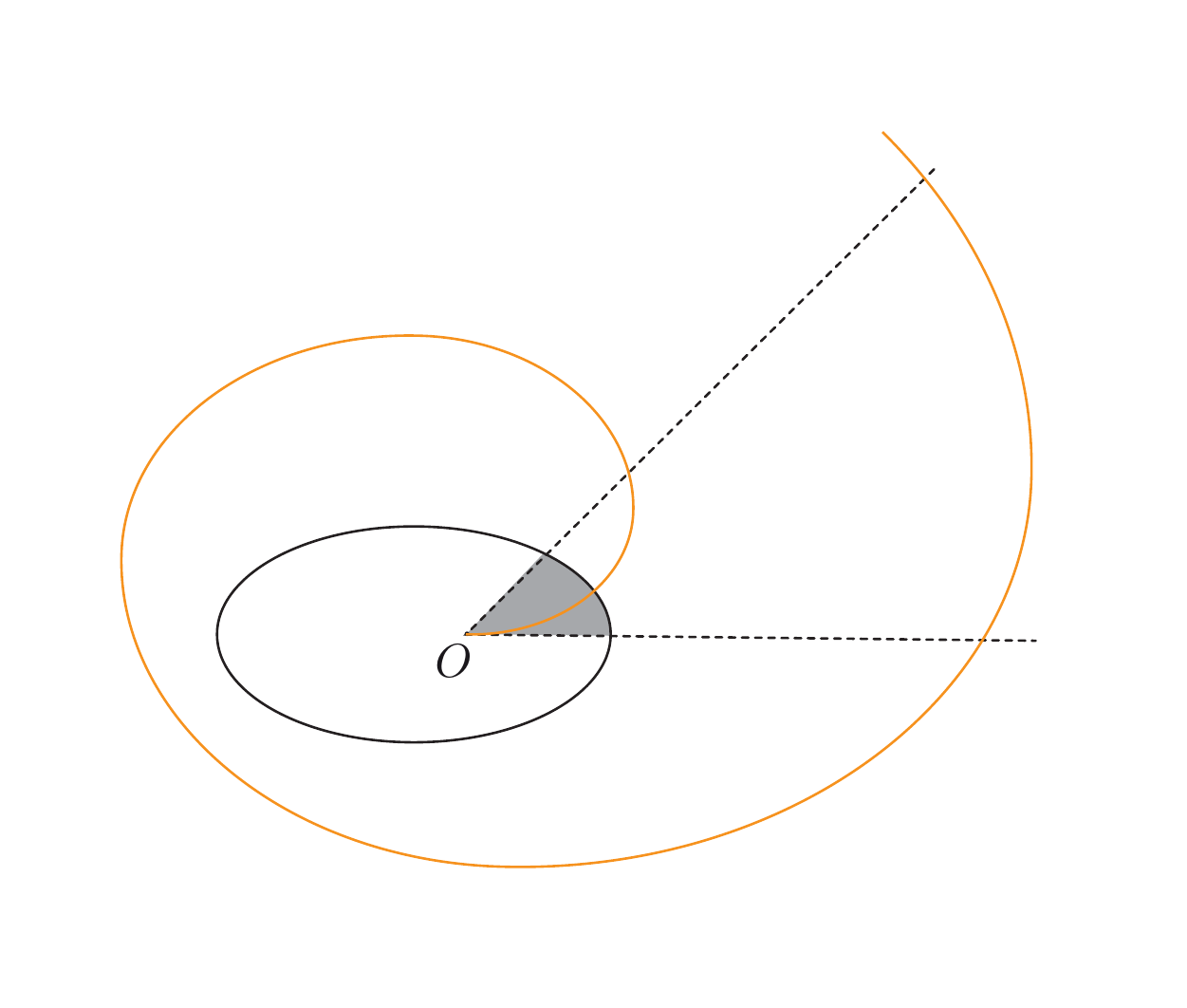}
\end{center}
\caption{Toute droite coupe une infinité de fois la spirale décrite par le point mobile}
\end{figure}

Aussi convaincant que ce raisonnement puisse paraître au premier abord, on y trouve plusieurs sources d'ambiguïté qui, ne passant pas inaperçues aux lecteurs contemporains, donnèrent lieu à une polémique qui ne serait étouffée que trois cent ans après la publication des \textit{Principia}~\cite{pourciau}. Qu'entend Newton par \og figure ovale\fg? Qu'entend\nobreakdash-il par \og jamais\fg? Jakob Bernoulli accepte la démonstration d'emblée et s'en inspire pour ses travaux sur la spirale logarithmique. Leibniz et Huygens n'y croient pas et se lancent dans un va-et-vient de contre\nobreakdash-exemples dans leur correspondance. Voici ce que Huygens écrit à Leibniz dans une lettre datée à La Haye le 5 mai 1691:
\begin{quote}
Je ne vois pas qu'on puisse accorder sa proposition pag.\,105 à Mr. Newton, parce que, ne considérant aucunement la nature de ce qu'il appelle ovale, mais seulement que c'est une ligne fermée tout au tour, il n'exclut pas même le carré ou le triangle\footnote{Cette lettre et celles qui suivent sont accessibles sur le site de la bibliothèque numérique des lettres néerlandaises \url{https://www.dbnl.org/tekst/huyg003oeuv10_01/}, où elles sont numérotées 2664 (2 mars 1691), 2667 (26 mars~1691), 2676 (20~avril 1691) et 2680 (5 mai~1691). J'ai modernisé la graphie et la ponctuation.}. 
\end{quote}

En effet, si les carrés ont droit de cité parmi les ovales, l'aire du secteur décrit par un angle $\theta$ peut dépendre algébriquement de sa tangente (par exemple, elle vaut~$\tan(\theta)/2$ pour le carré de côté~$2$ centré en l'origine lorsque $\theta$ est compris entre~$0$ et $\pi/4$), et de même pour le triangle.
\begin{figure}[ht]
\vspace{3mm}
\begin{center}
\includegraphics[width=0.7\textwidth]{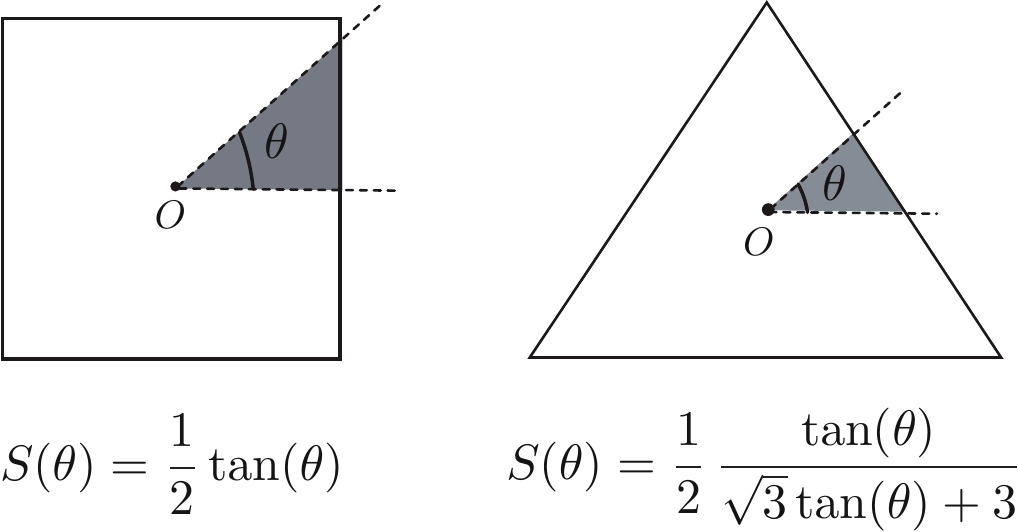}
\end{center}
\caption{Le lemme de Newton ne vaut ni pour le carré ni pour le triangle}
\end{figure} Ces potentiels contre-exemples venaient s'ajouter à un autre, l'ovale délimité par les paraboles $y=x^2-1$ et $y=1-x^2$, que Huygens avait déjà évoqué dans une lettre du 26 mars. Et à Leibniz d'objecter, dans sa réponse envoyée le 20 avril depuis Hanovre:
\begin{quote}
Considérez [...] si contre votre instance des deux portions égales de parabole sur une même base, Monsieur Newton pour soutenir l'impossibilité de la quadrature des ovales ne pourrait répondre qu'une telle ovale serait fausse et non pas composée d'une même ligne recourante, comme il semble que son raisonnement demande, puisqu'une parabole continuée ne tombe pas dans l'autre.
\end{quote}
\begin{figure}[ht]
\vspace{3mm}
\begin{center}
\begin{tikzpicture}[scale=0.5]
\draw[domain=-2:-1,smooth,variable=\x, dashed] plot ({\x},{1 -\x*\x});
\draw[domain=-1:1,smooth,variable=\x] plot ({\x},{1 -\x*\x});
\draw[domain=1:2,smooth,variable=\x, dashed] plot ({\x},{1 -\x*\x});

\draw[domain=-2:-1,smooth,variable=\x, dashed] plot ({\x},{\x*\x-1});
\draw[domain=-1:1,smooth,variable=\x] plot ({\x},{\x*\x-1});
\draw[domain=1:2,smooth,variable=\x, dashed] plot ({\x},{\x*\x-1});
\end{tikzpicture}
\hspace{2cm}
\begin{tikzpicture}[scale=1.5]
\draw[domain=-5:5,smooth,variable=\t,samples=101]
plot ({(\t*\t -1)/(\t*\t+1)},{2*\t*(\t*\t-1)/(\t*\t+1)/(\t*\t+1)});
\draw[domain=-5:5,smooth,variable=\t,samples=101]
plot ({-(\t*\t -1)/(\t*\t+1)},{2*\t*(\t*\t-1)/(\t*\t+1)/(\t*\t+1)});
\end{tikzpicture}
\end{center}
\caption{La double parabole et la lemniscate de Huygens}
\label{fig:lemniscate}
\end{figure}
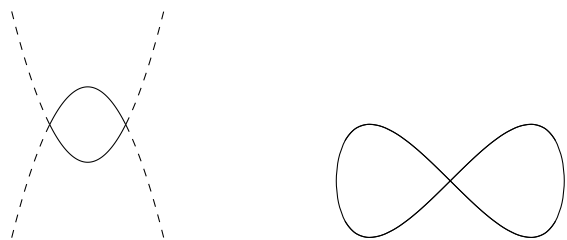

Par contre, Leibniz ne voit pas d'obstruction à inclure la \emph{lemniscate} d'équation $y^2=x^2(1-x^2)$, dont Huygens lui avait parlé dans des lettres précédentes, parmi les contre-exemples:
\begin{quote}
Mais votre ligne qui fait $8$\footnote{Comme on peut le voir dans sa lettre 2667, Huygens considère plutôt la courbe d'équation $x^2=y^2(1-y^2)$, qui est celle de la figure \ref{fig:lemniscate} tournée de $90$ degrés; cette courbe est aussi connue sous le nom de lemniscate de Gerono.} est véritablement recourante, et son raisonnement y est applicable, quoiqu'elle n'ait pas justement la forme d'une ovale, et selon lui, elle ne devrait pas être généralement quadrable. Il serait bon de considérer son raisonnement en lui même, pour voir où gît le manquement.
\end{quote} En effet, étant donné un angle $\theta \in [0, \pi/4)$ et posant $t=\tan(\theta)$ pour abréger la notation, la droite $y=tx$ intersecte la lemniscate au point d'abscisse $\sqrt{1-t^2}$, et l'aire du secteur décrit par $\theta$ est la somme de l'aire d'un triangle rectangle de base $\sqrt{1-t^2}$ et de hauteur~$t\sqrt{1-t^2}$ et de l'aire sous la fonction $y=x\sqrt{1-x^2}$ entre $x=\sqrt{1-t^2}$ et $x=1$, ce qui résulte en l'expression algébrique
\begin{displaymath}
\frac{1}{2}\,t(1-t^2)+\int^1_{\sqrt{1-t^2}}x\sqrt{1-x^2}\,dx=\frac{1}{2}\,t-\frac{1}{6}\,t^3.
\end{displaymath}

Il doit bien avoir une faute dans le raisonnement du maître! Leibniz écrit \og error\fg à côté du lemme dans sa copie des \emph{Principia}\footnote{L'exemplaire annoté de la main de Leibniz fait partie des collections de la Fondation Martin Bodmer et a été numérisé:\par\nopagebreak\url{https://bodmerlab.unige.ch/fr/constellations/early-modern-english-books/mirador/1072068344?page=002}.\par Je remercie Morgane Cariou, Rosa Navarro Durán et José Antonio Pascual de leur aide à déchiffrer les marginalia de Leibniz.}, ce qui ne l'empêchera pas de faire un compliment extraordinaire à Newton le 2 mars~1691 lorsqu'il le compare implicitement à Homère en citant l'\emph{Art poétique} d'Horace:
\begin{quote}
Puisque la première achevée retourne en elle même, en forme de 8, on en peut juger que le théorème de Mr. Newton p.\,105, qui pretend qu'il n'y a point de courbe recourante (de la géométrie ordinaire) indéfiniment quadrable, ne saurait subsister, et qu'il y a quelque faute dans sa démonstration. Mais je ne l'en estime pas moins; \emph{Opere in longo fas est obrepere somnum.}
\end{quote}

Newton, a-t-il cédé au sommeil dans son long ouvrage? De fait, ses arguments s'appliquent aussi au carré, au triangle et à la lemniscate; quelque soit la définition d'un ovale, ils montrent que l'aire des secteurs découpés ne peut pas être donnée par une fonction \emph{globale} des paramètres, pour la simple raison qu'une fonction algébrique non constante n'est jamais périodique. C'était sans doute clair pour Leibniz, qui en avait lui-même déduit quelque peu avant, et c'est à cette occasion qu'il a introduit le terme, que les fonctions trigonométriques sont transcendantes. Dans le cas du carré, l'expression algébrique que nous avons trouvée ne décrit l'aire que pour des angles compris entre~$0$ et $\pi/4$; lorsque l'on dépasse le premier sommet, il faut la remplacer par une autre. La vraie question qui se pose est donc celle de l'algébricité \emph{locale}: un angle $\theta_0$ étant donné, existe-t-il un intervalle ouvert autour de $\theta_0$ sur lequel l'aire $S(\theta)$ est une fonction algébrique de la tangente de $\theta$? C'est la possibilité que la fonction change avec l'angle qui donne tout son sel au lemme de Newton\dots  

Partant du principe que le sens du mot \emph{ovale} est celui courant à cette époque, à~savoir une courbe plane, fermée, connexe et convexe contenue dans le lieu des zéros d'un polynôme réel en deux variables, la réponse dépend alors de la régularité des ovales. Si l'on impose pour seule restriction qu'ils puissent être tracés de façon continue, on peut sans peine construire des exemples où l'aire est une fonction localement (mais non globalement) algébrique des paramètres, on l'a vu. On dira qu'une courbe plane est de classe~$\cC^\infty$ si, dans un petit disque autour de chacun de ses points, elle est défi\-nie par le graphe d'une fonction indéfiniment dérivable. Aucun des contre\nobreakdash-exemples proposés par Huygens et Leibniz n'est un ovale de classe~$\cC^\infty$, puisqu'ils possèdent tous des points singuliers en lesquels la dérivée n'existe pas.

\begin{theoreme}[Newton] L'aire des segments tracés sur un ovale de classe $\cC^\infty$ n'est pas une fonction localement algébrique.
\end{theoreme}

Comme l'explique Arnold \cite{Arnold}, cette assertion bien plus forte résulte de la non-algébricité globale en utilisant le fait qu'un ovale de classe~$\cC^\infty$ est \emph{analytique}, c'est-à-dire qu'il est défini autour de chaque point par une fonction développable en série entière\footnote{L'hypothèse que les ovales sont contenus dans le lieu des zéros d'un polynôme réel en deux variables exclut les exemples classiques de fonctions de classe $\cC^\infty$ qui ne sont pas développables en série entière. L'analyticité est alors une conséquence d'un autre théorème de Newton: l'existence des développements en série de Puiseux $y=a_1x^{1/n}+a_2 x^{2/n}+\cdots$ pour les branches des courbes algébriques.}. En effet, supposons par l'absurde que l'aire est une fonction localement, mais non globalement, algébrique des paramètres; il existe alors un point sur l'ovale tel que deux expressions algébriques distinctes sont nécessaires pour décrire l'aire d'un côté et de l'autre de ce point. Or, l'analyticité de l'ovale implique que l'aire est une fonction analytique des paramètres; en développant en série entière, on voit que les deux expressions coïncident sur un petit ouvert et sont donc partout égales. L'aire serait ainsi une fonction globalement algébrique, contradiction. 

Revenons maintenant sur le sens du mot \og jamais\fg, en nous appuyant sur un autre extrait de la lettre de Leibniz écrite à Hanovre entre le 10 et le~20 avril 1691:
\begin{quote}
Quant au cercle et à l'ellipse, l'impossibilité de leur quadrature générale est assez démontrée, mais je n'ai pas encore vu qu'on ait donné aucune démonstration pour prouver que le cercle entier, ou quelque portion déterminée, n'est pas quadrable.
\end{quote} Les fonctions trigonométriques sont transcendantes en tant que fonctions, mais qu'en est-il de leurs valeurs, par exemple le nombre~$\pi$? Peut-on démontrer que l'aire d'une partie donnée d'une figure ovale est un nombre transcendant, c'est-à-dire qu'il n'est pas racine d'un polynôme non nul à coefficients rationnels? Il s'agit là de deux exemples de \emph{périodes}, un terme qui s'est petit à petit écarté de son sens historique initial de \og période d'une fonction périodique \fg (les fonctions trigonométriques, les fonctions elliptiques, etc.) pour englober tous les nombres qui apparaissent comme des intégrales de fonctions algébriques sur des domaines définis par des inégalités polynomiales; n'importe quel premier cours d'analyse en est plein. En un sens, ce sont les nombres accessibles par des méthodes géométriques. Affranchis de leur lien avec la physique (aires balayées par des planètes, périodes d'un pendule, etc.) et des fonctions dont ils sont des valeurs spéciales ou des périodes, ils sont devenus un objet d'étude à part entière, avec pour horizon inaccessible la question de décrire toutes les relations algébriques entre eux. Bien plus difficile que celui pour les fonctions\footnote{Comme le témoigne, par exemple, le fait qu'il ait fallu attendre jusqu'en 1882 pour que la question de Leibniz sur la transcendance de $\pi$ trouve une réponse chez Lindemann. Les fonctions hypergéométriques constituent un bel exemple de cette dichotomie entre la transcendance d'une fonction et celle de ses valeurs spéciales. Soient $a, b, c$ des nombres rationnels qui ne sont pas des entiers négatifs ou nuls. La \textit{fonction hypergéométrique} de paramètres $a, b, c$ est définie par la série entière
\begin{displaymath}
_{2}F_1\biggl(\begin{matrix} a \ b \\ c \end{matrix} \mid z \biggr)=1+\frac{ab}{c}\,z+\frac{a(a+1)b(b+1)}{c(c+1)}\,\frac{z^2}{2!}+\cdots
\end{displaymath}
qui converge absolument dans le cercle unité $|z|<1$. Ces fonctions peuvent être algébriques, comme le montre l'exemple
\begin{displaymath}
_{2}F_1\biggl(\begin{matrix} a \ 1 \\ 1 \end{matrix} \mid z \biggr)=1+az+\frac{a(a+1)}{2!}\,z^2+\cdots=(1-z)^{-a}
\end{displaymath}
quel que soit l'entier $a$, mais mis à part une liste de cas exceptionnels, elles sont en général transcendantes. Néanmoins, ces fonctions transcendantes prennent parfois des valeurs algébriques en des nombres algébriques; c'est le cas des valeurs
\begin{equation}\label{eqn:Beukers}
_{2}F_1\biggl(\begin{matrix} \frac{1}{12} \ \frac{5}{12} \\ \frac{1}{2} \end{matrix} \mid \frac{1323}{1331} \biggr)=\frac{3}{4}\sqrt[4]{11}, \ _{2}F_1\biggl(\begin{matrix} \frac{1}{12} \ \frac{7}{12} \\ \frac{2}{3} \end{matrix} \mid \frac{6400}{64009} \biggr)=\frac{2}{3}\sqrt[6]{253}
\end{equation}
trouvées par Beukers et Wolfart \cite{BW}. Les valeurs des fonctions hypergéométriques en des nombres algébriques sont reliées aux périodes par le biais de la représentation intégrale d'Euler
\begin{displaymath}
_{2}F_1\biggl(\begin{matrix} a \ b \\ c \end{matrix} \mid z \biggr)=\frac{\int_0^1 t^{a-1} (1-t)^{c-a-1}(1-tz)^{-b} dt}{\int_0^1 t^{a-1} (1-t)^{c-a-1} dt} \quad (c>a>0),
\end{displaymath}
qui montre qu'elles sont des quotients d'intégrales le long d'un chemin de formes différentielles sur une courbe algébrique. Des identités telles que \eqref{eqn:Beukers} fournissent donc une relation non triviale entre périodes.}, ce problème admet néanmoins une solution conjecturale étonnamment simple. En lisant entre les lignes, il~est tentant d'imaginer Leibniz conjecturant que les périodes ont tendance à être transcendantes et qu'il n'y a en général d'autres relations algébriques parmi elles que celles qui s'expliquent par les propriétés formelles de l'intégration. Le but de ces notes est d'expliquer deux manières de rendre cette intuition rigoureuse, la conjecture de Kontsevich-Zagier et la conjecture des périodes de Grothendieck, et comment, en les supposant vraies, il est possible d'étendre aux périodes la théorie de Galois pour les nombres algébriques.

\subsection{La période d'un pendule} Considérons un pendule de longueur $\ell$ et de masse~$m$ qui oscille sans frottement dans le plan vertical, sa position étant repérée par l'angle~$\theta$.
\begin{figure}[ht]
\centerline{\includegraphics[width=0.2\textwidth]{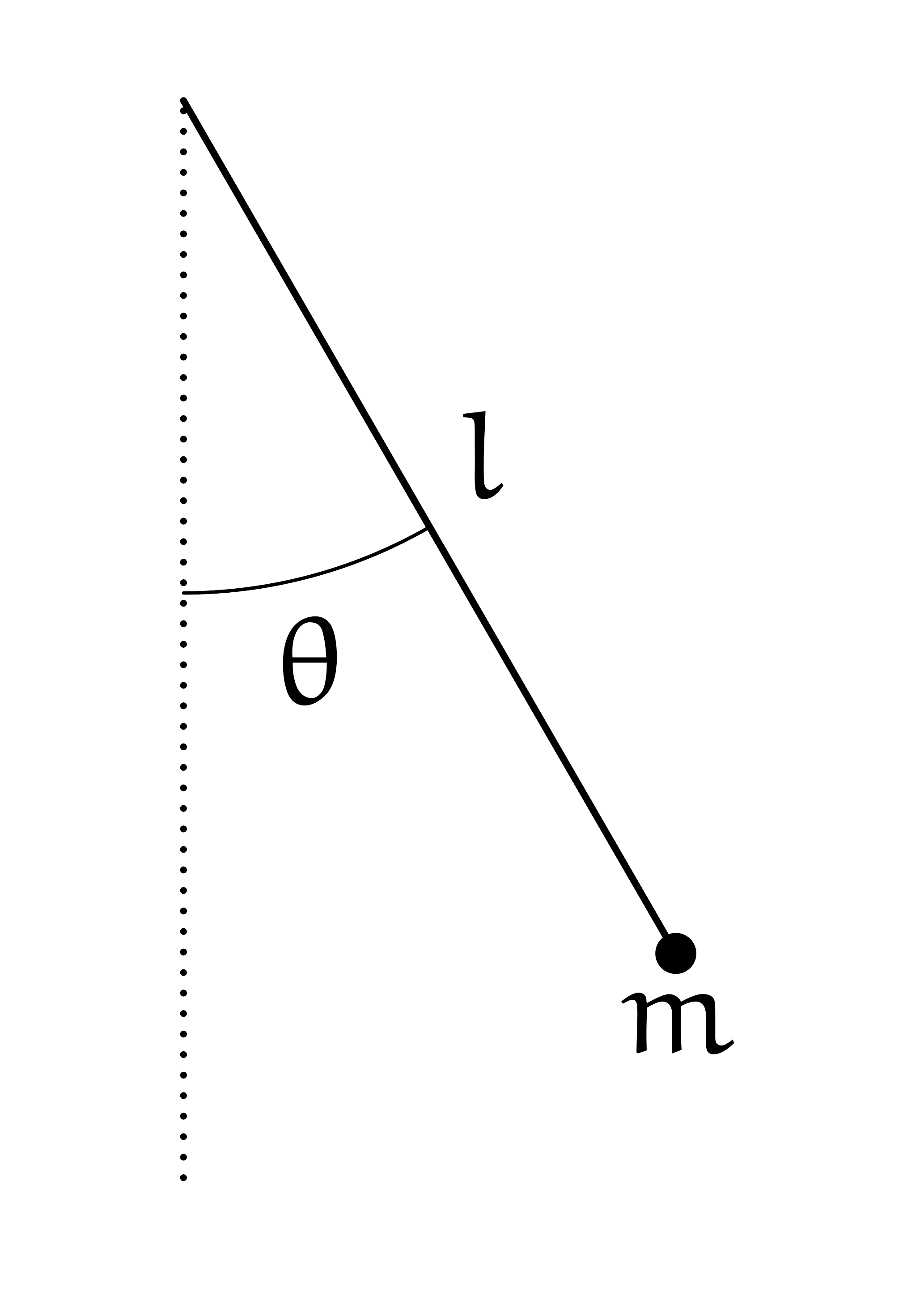}}
\caption{Un pendule de longueur $\ell$ et de masse $m$}
\end{figure}
Son mouvement est décrit par la deuxième loi de \hbox{Newton}~$F=ma$ reliant la force $F$ et l'accélération tangentielle $a$, ce qui donne dans ce cas $a=-g \sin \theta$, où $g$ désigne l'accélération de la pesanteur à l'endroit de l'expérience. En exprimant l'accélération comme la dérivée seconde de la longueur d'arc~$\ell \theta$, on trouve par suite l'équation différentielle de second ordre
\begin{equation}\label{eqn:motion}
\frac{d^2 \theta}{dt^2}+\frac{g}{\ell} \sin \theta=0,
\end{equation}
qui n'admet pas de solution en termes de fonctions élémentaires. Si l'on ne s'intéresse qu'aux oscillations de petite amplitude, c'est-à-dire aux solutions dont la valeur initiale~$\theta_0$ en $t=0$ est petite en module, on peut remplacer $\sin \theta$ par son développement limité $\theta$ à l'ordre $1$; l'équation qui en résulte a alors pour solution $\theta=\theta_0\cos(\sqrt{\sfrac{g}{\ell}}\, t)$, d'où la valeur approchée $T=2\pi \sqrt{\sfrac{\ell}{g}}$ pour la période du pendule.

Pour calculer la valeur exacte de $T$, une possibilité consiste à multiplier \eqref{eqn:motion} par~$\sfrac{d\theta}{dt}$, puis intégrer par parties. On trouve l'équation
\begin{displaymath}
\frac{d\theta}{dt}=\sqrt{\frac{2g}{\ell}(\cos \theta-\cos \theta_0)}
\end{displaymath}
qui, par inversion, permet d'exprimer la période du pendule comme
\begin{align*}
T&=4\sqrt{\frac{\ell}{2g}} \int_0^{\theta_0} \frac{d\theta }{\sqrt{\cos \theta-\cos \theta_0}} \\
&=2\sqrt{\frac{\ell}{g}} \int_0^{\theta_0} \frac{d\theta}{\sqrt{\sin^2(\sfrac{\theta_0}{2}) -\sin^2(\sfrac{\theta}{2})}},
\end{align*}
où l'on a utilisé l'identité trigonométrique $\cos \theta=1-2\sin^2(\sfrac{\theta}{2})$. Intro\-duisant le \textit{module elliptique} $k=\sin(\sfrac{\theta_0}{2})$, on aboutit à l'\textit{intégrale elliptique de première espèce}
\begin{align*}
T&=4\sqrt{\frac{\ell}{g}}\int_0^{\sfrac{\pi}{2}} \frac{d\phi}{\sqrt{1-k^2\sin^2 \phi}} \\
&=4\sqrt{\frac{\ell}{g}}\int_0^1 \frac{dx}{\sqrt{(1-x^2)(1-k^2 x^2)}}
\end{align*}
par les changements de variable $\sin(\sfrac{\theta}{2})=k\sin \phi$ et $x=\sin \phi$. Si~$k$ est un nombre algébrique, c'est un exemple paradigmatique de période au sens de la théorie des nombres.

\subsection*{Guide de lecture} Ces notes reprennent les deux exposés que j'ai donnés aux Journées~X\nobreakdash-UPS \emph{Périodes et transcendance} le 15 et le 16 avril 2019. Je n'ai pas su résister la tentation d'aller bien au-delà de mon cahier des charges et d'écrire le texte que, rétrospectivement, j'aurais voulu avoir eu à ma disposition quand je m'initiais dans le domaine il y a dix ans. En fonction des connaissances préalables et de l'énergie que l'on est prêt à y investir, il admet plusieurs niveaux de lecture: 
\begin{itemize}
\item Les sections \ref{sec:2} et \ref{sec:Kontsevich-Zagier}, demandant le moins de prérequis, introduisent la notion de période et la conjecture de Kontsevich-Zagier; l'accent est surtout mis sur les exemples. Vers la fin, j'énonce quelques théorèmes récents dans la direction de cette conjecture et j'esquisse les contours que pourrait prendre une théorie de Galois pour les périodes. 
\item Ceux qui souhaitent approfondir ce dernier point de vue sont alors invités à continuer la lecture par la section \ref{sec:4}, au moins jusqu'à l'exemple \ref{exmp:lacet0}, les trois premiers numéros de la section \ref{sec:5} et le début de la section \ref{sec:accoupl}, jusqu'à l'exemple \ref{example:nombrepi}. Le matériel ici demande une certaine aisance avec les espaces topologiques et les anneaux. Dans un premier temps, de l'introduction aux variétés algébriques on pourra ne retenir que l'anneau des fonctions et la définition de lissité. 
\item Il y a alors deux possibilités pour continuer la lecture: ou bien aborder les variantes de l'interprétation cohomologique des périodes présentées dans la sections \ref{sec:variante-relative}, au moins jusqu'à l'exemple~\ref{exmp:log-cohomologique}, et la section~\ref{sec:periodexp}; ou bien se concentrer sur les périodes des courbes elliptiques, en étudiant le reste des sections~\ref{sec:4} et~\ref{sec:5}, puis les deux derniers numéros de la section~\ref{sec:accoupl} (ceux qui souhaitent encore rallonger le chemin pourront ensuite lire les sections~\ref{sec:Legendre} et~\ref{sec:functionaltrans}). Dans les deux cas, un peu de familiarité avec ces objets de base de l'algèbre homologique (complexes, suites exactes, etc.) est supposée; en ce sens, les passages les plus exigeants sont l'exemple~\ref{exmp:dilogs} et les dernières pages de la section~\ref{sec:5}. Pour comprendre les constructions faisant intervenir des variétés projectives, il est aussi utile d'avoir en tête la notion de carte d'une variété différentielle. 

\item La section \ref{sec:period-conjecture} est enfin de nature plus avancée. Je l'ai écrite en imaginant ce que je répondrais à la question \og sur quoi je travaille?\fg à des étudiants de M2 en théorie des nombres et géométrie algébrique ou à des collègues dans des sujets plus éloignés, s'ils avaient une heure à m'accorder et qu'ils n'avaient pas peur de ne pas tout comprendre. 
\end{itemize}

\subsection*{Remerciements} Je~tiens à remercier les organisateurs et les participants de ces rencontres de m'avoir permis de porter un regard frais sur ma recherche. Ma vision des périodes a été enrichie par les échanges que j'ai eu la chance d'avoir, au cours des années, avec Yves André, Joseph Ayoub, Jean\nobreakdash-Benoît Bost, Francis Brown, José Ignacio Burgos Gil, Clément Dupont et Peter Jossen ; qu'ils soient tous ici remerciés! C'est notam\-ment Yves André qui m'a parlé en premier du lemme de Newton, Peter Jossen qui m'a appris la preuve du théorème \ref{thm:transfonct}, et avec Clément Dupont que j'ai souvent discuté du dilogarithme. Merci enfin à Olivier Benoist, Clément Dupont, Tiago Jardim da Fonseca et Marco Maculan pour leurs commentaires sur des versions préliminaires de ce texte, et à Peter Jossen, Marco Maculan et Claude Sabbah pour leur aide dans la préparation des figures l'illustrant.

\section{Définition et exemples}\label{sec:2}

Dans cette section, on présente une définition élémentaire de la notion de période qui a été dégagée par Kontsevich et Zagier \cite{KZ} au tournant du siècle dernier et on en déduit quelques conséquences immédiates, comme le fait que les périodes forment un sous-anneau dénombrable de celui des nombres complexes. On donne ensuite plusieurs exemples de périodes (les nombres algébriques, le nombre~$\pi$, les logarithmes des nombres rationnels, les intégrales elliptiques, les valeurs zêta multiples...) sur lesquels on reviendra au fur et à mesure que l'on disposera d'outils plus sophistiqués pour les étudier. Conjecturalement, des nombres tels que~$e$, la racine carrée de $\pi$ ou la constante~$\gamma$ d'Euler n'en font pas partie, mais il est néanmoins possible de les traiter sur le même pied que les périodes en en assouplissant la définition; on arrive de cette manière à la notion de période exponentielle, aussi évoquée par Kontsevich et Zagier à la fin de leur article. On conclut en discutant brièvement la question de trouver des exemples de nombres qui ne sont pas des périodes.

\subsection{Définition et premières propriétés} Soit $n \geq 1$ un entier. Une \textit{fonction rationnelle} à coefficients rationnels en $n$ variables est un quotient $f=P \slash Q$ de polynômes $P$ et~$Q$ dans $\QQ[x_1, \ldots, x_n]$, avec $Q$ non nul. On dit qu'un sous-ensemble $S \subset \RR^n$ est $\QQ$-\textit{semi\nobreakdash-algébrique} s'il est de la forme
\begin{displaymath}
S=\{(x_1, \ldots, x_n) \in \RR^n \mid P(x_1, \ldots, x_r) \geq 0 \}
\end{displaymath}
pour un polynôme $P \in \QQ[x_1, \ldots, x_n]$, ou s'il peut être obtenu à partir de ceux-ci en itérant un nombre fini de fois les opérations réunion, intersection et complémentaire. Par exemple, tous les sous-ensembles de~$\RR^n$ définis par un nombre fini d'équations et d'inéquations
\begin{displaymath}
P_i(x_1, \dots, x_n)=0, \qquad Q_j(x_1, \dots, x_n) \geq 0,
\end{displaymath}
avec $P_i$ et $Q_j$ des polynômes à coefficients rationnels en $n$ variables sont $\QQ$-semi-algébriques. Comme on le verra dans l'exemple~\ref{exmp:algebrique}, les sous-ensembles $\QQ$-semi\nobreakdash-algébriques de~$\RR$ sont les réunions finies de points algébriques réels et d'intervalles ouverts à bornes algébriques réelles ou infinies. La \textit{dimension} d'un ensemble $\QQ$-semi\nobreakdash-algébrique non vide \hbox{$S \subset \RR^n$} est le plus grand entier $d \geq 0$ tel que $S$ contient un ouvert homéomorphe à une boule dans $\RR^d$ (par exemple, $S \subset \RR$ est de dimension $1$ s'il contient un intervalle ouvert et de dimension $0$ sinon). D'après un théorème de Tarski et Seidenberg \cite[Th.\,2.2.1]{real-algebraic-geometry}, l'image d'un sous-ensemble $\QQ$-semi-algébrique de $\RR^{n+1}$ par la projection vers les~$n$ premières coordonnées est un sous-ensemble $\QQ$-semi-algébrique de~$\RR^n$. Par exemple, l'image par la projection vers la première coordonnée du sous-ensemble de $\RR^2$ défini par l'équation $x=y^2$ est l'ensemble $\{x \geq 0\}$ des nombres réels positifs. 

\begin{definition}[Kontsevich-Zagier]\label{defn:periodeKZ} Une \textit{période} est un nombre complexe dont la partie réelle et la partie imaginaire peuvent s'écrire comme des intégrales absolument convergentes
\begin{equation}\label{eqn:defperiod}
\int_{S} f(x_1, \ldots, x_n) \, dx_1\cdots dx_n,
\end{equation}
où $f$ est une fonction rationnelle à coefficients rationnels en $n$ variables et $S \subset \RR^n$ est un ensemble $\QQ$-semi-algébrique.
\end{definition}

Comme tant l'intégrande que le domaine d'intégration sont obtenus à partir de polynômes à coefficients rationnels, les périodes forment un sous-ensemble \textit{dénombrable} de celui des nombres complexes; un nombre complexe \og pris au hasard\fg n'est donc pas une période. Cependant, on verra plus tard que beaucoup de constantes de ce qui pourrait être appelé la vie mathématique courante sont des périodes.

\begin{lemma}\label{lemme:difference} Les périodes sont les nombres complexes dont la partie réelle et la partie imaginaire peuvent s'écrire comme des différences
$$\mathrm{vol}(S_1)-\mathrm{vol}(S_2),$$ où $S_1$ et $S_2$ sont des ensembles $\QQ$\nobreakdash-semi\nobreakdash-algébriques de volume fini.
\end{lemma}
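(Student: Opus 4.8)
Le plan est de démontrer les deux inclusions, celle de gauche à droite contenant l'essentiel de l'argument et la réciproque étant élémentaire.

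Pour l'inclusion réciproque, supposons qu'un réel s'écrive $\mathrm{vol}(S_1) - \mathrm{vol}(S_2)$ avec $S_1, S_2$ des ensembles $\QQ$-semi-algébriques de volume fini. Quitte à remplacer chaque $S_i$ par son produit avec un cube unité de dimension convenable --- opération qui préserve à la fois le volume et le caractère $\QQ$-semi-algébrique ---, je peux supposer $S_1, S_2 \subset \RR^n$. Je passe alors dans $\RR^{n+1}$, de coordonnée supplémentaire $t$, et je pose $S = (S_1 \times (0,1)) \cup (S_2 \times (1,2))$, réunion disjointe qui est encore $\QQ$-semi-algébrique de volume fini, ainsi que le polynôme $f(x,t) = 2 - 2t$. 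Comme $\int_0^1 (2-2t)\,dt = 1$ et $\int_1^2 (2-2t)\,dt = -1$, le théorème de Fubini donne $\int_S f = \mathrm{vol}(S_1) - \mathrm{vol}(S_2)$, de sorte que cette différence s'exprime comme une seule intégrale de la forme \eqref{eqn:defperiod} ; en faisant de même pour la partie imaginaire, on voit que tout nombre complexe dont les parties réelle et imaginaire sont de telles différences est une période.

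Pour l'inclusion directe, soit $\int_S f\,dx_1\cdots dx_n$ l'une des intégrales représentant la partie réelle ou la partie imaginaire d'une période. L'idée est de réaliser cette intégrale comme le volume signé de la région comprise entre le graphe de $f$ et l'hyperplan $\{t = 0\}$. En travaillant dans $\RR^{n+1}$ avec la coordonnée supplémentaire $t$, je pose
\[
A_+ = \{(x,t) \mid x \in S,\ 0 \le t \le f(x)\}, \qquad A_- = \{(x,t) \mid x \in S,\ f(x) \le t \le 0\}.
\]
En découpant $S$ suivant le signe de $f$ et en intégrant d'abord en $t$ (principe de Cavalieri), on obtient $\int_S f = \mathrm{vol}(A_+) - \mathrm{vol}(A_-)$ ; la convergence absolue de l'intégrale de départ garantit que ces deux volumes sont finis, puisque par exemple $\mathrm{vol}(A_+) = \int_{S \cap \{f \ge 0\}} f \le \int_S |f|$.

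Il reste à vérifier que $A_+$ et $A_-$ sont $\QQ$-semi-algébriques, et c'est là l'unique point délicat. En écrivant $f = P/Q$ avec $P, Q \in \QQ[x_1,\ldots,x_n]$, la condition $x \in S$, les conditions $t \ge 0$ et $t \le 0$, ainsi que la condition de signe $f(x) \ge 0 \Leftrightarrow P(x)Q(x) \ge 0$ sont toutes manifestement $\QQ$-semi-algébriques. La condition subtile est $t \le f(x)$, car la multiplication par $Q$ renverse l'inégalité selon le signe de $Q$ : on a $\{t \le P/Q\} = (\{Q>0\} \cap \{tQ - P \le 0\}) \cup (\{Q<0\} \cap \{tQ - P \ge 0\})$, qui est $\QQ$-semi-algébrique en tant que combinaison booléenne finie d'inégalités polynomiales à coefficients rationnels. (Le lieu $\{Q = 0\}$ est de mesure nulle et peut être écarté sans modifier aucun volume.) Ainsi $A_+$, et de même $A_-$, est $\QQ$-semi-algébrique de volume fini, et la partie considérée vaut $\mathrm{vol}(A_+) - \mathrm{vol}(A_-)$. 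La principale difficulté est donc précisément cette gestion du signe du dénominateur lorsqu'on transforme la région sous le graphe d'une fonction rationnelle en un ensemble semi-algébrique ; tout le reste n'est qu'une application routinière du théorème de Fubini.
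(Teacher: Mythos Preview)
Your proof is correct and follows essentially the same approach as the paper: introduce an extra variable $t$, realize the integral as the signed volume between the graph of $f$ and the hyperplane $\{t=0\}$, and verify $\QQ$-semi-algebraicity by splitting on the sign of the denominator $Q$. The only notable difference is that you supply an explicit construction for the reverse inclusion (via $f(x,t)=2-2t$ on a stacked domain), whereas the paper leaves that direction as an exercise.
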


\begin{proof} Soit $\int_S f(x_1, \ldots, x_n) dx_1\cdots dx_n$ une période. En décomposant le domaine d'intégration $S$ comme la réunion des sous-ensembles $S_+$ et $S_{-}$ où $f$ prend, respectivement, des valeurs positives et négatives, on peut récrire
\begin{align*}
\int_S f(x_1, \ldots, x_n)dx_1\cdots dx_n=\int_{S_+} &f(x_1, \ldots, x_n)dx_1\cdots dx_n \\
&-\int_{S_{-}} -f(x_1, \ldots, x_n)dx_1\cdots dx_n.
\end{align*}
Comme l'intégrale de départ est supposée \textit{absolument} convergente, les deux dernières intégrales sont des nombres réels positifs. Afin de les écrire comme des volumes, on introduit une nouvelle variable $t$ et on considère les sous-ensembles de $\RR^{n+1}$ définis par
\begin{align*}
S_1=\{(x_1,\dots, x_n, t) \in S_+ \times \RR \mid 0 \leq t \leq f(x_1, \ldots, x_n)\}, \\
S_2=\{(x_1,\dots, x_n, t) \in S_- \times \RR \mid 0 \geq t \geq f(x_1, \ldots, x_n)\}.
\end{align*}
Il s'agit d'ensembles $\QQ$-semi-algébriques car $f$ est une fonction rationnelle; en posant $f=\sfrac{P}{Q}$, on peut par exemple écrire $S_1$ comme l'intersection de la demi-droite $\{0 \leq t \}$ avec l'ensemble
\begin{displaymath}
\left( \{ tQ-P \leq 0\} \cap \{Q \geq 0 \} \right) \cup (\{tQ-P \geq 0\} \cap \{Q \leq 0\}).
\end{displaymath}
Par construction, le volume de $S_1$ est égal à
\begin{displaymath}
\vol(S_1)=\int_{S_1} dx_1\cdots dx_n dt=\int_{S_+} f(x_1, \ldots, x_n) dx_1\cdots dx_n,
\end{displaymath}
et de même pour $S_2$ en remplaçant $S_+$ par $S_{-}$ et $f$ par $-f$. On a donc écrit la période de départ sous la forme $\vol(S_1)-\vol(S_2)$. Le fait, aussi simple, que n'importe quelle différence de volumes d'ensembles~$\QQ$\nobreakdash-semi\nobreakdash-algébriques est une période est laissé en exercice.
\end{proof}

Avec plus de travail, on peut en fait démontrer que toute période est, au signe près, le volume d'un ensemble~$\QQ$\nobreakdash-semi\nobreakdash-algébrique (autrement dit, qu'une différence positive de tels volumes est encore un tel volume), que l'on peut de surcroît supposer compact \cite{viu-sos}. 

\begin{lemma}\label{lem:periodalgebra} Les périodes forment un sous-anneau de celui des nombres complexes contenant les nombres rationnels.
\end{lemma}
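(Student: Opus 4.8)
The plan is to reduce the statement to the analogue for \emph{real} periods and then treat closure under each ring operation separately, using absolute convergence (Fubini) for products and the difference-of-volumes description (Lemma~\ref{lemme:difference}) for sums. Write $\mathcal{P}\subset\RR$ for the set of real numbers arising as an integral of the form \eqref{eqn:defperiod}. By Definition~\ref{defn:periodeKZ} the periods are exactly the complex numbers $a+bi$ with $a,b\in\mathcal{P}$, so it suffices to show that $\mathcal{P}$ is a subring of $\RR$ containing $\QQ$: the real and imaginary parts of a sum or product of two elements $a+bi$ and $c+di$ are integer-coefficient polynomial expressions in $a,b,c,d$, hence again lie in $\mathcal{P}$ once $\mathcal{P}$ is a ring, and $\QQ=\QQ+i\cdot 0$. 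The inclusion $\QQ\subset\mathcal{P}$ is immediate: for rational $q>0$ one has $q=\vol([0,q])$ with $[0,q]$ manifestly $\QQ$-semi-algebraic, while $0=\vol(\emptyset)$ and negative rationals follow once negation is available.

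For the additive structure I would invoke Lemma~\ref{lemme:difference} to write any $a,b\in\mathcal{P}$ as $a=\vol(A_1)-\vol(A_2)$ and $b=\vol(B_1)-\vol(B_2)$ with the $A_i,B_j$ of finite volume. Negation is then free, since $-a=\vol(A_2)-\vol(A_1)$ is again such a difference. For the sum the key point is that $\vol(A_1)+\vol(B_1)$ is once more the volume of a single $\QQ$-semi-algebraic set: after multiplying by unit cubes so that $A_1,B_1$ both sit in a common $\RR^N$ (which leaves their volumes unchanged), I would place them in disjoint horizontal slabs of $\RR^{N+1}$, forming $(A_1\times[0,1])\cup(B_1\times[2,3])$, whose last coordinate separates the two pieces and whose volume equals $\vol(A_1)+\vol(B_1)$. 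Hence $a+b=\bigl(\vol(A_1)+\vol(B_1)\bigr)-\bigl(\vol(A_2)+\vol(B_2)\bigr)$ is a difference of two such volumes, so $a+b\in\mathcal{P}$ by Lemma~\ref{lemme:difference} once more. The reason for routing sums through volumes is precisely that one cannot in general glue two integrands into a single rational function; the constant integrand $1$ attached to a volume avoids this difficulty entirely.

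Multiplication, by contrast, is immediate from Fubini. Given $a=\int_S f(x)\,dx$ with $S\subset\RR^n$ and $b=\int_T g(y)\,dy$ with $T\subset\RR^m$, I would set
\begin{displaymath}
ab=\int_{S\times T} f(x)\,g(y)\,dx\,dy,
\end{displaymath}
noting that $f(x)g(y)\in\QQ(x_1,\dots,x_n,y_1,\dots,y_m)$ is a rational function with rational coefficients, that $S\times T=(S\times\RR^m)\cap(\RR^n\times T)$ is $\QQ$-semi-algebraic (one carries the defining polynomials of $S$ and $T$ into the larger coordinate ring), and that absolute convergence persists because $\int_{S\times T}|f(x)g(y)|\,dx\,dy=\bigl(\int_S|f|\,dx\bigr)\bigl(\int_T|g|\,dy\bigr)<\infty$ by Tonelli. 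Thus $ab\in\mathcal{P}$, completing the ring axioms. The only genuine obstacle is the additivity already handled above, where the naive sum of two integrals fails for lack of a common rational integrand and Lemma~\ref{lemme:difference} is exactly the device that circumvents it; the remaining bookkeeping (that products and slab-unions of $\QQ$-semi-algebraic sets stay $\QQ$-semi-algebraic) is routine.
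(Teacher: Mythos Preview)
Your proof is correct and follows essentially the same route as the paper: Fubini for products, and Lemma~\ref{lemme:difference} together with the disjoint-slab trick for sums. The only cosmetic difference is that you first reduce explicitly to the real case, whereas the paper works directly with the integrals; the mathematical content is identical.
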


\begin{proof} En écrivant $r=\int_{0 \leq x \leq r} dx$ ou $r=\int_{r \leq x \leq 0} dx$ selon que~$r$ est positif ou négatif, on voit que tout nombre rationnel~$r$ est une période. Il suffit donc de démontrer que l'ensemble des périodes est stable par addition et par multiplication. Dans le deuxième cas, il s'agit d'une application immédiate du théorème de Fubini
\begin{multline*}
\left( \int_{S} f(x_1, \dots, x_n) dx_1 \cdots dx_n \right) \cdot \left(\int_{T} g(y_1, \dots, y_m)dy_1 \cdots y_m\right) \\
=\int_{S \times T} f(x_1, \dots, x_n)g(y_1, \dots, y_m) dx_1 \cdots dx_n dy_1 \cdots dy_m,
\end{multline*}
car un produit de fonctions rationnelles est une fonction rationnelle et un produit de sous-ensembles $\QQ$-semi-algébriques en est un aussi.

Pour démontrer que les périodes sont stables par addition, on peut supposer sans perte de généralité que les deux intégrales ont même nombre de variables. En effet, si l'on avait disons \hbox{$m<n$}, il suffirait de voir $g$ comme une fonction de $n$ variables et d'utiliser l'identité
\begin{displaymath}
\int_T g(x_1, \ldots, x_m)dx_1\cdots dx_m=\int_{T \times [0, 1]^{n-m}} g(x_1, \dots, x_n) dx_1 \cdots dx_n
\end{displaymath}
pour ramener l'intégrale à un sous-ensemble de~$\RR^n$. Supposons donc que les deux périodes sont données par des intégrales faisant intervenir des fonctions de $n$ variables. Après les avoir écrites comme des différences de volumes (lemme~\ref{lemme:difference}), il suffit de voir que
\begin{equation}\label{eqn:difference}
\vol(S_1)+\vol(T_1)-\vol(S_2)-\vol(T_2) \\
\end{equation}
est encore une différence de volumes. Or, en choisissant des intervalles disjoints $I_1, I_2, I_3, I_4 \subset \RR$ de longueur $1$ et d'extrémités rationnelles, pour qu'ils soient $\QQ$-semi-algébriques, on peut récrire \eqref{eqn:difference} comme
\begin{displaymath}
\vol((S_1 \times I_1) \cup (T_1 \times I_2))-\vol((S_2 \times I_3) \cup (T_2 \times I_4)).
\end{displaymath}
La somme de deux périodes est donc une période.
\end{proof}

\subsection{Exemples}

On vient de voir que les nombres rationnels sont des périodes. Voici quelques exemples plus intéressants:

\begin{exemple}[les nombres algébriques]\label{exmp:algebrique} On dit qu'un nombre complexe $\alpha$ est \textit{algébrique} s'il existe un polynôme non nul à coefficients rationnels $P \in \QQ[x]$ dont $\alpha$ est racine:~$P(\alpha)=0$. Le~seul polynôme unitaire et irréductible ayant cette propriété est alors appe\-lé le polynôme minimal de $\alpha$, et son degré le degré de~$\alpha$. La~partie réelle et la partie imaginaire d'un nombre algébrique étant des réels algébriques, pour démontrer que $\alpha$ est une période il suffit de traiter le cas où $\alpha$ est réel strictement positif. Soient donc $\alpha>0$ un nombre algébrique et $P$ son polynôme minimal. Pour voir que $\alpha=\int_0^\alpha dx$ est une période, il suffit de démontrer que l'ensemble $\{0 \leq x \leq \alpha\}$ est~$\QQ$\nobreakdash-semi\nobreakdash-algébrique. Comme $\alpha$ est une racine simple de $P$, il existe des nombres rationnels $0<r<\alpha<s$ tels que la fonction $P$ soit strictement monotone dans l'intervalle~$[r, s]$. L'écriture
\begin{displaymath}
\{0 \leq x \leq \alpha \}=\{0 \leq x \leq r\} \cup \bigl(\{r \leq x \leq s \} \cap \{\varepsilon P(x) \leq 0 \}\bigr),
\end{displaymath}
avec $\varepsilon=1$ si $P$ est croissante et $\varepsilon=-1$ si $P$ est décroissante, permet alors de conclure. De là, on déduit également que les sous-ensembles $\QQ$\nobreakdash-semi-algébriques de $\RR$ sont les réunions finies de points algébriques et d'intervalles ouverts à bornes algébriques ou infinies.
\end{exemple}

\begin{exemple}[le nombre $\pi$]\label{exmp:pi} La définition de $\pi$ comme l'aire du cercle unité montre qu'il s'agit d'une période:
\begin{displaymath}
\pi=\int_{x^2+y^2 \leq 1} dxdy.
\end{displaymath}
Comme ce nombre est transcendant d'après un théorème de Lindemann \cite{Lin-pi}, l'ensemble des périodes est strictement inclus entre celui des nombres algébriques et celui de tous les nombres complexes. On aurait pu choisir maintes autres formules intégrales, par exemple
\begin{displaymath}
\pi=\int_{-\infty}^\infty \frac{dx}{1+x^2}=\int_0^\infty \frac{2dx}{1+x^2},
\end{displaymath}
ce qui met en évidence le fait qu'il existe en général beaucoup de façons différentes de représenter un même nombre comme période. Selon la conjecture de Kontsevich-Zagier (section~\ref{sec:Kontsevich-Zagier}), on devrait pouvoir naviguer à travers toutes ces représentations avec pour seule boussole les règles élémentaires du calcul différentiel, à savoir l'additivité des intégrales, le changement de variables et la formule de Stokes.
\end{exemple}

\begin{remarque}[périodes \og non effectives\fg]\label{noneffectif} Comme les périodes sont stables par produit, toutes les puissances positives de $\pi$ sont des périodes. En revanche, on conjecture que le nombre $\sfrac{1}{\pi}$ n'en est pas une. Pour des raisons qui deviendront plus claires dans l'approche cohomologique des périodes, il peut être convenable de considérer le sous-anneau de $\CC$ formé des nombres complexes de la forme $\sfrac{z}{\pi^n}$ où $z$ est une période et $n \geq 0$ un entier. Le mot \og période\fg est parfois aussi employé pour désigner cette classe \emph{a priori} plus large de nombres; quand on veut mettre l'accent sur cette distinction, on appelle périodes \emph{effectives} les nombres de la définition \ref{defn:periodeKZ}.
\end{remarque}

\begin{exemple}[logarithmes]\label{exmp:logarithme} Soit $q>1$ un nombre rationnel. La représentation intégrale
\begin{displaymath}
\log(q)=\int_1^q \frac{dx}{x}
\end{displaymath}
montre que le logarithme réel de $q$ est une période. Il en va de même pour toutes les déterminations du logarithme d'un nombre algébrique non nul~$\alpha$, c'est\nobreakdash-à\nobreakdash-dire pour n'importe quel nombre complexe~$\beta$ satis\-faisant à \hbox{$\exp(\beta)=\alpha$.} D'après le théorème d'Hermite\nobreakdash-Lindemann (exemple~\ref{exmp:LW}), un tel $\beta$ non nul est transcendant. Ce résultat a été renforcé de façon spectaculaire par Baker en~1967, lorsqu'il a démontré qu'une combinaison linéaire à coefficients algébriques de logarithmes de nombres algébriques non nuls est transcendante pourvu qu'elle ne soit pas nulle, voir par exemple \cite[Ch.\,2]{baker}.
\end{exemple}

\begin{exemple}[valeurs zêta]\label{exmp:zeta(2)} Les valeurs spéciales
\begin{equation}\label{eqn:zetaasseries}
\zeta(n)=\sum_{k=1}^\infty \frac{1}{k^n}
\end{equation}
de la fonction zêta de Riemann aux entiers $n \geq 2$ sont également des périodes. En effet, ces nombres admettent la représentation intégrale
\begin{displaymath}
\zeta(n)=\int_{[0, 1]^n} \frac{dx_1\cdots dx_n}{1-x_1 x_2 \cdots x_n},
\end{displaymath}
comme on le voit en développant l'intégrande comme série géométrique, puis en intégrant terme à terme $n$ fois.

Pour $n$ pair, on sait depuis Euler (1734) que $\zeta(n)$ est un multiple rationnel de $\pi^n$; en voici les premières valeurs:
\begin{displaymath}
\zeta(2)=\frac{\pi^2}{6}, \quad \zeta(4)=\frac{\pi^4}{90}, \quad \zeta(6)=\frac{\pi^6}{945}, \quad \text{etc.}
\end{displaymath}
Il s'ensuit que ces nombres sont transcendants. On conjecture que c'est aussi le cas pour les valeurs de la fonction zêta aux entiers impairs et même que, contrairement à ce qui précède, celles-ci ne sont pas algébriquement reliées au nombre $\pi$. Très peu de résultats sont connus dans cette direction, hormis l'irrationalité de $\zeta(3)$, démontrée par Apéry \cite{apery} en 1978, et l'existence d'une infinité de nombres irrationnels parmi~$\zeta(5), \zeta(7), \dots$, obtenue par Ball et Rivoal \cite{ball-rivoal} en 2001 et qui a déjà fait l'objet de l'exposé \cite{colmez} aux Journées X-UPS.

Plus généralement, étant donnés un entier $\ell \geq 1$ et des entiers~$n_1, \ldots, n_\ell \geq 1$ avec $n_1 \geq 2$ (cette dernière condition assurant la convergence de la série ci-dessous), la \emph{valeur zêta multiple}
\begin{equation}\label{eqn:MZVasseries}
\zeta(n_1, \ldots, n_\ell)=\sum_{k_1>k_2>\dots>k_\ell \geq 1} \frac{1}{k_1^{n_1}\dots k_\ell^{n_\ell}}
\end{equation}
est une période. En effet, si l'on considère les polynômes
\begin{displaymath}
P_0(x)=x, \qquad P_ 1(x)=1-x
\end{displaymath}
et on associe à un $n$-uplet $(a_1, \ldots, a_n) \in \{0, 1\}^n$ l'intégrale
\begin{equation}\label{eqn:intrepsMZV}
I(a_1, \ldots, a_n)=\int_{1 \geq x_1 \geq \cdots \geq x_n \geq 0} \frac{dx_1\cdots dx_n}{P_{a_1}(x_1)\cdots P_{a_n}(x_n)},
\end{equation}
qui converge si et seulement si $a_1=0$ et $a_n=1$, alors la valeur zêta multiple \eqref{eqn:MZVasseries} est égale à
\begin{displaymath}
\zeta(n_1, \ldots, n_\ell)=I(\underbrace{0, \ldots, 0}_{n_1-1}, 1, \underbrace{0, \ldots, 0}_{n_2-1}, \ldots, \underbrace{0, \ldots, 0}_{n_\ell-1}, 1).
\end{displaymath}

Il est conjecturé que la dimension $d_k$ du $\QQ$-espace vectoriel engendré par les valeurs zêta multiples $\zeta(n_1, \ldots, n_\ell)$ avec $n_1+\cdots+n_\ell=k$ est donnée par la série génératrice
\[
\sum_{k=0}^\infty d_k t^k=\frac{1}{1-t^2-t^3},
\]
avec la convention $d_0=1$ et $d_1=0$. L'interprétation de ces nombres comme périodes et les progrès en théorie des motifs ont permis à \hbox{Deligne}-Goncharov \cite{deligne-goncharov} et à Terasoma \cite{terasoma} de prouver que le côté droit de cette égalité conjecturale fournit une borne supérieure pour les dimensions, puis à Brown \cite{brown} que toute valeur zêta multiple peut s'écrire comme une combinaison $\QQ$-linéaire de celles où les $n_i$ ne prennent que les valeurs $2$ et $3$. Je renvoie à \cite{mzvbook} ou à l'exposé de Dupont dans ce volume \cite{dupont} pour beaucoup plus sur ces nombres.
\end{exemple}

\subsection{Plus de souplesse: intégrales de fonctions algébriques}\label{sec:algebrique} Dans la définition \ref{defn:periodeKZ}, il n'est question que d'intégrales de formes différentielles de degré maximal, c'est-à-dire égal au nombre de variables. L'ensemble des périodes reste inchangé si l'on admet également des intégrales de la forme
\begin{displaymath}
\int_S f(x_1, \ldots, x_n) dx_{i_1} \cdots dx_{i_d},
\end{displaymath}
où le domaine d'intégration est un sous-ensemble $\QQ$-semi-algébrique \emph{orienté}~$S \subset \RR^n$ de dimension~\hbox{$d$}, c'est-à-dire muni d'une orientation du sous-ensemble de $S$ dont les points admettent un voisinage ouvert~$U$ dans $\RR^n$ tel que $S \cap U$ soit une variété différentielle de dimension $d$. Une conséquence immédiate est que l'on peut remplacer, dans la définition de Kontsevich-Zagier, les coefficients rationnels par des coefficients algébriques et les fonctions rationnelles par des fonctions algébriques sans changer la classe des périodes. Par exemple, pour tout nombre rationnel $\lambda$ distinct de $0$ et $1$, l'intégrale elliptique
\begin{displaymath}
\int_0^1 \frac{dx}{\sqrt{x(x-1)(x-\lambda)}}
\end{displaymath}
n'est pas à première vue une période au sens de la définition \ref{defn:periodeKZ} car l'intégrande n'est pas un quotient de polynômes, mais en introduisant une nouvelle variable $y$ on peut la récrire comme
\begin{displaymath}
\int_{\substack{y^2=x(x-1)(x-\lambda) \\ 0 \leq x \leq 1}} \frac{dx}{y}
\end{displaymath}
(intégrale d'une forme de degré $1$ sur un sous-ensemble de $\RR^2$). D'après un théorème de Schneider~\cite{Schneider}, ces nombres sont transcendants. De même, pour voir que les valeurs de la fonction bêta
\begin{equation}\label{eqn:betafunction}
\mathrm{B}(a, b)=\int_0^1 t^{a-1}(1-t)^{b-1} dt \qquad (\mathrm{Re}(a)>0,\,\mathrm{Re}(b)>0)
\end{equation}
sont des périodes lorsque $a$ et $b$ sont rationnels, on pose $a=\sfrac{r}{d}$ et~$b=\sfrac{s}{d}$ et on récrit $\mathrm{B}\hspace{-.5mm}\left(a, b \right)$ comme l'intégrale
\begin{equation}\label{eqn:betaperiods}
d\int_{\substack{x^d+y^d=1 \\ 0 \leq x \leq 1}} x^{r-1} y^{s-d} dx
\end{equation} par le biais du changement de variables $x=t^{1/d}$ et $y=(1-t)^{1/d}$. 
Dans un travail ultérieur \cite{Schneider}, Schneider a aussi démontré la transcendance des valeurs bêta dès que $a$ et $b$ ne sont pas des entiers.

\begin{lemme} Les sous-ensembles $S \subset \RR^n$ définis par un nombre fini d'équations et d'inéquations polynomiales à coefficients algébriques sont $\QQ$-semi-algébriques.
\end{lemme}

\begin{proof} L'argument est une variante de celui de l'exemple~\ref{exmp:algebrique}. Il suffit de traiter le cas où $S$ est défini par des inéquations $P_i(x_1, \dots, x_n) \geq 0$, avec des polynômes $P_i$ à coefficients dans une extension finie $L$ de $\QQ$. Notons $d$ son degré. Par le théorème de l'élément primitif, il existe un nombre algébrique~$\alpha$ tel que~$1, \alpha, \dots, \alpha^{d-1}$ soit une base de $L$ sur $\QQ$. En écrivant les coefficients des $P_i$ dans cette base, on trouve des polynômes~\hbox{$\tilde{P}_i \in \QQ[x_1, \dots, x_n, t]$} satisfaisant à $\tilde{P}_i(x_1, \dots, x_n, \alpha)=P_i(x_1, \dots, x_n)$. Soit $f$ le polynôme minimal de~$\alpha$. Comme dans l'exemple \ref{exmp:algebrique}, on peut trouver un intervalle à bornes rationnelles $r<\alpha<s$ sur lequel $f$ n'a pas d'autre racine que $\alpha$. L'ensemble $S$ est alors l'image par la projection vers les~$n$ premières coordonnées de l'ensemble $\QQ$-semi-algébrique
\[
\{(x_1, \dots, x_n, t) \in \RR^{n+1} \,|\, f(t)=0, \,r<t<s,\, \tilde{P}_i(x_1,\dots, x_n, t) \geq 0 \}
\]
et il est donc lui-même $\QQ$-semi-algébrique par le théorème de Tarski et Seidenberg énoncé au début de cette section. 
\end{proof}

\subsection{Les périodes exponentielles}

Conjecturalement, le nombre $e$, la constante $\gamma$ d'Euler ou les valeurs spéciales de la fonction gamma ne sont pas des périodes: si ces nombres avaient une représentation intégrale comme il faut, on le saurait depuis longtemps! On peut pourtant les traiter sur un même pied que les périodes à condition d'élargir la définition comme suit.

\begin{definition}[Kontsevich-Zagier]\label{def:perexp} Une \textit{période exponentielle} est un nombre complexe dont la partie réelle et la partie imaginaire peuvent s'écrire comme des intégrales absolument convergentes de la forme
\begin{displaymath}
\int_S e^{f(x_1, \dots, x_n)} g(x_1, \dots, x_n) dx_1 \cdots dx_n,
\end{displaymath}
où $f$ et $g$ sont des fonctions rationnelles à coefficients rationnels en~$n$ variables et~\hbox{$S \subset \RR^n$} est un ensemble $\QQ$-semi-algébrique.
\end{definition}

En choisissant pour $f$ la fonction identiquement nulle, on voit que les périodes sont aussi des périodes exponentielles; pour les distinguer, on les appellera parfois \og usuelles\fg ou \og classiques\fg. Quitte à peut-être restreindre légèrement les domaines d'intégration permis dans la définition, on peut démontrer \cite{ominimal} que la partie réelle et la partie imaginaire d'une période exponentielle sont des différences de volumes de sous-ensembles de~$\RR^n$ définissables par des formules faisant intervenir la fonction exponentielle réelle et la restriction de la fonction sinus à l'intervalle~$[0, 1]$; c'est l'analogue du lemme \ref{lemme:difference}, à l'exception près que l'on ne s'attend pas à ce qu'un tel volume soit toujours une période exponentielle.

\begin{exemple}[exponentielles]\label{exmp:LW}
Si~$\alpha$ est un nombre algébrique non nul, la représentation intégrale
\begin{displaymath}
e^\alpha=\int_{x \leq \alpha} e^{x} dx
\end{displaymath}
montre que~$e^\alpha$ est une période exponentielle. Les résultats de transcendance sur ces nombres remontent à Hermite, qui a prouvé la transcendance de $e$ dans quatre notes aux \emph{Comptes Rendus de l'Académie des Sciences} \cite{hermite} publiées en 1873. En établissant la transcendance de~$\pi$ neuf ans plus tard, Lindemann a expliqué comment son argument permettait plus généralement de démontrer ce qui est connu aujourd'hui comme le \emph{théorème d'Hermite-Lindemann}: la transcendance de $e^{\alpha}$ quel que soit le nombre algébrique non nul $\alpha$ (compte tenu de l'identité $e^{i\pi}=-1$, la transcendance de $\pi$ en est un cas particulier). Il énonce aussi, sous une autre forme, ce qui est maintenant appelé le \emph{théorème de Lindemann-Weierstrass}: si~$\alpha_1, \dots, \alpha_n$ sont des nombres algébriques linéairement indépendants sur $\QQ$, alors leurs exponentielles~$e^{\alpha_1}, \dots, e^{\alpha_n}$ sont des nombres algébriquement indépendants. On s'attend même à ce que les nombres $e^{\alpha}$ soient transcendants sur le corps engendré par toutes les périodes usuelles.
\end{exemple}

\begin{exemple}[la racine carrée de $\pi$]
L'intégrale de Gauss
\begin{displaymath}
\int_{-\infty}^\infty e^{-x^2} dx=\sqrt{\pi}
\end{displaymath}
est une période exponentielle. Indiquons la preuve de cette identité. Comme l'intégrale est positive, il suffit de montrer que son carré vaut~$\pi$, ce qui revient à calculer
\begin{equation}\label{eqn:gauss}
\int_{-\infty}^\infty \int_{-\infty}^\infty e^{-x^2-y^2} dx dy
\end{equation} par le théorème de Fubini. 
Or, en passant en coordonnées polaires $(x, y)=(r\cos \theta, r\sin \theta)$, cette intégrale se récrit
\begin{displaymath}
\int_0^\infty \int_0^{2\pi} e^{-r^2} rdr d\theta=\pi \int_0^\infty \underbrace{2re^{-r^2}dr}_{d(-e^{-r^2})}=\pi.
\end{displaymath}
Cet exemple montre que des périodes \emph{a priori} exponentielles telles que \eqref{eqn:gauss} peuvent en fait être égales à des périodes classiques.
\end{exemple}

\begin{exemple}[valeurs gamma]\label{exmp:valeursgamma} Plus généralement, les valeurs de la fonction gamma en des nombres rationnels sont des périodes exponentielles. Cette fonction est définie, pour tout nombre complexe~$s$ de partie réelle strictement positive, par l'intégrale convergente
\[
\Gamma(s)=\int_0^\infty t^{s-1} e^{-t} dt.
\]
Elle est holomorphe sur le domaine $\{\mathrm{Re}(s)>0 \}$; en intégrant par parties, on obtient l'identité $\Gamma(s+1)=s\Gamma(s)$ qui permet de la prolonger en une fonction méromorphe sur tout le plan complexe, dont les pôles se trouvent aux entiers $s=0, -1, \dots$ On en déduit que gamma prend la valeur $\Gamma(n)=(n-1)!$ pour tout entier $n \geq 1$.

Les valeurs en des arguments $s \in \QQ \setminus \ZZ$ sont plus intéressantes. Pour les comprendre, on se ramène immédiatement au cas où $s$ est strictement positif, et même compris entre $0$ et $1$. Pour un tel $s=\sfrac{a}{n}$, avec $n \geq 2$ ne divisant pas $a$, la représentation intégrale
\begin{displaymath}
\Gamma(\sfrac{a}{n})=\int_0^\infty t^{\sfrac{a}{n}-1}e^{-t} dt=\int_0^\infty n e^{-x^n} x^{a-1} dx
\end{displaymath}
montre que $\Gamma(\sfrac{a}{n})$ est une période exponentielle. À nouveau, on ne croit pas que ces nombres soient des périodes au sens classique, mais contrairement à ce qui est attendu pour $e^\alpha$ (exemple \ref{exmp:LW}), leurs puissances $n$-ièmes le sont: grâce à la formule d'Euler
\begin{equation}\label{eqn:Eulerformula}
\Gamma(r)\Gamma(s)=\Gamma(r+s)\mathrm{B}\hspace{-.5mm}\left(r, s \right)
\end{equation}
reliant la fonction gamma et la fonction bêta, on peut les écrire comme le produit télescopique
\begin{displaymath}
\Gamma(\sfrac{a}{n})^n=(a-1)!\prod_{k=1}^{n-1} \mathrm{B}(\sfrac{a}{n}, \sfrac{ak}{n}),
\end{displaymath}
où chaque facteur est une période d'après la formule \eqref{eqn:betaperiods}.

On conjecture que ces valeurs de la fonction gamma sont toutes des nombres transcendants, mais ceci n'est connu que pour\footnote{Par contre, comme les valeurs spéciales de la fonction bêta sont transcendantes d'après le théorème de Schneider, l'identité \eqref{eqn:Eulerformula} entraîne qu'au moins l'un des nombres $\Gamma(a/n)$ et $\Gamma(2a/n)$ est transcendant.} \hbox{$n=2, 3, 4$} (section~\ref{sec:grothendieck-elliptic}). Lang et Rohrlich \cite[Ch.\,24]{motifs} ont conjecturé que, le dénominateur $n$ étant fixé, toutes les relations algébriques entre les~$\Gamma(\sfrac{a}{n})$ découlent des trois relations fonctionnelles 
\begin{equation}\label{eqn:functionalgamma}
\begin{aligned}
\Gamma(s+1)&=s\Gamma(s), \\
\Gamma(s)\Gamma(1-s)&=\frac{\pi}{\sin(\pi s)}, \\
\prod_{a=0}^{n-1} \Gamma(s+\sfrac{a}{n})&=\frac{(2\pi)^{\sfrac{(n-1)}{2}}}{n^{ns-\sfrac{1}{2}}}\,\Gamma(ns).
\end{aligned}
\end{equation}
Ces relations entraînent une borne supérieure pour le nombre d'éléments algébriquement indépendants dans le corps engendré par $2\pi i$ et les $\Gamma(\sfrac{a}{n})$: pour tout $n \geq 3$, il y en a au plus $1+\sfrac{\varphi(n)}{2}$, où $\varphi$ est la fonction indicatrice d'Euler. D'après la conjecture de Lang-Rohrlich, il y en aurait exactement autant.
\end{exemple}

\begin{exemple}[la constante $\gamma$ d'Euler] La constante $\gamma$ d'Euler est d'habitude définie comme la limite
\begin{displaymath}
\gamma=\lim_{n \to \infty}\Bigl(1+\frac{1}{2}+\ldots+\frac{1}{n}-\log(n) \Bigr)
\end{displaymath}
des écarts entre les sommes partielles de la série harmonique et le logarithme, mais ce point de vue n'est guère utile pour démontrer qu'il s'agit d'une période exponentielle, comme Belkale et Brosnan l'ont observé en premier \cite{belkale}. En revanche, $\gamma$ est aussi l'opposé de la dérivée de la fonction gamma en $s=1$, d'où la représentation
\begin{displaymath}
\gamma=-\Gamma'(1)=-\int_0^{+\infty} e^{-x}\log(x) dx.
\end{displaymath}
Le logarithme n'est pas une fonction algébrique, mais la substitution $\log(x)=\int_1^x \sfrac{dy}{y}$ et un changement de variables mènent à l'expression
\begin{equation}\label{eqn:gamma-per-exponentielle}
\gamma=\int_{0}^1\int_0^1 e^{-xy}dxdy-\int_1^\infty\int_1^\infty e^{-xy}dxdy,
\end{equation}
qui est clairement une période exponentielle! On conjecture que la constante d'Euler est un nombre transcendant, mais à l'heure actuelle on ne sait même pas s'il est irrationnel\footnote{Par contre, il résulte du \emph{théorème de Siegel-Shidlovskii}, tel qu'il est par exemple expliqué dans la contribution de Rivoal \cite{rivoal} à ce volume, que la première intégrale dans l'expression de $\gamma$ comme période exponentielle est un nombre transcendant: c'est la valeur en $z=1$ de la fonction $\sum_{n \geq 0} (-1)^n z^n/(n+1)(n+1)!$, qui est une fonction $E$ satisfaisant à une équation différentielle non singulière en~$z=1$.}.
\end{exemple}

\subsection{Et des nombres qui ne sont pas des périodes?}

Avant de nous lancer à la recherche de nombres qui ne sont pas des périodes, faisons un pas en arrière et examinons la question, par certains côtés analogue, de l'existence de nombres transcendants~\cite{walds-hist}. De~nos jours, la preuve la plus rapide repose sur la non\nobreakdash-dénombrabilité des nombres réels, et \emph{a fortiori} des nombres complexes, établie par Cantor \cite{cantor, cantor2} pour la première fois en 1874, puis en 1891 à l'aide de l'argument diagonal\footnote{Contrairement à une idée reçue, ces démonstrations de Cantor sont constructives : partant d'une énumération explicite des nombres algébriques, elles fournissent des algorithmes pour écrire des nombres transcendants en base binaire avec autant de précision que voulu \cite{Gray}. }. Comme les nombres algébriques forment un sous-ensemble dénombrable de celui des nombres complexes, non seulement il y a des nombres transcendants mais en fait la plupart des nombres complexes le sont. Cependant, d'un point de vue historique cet argument est postérieur de trente ans à la première démonstration de l'existence des nombres transcendants par Liouville, et même à celle d'Hermite de la transcendance du nombre $e$, publiée un an avant. En~1844, Liouville démontre que tout nombre réel algébrique irrationnel est mal approché par les rationnels, au sens suivant:

\begin{thm}[Liouville]\label{thm:Liouville} Soit $\alpha$ un nombre réel algébrique de degré~$d$. Il existe un nombre réel~$C(\alpha)>0$ tel que l'inégalité
\begin{equation}\label{eqn:liouville}
\Bigl|\alpha-\frac{p}{q}\Bigr|\geq \frac{C(\alpha)}{q^d}
\end{equation}
soit vraie pour tout rationnel $\sfrac{p}{q}$ distinct de $\alpha$, avec $p$ et $q$ des entiers premiers entre eux et~$q>0$.
\end{thm}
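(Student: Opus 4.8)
The plan is to prove Liouville's inequality by playing the minimal polynomial of $\alpha$ against the mean value theorem. Let $P \in \QQ[x]$ be the minimal polynomial of $\alpha$, of degree $d$; after clearing denominators I may assume $P \in \ZZ[x]$ with integer coefficients, so that $P$ has degree $d$ and $P(\alpha)=0$. The key arithmetic observation is that for any rational $\sfrac{p}{q}$ in lowest terms with $q>0$, the value $q^d P(\sfrac{p}{q})$ is an integer, being $\sum a_i p^i q^{d-i}$ with the $a_i \in \ZZ$. Moreover this integer is \emph{nonzero} whenever $\sfrac{p}{q} \neq \alpha$: since $P$ is the minimal polynomial of the irrational number $\alpha$, it is irreducible over $\QQ$ and has $\alpha$ as its only rational-or-irrational root of its type; concretely, $P$ has no rational root at all (for $d \geq 2$), and the degree-$1$ case is handled trivially since then $\alpha$ itself is rational, excluded by hypothesis. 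Hence $|q^d P(\sfrac{p}{q})| \geq 1$, which gives the lower bound $|P(\sfrac{p}{q})| \geq q^{-d}$.

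The second ingredient is an upper bound for $|P(\sfrac{p}{q})|$ in terms of $|\alpha - \sfrac{p}{q}|$. First I would restrict attention to rationals lying within distance $1$ of $\alpha$, say in the interval $[\alpha-1, \alpha+1]$; the finitely many (indeed, for the inequality, all) rationals farther away satisfy $|\alpha - \sfrac{p}{q}| \geq 1 \geq C(\alpha)/q^d$ automatically once $C(\alpha) \leq 1$, so they cause no trouble. For $\sfrac{p}{q}$ in this interval, the mean value theorem applied to $P$ between $\alpha$ and $\sfrac{p}{q}$ yields a point $\xi$ between them with
\[
P(\sfrac{p}{q}) = P(\sfrac{p}{q}) - P(\alpha) = P'(\xi)\,(\sfrac{p}{q} - \alpha),
\]
so that $|P(\sfrac{p}{q})| \leq M \cdot |\alpha - \sfrac{p}{q}|$, where $M = \sup_{[\alpha-1,\alpha+1]} |P'|$ is a finite positive constant depending only on $\alpha$.

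Combining the two bounds, for every $\sfrac{p}{q}$ within distance $1$ of $\alpha$ I get
\[
\frac{1}{q^d} \leq |P(\sfrac{p}{q})| \leq M\,\Bigl|\alpha - \frac{p}{q}\Bigr|,
\]
whence $|\alpha - \sfrac{p}{q}| \geq \sfrac{1}{(M q^d)}$. Setting $C(\alpha) = \min(1, \sfrac{1}{M})$ then makes the inequality \eqref{eqn:liouville} hold for all rationals $\sfrac{p}{q} \neq \alpha$, dispatching both the nearby and the distant cases. I expect the only genuinely delicate point to be the nonvanishing of $P(\sfrac{p}{q})$: this is where irreducibility of the minimal polynomial is essential, and it is what forces the exponent to be exactly the degree $d$ rather than something smaller. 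Everything else is a routine combination of an integrality argument with the mean value theorem.
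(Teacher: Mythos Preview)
Your proof is correct and follows essentially the same route as the paper's: both combine the integrality and nonvanishing of $q^d P(\sfrac{p}{q})$ (from irreducibility of the minimal polynomial) with the mean value theorem on $[\alpha-1,\alpha+1]$, and both set $C(\alpha)=\min(1,M^{-1})$ to cover the far-away rationals. One small wording issue: the case $d=1$ is not ``excluded by hypothesis'' --- the statement allows $\alpha$ rational --- but your argument goes through unchanged there since the only rational root of $P$ is $\alpha$ itself, which is the excluded value of $\sfrac{p}{q}$.
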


\begin{proof} Il n'y a rien à démontrer si $|\alpha-\sfrac{p}{q}|\geq 1 \geq \sfrac{1}{q^d}$. Supposons donc $|\alpha-\sfrac{p}{q}|< 1$ et soit $P \in \ZZ[X]$ un polynôme irréductible de degré $d$ annulant $\alpha$. Posant $M=\max_{x \in [\alpha-1, \alpha+1]} | P'(x)|$, le théorème des accroissements finis donne l'inégalité
\begin{displaymath}
\left|P(\sfrac{p}{q})\right| \leq M \left|\alpha-\sfrac{p}{q}\right|.
\end{displaymath}
Vu que $\sfrac{p}{q}$ est différent de $\alpha$ et que $P$ est irréductible, $q^d P(\sfrac{p}{q})$ est un entier non nul, d'où~$|q^d P(\sfrac{p}{q})| \geq 1$. L'inégalité~\hbox{$|\alpha-\sfrac{p}{q}| \geq \sfrac{1}{Mq^d}$} est donc vraie et l'énoncé vaut avec~$C(\alpha)=\min\{1, M^{-1}\}$.
\end{proof}

Pour construire un nombre transcendant, il suffit ainsi d'exhiber un nombre réel avec des approximations rationnelles violant l'inégalité~\eqref{eqn:liouville}, et Liouville donne l'exemple du nombre
\begin{equation}\label{eqn:nombreLiouville}
x=\sum_{k=1}^\infty 10^{-k!}=0,11000100000000000000000100\dots
\end{equation}
En effet, en posant $p_n=\sum_{k=1}^n 10^{n!-k!}$ et $q_n=10^{n!}$, l'inégalité
\[
0<x-\frac{p_n}{q_n}=\sum_{k=n+1}^{\infty} 10^{-k!}<10^{-(n+1)!}<\frac{1}{q_n^n}
\]
est satisfaite pour tout $n$, ce qui serait en contradiction, pour $n$ assez grand, avec l'inégalité $|x-\sfrac{p_n}{q_n}|\geq \sfrac{C(\alpha)}{q^d}$ si $x$ était algébrique de degré $d$. Plus généralement, les \emph{nombres de Liouville} sont les réels $x$ tels qu'il existe une suite de nombres rationnels $(\sfrac{p_n}{q_n})$ satisfaisant~à
\begin{displaymath}
0<\Bigl|x-\frac{p_n}{q_n}\Bigr|<\frac{1}{q_n^n}.
\end{displaymath}
Par exemple, le nombre $\sum_{k \geq 0} \varepsilon_n 2^{-n!}$ est de Liouville quelle que soit la suite~$(\varepsilon_n)$ de $0$ et de $1$ contenant une infinité de $1$. Il ne coûte rien de se poser la question suivante, dont la réponse devrait très vraisemblablement être négative\footnote{Pour le même prix, on pourrait conjecturer que la \emph{mesure d'irrationalité} (c'est\nobreakdash-à\nobreakdash-dire l'infimum $\mu(x)$ de l'ensemble des nombres réels $\mu$ pour lesquels il existe $C>0$ tel que l'égalité $\left|x-\sfrac{p}{q}\right|\geq \sfrac{C}{q^\mu}$ soit vraie pour tout $\sfrac{p}{q}$ distinct de~$x$) de toute période exponentielle réelle $x$ vaut $2$. Les nombres de Liouville sont les nombres réels à mesure d'irrationalité infinie.}:

\begin{question}\label{question:Liouville} Est-ce qu'il y a des nombres de Liouville parmi les périodes ou, plus généralement, parmi les périodes exponentielles?
\end{question}

Comme les nombres rationnels sont denses dans $\RR$, tout réel positif~$\alpha$ peut s'écrire comme une limite
\[
\alpha=\lim_{n \to +\infty} \frac{a_n}{b_n+1},
\]
où $(a_n)$ et $(b_n)$ sont des suites d'entiers naturels (mettre $b_n+1$ au déno\-minateur est juste un moyen de garantir sa non-annulation). Une classe $\mathcal{E}$ de fonctions de $\NN$ dans $\NN$ étant fixée, on dit que~$\alpha$ est \emph{calculable par rapport à $\mathcal{E}$} si une telle limite peut être réalisée par des fonctions dans la classe $\mathcal{E}$, en ce sens précis qu'il existe des fonctions~$a, b, c \in \mathcal{E}$ telles que, pour tout entier~$m$, l'inégalité
\begin{displaymath}
\left| \alpha-\frac{a(n)}{b(n)+1} \right|<\frac{1}{m}
\end{displaymath}
soit satisfaite dès que $n \geq c(m)$. Pour construire des nombres qui ne sont pas des périodes, Yoshinaga \cite{yoshinaga} a eu l'idée de démontrer que toutes les périodes réelles sont calculables par rapport à la classe des \emph{fonctions élémentaires}. Il s'agit des fonctions d'une variable dans la plus petite classe de fonctions $\{f\colon \NN^r \to \NN \,|\,r\geq 1\}$ contenant les fonctions constantes, la fonction successeur $x \mapsto x+1$, la soustraction modifiée $(x, y) \mapsto \max(x-y, 0)$ et les projections $(x_1, \dots, x_r) \mapsto x_i$, et qui soit stable par les opérations de composition et de somme et produit bornés: les fonctions qui à $(n, x_1, \dots, x_r) \in \NN^{r+1}$ associent
\[
\sum_{i=0}^n f(i, x_1,\dots,x_r) \quad \text{et}\quad \prod_{i=0}^n f(i, x_1,\dots,x_r)
\]
sont élémentaires si $f$ l'est. Tent et Ziegler \cite{TZ} ont ensuite démontré que l'on pouvait se passer du produit borné; on appelle fonctions élémentaires \emph{inférieures} celles dans cette classe restreinte.

\begin{thm}[Yoshinaga, Tent-Ziegler] La partie réelle et la partie imaginaire d'une période sont des nombres calculables par rapport à la classe des fonctions élémentaires inférieures.
\end{thm}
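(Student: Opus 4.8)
The plan is to reduce the computation of a period to that of the volume of a bounded $\QQ$-semi-algebraic set, and then to approximate such a volume by counting grid points, checking at each stage that every function produced stays within the class of lower elementary functions.

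First I would invoke Lemma~\ref{lemme:difference}: the real and imaginary parts of a period are differences $\mathrm{vol}(S_1)-\mathrm{vol}(S_2)$ of volumes of $\QQ$-semi-algebraic sets of finite volume. Since the lower elementary functions are closed under modified subtraction, it is enough to show that the volume of a \emph{single} bounded $\QQ$-semi-algebraic set $S\subset\RR^n$ is computable with respect to them (the sign being treated separately, so that one may assume the number to be approximated is nonnegative, as in the setup preceding the statement). To arrange boundedness I would either truncate $S$ to $S\cap[-R,R]^n$ with an effective bound on the tail volume coming from the absolute convergence of the defining integral, or simply invoke the refined representation of periods as volumes of \emph{compact} $\QQ$-semi-algebraic sets~\cite{viu-sos}; after a rational translation one may assume $S\subset[0,L]^n$ with $L\in\QQ$.

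The core of the argument is a uniform grid approximation
\[
\mathrm{vol}(S)\approx\frac{1}{N^{n}}\,\#\bigl\{(k_1,\dots,k_n)\in\NN^n : k_i\le LN,\ (\tfrac{k_1}{N},\dots,\tfrac{k_n}{N})\in S\bigr\}.
\]
The essential observation is that the predicate ``the grid point $(\tfrac{k_1}{N},\dots,\tfrac{k_n}{N})$ lies in $S$'' is decidable by a lower elementary function: the set $S$ is a Boolean combination of inequalities $P(x_1,\dots,x_n)\ge 0$ with $P\in\QQ[x_1,\dots,x_n]$, and after clearing denominators the sign of $P$ at a grid point is the sign of an integer polynomial expression in $k_1,\dots,k_n,N$, built from additions, a fixed number of multiplications (each a bounded sum, hence lower elementary for fixed degree), and a comparison. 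The Boolean operations are reproduced with modified subtraction and products of indicator functions, and the count itself is a nesting of $n$ bounded sums, which remains lower elementary because $n$ is fixed for the given period. Taking the resolution $N$ to be the running index, the approximants appear as $a(N)/(b(N)+1)$ with $a(N)$ the count and $b(N)+1=N^{n}$, both lower elementary.

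The hard part will be the \emph{effective} control of the approximation error. The discrepancy between $\mathrm{vol}(S)$ and its grid count comes entirely from the cells of side $1/N$ that straddle the boundary $\partial S$, so I must bound the number of such cells by $CN^{\,n-1}$, which gives an error $\le C/N$. This requires a quantitative input from real algebraic geometry: the boundary lies in the real zero locus of the product of the polynomials defining $S$, and a cylindrical algebraic decomposition—or a bound on the $(n-1)$-dimensional measure of such a locus in terms of the number and degrees of these polynomials—yields the constant $C$ explicitly from the defining data. Once $C$ is effective, the convergence function $c(m)\approx\lceil Cm\rceil$ is itself lower elementary, and the inequality $\lvert\alpha-a(N)/(b(N)+1)\rvert<1/m$ for $N\ge c(m)$ follows. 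Producing this boundary estimate while keeping all the bookkeeping functions inside the lower elementary class—and, following Tent--Ziegler, without recourse to the bounded product used in Yoshinaga's original argument—is where the genuine work lies.
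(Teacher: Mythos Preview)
The paper does not actually give a proof of this theorem: it is stated as a result of Yoshinaga and of Tent--Ziegler, with references to \cite{yoshinaga} and \cite{TZ}, and the text moves on immediately to the diagonal-argument consequence. So there is no ``paper's own proof'' to compare against.

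That said, your sketch is a fair outline of the strategy in the cited works: reduce to volumes of bounded (even compact) $\QQ$-semi-algebraic sets, approximate those volumes by rational grid counts, observe that membership of a rational point in $S$ is decided by the sign of an integer polynomial expression and hence by a (lower) elementary function, and control the error via an effective bound on the number of boundary cells. The places you flag as ``the hard part'' --- the effective $CN^{\,n-1}$ bound on boundary cells and the verification that every step, including the fixed-depth products, stays lower elementary --- are indeed where the work is in the references, so your proposal is an honest plan rather than a complete proof.
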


À partir d'une énumération explicite de l'ensemble des fonctions élémentaires, Yoshinaga démontre que le~nombre
\[
0,388832221773824641256243009581\dots
\]
n'est pas une période par une variante de l'argument diagonal de Cantor. En revanche, le nombre de Liouville \eqref{eqn:nombreLiouville} est calculable\footnote{Le seul résultat dans la direction de la question \ref{question:Liouville} que je connaisse est un théorème récent de Lairez et Sertöz, qui montrent que le nombre \hbox{$\sum_{n \geq 0} (2\uparrow\uparrow 3n)^{-1}$} n'est pas le quotient de deux périodes sur une surface lisse dans $\PP^3$ définie par un polynôme homogène de degré $4$ à coefficients rationnels \cite{lairez}. La notation $2\uparrow\uparrow 3n$ désigne ici une tour de $3n$ puissances de $2$.}. 

\section{La conjecture de Kontsevich-Zagier}\label{sec:Kontsevich-Zagier}

La même période peut admettre, on l'a vu, de nombreuses repré\-sentations intégrales. Après avoir formalisé la notion de période, Kontsevich et Zagier énoncent la conjecture qu'il est toujours possible de passer d'une représentation à une autre par une combinaison des trois relations élémentaires suivantes:
\begin{enumerate}
\item[(i)] \emph{Additivité:} pour $f$ et $g$ des fonctions rationnelles et $S$ et $T$ des domaines $\QQ$\nobreakdash-semi-algébriques disjoints, on a les relations
\begin{displaymath}
\int_S (f+g)=\int_S f +\int_S g, \qquad \int_{S \cup T} f=\int_{S} f +\int_{T} f.
\end{displaymath}

\item[(ii)] \emph{Changement de variables:} pour tout changement de variables algébrique $\varphi$, on a la relation
\begin{align*}
\int_S f(x_1, \ldots, x_n)&dx_1\dots dx_n \\
&=\int_{\varphi^{-1}(S)} f(\varphi(y_1), \ldots, \varphi(y_n)) |J_\varphi|dy_1 \dots dy_n,
\end{align*}
où $J_\varphi$ désigne le jacobien de $\varphi$, c'est-à-dire le déterminant de la matrice des dérivées partielles de cette fonction.

\item[(iii)] \emph{Formule de Stokes:} pour toute sous-variété différentielle à bord orientée $S$ de $\RR^n$ et pour toute forme différentielle~$\omega$ à coefficients des fonctions rationnelles sur $S$, on a la relation
\begin{displaymath}
\int_S d\omega=\int_{\partial S} \omega, 
\end{displaymath} où $\partial S$ désigne le bord de $S$, muni de l'orientation induite par celle de~$S$, et $d\omega$ la dérivée extérieure de la forme $\omega$. 
\end{enumerate}

\vspace{.1cm}

\begin{conjecture}[Kontsevich-Zagier]\label{conj:KZ} Toutes les relations linéaires entre périodes découlent de l'additivité, du changement de variables et de la formule de Stokes.
\end{conjecture}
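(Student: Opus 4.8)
Soyons honnêtes d'emblée: il s'agit d'une conjecture ouverte, et aucune stratégie connue ne permet de l'établir en toute généralité. Le seul programme crédible consiste à la traduire en termes cohomologiques, puis à la ramener à un énoncé de nature galoisienne --- l'injectivité de l'accouplement des périodes --- pour lequel on dispose d'un analogue relatif démontré par Ayoub, mais dont le cas absolu renferme des énoncés de transcendance encore hors de portée.

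Le premier pas serait de construire un anneau de \emph{périodes formelles}~$\mathcal{P}^{\mathrm{KZ}}$: on part du $\QQ$-espace vectoriel librement engendré par les données $(S, f)$ codant une intégrale comme dans la définition~\ref{defn:periodeKZ}, et on le quotiente par les relations imposées par l'additivité, le changement de variables et la formule de Stokes. L'accouplement d'intégration fournit alors un homomorphisme d'anneaux surjectif $\mathcal{P}^{\mathrm{KZ}} \to \mathcal{P} \subset \CC$, et la conjecture~\ref{conj:KZ} équivaut exactement à son injectivité. À l'aide de la formule de Stokes et des changements de variables, on réécrit chaque générateur comme une période de cohomologie relative, c'est-à-dire comme la valeur de l'accouplement $\int_\gamma \omega$ entre une classe d'homologie singulière relative $\gamma$ et une classe de cohomologie de de~Rham algébrique $\omega$, pour une paire $(X, D)$ de variétés définies sur~$\QQ$.

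Le deuxième pas serait d'identifier $\mathcal{P}^{\mathrm{KZ}}$ à l'anneau des périodes formelles \emph{motiviques} (ou de Nori), dont les relations ne sont autres que l'additivité, la fonctorialité et la formule de Stokes déguisées en compatibilités des suites exactes longues et des morphismes de paires; il s'agit d'un théorème de comparaison entre les deux formalismes. On aurait ainsi ramené la conjecture à l'injectivité de l'application des périodes motiviques, c'est-à-dire à la \emph{conjecture des périodes de Grothendieck} (section~\ref{sec:period-conjecture}): le spectre de l'anneau des périodes formelles est un torseur sous le groupe de Galois motivique~$\mathcal{G}$, et l'injectivité revient à dire que le point complexe fourni par l'intégration est \emph{générique}, autrement dit que le degré de transcendance du corps engendré par les périodes d'un motif est égal à la dimension de~$\mathcal{G}$.

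C'est ici que gît l'obstacle principal, et il est de taille: cette injectivité est un énoncé de transcendance d'une force prodigieuse, qui contient la transcendance de~$\pi$, l'indépendance algébrique (conjecturale) des valeurs zêta aux entiers impairs, et bien davantage encore; on ne sait l'établir que dans une poignée de cas, essentiellement les motifs de poids~$1$ reliés au théorème de Baker et quelques familles abéliennes. L'indice le plus encourageant est le théorème d'Ayoub, qui démontre l'analogue \emph{relatif} de la conjecture~\ref{conj:KZ}: pour des intégrales dépendant d'un paramètre parcourant un disque, toute relation se déduit effectivement des trois règles. Sa démonstration repose de façon cruciale sur la \emph{variation} d'une famille non triviale --- équations de Picard-Fuchs, monodromie. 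Or c'est exactement cet ingrédient qui fait défaut au-dessus de~$\QQ$, où il n'y a \og rien à faire varier\fg; transposer l'argument d'Ayoub en fixant la base en un point rationnel reviendrait à fabriquer de la transcendance \emph{ex nihilo}, et là réside le cœur de la difficulté.
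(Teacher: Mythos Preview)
Your assessment is entirely correct and matches the paper's own treatment: this is an open conjecture, not a theorem, and the paper does not attempt to prove it. The paper presents it as a conjecture, gives a precise reformulation due to Ayoub (Conjecture~\ref{conj:KZ-Ayoub}), records the known partial results---Huber--W\"ustholz for periods in one variable (Theorem~\ref{thm:HW}) and Ayoub's geometric variant over~$\CC\lpr t\rpr$ (Theorem~\ref{thm:Ayoub})---and later, in Section~\ref{sec:period-conjecture}, explains the equivalence with Grothendieck's period conjecture via the torsor of formal periods, exactly along the lines you sketch. Your outline of the program and of the obstacle (the passage from the relative case, where one has variation and monodromy, to the absolute case over~$\QQ$, which would require genuine transcendence input) is faithful to the state of the art as the paper describes it.
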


Bien que l'énoncé ci-dessus ne concerne \emph{a priori} que les relations \textit{linéaires}, il s'agit en fait d'une conjecture sur toutes les relations \textit{algé\-bri\-ques}: comme les périodes forment un sous-anneau de $\CC$ qui contient celui des nombres algébriques, toute relation polynomiale à coefficients algébriques donne lieu à une relation linéaire.

\begin{exemple} Revenons à l'exemple \ref{exmp:pi} et voyons comment les représentations intégrales de la période $\pi$ qui s'y trouvent sont reliées entre elles. Appliquée à la sous-variété $S=\{x^2+y^2 \leq 1\}$ de $\RR^2$ et à la forme différentielle $\omega=xdy$, la formule de Stokes donne
\[
\pi=\int_{x^2+y^2 \leq 1} dxdy=\int_{x^2+y^2=1} xdy=2\int_{-1}^1 \sqrt{1-x^2} dx,    
\] où la dernière égalité découle de la relation d'additivité en écrivant le cercle~$\partial S$, avec son orientation anti-horaire induite par celle de~$S$, comme la réunion des demi\nobreakdash-cercles paramétrés par $x=\sqrt{1-y^2}$ et~$x=-\sqrt{1-y^2}$ pour $-1 \leq y \leq 1$. Par additivité et changement de variables, l'intégrale ainsi obtenue est égale à 
\begin{align}
&\int_{-1}^1\sqrt{1-x^2} dx+\int_{-1}^0 \sqrt{1-x^2} dx+\int_{0}^1 \sqrt{1-x^2} dx \\
&\quad=\int_{-1}^1 \frac{1-t^2}{\sqrt{1-t^2}} dt+\int_{-1}^0 \frac{t^2}{\sqrt{1-t^2}} dt +\int_{0}^1 \frac{t^2}{\sqrt{1-t^2}} dt=\int_{-1}^1 \frac{dt}{\sqrt{1-t^2}}
\end{align} (on a posé $x=-\sqrt{1-t^2}$ dans la deuxième intégrale et $x=\sqrt{1-t^2}$ dans la troisième). De là on obtient enfin
\begin{align}
\int_{-1}^1 \frac{dt}{\sqrt{1-t^2}}&=\int_{-1}^0 \frac{dt}{\sqrt{1-t^2}}+\int_{0}^1 \frac{dt}{\sqrt{1-t^2}} \\
&=\int_{-\infty}^0 \frac{du}{1+u^2}+\int_0^{\infty} \frac{du}{1+u^2}  \\
&=\int_{-\infty}^\infty \frac{du}{1+u^2},
\end{align} par la relation d'additivité et les changements de variables 
\[
t=-\frac{1}{\sqrt{1-u^2}} \quad\text{et}\quad t=\frac{1}{\sqrt{1-u^2}}
\] sur la première et la deuxième intégrale respectivement. 
\end{exemple}

\begin{exemple}\label{exmp:KZ-log} Soient $a, b >1$ des nombres rationnels. Au vu de l'exemple \ref{exmp:logarithme}, l'identité $\log(ab)=\log(a)+\log(b)$ est une relation entre périodes qui devrait, selon la conjecture de Kontsevich-Zagier, découler des règles ci-dessus. Voici comment:\enlargethispage{-3\baselineskip}
\begin{align}
\log(ab)&=\int_1^{ab} \frac{dx}{x} \\
&=\int_1^a \frac{dx}{x}+\int_a^{ab} \frac{dx}{x} \qquad \mbox{(additivité)} \\
&=\int_1^a \frac{dx}{x}+\int_1^b \frac{dy}{y} \qquad \mbox{(changement de variables $x=ay$)} \\
&=\log(a)+\log(b).
\end{align}
\end{exemple}

\begin{exemple} Soient $m, n \geq 2$ des entiers. La représentation comme série \eqref{eqn:zetaasseries} des valeurs spéciales de la fonction zêta de Riemann donne lieu à l'identité de périodes
\begin{equation}\label{eqn:identityMZV}
\zeta(m)\zeta(n)=\zeta(m, n)+\zeta(n, m)+\zeta(n+m),
\end{equation}
comme on le voit en décomposant le domaine de sommation dans le produit $\zeta(m)\zeta(n)=\sum_{k_1, k_2 \geq 1} k_1^{-m} k_2^{-n}$ selon que $k_1$ est supérieur, inférieur ou égal à $k_2$. Il n'est pas \emph{a priori} évident que cette relation puisse se déduire des règles ci-dessus, comme le prédit la conjecture de Kontsevich-Zagier, car elle est obtenue à partir d'une représentation comme série plutôt que comme intégrale.

Néanmoins, le changement de variables $x_i=t_1\cdots t_{i}$ transforme le domaine d'intégration dans la formule \eqref{eqn:intrepsMZV} en l'hypercube $[0, 1]^{n+m}$ et fournit les représentations alternatives
\begin{align}
\zeta(m)\zeta(n)&=\int_{[0, 1]^{n+m}} \frac{dt_1\cdots dt_{n+m}}{(1-t_1\cdots t_m)(1-t_{m+1}\cdots t_{n+m})} \\
\zeta(m, n)&=\int_{[0, 1]^{n+m}} \frac{t_1\cdots t_m\,dt_1\cdots dt_{n+m}}{(1-t_1\cdots t_m)(1-t_1\cdots t_{n+m})} \\
\zeta(n, m)&=\int_{[0, 1]^{n+m}} \frac{t_{m+1}\cdots t_{m+n}\,dt_1\cdots dt_{n+m} }{(1-t_{m+1}\cdots t_{m+n})(1-t_1\cdots t_{n+m})} \\
\zeta(n+m)&=\int_{[0, 1]^{n+m}} \frac{dt_1\cdots dt_{n+m}}{1-t_1\cdots t_{n+m}},
\end{align}
à partir desquelles la relation \eqref{eqn:identityMZV} découle par additivité de l'identité
\begin{displaymath}
\frac{1}{(1-a)(1-b)}=\frac{a}{(1-a)(1-ab)}+\frac{b}{(1-b)(1-ab)}+\frac{1}{1-ab}
\end{displaymath}
appliquée à $a=t_1\cdots t_m$ et $b=t_{m+1}\cdots t_{m+n}$.
\end{exemple}

\subsection{Une version précisée} L'énoncé de la conjecture \ref{conj:KZ} est certes un peu flou, surtout en ce qui concerne la manière d'obtenir une relation entre périodes à partir des trois règles du calcul intégral. Le but de cette section est de donner une version beaucoup plus précise de la conjecture suivant les travaux d'Ayoub~\cite{AyoubEMS, Ayoub, AyoubTohoku}. Pour chaque entier $n \geq 0$, notons
\begin{displaymath}
\overline{\DD}^n=\{(z_1, \ldots, z_n) \in \CC^n \mid |z_i| \leq 1 \text{ pour tout } i=1, \dots, n\}
\end{displaymath}
le polydisque fermé dans $\CC^n$ et $\mathcal{O}(\overline{\DD}^n)$ l'anneau des séries entières
\begin{displaymath}
f=\sum_{j_1, \dots, j_n \geq 0} a_{j_1, \dots, j_n} z_1^{j_1}\cdots z_n^{j_n}
\end{displaymath}
qui ont polyrayon de convergence $> 1$, c'est\nobreakdash-à\nobreakdash-dire qui convergent partout sur un voisinage ouvert de $\overline{\DD}^n$. Soit $\mathcal{O}_{\textup{$\QQ$-alg}}(\overline{\DD}^n)$ le sous-anneau de~$\mathcal{O}(\overline{\DD}^n)$ des séries entières qui sont \textit{algébriques} sur~$\QQ(z_1, \ldots, z_n)$, c'est-à-dire qui satisfont à une équation polynomiale dont les coefficients sont des fonctions rationnelles à coefficients rationnels. En regardant une fonction de $n$ variables comme fonction de $n+1$ variables, on obtient des inclusions
\begin{equation}\label{eqn:inclusions}
\mathcal{O}_{\textup{$\QQ$-alg}}(\overline{\DD}^n) \subset \mathcal{O}_{\textup{$\QQ$-alg}}(\overline{\DD}^{n+1}). 
\end{equation}
On désigne par $\mathcal{O}_{\textup{$\QQ$-alg}}(\overline{\DD}^\infty)$ la réunion de tous les $\mathcal{O}_{\textup{$\QQ$-alg}}(\overline{\DD}^n)$. À un élément $f \in \mathcal{O}_{\textup{$\QQ$-alg}}(\overline{\DD}^n)$ on peut associer son intégrale
\begin{align*}
\int_{[0, 1]^n} f(z_1, \dots, z_n)dz_1\dots dz_n,
\end{align*}
qui est une période au sens de Kontsevich-Zagier (section \ref{sec:algebrique}). Comme la valeur de cette intégrale ne change pas si l'on regarde $f$ comme une fonction de~$n+1$ variables et on intègre sur $[0, 1]^{n+1}$, on dispose d'une application~$\QQ$\nobreakdash-linéaire
\begin{equation}\label{eqn:integration-infinie}
\int_{[0, 1]^\infty}\colon \mathcal{O}_{\textup{$\QQ$-alg}}(\overline{\DD}^\infty) \longrightarrow \CC.
\end{equation}
On peut démontrer (mais ce n'est pas facile!) que l'image de cette application coïncide avec l'anneau des périodes.

\begin{conjecture}\label{conj:KZ-Ayoub} Le noyau de l'application \eqref{eqn:integration-infinie} est le $\QQ$-sous-espace vectoriel de $\mathcal{O}_{\textup{$\QQ$-alg}}(\overline{\DD}^\infty)$ engendré par les éléments de la forme
\begin{equation}\label{eqn:conjKZgenerateurs}
\frac{\partial g}{\partial z_i}-g|_{z_i=1}+g|_{z_i=0},
\end{equation}
où $g$ est une fonction dans $\mathcal{O}_{\textup{$\QQ$-alg}}(\overline{\DD}^\infty)$ et $i \geq 1$ est un entier.
\end{conjecture}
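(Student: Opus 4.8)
Le plan est de démontrer séparément les deux inclusions. L'inclusion du sous-espace engendré dans le noyau est élémentaire ; l'inclusion réciproque constitue tout le contenu de l'énoncé et coïncide avec la conjecture de Kontsevich-Zagier elle-même.

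Je commencerais par vérifier que chaque générateur appartient au noyau. Fixons $g \in \mathcal{O}_{\textup{$\QQ$-alg}}(\overline{\DD}^n)$ et un indice $i$ ; comme le polyrayon de convergence est $>1$, les restrictions $g|_{z_i=0}$ et $g|_{z_i=1}$ sont bien définies et restent algébriques sur $\QQ(z_1, \dots, z_n)$ (en spécialisant une équation polynomiale minimale), de sorte qu'elles appartiennent à $\mathcal{O}_{\textup{$\QQ$-alg}}(\overline{\DD}^\infty)$, vues comme fonctions constantes en $z_i$. Par le théorème de Fubini et le théorème fondamental du calcul intégral dans la variable $z_i$, on a
\[
\int_0^1 \frac{\partial g}{\partial z_i}\,dz_i = g|_{z_i=1}-g|_{z_i=0},
\]
et les deux termes du membre de droite étant indépendants de $z_i$, l'intégration sur les variables restantes les fait s'annuler avec les termes de bord : l'intégrale du générateur sur $[0, 1]^n$ est donc nulle. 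Il s'agit là exactement des règles d'additivité et de Stokes de la conjecture \ref{conj:KZ}, transcrites en une identité de séries entières. Le sous-espace engendré par les générateurs est ainsi contenu dans le noyau.

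Reste l'inclusion réciproque, qui est le cœur du problème. La stratégie, suivant Ayoub, consiste à organiser ces données en un complexe : les opérateurs de dérivation $\partial/\partial z_i$ et les applications de face $g \mapsto g|_{z_i=\varepsilon}$ (pour $\varepsilon \in \{0, 1\}$) munissent $\mathcal{O}_{\textup{$\QQ$-alg}}(\overline{\DD}^\infty)$ d'une structure de complexe de type cubique, dont la cohomologie en un degré convenable est précisément le quotient par le sous-espace engendré par les générateurs. L'application d'intégration se factorise à travers cette cohomologie, et l'assertion revient exactement à l'injectivité de l'application induite vers $\CC$. Je chercherais alors à identifier ce quotient à un anneau concret de « périodes formelles » et à interpréter l'accouplement d'intégration comme la comparaison entre cohomologie de de~Rham et homologie de Betti, ce qui ramènerait l'injectivité à un énoncé de fidélité des périodes de la cohomologie (relative) des variétés algébriques sur $\QQ$.

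Le principal obstacle est précisément cette inclusion réciproque : elle équivaut à la conjecture de Kontsevich-Zagier dans toute sa force et demeure ouverte. Le théorème d'Ayoub établit l'énoncé analogue dans un cadre \emph{relatif}, où le corps de base $\QQ$ est remplacé par un corps de fonctions rationnelles ; l'injectivité y résulte d'une propriété de conservativité du foncteur de réalisation et d'une analyse fine du complexe de de~Rham relatif. Ce qui manque dans le cas absolu sur $\QQ$ est exactement un ingrédient comparable contrôlant les périodes du point $\mathrm{Spec}\,\QQ$, c'est-à-dire le contenu de la conjecture des périodes évoquée à la section \ref{sec:period-conjecture}.
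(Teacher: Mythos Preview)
Your assessment is correct, and there is nothing to compare against: the statement is a \emph{conjecture}, and the paper does not prove it. Indeed the text explicitly calls the Kontsevich--Zagier conjecture ``désespérément hors de portée''. Your proposal handles the situation appropriately: you verify the easy inclusion (generators lie in the kernel, by Fubini and the fundamental theorem of calculus), and you correctly identify the reverse inclusion as equivalent to the full conjecture, relating it to Ayoub's geometric analogue (théorème~\ref{thm:Ayoub}) and explaining why the absolute case over $\QQ$ remains open.

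One small point: the paper's remark following the conjecture proves the analogous statement for \emph{all} holomorphic functions (not just algebraic ones) by induction on $n$, choosing a primitive in the last variable; it then stresses that this argument fails in the algebraic setting precisely because algebraic functions need not have algebraic primitives. You allude to the complex of cubical type and Ayoub's machinery, which is a reasonable sketch of where a proof would have to go, but you might also mention this elementary holomorphic argument as the model case --- it makes vivid exactly where the difficulty enters.
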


\begin{remarque} Comme l'explique Ayoub \cite[Rem.\,1.2]{Ayoub}, si à la place des fonctions algébriques on considère toutes les fonctions holomorphes, on voit sans peine que le noyau de l'application
\begin{displaymath}
\int_{[0, 1]^n} \colon \mathcal{O}(\overline{\DD}^n) \longrightarrow \CC
\end{displaymath}
est le $\CC$-espace vectoriel engendré par les $\sfrac{\partial g}{\partial z_i}-g|_{z_i=1}+g|_{z_i=0}$. Raisonnons par récurrence sur $n$. Le cas $n=0$ est évident. Prenons donc~$n \geq 1$ et un élément $f=f(z_1, \ldots, z_n)$ du noyau. Considérons une primitive $g$ de $f$ par rapport à la dernière variable, c'est-à-dire une fonction~$g \in \mathcal{O}(\overline{\DD}^n)$ satisfaisant à $\sfrac{\partial g}{\partial z_n}=f$ (elle existe toujours, vu que le polyrayon de la série entière obtenue en intégrant terme à terme par rapport à $z_n$ est encore $>1$). Alors la fonction
\begin{displaymath}
h=f-\left( \sfrac{\partial g}{\partial z_n}-g|_{z_n=1}+g|_{z_n=0} \right)
\end{displaymath}
appartient au noyau et ne dépend que des variables $z_1, \ldots, z_{n-1}$. On peut donc la voir comme un élément dans $\mathcal{O}(\overline{\DD}^{n-1})$ et, par hypothèse de récurrence, l'écrire comme une combinaison linéaire d'éléments de la forme \eqref{eqn:conjKZgenerateurs}; il s'ensuit que $f$ l'est également. Ce raisonnement, cela va sans dire, est voué à l'échec dans le cas algébrique car une fonction algébrique n'admet pas en général de primitive algébrique; c'est déjà le cas pour la fonction $1/(z-2)=\sum_{n \geq 0} (z/2)^n $. 
\end{remarque}

\begin{remarque} Un des aspects les plus frappants de la conjecture~\ref{conj:KZ-Ayoub} est que les relations de changement de variables ont disparu de l'énoncé, qui ne garde finalement qu'une forme simple de la formule de Stokes. Ayoub explique comment les retrouver en une variable \cite[Rem.\,1.5]{Ayoub}. Étant donnés $f_1, f_2 \in \mathcal{O}_{\textup{$\QQ$-alg}}(\overline{\DD}^\infty)$, écrivons $f_1 \equiv f_2$ si $f_1-f_2$ appartient à l'espace vectoriel engendré par les éléments de la forme \eqref{eqn:conjKZgenerateurs}. Soit~$u$ un changement de variables satisfaisant à $u(0)=0$ et \hbox{$u(1)=1$.} En dérivant la fonction
\begin{displaymath}
g(z_1, z_2)=(u'(z_1)z_2+1-z_2)f(u(z_1)z_2+z_1(1-z_2))
\end{displaymath}
par rapport à la variable $z_2$, on trouve la relation
\begin{align}
u'(z_1)&f(u(z_1))-f(z_1) \equiv (u'(z_1)-1)f(u(z_1)z_2+z_1(1-z_2)) \\
&+(u'(z_1)z_2+1-z_2)f'(u(z_1)z_2+z_1(1-z_2))(u(z_1)-z_1),
\end{align}
dont le côté droit est égal à $\sfrac{\partial h}{\partial z_1}-h|_{z_1=1}+h|_{z_1=0}$ pour la fonction
\begin{displaymath}
h(z_1, z_2)=(u(z_1)-z_1)f(u(z_1)z_2+z_1(1-z_2)).
\end{displaymath}
La relation de changement de variables $\int_0^1 f(z)dz=\int_0^1 u'(z)f(u(z))dz$ découle donc de la formule de Stokes, mais pour le démontrer on a eu besoin d'introduire une nouvelle variable.
\end{remarque}

\subsection{Ce qui est connu} J'aurais du mal à imaginer une conjecture plus désespérément hors de portée que celle de Kontsevich et Zagier; il faut donc s'attendre à un paragraphe court... Quand il s'agit d'intégrales de fonctions algébriques d'une variable, on peut relier les périodes aux groupes algébriques et utiliser des techniques poussées en théorie de la transcendance, notamment le théorème du sous-groupe analytique de W\"ustholz \cite{baker-wustholz}, pour déterminer toutes les relations \textit{linéaires} entre ces nombres. Mais, contrairement à la conjecture générale, ceci ne suffit pas à contrôler les relations algébriques car les périodes en une variable ne sont \emph{pas} stables par multiplication.

\begin{thm}[Huber-W\"ustholz]\label{thm:HW} La conjecture de Kontsevich-Zagier vaut pour les périodes \textit{en une variable}\footnote{Dans le langage de la section \ref{sec:variante-relative}, on peut rendre \og scientifique\fg cette notion en considérant l'ensemble des nombres qui apparaissent comme des coefficients de l'accouplement de périodes entre la cohomologie de de~Rham $\rH^1_{\dR}(X, D)$ et l'homologie de Betti $\rH_1^\Betti(X, D)$ d'une courbe affine $X$ relatives à un ensemble fini de points $D \subset X$, tout étant défini par des équations à coefficients algébriques.}.
\end{thm}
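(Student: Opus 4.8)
Le plan est de traduire la question en termes de groupes algébriques commutatifs, puis d'invoquer le théorème du sous-groupe analytique de W\"ustholz. Je~commencerais par reformuler les périodes « en une variable » au sens de la note de bas de page : ce sont les coefficients de l'accouplement de périodes entre $\rH^1_{\dR}(X, D)$ et $\rH_1^\Betti(X, D)$ pour une courbe affine $X$ relative à un ensemble fini de points $D$, le tout défini sur $\overline{\QQ}$. La théorie de Deligne des $1$-motifs $M=[L \to G]$, où $L$ est un réseau et $G$ une variété semi-abélienne (extension d'une variété abélienne par un tore), identifie précisément ces espaces avec les réalisations de de~Rham et de Betti d'un $1$-motif défini sur $\overline{\QQ}$. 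Les coefficients de la matrice des périodes de $M$ sont alors exactement les périodes en une variable, et la conjecture de Kontsevich-Zagier à démontrer revient à décrire toutes les relations $\overline{\QQ}$-linéaires entre ces coefficients, puis à vérifier que chacune découle de l'additivité, du changement de variables et de la formule de Stokes.

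Le deuxième temps, de nature plus comptable, consiste à isoler les relations \emph{d'origine géométrique} et à s'assurer que les trois règles les produisent toutes. La bilinéarité de l'accouplement correspond à l'additivité ; les morphismes de $1$-motifs (et, du côté des courbes, les applications et les changements de coordonnées algébriques) donnent lieu, par fonctorialité, à des relations que l'on retrouve en combinant le changement de variables et la formule de Stokes ; enfin, les relations issues de l'algèbre linéaire des réalisations d'un même motif (compatibilité avec la filtration par le poids, suites exactes courtes de $1$-motifs) s'en déduisent également. On~obtient ainsi que toute relation d'origine géométrique est conséquence des règles de Kontsevich-Zagier ; c'est la direction facile.

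Le cœur de la preuve est la réciproque : montrer qu'il n'existe pas d'autre relation $\overline{\QQ}$-linéaire que celles d'origine géométrique. L'idée est que chaque période s'interprète comme une coordonnée du logarithme d'un point algébrique dans l'extension vectorielle universelle du groupe attaché au motif. Une relation $\sum_i a_i p_i=0$ à coefficients algébriques revient alors à dire que ce logarithme appartient à un sous-espace $\overline{\QQ}$-rationnel propre de l'algèbre de Lie. Le théorème du sous-groupe analytique affirme précisément que le plus petit sous-espace de ce type est l'algèbre de Lie d'un sous-groupe algébrique connexe $H$ défini sur $\overline{\QQ}$ ; ce sous-groupe détermine un sous-$1$-motif, c'est-à-dire un morphisme de $1$-motifs, dont la relation de départ n'est que la traduction — elle est donc d'origine géométrique. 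En combinant avec l'étape précédente, toute relation linéaire découle des trois règles.

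La principale difficulté réside dans cette dernière étape. D'une part, le théorème du sous-groupe analytique ne renseigne \emph{a priori} que sur un logarithme à la fois ; pour capturer \emph{toutes} les relations linéaires simultanément, il faut regrouper l'ensemble des périodes en jeu dans l'algèbre de Lie d'un unique groupe algébrique commutatif convenable, construit à partir des motifs considérés. D'autre part, il faut établir un dictionnaire précis et exhaustif entre sous-groupes algébriques définis sur $\overline{\QQ}$ et relations déductibles des trois règles, en traitant de manière uniforme la partie torique (qui fournit les logarithmes), la partie abélienne (qui fournit les périodes elliptiques et abéliennes) et le réseau responsable du caractère relatif. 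Signalons enfin que la méthode ne contrôle que les relations \emph{linéaires} : les périodes en une variable n'étant pas stables par multiplication, elle ne dit rien des relations algébriques, ce qui explique que la conjecture générale demeure, elle, hors de portée.
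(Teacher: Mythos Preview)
The paper does not give a proof of this theorem; it simply records ``Ce résultat est démontré dans \cite{huber-wustholz}'' and then, in the discussion surrounding the companion Theorem~\ref{thm:reponseLeibniz}, sketches the shape of the argument: the case where $\gamma$ has no boundary follows from W\"ustholz's analytic subgroup theorem, and the main contribution of Huber--W\"ustholz is to carry out the \emph{d\'evissages g\'eom\'etriques} needed to reduce the relative case (chains with boundary) to the analytic subgroup theorem as well. Your sketch is entirely consistent with this: you correctly identify the reformulation via $1$-motifs, the role of the analytic subgroup theorem as the transcendence input, and the need to translate back and forth between algebraic subgroups and relations deducible from the three rules. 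Your final paragraph also isolates the right difficulty (packaging everything into a single commutative group, and building the dictionary between $\overline{\QQ}$-subgroups and Kontsevich--Zagier relations), which is precisely what the paper calls the d\'evissage. So your proposal matches the approach the paper points to; there is nothing to correct.
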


Ce résultat est démontré dans \cite{huber-wustholz}. Les auteurs en tirent une carac\-térisation des périodes algébriques en une variable, que l'on peut estimer être une réponse complète à la question de Leibniz dans sa lettre à Huygens \cite{wust-leib}. Tous les termes intervenant dans l'énoncé ci-dessous seront expliqués en détail dans les sections suivantes.

\begin{thm}[Huber-W\"ustholz]\label{thm:reponseLeibniz} Soit $X$ une courbe affine lisse définie sur $\overline\QQ$. Pour toute $1$-forme différentielle $\omega$ et toute combinaison linéaire $\gamma$ de chemins $[0, 1] \to X(\CC)$ de classe $\mathcal{C}^\infty$ dont les points initiaux et finaux sont dans $X(\overline\QQ)$, la période $\int_\gamma \omega$ est algébrique si et seulement s'il existe une fonction $f$ et une $1$-forme différentielle~$\omega'$ sur $X$ satisfaisant aux égalités $\omega=df+\omega'$ et $\int_\gamma \omega'=0$.
\end{thm}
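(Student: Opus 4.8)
Je traiterais d'abord le sens direct, qui est élémentaire et ne repose que sur la formule de Stokes. Supposons $\omega = df + \omega'$ avec $\int_\gamma \omega' = 0$, la fonction $f$ étant régulière aux extrémités des chemins. En écrivant $\gamma = \sum_i c_i \gamma_i$ à coefficients $c_i$ algébriques, le théorème fondamental du calcul intégral le long de chaque chemin $\gamma_i$ donne
\[
\int_\gamma \omega = \int_\gamma df + \int_\gamma \omega' = \sum_i c_i\bigl(f(\gamma_i(1)) - f(\gamma_i(0))\bigr).
\]
Comme $f$ est définie sur $\overline\QQ$ et que les points $\gamma_i(0)$ et $\gamma_i(1)$ appartiennent à $X(\overline\QQ)$, chacune des valeurs $f(\gamma_i(\cdot))$ est algébrique, de sorte que $\int_\gamma \omega$ l'est aussi.

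Pour la réciproque, je placerais le problème dans le cadre de l'accouplement de périodes entre la cohomologie de de~Rham relative $\rH^1_{\dR}(X, D)$ et l'homologie $\rH_1(X, D)$, en choisissant un ensemble fini $D \subset X(\overline\QQ)$ qui contienne toutes les extrémités des $\gamma_i$, de sorte que $\gamma$ définisse une classe d'homologie relative. L'application $\overline\QQ$-linéaire $\eta \mapsto \int_\gamma \eta$ envoie le sous-espace $E$ engendré par les formes exactes $df$ dans $\overline\QQ$, précisément par le calcul du sens direct. Notant $K$ son noyau, l'énoncé revient à l'inclusion $\int_\gamma^{-1}(\overline\QQ) \subset E + K$ (l'inclusion inverse étant le sens direct)~: il s'agit de voir qu'aucune forme ne peut produire \og par accident \fg une valeur algébrique contre $\gamma$, hormis celles qui sont forcées par les formes exactes.

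C'est là qu'interviendrait le théorème \ref{thm:HW}. Posant $\alpha = \int_\gamma \omega \in \overline\QQ$ et voyant $\alpha$ lui-même comme une période en une variable (exemple \ref{exmp:algebrique}), la relation $\int_\gamma \omega - \alpha = 0$ est une relation $\overline\QQ$-linéaire entre périodes en une variable~; d'après le théorème \ref{thm:HW} et la remarque suivant la conjecture \ref{conj:KZ}, elle découle de l'additivité, du changement de variables et de la formule de Stokes. Le point est alors de lire cette conséquence au niveau cohomologique~: l'additivité et le changement de variables préservent les classes de de~Rham et d'homologie (fonctorialité et additivité des chaînes), tandis que la formule de Stokes est la seule règle reliant des formes distinctes, et ne contribue que par les termes de bord $\int_\gamma df$. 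Suivre la chaîne de réductions jusqu'à $0$ revient ainsi à exhiber $\omega$, à une forme exacte $df$ absorbant $\alpha$ près, comme une forme $\omega'$ s'accouplant trivialement à $\gamma$.

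L'obstacle principal, et la raison pour laquelle il s'agit d'un théorème et non d'une formalité, réside dans la non-existence de périodes de lacets algébriques \og accidentelles \fg. Lorsque $\gamma$ est un véritable cycle, on a $\int_\gamma df = 0$ pour toute fonction $f$, et l'énoncé se réduit à l'implication \og $\int_\gamma \omega$ algébrique $\Rightarrow \int_\gamma \omega = 0$ \fg, c'est-à-dire à la transcendance des périodes non nulles de $\rH^1$. Cet ingrédient de transcendance est le théorème du sous-groupe analytique de Wüstholz qui sous-tend le théorème \ref{thm:HW}~; une fois celui-ci admis, la difficulté restante est purement comptable~: séparer la contribution de bord (exacte) de la contribution des lacets, et vérifier que les trois règles ne peuvent fabriquer de l'algébricité qu'au travers de la première.
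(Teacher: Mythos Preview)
The paper does not prove this theorem; it cites \cite{huber-wustholz} and, in the paragraph following the statement, sketches only the architecture: the case where $\gamma$ has no boundary is Wüstholz's analytic subgroup theorem, and the contribution of \cite{huber-wustholz} consists in the geometric dévissages reducing the general case to one where that theorem applies directly.

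Your easy direction is correct. For the converse, your route through Theorem~\ref{thm:HW} has a genuine gap. You write that ``l'additivité et le changement de variables préservent les classes de de~Rham et d'homologie'', but this is not so for change of variables: that rule relates a period on $(X, D)$ to one on a \emph{different} pair $(X', D')$ via a morphism (compare the relation~\eqref{eqn:KZformel2} in the formal-periods formulation). A derivation of the identity $\int_\gamma \omega = \alpha$ from the three rules may therefore pass through auxiliary curves, and nothing in your argument explains how to collapse such a derivation back onto a decomposition $\omega = df + \omega'$ living on $X$ itself. The sentence ``suivre la chaîne de réductions revient ainsi à exhiber~$\omega$\ldots'' is exactly the step that is missing, and it is not a formality: it amounts to understanding the structure of the category of one-motives well enough to read off, from an equality of formal periods, a morphism of motives realising it.

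Note also the direction of the logic. In \cite{huber-wustholz} the two theorems are proved together, by the same dévissage followed by the analytic subgroup theorem; taking Theorem~\ref{thm:HW} as a black box and extracting Theorem~\ref{thm:reponseLeibniz} from it is not how the argument runs. Your last paragraph rightly identifies the analytic subgroup theorem as the transcendence input, but the ``purement comptable'' remainder you allude to is precisely the dévissage that the paper singles out as the principal contribution of \cite{huber-wustholz}.
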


Si $\gamma$ n'a pas de bord, l'intégrale $\int_\gamma df$ est nulle par la formule de Stokes; la période $\int_\gamma\omega$ est donc ou bien nulle ou bien transcendante. Ce cas, qui comprend entre autre les théorèmes de Schneider sur la transcendance des intégrales elliptiques et des valeurs de la fonction bêta en des arguments rationnels, avait été démontré par Wüstholz à la fin des années 80 comme corollaire de son théorème du sous-groupe analytique \cite{WusICM}. La principale contribution de \cite{huber-wustholz} est de mener à bien les dévissages géométriques nécessaires pour pouvoir appliquer le théorème du sous-groupe analytique également quand $\gamma$ a un bord.

\begin{exemple} Examinons les implications de ce théorème pour les intégrales des fonctions \emph{rationnelles} en une variable, un cas pour lequel le théorème de Baker sur les combinaisons linéaires de logarithmes (exemple~\ref{exmp:logarithme}) est en fait suffisant \cite{vdP}. Soient $P, Q \in \QQ[x]$ des polynômes sans facteur commun et soit
\begin{displaymath}
Q(x)=q(x-\alpha_1)^{n_1}\cdots(x-\alpha_\ell)^{n_\ell}
\end{displaymath}
la factorisation de $Q$ dans $\CC[x]$. La fraction rationnelle~$\sfrac{P}{Q}$ admet la décomposition en éléments simples
\begin{equation}\label{eqn:elementssimples}
\frac{P}{Q}=T+\sum_{i=1}^\ell \Bigl[\frac{r_i}{x-\alpha_i}+\frac{r_{i, 2}}{(x-\alpha_i)^2}+\cdots+\frac{r_{i, n_i}}{(x-\alpha_i)^{n_i}} \Bigr]
\end{equation}
où $T$ est le quotient de la division euclidienne de $P$ par $Q$ et tous les nombres $r_i, r_{i, 2}, \dots$ sont algébriques (par exemple, $r_i$ est le résidu de~$\sfrac{P}{Q}$ au point $x=\alpha_i$). Comme $T$ et tous les termes faisant intervenir des pôles d'ordre $\geq 2$ sont la dérivée d'une fonction rationnelle, le théorème \ref{thm:reponseLeibniz} est dans ce cas l'énoncé:

\begin{coro} La période $\int_\gamma \sfrac{P(x)}{Q(x)}dx$ est algébrique si et seulement si l'égalité $\sum_{i=1}^{n} r_i\int_\gamma \spfrac{dx}{x-a_i}=0$ est satisfaite.
\end{coro}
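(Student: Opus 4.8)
Le plan est de me ramener, grâce à la décomposition \eqref{eqn:elementssimples}, au cas des pôles simples, puis d'appliquer le théorème \ref{thm:reponseLeibniz}. Soit $X$ la courbe affine lisse sur $\overline\QQ$ obtenue en retirant à la droite affine les pôles $\alpha_1, \dots, \alpha_\ell$, de sorte que $\omega=\sfrac{P}{Q}\,dx$ y soit une $1$-forme différentielle régulière. Je commencerais par isoler la partie exacte de $\omega$: la partie polynomiale $T$ possède une primitive polynomiale et, pour $j \geq 2$, on a l'égalité
\begin{displaymath}
\frac{r_{i, j}}{(x-\alpha_i)^j}\,dx=d\Bigl(\frac{-r_{i, j}}{(j-1)(x-\alpha_i)^{j-1}}\Bigr).
\end{displaymath}
En regroupant toutes ces contributions, j'obtiendrais une fonction rationnelle $f \in \mathcal{O}(X)$ à coefficients algébriques telle que $\omega=df+\omega'$, où $\omega'=\sum_{i=1}^\ell r_i\,\sfrac{dx}{x-\alpha_i}$ ne fait intervenir que des pôles simples; c'est précisément une décomposition du type requis par le théorème \ref{thm:reponseLeibniz}.

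Comme les extrémités des chemins composant $\gamma$ sont dans $X(\overline\QQ)$ et que $f$ est à coefficients algébriques, l'intégrale $\int_\gamma df$ est une combinaison $\overline\QQ$-linéaire de valeurs de $f$ en des points algébriques, donc un nombre algébrique. Par additivité, $\int_\gamma\omega=\int_\gamma df+\int_\gamma\omega'$ est algébrique si et seulement si $\int_\gamma\omega'$ l'est, et l'énoncé du théorème \ref{thm:reponseLeibniz} se lit alors comme l'équivalence entre l'algébricité de $\int_\gamma\omega$ et l'existence d'une décomposition rendant $\int_\gamma\omega'$ nul. L'implication facile est ici celle de droite à gauche: si $\sum_i r_i\int_\gamma\sfrac{dx}{x-\alpha_i}=0$, la formule de Stokes donne $\int_\gamma\omega=\int_\gamma df$, qui est algébrique.

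La principale difficulté sera la réciproque, car il ne suffit pas de savoir qu'il existe \emph{une} décomposition annulant l'intégrale: il faut voir que c'est notre $\omega'$ canonique qui s'intègre à zéro. Pour cela, je remarquerais que le long d'un chemin d'extrémités $a, b \in X(\overline\QQ)$, une détermination de $\int \sfrac{dx}{x-\alpha_i}$ vaut $\log\bigl(\sfrac{b-\alpha_i}{a-\alpha_i}\bigr)+2\pi i\,m$ pour un entier $m$ traduisant l'enroulement autour de $\alpha_i$; comme $b-\alpha_i$ et $a-\alpha_i$ sont des nombres algébriques non nuls et que $2\pi i=2\log(-1)$, la période $\int_\gamma\omega'=\sum_i r_i\int_\gamma\sfrac{dx}{x-\alpha_i}$ est une combinaison linéaire à coefficients algébriques de logarithmes de nombres algébriques non nuls. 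C'est ici qu'intervient le cœur transcendant de l'argument: d'après le théorème de Baker (exemple \ref{exmp:logarithme}) --- qui est, dans ce cas de dimension un, la substance du théorème \ref{thm:reponseLeibniz} ---, un tel nombre est nul ou transcendant. Étant supposée algébrique, la période $\int_\gamma\omega'$ est donc nulle, ce qui achève la démonstration.
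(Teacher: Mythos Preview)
Your proof is correct and follows the paper's approach: the partial fraction decomposition isolates the exact part $df$, reducing everything to the residue form $\omega'=\sum_i r_i\,dx/(x-\alpha_i)$, and the paper simply asserts that the corollary is what Theorem~\ref{thm:reponseLeibniz} becomes in this case. You are more explicit than the paper in flagging that the theorem only furnishes \emph{some} decomposition with vanishing integral, not necessarily the canonical one; your resolution via Baker's theorem is precisely what the paper has in mind when it remarks that Baker ``est en fait suffisant'' for rational integrands.
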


En particulier, lorsque le degré de $P$ est strictement inférieur à celui de $Q$ et que les zéros de $Q$ sont tous distincts, seuls les termes~$\sfrac{r_i}{(x-\alpha_i)}$ apparaissent dans la décomposition \eqref{eqn:elementssimples}, et il s'ensuit que l'intégrale $\int_\gamma \sfrac{P(x)}{Q(x)}dx$ est ou bien zéro ou bien transcendante. Un exemple célèbre est la période
\begin{displaymath}
\int_0^1 \frac{dx}{x^3+1}=\frac{1}{3}\int_0^1 \Bigl(\frac{1}{x+1}-\frac{x-2}{x^2-x+1}\Bigr)dx=\frac{1}{3}\Bigl(\log 2+\frac{\pi}{\sqrt{3}}\Bigr),
\end{displaymath}
dont on ne savait même pas si elle était irrationnelle quand Siegel a publié son livre \cite{siegel} sur les nombres transcendants en 1949.
\end{exemple}

Dans une autre direction, Ayoub~\cite{Ayoub} a récemment démontré une variante \og géométrique\fg de la conjecture de Kontsevich-Zagier, sous la forme précisée du numéro précédent, où les périodes sont remplacées par des \emph{séries de périodes}. Pour l'énoncer, considérons l'anneau
\begin{displaymath}
\mathcal{O}^\dagger(\overline{\DD}^n)=\mathcal{O}(\overline{\DD}^n)\lpr t\rpr
\end{displaymath}
des séries de Laurent formelles
\[
F=\sum_{i \gg-\infty} f_i(z_1, \ldots, z_n) t^i
\]
dont les coefficients sont des fonctions \hbox{$f_i \in \mathcal{O}(\overline{\DD}^n)$,} ainsi que le sous-anneau~$\mathcal{O}^\dagger_{\textup{$\CC$-alg}}(\overline{\DD}^n)$ formé de celles qui sont algébriques sur le corps $\CC(z_1, \dots, z_n, t)$. Posons $\mathcal{O}^\dagger_{\textup{$\CC$-alg}}(\overline{\DD}^\infty)=\bigcup_{n \geq 0} \mathcal{O}^\dagger_{\textup{$\CC$-alg}}(\overline{\DD}^n)$ et considérons l'application~$\CC$\nobreakdash-linéaire
\begin{equation}\label{eqn:applicationAyoub}
\begin{aligned}
\mathcal{O}^\dagger_{\textup{$\CC$-alg}}(\overline{\DD}^\infty) &\longrightarrow \CC\lpr t\rpr \\
\sum_{i \gg -\infty} f_i t^i &\longmapsto \sum_{i \gg -\infty} \biggl(\int_{[0, 1]^\infty} f_i \biggr) t^i.
\end{aligned}
\end{equation}
Si, dans ce qui précède, on remplaçait les nombres complexes par les nombres rationnels, les coefficients $\int_{[0, 1]^\infty} f_i$ seraient des périodes.

\begin{thm}[Ayoub]\label{thm:Ayoub} Le noyau de \eqref{eqn:applicationAyoub} est le $\CC$-sous-espace vectoriel de $\mathcal{O}^\dagger_{\textup{$\CC$-alg}}(\overline{\DD}^\infty)$ engendré par les éléments suivants:
\begin{enumerate}
\item tous les éléments de la forme
\begin{displaymath}
\frac{\partial F}{\partial z_i}-F|_{z_i=1}+F|_{z_i=0},
\end{displaymath}
où $F$ est une fonction dans $\mathcal{O}^\dagger_{\textup{$\CC$-alg}}(\overline{\DD}^\infty)$ et $i \geq 1$ est un entier;
\item tous les éléments de la forme
\begin{displaymath}
\biggl(g-\int_{[0, 1]^\infty} g \biggr) \cdot F,
\end{displaymath}
où $g, F \in \mathcal{O}^\dagger_{\textup{alg}}(\overline{\DD}^\infty)$ sont des fonctions satisfaisant à
\begin{displaymath}
\frac{\partial g}{\partial t}=0, \qquad \frac{\partial g}{\partial z_i} \cdot \frac{\partial F}{\partial z_i}=0 \quad (i \geq 1)
\end{displaymath}
(dit autrement, $g$ ne dépend pas de l'indéterminée $t$ et $F$ et $g$ ne dépendent pas simultanément d'une même variable $z_i$).
\end{enumerate}
\end{thm}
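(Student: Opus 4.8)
The plan is to split the statement into the easy inclusion --- that the two displayed families of generators lie in the kernel --- and the hard inclusion --- that they \emph{generate} the whole kernel.

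First I would verify soundness, working one Laurent coefficient at a time since the map \eqref{eqn:applicationAyoub} is computed coefficient by coefficient in $t$. Writing $F=\sum_i F_i t^i$ with $F_i\in\mathcal{O}(\overline{\DD}^\infty)$, the coefficient of $t^j$ in a generator of the first kind is $\frac{\partial F_j}{\partial z_i}-F_j|_{z_i=1}+F_j|_{z_i=0}$; integrating first in $z_i$ and using $\int_0^1\frac{\partial F_j}{\partial z_i}\,dz_i=F_j|_{z_i=1}-F_j|_{z_i=0}$, the three contributions cancel. For a generator of the second kind $g$ is independent of $t$, so $c:=\int_{[0,1]^\infty}g$ is a constant, and because $g$ and $F$ never depend on a common variable, Fubini factors the integral of $(g-c)F_j$ through $\int(g-c)=c-c=0$. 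So both families sit in the kernel; this step is routine.

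For completeness I would start from the inductive elimination of variables that already settles the holomorphic analogue (the remark following Conjecture~\ref{conj:KZ-Ayoub}): pick a primitive of $F$ in the last variable, subtract the corresponding first-kind generator, and drop a variable. The point at which this fails --- and the reason the theorem is hard --- is that an algebraic function has no algebraic primitive in general, so this move is simply unavailable. Already a one-variable element of the kernel such as $\frac{1}{z_1-2}+\mu\,\frac{1}{z_1-3}$ with $\mu=\log2/\log(2/3)\in\CC$ is not a single first-kind generator, and expressing it forces one both to introduce auxiliary variables and to call on the second family.

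The strategy I would then follow is to reinterpret \eqref{eqn:applicationAyoub} as a de~Rham--Betti period pairing for an explicit ind-variety --- the countable affine space with coordinates $z_i$, the cube $[0,1]^\infty$ as Betti chain, and the algebraic Laurent series as de~Rham classes --- so that ``kernel $=$ generators'' becomes the assertion that the comparison between algebraic de~Rham cohomology and Betti homology of this family, now over the base $\CC\lpr t\rpr$, is an isomorphism whose only relations are those imposed by exterior differentiation (the first family) and by the K\"unneth decomposition (the second family, which records that a $t$-independent algebraic factor may be replaced by its integral). The decisive gain is that one works over a \emph{complex} base, encoded by the variable $t$: there the whole formalism of motives is at one's disposal and the relevant relative period statement can actually be proved, whereas over $\QQ$ it remains out of reach. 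The main obstacle is exactly this last point --- establishing that the relative de~Rham--Betti comparison is an isomorphism with precisely the predicted relations --- and it is where the real content lies; it rests on Ayoub's theory of motivic periods and the computation of the corresponding motivic Galois group over $\CC\lpr t\rpr$ \cite{Ayoub}, the auxiliary variable $t$ and the second family of generators being the devices that bring the problem within reach of that machinery, in the same spirit as the single extra variable that already recovers change of variables from Stokes.
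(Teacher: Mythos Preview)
The paper does not prove this theorem: it is stated as a result of Ayoub, with a reference to \cite{Ayoub}, and no argument is supplied. So there is no ``paper's own proof'' to compare against; your proposal in fact goes further than the paper does.

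Your treatment of the easy inclusion is correct. For the first family, integrating in $z_i$ before the other variables kills each Laurent coefficient; for the second family, the disjoint-variable hypothesis lets Fubini factor the integral of each coefficient through $\int_{[0,1]^\infty}(g-c)=0$. One small point of presentation: you should say explicitly that for each $i$ at most one of $g,F$ depends on $z_i$, so the variable set splits into two disjoint blocks and Fubini applies cleanly.

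For the hard inclusion, what you have written is not a proof but an orientation. You correctly identify why the naive induction on variables breaks (no algebraic primitive), you give a good illustrative kernel element, and your description of the strategy --- recasting the map as a relative de~Rham/Betti comparison over the base $\CC\lpr t\rpr$ and invoking the motivic machinery developed in \cite{Ayoub} --- is in the right spirit. But the sentence ``it rests on Ayoub's theory of motivic periods and the computation of the corresponding motivic Galois group'' is doing all of the work, and that work (several hundred pages across \cite{Ayoub} and its companions) is precisely what the theorem is about. In particular, interpreting the second family of generators as ``K\"unneth relations'' and the first as ``exterior differentiation'' is heuristically apt, but turning that into an honest argument that these generate the full kernel requires constructing the relevant motivic Galois group over $\CC\lpr t\rpr$, identifying its algebra of functions with the target of a suitable period map, and proving the comparison is an isomorphism --- none of which your outline supplies. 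So: your proposal is appropriate as an expository sketch matching the level of the surrounding text, but it should not be read as a self-contained proof of the theorem.
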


\subsection{Vers une théorie de Galois pour les périodes}\label{sec:Galoistheoryheuristics}

Reprenons l'exemple \ref{exmp:algebrique}, où l'on a vu qu'un nombre algébrique est une période à l'aide de son polynôme minimal. Les autres racines de ce polynôme sont appelés les \emph{conjugués} du nombre algébrique; d'après Galois, on peut leur associer un groupe de permutations qui constitue depuis les débuts de la théorie un des outils les plus puissants pour les étudier \cite{Yves4}. Voici une manière d'y penser. 

Soit~$P$ un polynôme de degré $n 
\geq 1$ à coefficients rationnels. Si $P$ est irréductible, l'équation $P(x)=0$ admet exactement $n$ solutions complexes. On voudrait les numéroter $\alpha_1, \ldots, \alpha_n$, mais comme il n'y a pas de moyen naturel de distinguer les unes des autres, cela requiert un choix ; l'ambiguïté dans ce choix est encodée dans le groupe symétrique $\mathfrak{S}_n$ permutant les $\alpha_j$. Certaines relations algébriques entre les racines (par exemple, celles exprimant les coefficients de $P$ comme des fonctions symétriques en les $\alpha_j$) sont préservées par toutes les permutations, mais il se peut que d'autres relations ne le soient pas. Par exemple, les racines du polynôme
\begin{equation}\label{eqn:exmpcycl}
x^4+x^3+x^2+x+1=\frac{x^5-1}{x-1} 
\end{equation} sont les racines primitives cinquièmes de l'unité et, si l'on choisit la numérotation $\alpha_j=\zeta^j$ avec $\zeta=\exp(2\pi i/5)$ pour $1 \leq j \leq 4$, les seules permutations qui respectent les relations $\alpha_j=\alpha^j_1$ sont celles dans le sous-groupe engendré par le $4$-cycle $(1\,2\,4\,3)$, qui envoie $\zeta$ sur $\zeta^2$. 

On est ainsi amené à définir le \emph{groupe de Galois} de $P$ comme le sous-groupe $G_P \subset \mathfrak{S}_n$ formé des permutations $g$ préservant toutes les relations algébriques entre les racines, c'est-à-dire satisfaisant~à 
\[
Q(g(\alpha_1), \ldots, g(\alpha_n))=0
\] quel que soit le polynôme~$Q \in \QQ[x_1, \ldots, x_n]$ avec $Q(\alpha_1, \ldots, \alpha_n)=0$. C'est ce qu'il faut pour que l'action de $G_P$ sur le sous-anneau 
\begin{equation}\label{eqn:sousanneaueng}
\QQ[\alpha_1, \dots, \alpha_n]=\{Q(\alpha_1, \dots, \alpha_n) \  | \ Q \in \QQ[x_1, \dots, x_n] \} \subset \CC
\end{equation} soit bien définie, c'est-à-dire indépendante de la manière de représenter ses éléments comme des polynômes en les $\alpha_j$. 

Que le groupe de Galois soit grand, par exemple $\mathfrak{S}_n$ en entier, veut alors dire qu'il y a peu de relations entre les racines, par exemple seulement celles \og évidentes\fg ; c'est le cas pour la plupart des polynômes $P$. Que le groupe de Galois soit petit, par exemple le groupe cyclique d'ordre $4$ par opposition à $\mathfrak{S}_4$ pour le polynôme \eqref{eqn:exmpcycl}, veut dire qu'il y a beaucoup de relations entre les racines. On peut néanmoins démontrer que $G_P$ contient toujours assez d'éléments pour permuter transitivement les racines et pour qu'une expression polynomiale en les $\alpha_j$ soit invariante par tout le groupe si et seulement s'il s'agit d'un nombre rationnel. Ces intuitions sont quantifiées par une égalité entre la dimension de \eqref{eqn:sousanneaueng} et l'ordre du groupe de Galois: 
\begin{equation}\label{eqn:dimension-ordre}
\dim_\QQ \QQ[\alpha_1, \dots, \alpha_n]=|G_P|. 
\end{equation} 

Deux questions, à première vue indépendantes, se posent lorsque l'on essaie d'étendre ces idées au-delà des nombres algébriques: 
\begin{itemize}
\item quels seraient les conjugués d'un nombre transcendant\footnote{Galois a lui-même envisagé ce genre des questions à la fin de sa lettre-testament: \og Mes principales méditations depuis quelque temps étaient dirigées sur l'application à l'analyse transcendante de la théorie de l'ambiguïté. Il s'agissait de voir a priori dans une relation entre quantités ou fonctions transcendantes quels échanges on pouvait faire, quelles quantités on pouvait substituer aux quantités données sans que la relation pût cesser d'avoir lieu. Cela fait reconnaître tout de suite l'impossibilité de beaucoup d'expressions que l'on pouvait chercher. \fg }? 
\item peut-on associer des groupes de Galois aux polynômes en plusieurs variables?  
\end{itemize} On verra que le rêve de Grothendieck d'une théorie des motifs fournit de nos jours une réponse inconditionnelle à la seconde question; supposant connue la conjecture de Kontsevich-Zagier, elle permet de donner un sens aux conjugués des périodes. 

Comme l'explique André \cite{Yves2}, il semblerait que l'on dispose d'heuristiques pour deviner les conjugués de certains nombres transcendants. Par exemple, bien que $\pi$ ne soit racine d'aucun polynôme non nul à coefficients rationnels, il l'est du \og polynôme\fg de degré infini
\begin{displaymath}
\prod_{n \in \ZZ \setminus \{0\}} \left(1-\frac{x}{n\pi} \right)=\frac{\sin x}{x}=1-\frac{x^2}{6}+\frac{x^4}{120}+\cdots \in \QQ\lcr x\rcr. 
\end{displaymath}
Cela suggère que les conjugués de $\pi$ devraient être ses multiples entiers non nuls, ou plutôt rationnels si l'on vise à une action transitive du groupe de Galois, qui serait donc le groupe multiplicatif~$\QQ^\times$. Comme dire que $\pi$ est transcendant revient à dire que l'espace vectoriel $\QQ[\pi]$ est de dimension infinie sur $\QQ$, le fait de trouver un groupe de Galois infini laisse la porte ouverte à une généralisation de~\eqref{eqn:dimension-ordre}. 

Cette intuition est pourtant trompeuse en général: d'après un théorème d'Hurwitz \cite{hurwitz}, tout nombre complexe est zéro d'une infinité non dénombrable de séries entières à coefficients rationnels, et il n'y a pas moyen d'en choisir une \og minimale\fg. Pire: on peut toujours trouver de telles séries qui ne s'annulent que sur le nombre en question et sur son conjugué complexe, et des séries qui s'annulent également sur n'importe quel autre nombre donné. 

Une meilleure heuristique, cette fois-ci pour les logarithmes, provient de la \emph{monodromie}, c'est-à-dire de l'ambiguïté dans le choix d'un chemin d'intégration dans la représentation $\log(q)=\int_1^q dx/x$. Si, à la place de la droite allant de $1$ à $q$, on choisit un chemin qui tourne autour de $0$, le résultat change par un multiple de $2\pi i$. Cela suggère que les conjugués de $\log(q)$ devraient au moins contenir $\log(q)+(2\pi i)\QQ$. 

Quant aux polynômes en plusieurs variables, on se heurte d'abord à la difficulté que les solutions dans $\CC^n$ des équations qu'ils définissent ne forment plus un ensemble fini, ni même discret, mais continu. Par exemple, l'équation $xy-1=0$ admet pour solutions tous les couples~$(x, y) \in \CC^2$ avec $x$ non nul et~$y$ son inverse; on a donc affaire au plan complexe épointé~$\CC^\times$ plongé dans $\CC^2$ comme une hyperbole, un des premiers exemples de \emph{variété algébrique}. Une théorie de Galois en plusieurs variables doit tenir compte de la géométrie de ces espaces.

Suivant Grothendieck, le rôle du groupe de permutations de l'ensemble fini de racines sera joué par des groupes de matrices agissant sur des $\QQ$-espaces vectoriels de dimension finie, à savoir les groupes d'\emph{homologie singulière} $\rH_p^{\Betti}(X)$ des variétés algébriques $X$. Ces espaces vectoriels sont des invariants de nature topologique, dont les éléments sont représentés par des applications continues $\sigma \colon \Delta^p \to X$ que l'on appelle des cycles. Par exemple, le premier groupe d'homologie singulière de $\CC^\times$ est le~$\QQ$\nobreakdash-espace vectoriel de dimension $1$ engendré par le lacet $\gamma$ qui tourne une fois autour de l'origine dans le sens anti-horaire. Dans ce cas, il y a aussi une façon algébrique de détecter le point manquant : le fait que la forme différentielle $dx/x$ n'admet pas de primitive algébrique, c'est-à-dire ne soit pas la dérivée d'un polynôme en~$x$ et~$1/x$. En général, les obstructions de cette nature donnent lieu à d'autres~$\QQ$\nobreakdash-espaces vectoriels de dimension finie: les groupes de \emph{cohomologie de de Rham}~$\rH^p_{\dR}(X)$, dont les éléments sont représentés par des formes différentielles algébriques~$\omega$. 

Grothendieck \cite{GrodR} a relié ces deux espaces en démontrant que l'intégration fournit un accouplement parfait 
\begin{equation}\label{eqn:accouplGroth}
\begin{aligned}
\rH^p_{\dR}(X) \times \rH_p^\Betti(X) &\longrightarrow \CC \\
([\omega], [\sigma]) &\longmapsto \int_{\sigma} \omega, 
\end{aligned} 
\end{equation} c'est-à-dire que $\rH^p_{\dR}(X)$ et $\rH_p^\Betti(X)$ ont même dimension et que, quelles que soient les bases $\{[\omega_i]\}$ et~$\{[\sigma_j]\}$ de ces espaces, la matrice 
\[
\biggl(\int_{\sigma_j} \omega_i \biggr)
\] est inversible. Comme on peut démontrer que tous les coefficients de ces matrices sont des périodes, il est coutume d'appeler \eqref{eqn:accouplGroth} l'\emph{accouplement de périodes}. Par exemple, pour $X=\CC^\times$, il s'agit de la matrice de taille $1$ ayant pour coefficient $\int_\gamma dx/x=2\pi i$. La donnée des espaces vectoriels $\rH^p_{\dR}(X)$ et $\rH_p^\Betti(X)$ et de leur accouplement de périodes est une première approximation de la notion de \emph{motif}. 

Réciproquement, le point de vue plus sophistiqué sur les périodes évoqué à plusieurs reprises consiste à réinterpréter le domaine d'intégration dans la définition \ref{defn:periodeKZ} comme la classe d'un cycle en homologie singulière sur une certaine variété algébrique; l'intégrande, comme la classe d'une forme différentielle en cohomologie de de~Rham sur cette même variété; et~le processus d'intégration, comme l'accouplement entre ces deux espaces. On verra que c'est toujours possible, à condition d'élargir un peu le cadre ci-dessus. Partant d'une seule période, on obtient alors d'autres périodes qu'il aurait été impossible de deviner en gardant le point de vue élémentaire: tous les coefficients de toutes les matrices de l'accouplement par rapport à tous les choix possibles de bases. C'est parmi celles-ci que se trouveront les conjugués que l'on souhaite définir pour la période de départ. 

Si l'ambiguïté dans le choix d'une numérotation des racines faisait naturellement surgir le groupe symétrique, celle dans le choix de bases suggère que l'on considère des groupes d'automorphismes linéaires, disons du $\QQ$\nobreakdash-espace vectoriel $\rH_p^\Betti(X)$. On voudrait faire agir un tel automorphisme $g$ sur les périodes par le biais de la formule
\begin{equation}\label{eqn:actionGalois}
g\cdot \int_\sigma \omega=\int_{g(\sigma)} \omega,  
\end{equation} mais pour que cette action soit bien définie il faudrait savoir que le nombre de droite ne dépend pas de la représentation intégrale de la période $\int_\sigma \omega$. D'après la conjecture de Kontsevich-Zagier, toutes les relations algébriques entre périodes devraient découler de l'additivité, du changement de variables et de la formule de Stokes. 

Ce sont toutes les trois des relations d'\og origine géométrique\fg. Par exemple, on peut penser à un changement de variables comme à un morphisme $\varphi \colon X \to Y$ entre variétés algébriques. Un tel morphisme induit, par composition et tiré en arrière, des applications linéaires 
\[
\varphi_\ast \colon \rH_p^\Betti(X) \to \rH_p^\Betti(Y) \quad \text{et}\quad \varphi^\ast \colon \rH_{\dR}^p(Y) \to \rH^p_{\dR}(X)
\]en homologie singulière et en cohomologie de de Rham, et la relation de changement de variables exprime leur compatibilité 
\[
\int_{\varphi_\ast(\sigma)} \omega=\int_\sigma \varphi^\ast(\omega) 
\] avec l'accouplement de périodes. On définit alors, simultanément pour toutes les variétés algébriques, des \textit{groupes de Galois motiviques}
\[
G_p^X \subset \GL(\rH_p^{\Betti}(X))
\] comme les sous-groupes des automorphismes linéaires compatibles aux contraintes imposées par l'additivité, le changement de variables et la formule de Stokes, par exemple satisfaisant à 
\[
\varphi_\ast \circ g=h \circ \varphi_\ast
\] pour tout morphisme $\varphi\colon X \to Y$ et pour tous $g \in G_p^X$ et $h \in G_p^Y$. Bien que nous ayons été guidés par la conjecture de Kontsevich-Zagier, cette définition du groupe de Galois motivique est inconditionnelle. 

Comme les groupes de Galois motiviques sont en général infinis, leur taille doit être interprétée comme une dimension. Celle-ci est la différence entre la dimension du groupe ambiant $\GL(\rH_p^{\Betti}(X))$ et le nombre d'équations nécessaires pour le définir. Qu'un groupe de Galois motivique soit de grande dimension suggère qu'il y a peu de relations entre les périodes; qu'il soit de petite dimension, qu'il y en a beaucoup. Pour rendre cette idée plus précise, fixons une variété~$X$ et un entier $p$ et considérons le sous-anneau  
\begin{equation}\label{eqn:anneauperiodes}
\QQ\biggr[\Bigl(\int_{\sigma_j} \omega_i \Bigr)_{1 \leq 1, j \leq n}\biggl] \subset \CC
\end{equation} engendré par les coefficients de la matrices de l'accouplement \eqref{eqn:accouplGroth}, où $n$ désigne la dimension de $\rH_p^{\Betti}(X)$ ; contrairement aux périodes elles-mêmes, ce sous-anneau ne dépend pas du choix des bases. Plutôt qu'à sa dimension sur~$\QQ$, en général infinie, on s'intéresse à son \emph{degré de transcendance}. Il s'agit d'un entier compris entre $0$ et $n^2$, qui est égal à~$r$ si l'on peut trouver~$r$ éléments algébriquement indépendants parmi les~$\int_{\sigma_j} \omega_i$ mais n'importe quels~$r+1$ éléments sont reliés par une équation algébrique à coefficients dans~$\QQ$. L'analogue de \eqref{eqn:dimension-ordre} est alors une conjecture dont l'origine remonte à une note de bas de page dans l'article fondateur~\cite{GrodR} de Grothendieck.  

\begin{conjecture}[de périodes de Grothendieck] Le degré de transcendance du sous-anneau \eqref{eqn:anneauperiodes} de $\CC$ coïncide avec la dimension du groupe de Galois motivique $G_p^X$. 
\end{conjecture}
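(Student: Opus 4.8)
Il s'agit ici de la conjecture des périodes de Grothendieck, l'un des problèmes ouverts les plus profonds du sujet : aucune démonstration inconditionnelle n'en est connue, et le plan esquissé ci-dessous ne fait au fond que ramener l'énoncé à sa véritable difficulté. Le cadre naturel est celui de la théorie tannakienne. On considère la catégorie tannakienne engendrée par le motif $\rH_p(X)$, munie de ses deux foncteurs fibres --- la réalisation de Betti $\omega_\Betti$ et la réalisation de de~Rham $\omega_\dR$ --- à valeurs dans les $\QQ$-espaces vectoriels de dimension finie. Le groupe de Galois motivique $G=G_p^X$ s'identifie au groupe des automorphismes tensoriels de $\omega_\Betti$, et l'on dispose du \emph{torseur des périodes} $P=\underline{\mathrm{Isom}}^{\otimes}(\omega_\dR, \omega_\Betti)$, un torseur sous $G$ défini sur $\QQ$, dont la dimension vaut précisément $\dim G$.

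L'ingrédient décisif est le \emph{point de périodes}. L'accouplement d'intégration \eqref{eqn:accouplGroth} fournit, après extension des scalaires à $\CC$, un isomorphisme tensoriel entre les réalisations de de~Rham et de Betti, c'est-à-dire un point complexe $\varpi \in P(\CC)$ dont les coordonnées dans les bases $\{[\omega_i]\}$ et $\{[\sigma_j]\}$ sont exactement les périodes $\int_{\sigma_j}\omega_i$. Le sous-anneau \eqref{eqn:anneauperiodes} est ainsi l'anneau des coordonnées de $\varpi$, et un fait standard de géométrie algébrique identifie son degré de transcendance sur $\QQ$ à la dimension de l'adhérence de Zariski $\overline{\{\varpi\}}$ de ce point dans $P$. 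La conjecture revient donc à démontrer que $\varpi$ est un point \emph{générique} de $P$, c'est-à-dire que son adhérence de Zariski remplit $P$ tout entier.

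Une moitié de l'énoncé, la borne supérieure $\mathrm{trdeg} \leq \dim G$, est alors formelle : l'adhérence $\overline{\{\varpi\}}$ étant un sous-schéma fermé de $P$, sa dimension est au plus $\dim P=\dim G$. Cela traduit le fait que toutes les relations d'origine géométrique --- additivité, changement de variables et formule de Stokes, telles qu'elles sont encodées dans la structure tannakienne --- sont automatiquement vérifiées par les périodes et confinent $\varpi$ à l'intérieur de $P$.

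L'obstacle principal, qui est tout le contenu de la conjecture, est la borne inférieure $\mathrm{trdeg} \geq \dim G$ : il faudrait prouver que $\varpi$ n'appartient à aucun sous-$\QQ$-schéma strict de $P$, autrement dit qu'il n'existe \emph{aucune} relation algébrique entre les périodes au-delà de celles d'origine géométrique. C'est ici que je m'attends à buter, car établir l'absence d'une telle relation inattendue est précisément ce qu'affirme la conjecture, et aucune méthode générale n'y parvient. Les seuls cas atteignables relèvent à ce jour de la théorie de la transcendance : le théorème du sous-groupe analytique de W\"ustholz permet de traiter les périodes en une variable (théorème~\ref{thm:HW}) et quelques familles isolées au-delà, mais ces techniques restent muettes dans le cas général. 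Une démonstration complète réclamerait vraisemblablement un principe radicalement nouveau reliant la généricité du point de périodes à la structure du groupe de Galois motivique.
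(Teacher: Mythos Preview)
The statement is a \emph{conjecture}, not a theorem: the paper does not prove it, and you rightly identify from the outset that no unconditional proof is known. Your proposal is therefore not a proof but an exposition of the conjectural framework, and as such it is accurate and well-aligned with the paper's own later discussion in \S\ref{sec:periodesformelles}. There the torseur des périodes is introduced, the comparison isomorphism is recognised as a distinguished complex point \eqref{eqn:comp-pointcomplexe-torseur} of that torsor, and the conjecture is reformulated as the genericity of this point---exactly the picture you sketch. The paper also makes explicit, as you do implicitly, that this reformulation renders the conjecture equivalent to that of Kontsevich--Zagier (Conjecture~\ref{conj:KZbis}).

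One nuance worth flagging: your claim that the upper bound $\mathrm{trdeg} \leq \dim G$ is ``formal'' presupposes that the tannakian category of motives and hence the motivic Galois group are unconditionally well-defined. This is indeed the case if one works with Nori motives or Ayoub's construction, and the paper alludes to this (``cette définition du groupe de Galois motivique est inconditionnelle''), but it is a nontrivial input rather than a triviality. With that caveat, your separation of the problem into an accessible upper bound and an inaccessible lower bound is the correct diagnosis.
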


Avec les définitions données, cette conjecture s'avère être équivalente à celle de Kontsevich-Zagier. La supposant vraie, l'action \eqref{eqn:actionGalois} du groupe de Galois sur les périodes est bien définie. À la différence des nombres algébriques, cette action n'est plus transitive en général. Les \emph{conjugués} d'une période~$\int_\sigma \omega$ sont alors les éléments de son orbite sous l'action du groupe de Galois motivique, c'est-à-dire tous les périodes de la forme $g\cdot\int_\sigma \omega$. On verra vers la fin de ces notes (exemples~\ref{exmp:Galoispi} et~\ref{eqn:Galoislogs}) que ce point de vue est compatible avec les premières tentatives de réponse pour les conjugués de $\pi$ et de $\log(q)$. 

\section{Homologie singulière}\label{sec:4}

Le but de cette section et les suivantes est de rendre précis le point de vue sur les périodes évoqué ci-dessus. Cette approche plus sophistiquée, qui est à l'origine de tous les \hbox{progrès} récents vers la conjecture de Kontsevich-Zagier, nous permettra notamment d'associer de manière rigoureuse à une période d'autres périodes parmi lesquelles se trouvent les analogues des conjugués d'un nombre algébrique.

\subsection{Définition et premiers exemples}\label{sec:homsing} Soit $p \geq 0$ un entier. Le \textit{$p$-simplexe} $\Delta^p$ est l'ensemble
\begin{displaymath}
\Delta^p=\{(x_0, \dots, x_p) \in \RR^{p+1} \mid x_i \geq 0, \, x_0+\dots+x_p=1 \}
\end{displaymath}
muni de la topologie de sous-espace de $\RR^{p+1}$. Comme le montre la figure \ref{fig:Delta}, le $0$\nobreakdash-simplexe $\Delta^0$ est réduit à un point, le $1$-simplexe $\Delta^1$ est homéomorphe à l'intervalle~$[0, 1]$ par l'application \hbox{$t \mapsto (1-t, t)$}, le 2\nobreakdash-simplexe $\Delta^2$ est un triangle dans le plan $x_0+x_1+x_2=1$, etc.

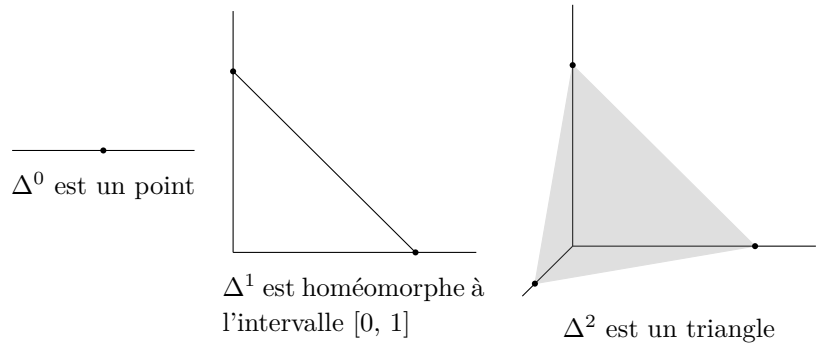
\begin{figure}[ht]
\centering
\begin{subfigure}{.25\textwidth}
\centering
\begin{tikzpicture}[scale=.8]
\def\laxis{1.5}
\def\ltriangle{3}
\def\r{.05};
\def\sqrtoftwo{1.4142}
\draw (-\laxis,0) -- (\laxis,0);
\fill (0,0) circle (\r);
\end{tikzpicture}
\caption*{$\Delta^0$ est un point}
\end{subfigure}\
\begin{subfigure}{.3\textwidth}
\centering
\begin{tikzpicture}[scale=.8]
\def\laxis{4}
\def\ltriangle{3}
\def\r{.05};
\def\sqrtoftwo{1.4142}
\draw (0,0) -- (\laxis,0);
\draw (0,0) -- (0,\laxis);
\draw (\ltriangle,0) -- (0,\ltriangle);
\fill (\ltriangle,0) circle (\r);
\fill (0,\ltriangle) circle (\r);
\end{tikzpicture}
\caption*{$\Delta^1$ est homéomorphe à l'intervalle [0, 1]}
\end{subfigure}\
\begin{subfigure}{.4\textwidth}
\centering
\begin{tikzpicture}[scale=.8]
\def\laxis{4}
\def\ltriangle{3}
\def\sqrtoftwo{1.4142}
\def\r{.05};
\draw (0,0) -- (\laxis,0);
\draw (0,0) -- (0,\laxis);
\draw (0,0) -- (-{\laxis*(2*\sqrtoftwo/2-1)/2}, -{\laxis*(2*\sqrtoftwo/2-1)/2});
\filldraw [opacity=.25,gray] (\ltriangle,0) -- (0,\ltriangle) --
(-{\ltriangle*(2*\sqrtoftwo/2-1)/2},-{\ltriangle*(2*\sqrtoftwo/2-1)/2}) -- cycle;
\fill (\ltriangle,0) circle (\r);
\fill (0,\ltriangle) circle (\r);
\fill (-{\ltriangle*(2*\sqrtoftwo/2-1)/2},-{\ltriangle*(2*\sqrtoftwo/2-1)/2}) circle (\r);
\end{tikzpicture}
\caption*{$\Delta^2$ est un triangle}
\end{subfigure}
\caption{Premiers exemples de $p$-simplexes}
\label{fig:Delta}
\end{figure}

Lorsque $p$ varie, les $p$-simplexes sont reliés par des applications \textit{face}~$\delta^i_p \colon \Delta^p \to \Delta^{p+1}$, définies pour $i=0,\dots, p+1$ par la formule
\begin{displaymath}
\delta^i_p(x_0, \dots, x_p)=(x_0, \dots, x_{i-1}, 0, x_{i}, \dots, x_p).
\end{displaymath}
Les faces encodent les différentes manières de plonger un $p$-simplexe dans le bord d'un $(p+1)$-simplexe. Par exemple, on peut voir le point~$\Delta^0$ comme l'une des deux extrémités du segment~$\Delta^1$ et celui-ci, à son tour, comme l'un des trois côtés du triangle~$\Delta^2$.

Soit $M$ un espace topologique. Un \textit{$p$-simplexe singulier} dans~$M$ est une application continue $\sigma \colon \Delta^p \to M$ et une \textit{$p$-chaîne singulière} dans~$M$ est une combinaison linéaire à coefficients entiers d'un nombre fini de $p$-simplexes singuliers. Comme la seule contrainte imposée aux applications $\sigma$ est d'être continues, leurs images dans $M$ peuvent par exemple avoir des auto\nobreakdash-intersections et sont en général loin d'être lisses, d'où le nom \og singulier\fg (voir figure \ref{fig:raisonsing}). Le groupe des $p$\nobreakdash-chaînes singulières dans $M$ est noté
\begin{displaymath}
C_p(M)=\bigoplus_{\sigma\hspace{-.3mm}\colon\hspace{-.3mm}\Delta^p \to M} \ZZ \, \sigma.
\end{displaymath}
Par exemple, $C_0(M)$ et $C_1(M)$ sont les groupes abéliens libres engendrés, respectivement, par les points de~$M$ et par les chemins joignant des points, distincts ou pas, de $M$.

\begin{figure}[ht]
\begin{center}
$\begin{array}{c}\includegraphics[scale=.6]{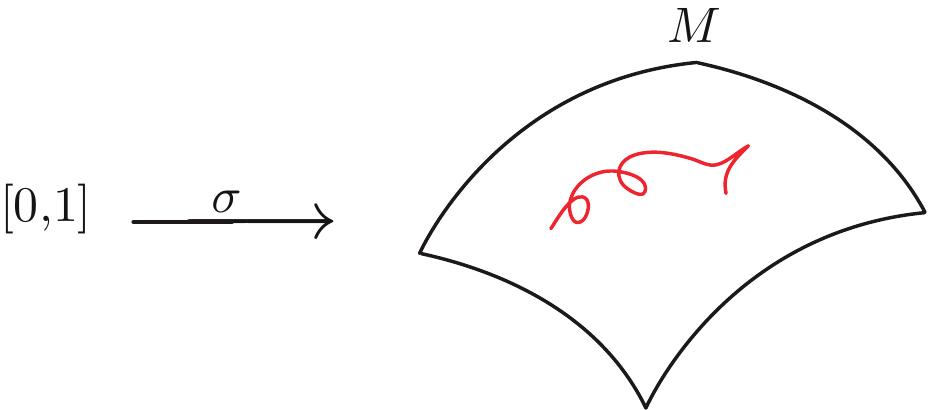}\end{array}$
\end{center}
\caption{Un $1$-simplexe singulier}
\label{fig:raisonsing}
\end{figure}

En s'appuyant sur les faces, on définit pour chaque entier $p \geq 1$ l'application \textit{bord} comme l'homomorphisme
\begin{align}\label{eqn:bord}
\partial_p \colon C_p(M) &\longrightarrow C_{p-1}(M) \\
\sigma &\longmapsto \sum_{i=0}^p (-1)^i (\sigma \circ \delta^i_{p-1}).
\end{align}
Ces applications sont bien définies car la composition
\[
\sigma \circ \delta^i_{p-1} \colon \Delta^{p-1} \to \Delta^p \to M,
\]
dont l'effet est de restreindre $\sigma$ au sous-espace $\{x_i=0\}$ de $\Delta^p$, est un $(p-1)$-simplexe singulier dans $M$. Par exemple, le bord d'un chemin~$\sigma \colon [0, 1] \to M$ est la différence $\sigma(1)-\sigma(0)$ entre son point final et son point initial; le bord d'un $2$\nobreakdash-simplexe singulier~$\sigma \colon \Delta^2 \to M$ est la somme alternée des chemins obtenus en restreignant~$\sigma$ à chacun des trois côtés du triangle, et ainsi de suite.

Grâce aux signes alternés dans la définition du bord \eqref{eqn:bord}, on vérifie par un calcul direct l'égalité
\begin{displaymath}
\partial_{p} \circ \partial_{p+1}=0
\end{displaymath}
pour tout entier $p \geq 0$. Par exemple, en calculant deux fois le bord d'un~$2$\nobreakdash-simplexe singulier~$\sigma \colon \Delta^2 \to M$, on trouve:
\begin{align*}
\partial_1(\partial_2 \sigma)&=\partial_1(\sigma \circ \delta_1^0-\sigma \circ \delta_1^1+\sigma \circ \delta_1^2) \\
&=\sigma \circ \delta_1^0 \circ \delta_0^0-\sigma \circ \delta_1^0 \circ \delta_0^1-\sigma \circ \delta_1^1\circ \delta_0^0 \\
&\hspace{10mm}+\sigma \circ \delta_1^1\circ \delta_0^1+\sigma \circ \delta_1^2\circ \delta_0^0-\sigma \circ \delta_1^2\circ \delta_0^1\\
&=\sigma(0, 0, 1)-\sigma(0, 1, 0)-\sigma(0, 0, 1) \\
&\hspace{10mm}+\sigma(1, 0, 0)+\sigma(0, 1, 0)-\sigma(1, 0, 0) \\
&=0.
\end{align*}
Il s'ensuit que l'image de l'application $\partial_{p+1}$, qui est un sous-groupe du groupe abélien $C_p(M)$, est contenue dans le noyau de l'application~$\partial_p$; on peut donc prendre le quotient. Pour avoir des notations uniformes dans la suite, il est utile de poser $C_{p}(M)=0$ pour tout~$p < 0$ et~\hbox{$\partial_p=0$ pour tout $p \leq 0$.} La suite des $(C_p(M), \partial_p)$ est appelée le \emph{complexe\footnote{Plus généralement, un \emph{complexe (homologique) de groupes abéliens} est une suite~$(A_p, \partial_p)_{p \in \ZZ}$ de groupes abéliens $A_p$ et d'homomorphismes \hbox{$\partial_p \colon A_p \to A_{p-1}$} tels que la composition~$\partial_p \circ \partial_{p+1}$ est nulle pour tout entier $p$.} des chaînes singulières} de $M$.

\begin{definition}\label{def:homologsing} Le groupe d'\textit{homologie singulière} en degré $p$ de l'espace topologique $M$ est le quotient
\begin{displaymath}
\rH_p(M, \ZZ)=\ker(\partial_p) \slash \mathrm{im}(\partial_{p+1}).
\end{displaymath}
\end{definition}

Les $p$-chaînes singulières dans le noyau de $\partial_p$ s'appellent des \textit{cycles} et les éléments dans l'image de $\partial_{p+1}$ des \textit{bords}; avec cette terminologie, l'homologie singulière est le groupe des cycles modulo les bords. On notera $[\sigma]$ la classe d'un cycle $\sigma$ dans le quotient $\rH_p(M, \ZZ)$. Par exemple, un chemin \hbox{$\sigma \colon [0, 1] \to M$} définit un cycle si et seulement si son point initial et son point final coïncident. Lorsqu'il n'y a pas d'ambiguïté, on omettra le sous-indice $p$ dans l'application bord.

\begin{exemple}[homologie en degré $0$]\label{exm:degre0} Calculons l'homologie singulière en degré $0$ d'un espace topologique $M$, c'est-à-dire le quotient \hbox{$C_0(M)\slash \mathrm{im}(\partial_1)$}. Le groupe $C_0(M)$ est engendré par les points de~$M$, et deux points $P, Q \in M$ représentent la même classe dans le quotient si et seulement si $P-Q$ est le bord d'une $1$-chaîne singulière. C'est le cas si et seulement si $P$ et $Q$ appartiennent à la même composante connexe par arcs, d'où un isomorphisme
\begin{displaymath}
\rH_0(M, \ZZ)\simeq \bigoplus_{\pi_0(M)} \ZZ,
\end{displaymath}
où~$\pi_0(M)$ désigne l'ensemble de ces composantes. En particulier, si~$M$ est connexe par arcs, le groupe $\rH_0(M, \ZZ)$ est isomorphe à $\ZZ$.
\end{exemple}

\begin{exemple}[homologie d'un point]\label{exp:point} Soit $M$ l'espace topologique réduit à un point. L'exemple~\ref{exm:degre0} donne un isomorphisme $\rH_0(M, \ZZ)\simeq \ZZ$. Le groupe~$C_p(M)$ est lui-même isomorphe à $\ZZ$ pour tout $p \geq 0$, car il n'y a qu'un seul $p$\nobreakdash-simplexe singulier $\sigma_p\colon\Delta^p \to M$, à savoir l'application constante de valeur le seul point de $M$. Pour $p \geq 1$, les applications bord sont donc déterminées par
\[
\partial_p(\sigma_p)=\sum_{i=0}^{p} (-1)^i \sigma_{p-1}=\begin{cases} \sigma_{p-1} & \text{si $p$ est pair,}\\ 0 & \text{si $p$ est impair.} \end{cases}
\]
Il vient que le noyau de $\partial_p$ est ou bien zéro ou bien égal à l'image de~$\partial_{p+1}$, et dans le deux cas $\rH_p(M, \ZZ)=\ker(\partial_p) \slash \mathrm{im}(\partial_{p+1})$ s'annule.
\end{exemple}

Soient $M$ et $M'$ des espaces topologiques et $f \colon M \to M'$ une application continue entre eux. En associant à chaque $p$\nobreakdash-simplexe~$\sigma$ dans~$M$ le~$p$\nobreakdash-simplexe $f \circ \sigma$ dans $M'$, on obtient un homomorphisme~$f_\ast \colon C_p(M) \to C_p(M')$. On vérifie sans peine que $f_\ast$ commute aux applications bord (c'est-à-dire que l'égalité $\partial_p \circ f_\ast=f_\ast \circ \partial_p$ est vraie pour tout $p$) et induit ainsi un homomorphisme entre les groupes d'homologie singulière, encore noté
\[
f_\ast \colon \rH_p(M, \ZZ) \longrightarrow \rH_p(M', \ZZ).
\]
Par construction, l'application identité sur $M$ induit l'identité en homologie et ces homomorphismes sont compatibles avec la composition au sens suivant: si $f \colon M \to M'$ et $g\colon M' \to M''$ sont des applications continues, alors l'égalité $(g \circ f)_\ast=g_\ast \circ f_\ast$ est vraie. Cela se résume en disant que~$\rH_p(-, \ZZ)$ est un \emph{foncteur} de la catégorie des espaces topo\-logiques vers la catégorie des groupes abéliens ou, de manière plus informelle, que l'homologie singulière est fonctorielle par rapport aux applications continues entre espaces topologiques.

On se placera plus tard dans le cadre où l'espace topologique $M$ est muni d'une structure de variété différentielle et on voudra intégrer des formes différentielles sur des chaînes singulières. Il ne suffira pas alors de travailler avec des applications continues \hbox{$\sigma \colon \Delta^p \to M$,} comme on l'a fait dans ce numéro, mais il faudra imposer des conditions de lissité pour que les intégrales existent, par exemple que~$\sigma$ s'étende en une fonction de classe $\mathcal{C}^\infty$ sur un voisinage ouvert de $\Delta^p$ dans~$\RR^{p+1}$. On peut démontrer que tout élément de $\rH_p(M, \ZZ)$ admet un tel représentant \cite[Th.\,18.13]{Lee}.

Par ailleurs, en remplaçant $\ZZ$ par $\QQ$ dans tout ce qui précède on aboutit à l'homologie singulière à coefficients rationnels $\rH_p(M, \QQ)$, qui est un $\QQ$-espace vectoriel. Pour certains propos tels que construire des produits (section \ref{sec:Legendre}), c'est son dual qui a des meilleures propriétés, d'où l'intérêt de la définition suivante.

\begin{defi} Soit $M$ un espace topologique. La \emph{cohomologie singulière} à coefficients rationnels de~$M$ en degré $p$ est le dual linéaire de l'homologie singulière, c'est-à-dire le $\QQ$-espace vectoriel
\[
\rH^p(M, \QQ)=\Hom(\rH_p(M, \QQ), \QQ)
\]
formé des applications linéaires $\rH_p(M, \QQ) \to \QQ$.
\end{defi}

Écrire le degré en exposant, plutôt qu'en indice, est une façon d'indiquer que la cohomologie singulière est \emph{contravariante}, c'est-à-dire qu'une application continue $f\colon M\to M'$ entre espaces topologiques induit pour tout $p$ une application linéaire
\[
f^\ast \colon \rH^p(M', \QQ)\longrightarrow \rH^p(M, \QQ),
\]
à savoir le dual de l'application $f_\ast$ induite en homologie. On dispose également d'une notion de cohomologie singulière à coefficients entiers, mais pour la définir il faut passer par le dual du complexe de chaînes singulières au lieu de simplement celui de l'homologie.

\subsection{Invariance par homotopie et suite de Mayer-Vietoris}\label{sec:homotopyMayerVietoris}

Pour aller plus loin dans les exemples, il est fort utile de connaître deux propriétés de l'homologie singulière, l'invariance par homotopie et la suite exacte de Mayer-Vietoris, qui permettent de calculer l'homo\-logie d'un espace en le décomposant en morceaux plus simples, comme les exemples \ref{exmp:lacet0} et \ref{exmp:torecomplexe} du numéro suivant l'illustreront.

\begin{invariance} On dit que deux applications continues $f_0, f_1 \colon M \to M'$ sont \textit{homotopes} si l'on peut déformer continûment l'une en l'autre, c'est-à-dire s'il existe une application continue
$
H \colon M \times [0, 1] \to M'
$
satisfaisant aux conditions
\[
H(x, 0)=f_0(x) \quad \text{et} \quad H(x, 1)=f_1(x)
\]pour tout $x \in M$. Une application continue $f \colon M \to M'$ est une \textit{équivalence d'homotopie} s'il existe une application continue \hbox{$g \colon M' \to M$} telle que $g \circ f$ et $f \circ g$ soient homotopes, respectivement, aux applications identité sur~$M$ et sur $M'$; on dit dans ce cas que $g$ est un inverse homotopique de $f$ et que les espaces $M$ et $M'$ ont \emph{même type d'homotopie}, ou encore qu'ils sont \emph{homotopiquement équivalents}.

\begin{prop}\label{prop:homotopie} Si $f \colon M \to M'$ est une équivalence d'homotopie, alors~$f_\ast \colon \rH_p(M, \ZZ) \to \rH_p(M', \ZZ)$ est un isomorphisme de groupes.
\end{prop}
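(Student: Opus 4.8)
Le plan est de ramener l'énoncé au fait que deux applications homotopes induisent la \emph{même} application en homologie, puis de démontrer ce dernier fait en construisant une homotopie de chaînes. Commençons par la réduction. Puisque $f$ est une équivalence d'homotopie, il existe $g \colon M' \to M$ telle que $g \circ f$ soit homotope à $\mathrm{id}_M$ et $f \circ g$ à $\mathrm{id}_{M'}$. En admettant que des applications homotopes induisent la même application en homologie, la fonctorialité rappelée plus haut donne $g_\ast \circ f_\ast = (g \circ f)_\ast = (\mathrm{id}_M)_\ast = \mathrm{id}$ sur $\rH_p(M, \ZZ)$, et de même $f_\ast \circ g_\ast = \mathrm{id}$ sur $\rH_p(M', \ZZ)$. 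Ainsi $f_\ast$ est un isomorphisme, d'inverse $g_\ast$. Tout le contenu est donc dans le lemme d'invariance par homotopie au niveau des applications induites.

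Pour établir ce lemme, je fixerais deux applications homotopes $f_0, f_1 \colon M \to M'$ et une homotopie $H \colon M \times [0, 1] \to M'$ entre elles, puis je chercherais une \emph{homotopie de chaînes}, c'est-à-dire une suite d'homomorphismes $P_p \colon C_p(M) \to C_{p+1}(M')$ satisfaisant à l'identité
\[
\partial_{p+1} \circ P_p + P_{p-1} \circ \partial_p = (f_1)_\ast - (f_0)_\ast.
\]
L'idée géométrique est que le produit $\Delta^p \times [0, 1]$ est un \emph{prisme} dont le bord est formé du $p$-simplexe du bas $\Delta^p \times \{0\}$, de celui du haut $\Delta^p \times \{1\}$, et des prismes latéraux au-dessus des faces de $\Delta^p$. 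Concrètement, en notant $v_j = (e_j, 0)$ et $w_j = (e_j, 1)$ les sommets de ce prisme (où $e_0, \dots, e_p$ sont les sommets de $\Delta^p$), on le découpe en $(p+1)$-simplexes $[v_0, \dots, v_i, w_i, \dots, w_p]$ et l'on pose, pour un $p$-simplexe singulier $\sigma \colon \Delta^p \to M$,
\[
P_p(\sigma) = \sum_{i=0}^p (-1)^i \bigl(H \circ (\sigma \times \mathrm{id})\bigr)\big|_{[v_0, \dots, v_i, w_i, \dots, w_p]}.
\]

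La vérification de l'identité d'homotopie de chaînes est le cœur — et la principale difficulté — de la preuve : il s'agit d'un calcul combinatoire où l'on développe $\partial_{p+1} P_p$ et $P_{p-1} \partial_p$ en faisant agir les applications face. Les faces correspondant au sommet $\Delta^p \times \{1\}$ et à la base $\Delta^p \times \{0\}$ du prisme produisent respectivement $(f_1)_\ast \sigma$ et $(f_0)_\ast \sigma$ — ici l'on utilise $H(\cdot, 1) = f_1$ et $H(\cdot, 0) = f_0$ — tandis que les faces internes se compensent deux à deux grâce aux signes alternés, et les faces latérales restantes s'identifient exactement aux termes de $P_{p-1} \partial_p$. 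Le plus délicat est la gestion précise des signes dans ces annulations. Une fois l'identité acquise, on conclut aussitôt : si $\sigma$ est un cycle, alors $\partial_p \sigma = 0$, de sorte que $(f_1)_\ast \sigma - (f_0)_\ast \sigma = \partial_{p+1}(P_p \sigma)$ est un bord ; les classes $[(f_1)_\ast \sigma]$ et $[(f_0)_\ast \sigma]$ coïncident donc dans $\rH_p(M', \ZZ)$, ce qui établit l'égalité $(f_0)_\ast = (f_1)_\ast$ en homologie et achève la démonstration.
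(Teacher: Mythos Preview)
Your proposal is correct and follows exactly the same approach as the paper: reduce to the statement that homotopic maps induce the same morphism in homology, then conclude by functoriality. The paper simply cites Hatcher for the chain-homotopy step, whereas you sketch the prism-operator construction explicitly --- which is precisely the argument in the reference.
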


Un cas particulier important est celui où $M$ a le même type d'homo\-topie que l'espace topologique réduit à un seul point; on dit alors que $M$ est \textit{contractile}. Au vu du calcul de l'homologie du point (exemple~\ref{exp:point}), il résulte de la proposition ci\nobreakdash-dessus que tout espace contractile $M$ a pour groupes d'homologie~\hbox{$\rH_0(M, \ZZ) \simeq \ZZ$} et \hbox{$\rH_p(M, \ZZ)=0$} pour tout $p\geq1$. Pour établir la proposition, on démontre que deux applications homotopes induisent le même homomorphisme en homologie; comme l'identité induit l'identité, cela implique que $f_\ast$ admet pour inverse l'application $g_\ast$ induite par un inverse homotopique $g$ de $f$ et est donc un isomorphisme. On renvoie le lecteur à \cite[Th.\,2.10]{hatcher} pour la preuve du premier énoncé.
\end{invariance}

\begin{mayer-vietoris} Si $M=U \cup V$ est la réunion de deux sous-espaces ouverts $U$ et $V$, on peut calculer l'homologie singulière de $M$ en fonction de celles de $U$, de $V$ et de l'intersection~$U \cap V$. Plus précisément, on dispose d'une \textit{suite exacte longue}
\begin{displaymath}
\xymatrix @R=1.25ex @C=1.1ex
{
& &&&\cdots \ar[rr] && \rH_2(M, \ZZ) \ar@{-}[r] & *{} \ar@{-}`r/9pt[d] `/9pt[l] \\
& *{} \ar@{-}`/9pt[d] `d/9pt[d]& && && & *{} \ar@{-}[llllll] & \\
& *{} \ar[r] & \ar[rr]^-{\alpha} \rH_{1}(U \cap V, \ZZ) &&\ar[rr]^-{\beta}\rH_1(U, \ZZ) \oplus \rH_1(V, \ZZ) && \rH_1(M, \ZZ) \ar@{-}[r] & *{} \ar@{-} `r/9pt[d] `[l]\\
& *{} \ar@{-}`/9pt[d] `d/9pt[d]& && && & *{} \ar@{-}[llllll] & \\
& *{} \ar[r] & \ar[rr]^-{\alpha} \rH_{0}(U \cap V, \ZZ) &&\ar[rr]^-{\beta}\rH_0(U, \ZZ) \oplus \rH_0(V, \ZZ)&& \rH_0(M, \ZZ)\ar[rr]&&0.
}
\end{displaymath}
Ci-dessus, les applications $\alpha$ et $\beta$ sont construites à partir des inclusions entre les différents ouverts: si~$\iota_{U \cap V, U}$ désigne l'inclusion de~$U \cap V$ dans $U$, et de même pour d'autres paires, on pose
\begin{displaymath}
\alpha=(-(\iota_{U \cap V, U})_\ast, (\iota_{U \cap V, V})_\ast) \quad \text{et} \quad \beta=(\iota_{U, M})_\ast+(\iota_{V, M})_\ast,
\end{displaymath}
le signe garantissant que la composition $\beta \circ \alpha$ est nulle. La flèche \textit{connectant} le dernier terme d'une ligne avec le premier terme de la ligne suivante est définie comme suit: soit~$[\sigma] \in \rH_p(M, \ZZ)$ la classe d'homologie représentée par une $p$-chaîne singulière $\sigma$ dans~$M$. Par un processus de \emph{subdivision barycentrique}\footnote{Il s'agit de voir le $p$-simplexe $\Delta^p$ comme une $p$-chaîne singulière en le divisant en de plus petites parties homéomorphes à $\Delta^p$, par exemple en écrivant le triangle~$\Delta^2$ comme la réunion des trois triangles tracés depuis son barycentre.}, on peut l'écrire comme
\[
\sigma=\sigma_U+\sigma_V,
\]
où~$\sigma_U$ et $\sigma_V$ sont des~$p$\nobreakdash-chaînes singulières dans $U$ et dans $V$. Comme le bord de $\sigma$ est nul, l'égalité $\partial_p \sigma_U=-\partial_p \sigma_V$ est satisfaite, et on voit ainsi que $\partial_p \sigma_U$ n'est pas seulement une chaîne dans~$U$, mais aussi dans $V$ puisqu'elle coïncide avec $-\partial_p \sigma_V$. Autrement dit,~$\partial_p \sigma_U$ est une $(p-1)$\nobreakdash-chaîne singulière dans l'intersection~$U \cap V$, et on peut alors considérer l'application
\begin{align}\label{eqn:morphismeconnectant}
\rH_p(M, \ZZ) &\longrightarrow \rH_{p-1}(U \cap V, \ZZ). \\
[\sigma] &\longmapsto [\partial_p \sigma_U]
\end{align}
Observons que celle-ci n'est pas toujours nulle: bien que $\partial_p\sigma_U$ soit le bord d'une chaîne dans $U$, il n'y a pas de raison pour que ce soit également le bord d'une chaîne dans $U \cap V$. 

Dire que le diagramme ci-dessus est une suite exacte longue est une manière concise d'exprimer le fait que n'importe quels trois termes consécutifs~$A \To{f} B \To{g} C$ satisfont à la relation~$\ker(g)=\mathrm{im}(f)$. En particulier, si $A$ est nul, l'application $g$ est injective; si $C$ est nul, l'application~$f$ est surjective; si $A$ et $C$ sont nuls, $B$ doit l'être aussi.
\end{mayer-vietoris}

\subsection{Plus d'exemples}

Armés de ces outils, nous sommes maintenant en état de calculer l'homologie du plan complexe épointé et des tores complexes.

\begin{exemple}[le plan complexe épointé]\label{exmp:lacet0} Soit $M=\CC^\times$ le plan complexe épointé. Cet espace topologique étant connexe par arcs, il y a un isomorphisme~\hbox{$\rH_0(M, \ZZ) \simeq \ZZ$}. Par ailleurs, l'inclusion
\begin{displaymath}
S^1=\{z \in \CC \mid |z|=1 \} \hookrightarrow M
\end{displaymath}
du cercle unité est une équivalence d'homotopie, car l'application~\hbox{$M \to S^1$} qui envoie un nombre complexe non nul $z$ vers~$z/|z|$ en est un inverse homotopique. Au vu de la proposition \ref{prop:homotopie}, il suffit donc de calculer l'homologie singulière de~$S^1$. Pour ce faire, on écrit~$S^1$ comme la réunion de deux arcs de cercle $U$ et $V$ tels que les montre la figure \ref{fig:homologiecercle}.
\begin{figure}[ht]
\centerline{\includegraphics[width=0.9\textwidth]{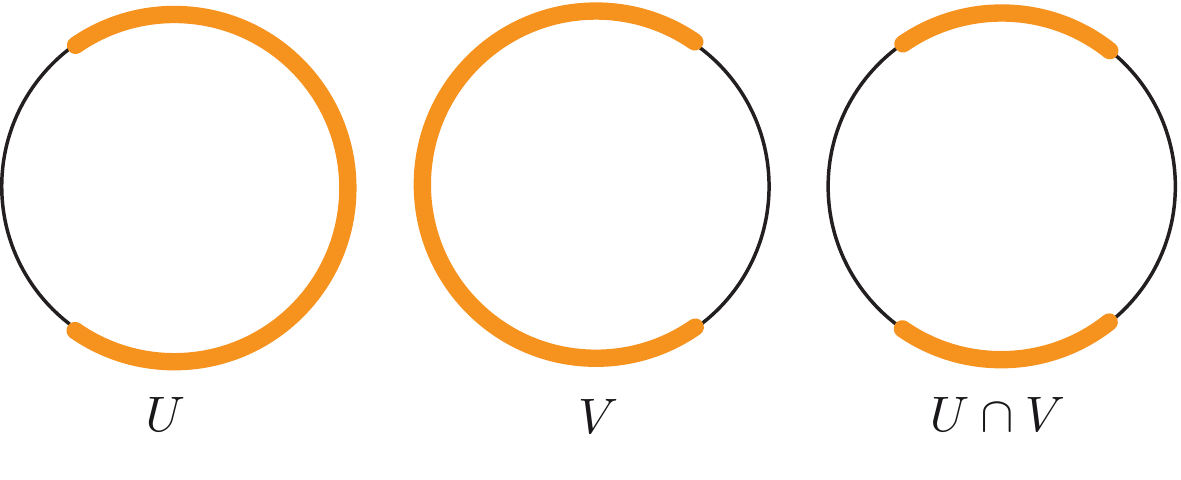}}
\caption{Recouvrement de $S^1$ par deux ouverts contractiles}
\label{fig:homologiecercle}
\end{figure}
Puisque les ouverts~$U$ et $V$, ainsi que les deux composantes connexes de~\hbox{$U \cap V$}, sont contractiles, la suite exacte de Mayer-Vietoris implique l'annulation~$\rH_p(S^1, \ZZ)=0$ pour tout $p \geq 2$. En effet, comme les termes à gauche et à droite dans la suite
\[
\rH_p(U, \ZZ) \oplus \rH_p(V, \ZZ) \longrightarrow \rH_p(S^1, \ZZ) \longrightarrow \rH_{p-1}(U \cap V, \ZZ)
\]
sont nuls pour $p \geq 2$, celui du milieu doit l'être aussi. De plus, l'homologie en degré $1$ s'inscrit dans la suite exacte longue
\begin{equation}
\xymatrix @R=1.25ex @C=1.1ex
{
& &&&0 \ar[rr] && \rH_1(S^1, \ZZ) \ar@{-}[r] & *{} \ar@{-}`r/9pt[d] `/9pt[l] \\
& *{} \ar@{-}`/9pt[d] `d/9pt[d]& && && & *{} \ar@{-}[llllll] & \\
& *{} \ar[r] & \ar[rr]^-{\alpha} \rH_{0}(U \cap V, \ZZ)\ar@{=}[d] &&\ar[rr]^-{\beta}\rH_0(U, \ZZ) \oplus \rH_0(V, \ZZ)\ar@{=}[d]&& \rH_0(S^1, \ZZ)\ar@{=}[d]\ar[rr]&&0,\\
& & \ZZ\oplus\ZZ &&\ZZ\oplus\ZZ &&\ZZ&&
}
\end{equation}
où l'application $\alpha$ est donnée par la matrice $\left(\begin{smallmatrix} -1 & -1 \\ 1 & 1\end{smallmatrix}\right)$ et l'application~$\beta$ est donnée par $(x, y) \mapsto x+y$. Comme la flèche connectante est injective car le terme précédent s'annule, le groupe $\rH_1(S^1, \ZZ)$ est isomorphe au noyau de $\alpha$, qui est le groupe libre de rang~$1$ engendré par la classe de~$P-Q$, où $P$ et $Q$ sont n'importe quels deux points dans des composantes connexes distinctes de~$U \cap V$. On a donc
\[
\rH_1(M, \ZZ) \simeq \rH_1(S^1, \ZZ) \simeq \ZZ.
\]
Un générateur explicite est donné par un lacet~\hbox{$\sigma \colon [0, 1] \to S^1 \subset M$} tel que l'application \eqref{eqn:morphismeconnectant} envoie $[\sigma]$ sur~$[P-Q]$, par exemple le cercle lui\nobreakdash-même, parcouru une seule fois dans le sens anti-horaire. Tout compte fait, le premier groupe d'homologie singulière de $\CC^\times$ est le groupe abélien libre engendré par le lacet $t \mapsto \exp(2\pi i t)$. 
\begin{figure}[ht]
\centerline{\includegraphics[width=0.25\textwidth]{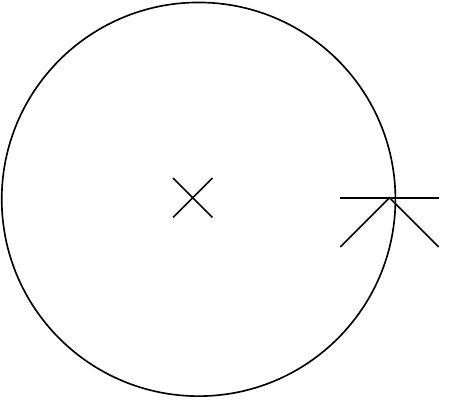}}
\caption{Un générateur de $\rH^1(\CC^\times, \ZZ)$}
\label{fig:lacetengendrant}
\end{figure}
\end{exemple}

\begin{exemple}[les tores complexes]\label{exmp:torecomplexe} Soient $\omega_1, \omega_2 \in \CC^\times$ deux nombres complexes non nuls qui ne sont pas $\RR$-proportionnels, c'est-à-dire dont le quotient $\sfrac{\omega_1}{\omega_2}$ n'est pas réel, et soit \hbox{$\Lambda=\ZZ \omega_1 \oplus \ZZ \omega_2$} le réseau du plan complexe qu'ils engendrent. Le groupe $\Lambda$ agit par translation sur $\CC$ et cette action est libre et proprement discontinue (tout élément non nul de $\Lambda$ agit sans point fixe et la préimage d'une partie compacte de $\CC$ par l'action $\Lambda \times \CC \to \CC$ est compacte); le quotient \hbox{$M=\CC \slash \Lambda$} est alors un espace topologique séparé. On l'appelle \emph{tore complexe}, car il est en même temps homéomorphe au produit de deux cercles~$\RR^2/\ZZ^2 \simeq S^1 \times S^1$ et muni d'une structure de variété complexe\footnote{En tant qu'espace topologique, $M$ est donc indépendant du choix de $\omega_1$ et $\omega_2$, mais sa structure de variété complexe en dépend.}. Le parallélogramme
\begin{displaymath}
\cF=\{x\omega_1+y\omega_2 \mid x, y \in \RR,\ 0 \leq x, y \leq 1 \}
\end{displaymath}
est presque un domaine fondamental pour l'action de $\Lambda$ sur $\CC$; l'espace topologique $M$ est obtenu en recollant deux à deux les côtés opposés de son bord, comme on le voit dans la figure \ref{fig:recollement-tore}.

\begin{figure}[h]
\begin{center}
	\begin{tikzpicture}
\node[inner sep=0pt] at (0,0) {\includegraphics[width=.9\textwidth]{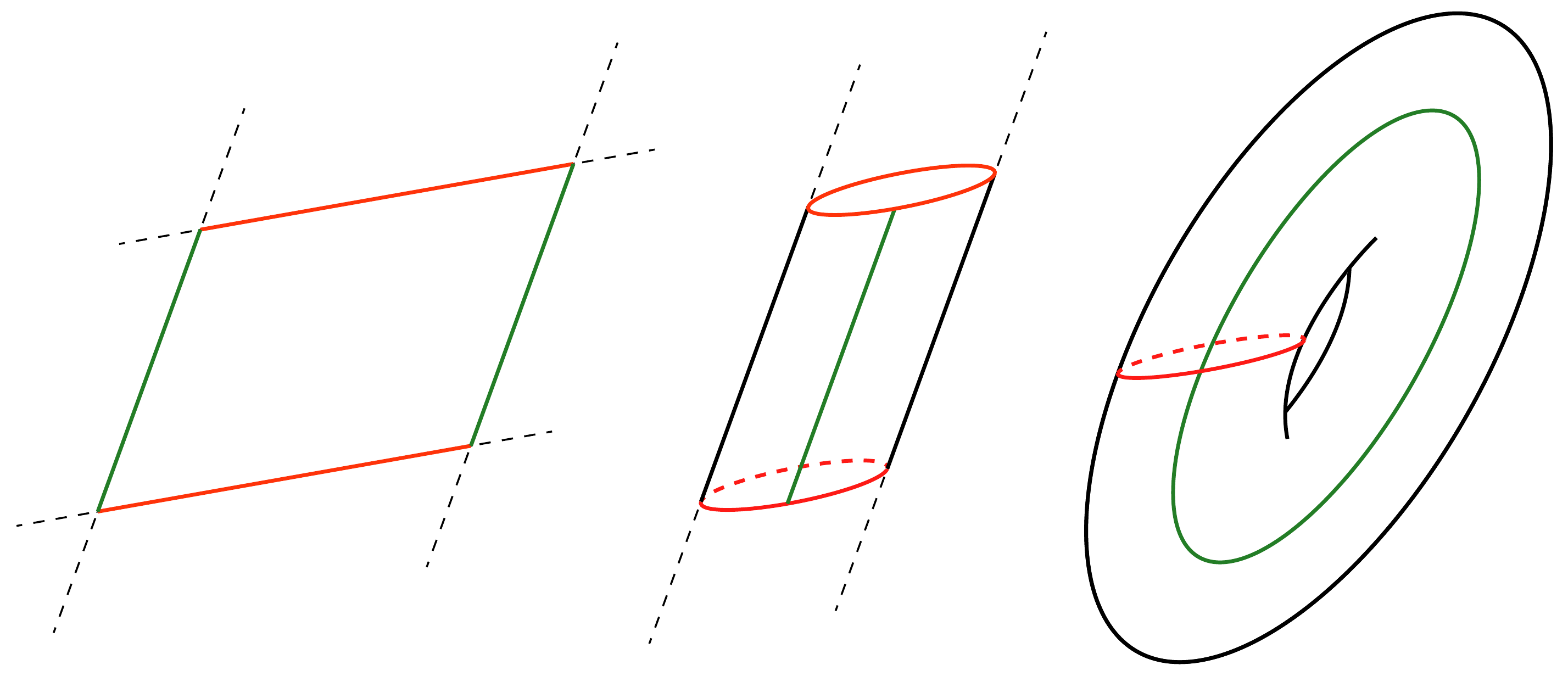}};	

\node at (-5,-1.5) {$0$};
\node at (-1.9,-1) {$\omega_1$};
\node at (-4.25,1) {$\omega_2$};
\node at (-3, 0) {$\mathcal{F}$};

\draw[->] (-1.5,0) --(-0.75, 0);
\draw[->] (1.5, 0)-- (2.25,0);
	\end{tikzpicture}
	\end{center}
	\caption{Le tore complexe $M=\CC\slash\Lambda$}
\label{fig:recollement-tore}
\end{figure}

Ces tores complexes étant connexes par arcs, on a un isomorphisme~$\rH_0(M, \ZZ)\simeq \ZZ$. Pour calculer l'homologie en degré supérieur, on écrit $M$ comme la réunion de deux coudes $U$ et $V$ tels que les montre la figure \ref{fig:homologietore}, et on utilise la suite exacte de Mayer\nobreakdash-Vietoris.
\begin{figure}[ht]
\centerline{\includegraphics[width=1\textwidth]{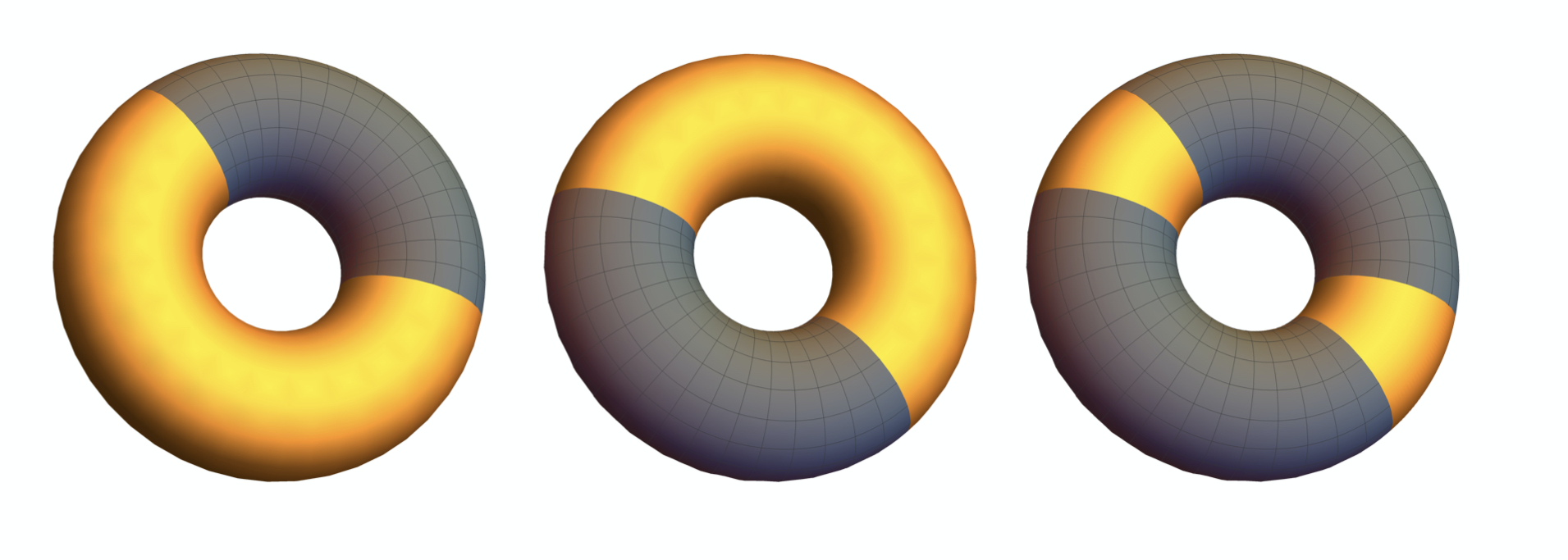}}
\caption{Recouvrement d'un tore complexe par deux ouverts avec même type d'homotopie que $S^1$}
\label{fig:homologietore}
\end{figure}
Comme les ouverts $U$, $V$ et les deux composantes connexes de $U \cap V$ ont même type d'homotopie qu'un cercle, dont l'homologie vient d'être calculée dans l'exemple \ref{exmp:lacet0}, les groupes~$\rH_p(M, \ZZ)$ s'annulent pour $p \geq 3$ par le même argument qu'avant. Les groupes restant s'inscrivent dans la suite exacte longue
\begin{displaymath}
\xymatrix @R=1.25ex @C=1.1ex
{
& &&&0 \ar[rr] && \rH_2(M, \ZZ) \ar@{-}[r] & *{} \ar@{-}`r/9pt[d] `/9pt[l] \\
& *{} \ar@{-}`/9pt[d] `d/9pt[d]& && && & *{} \ar@{-}[llllll] & \\
& *{} \ar[r] & \ar[rr]^-{\alpha} \rH_{1}(U \cap V, \ZZ)\ar@{=}[d] &&\ar[rr]^-{\beta}\rH_1(U, \ZZ) \oplus \rH_1(V, \ZZ)\ar@{=}[d]&& \rH_1(M, \ZZ) \ar@{-}[r] & *{} \ar@{-} `r/9pt[d] `[dd]\\
& & \ZZ\oplus\ZZ &&\ZZ\oplus\ZZ &&&&\\
& *{} \ar@{-}`/9pt[d] `d/9pt[d]& && && & *{} \ar@{-}[llllll] & \\
& *{} \ar[r] & \ar[rr]^-{\alpha} \rH_{0}(U \cap V, \ZZ)\ar@{=}[d] &&\ar[rr]^-{\beta}\rH_0(U, \ZZ) \oplus \rH_0(V, \ZZ)\ar@{=}[d] && \rH_0(M, \ZZ)\ar@{=}[d]\ar[rr]&&0.\\
& & \ZZ\oplus\ZZ &&\ZZ\oplus\ZZ &&\ZZ&&
}
\end{displaymath}
L'application $\alpha$ est encore donnée, tant en degré $0$ qu'en degré $1$, par la matrice $\left(\begin{smallmatrix} -1 & -1 \\ 1 & 1\end{smallmatrix}\right)$; son noyau et son image sont donc libres de rang $1$, d'où un isomorphisme $\rH_2(M, \CC) \simeq \ZZ.$ Enfin, comme~$\ker(\beta)$ et $\im(\alpha)$ sont égaux dans une suite exacte longue, ainsi que $\ker(\alpha)$ et l'image de la flèche connectante, on dispose de la suite exacte
\begin{displaymath}
\xymatrix @R=1.25ex @C=3.5ex
{
0 \ar[r] & (\rH_{1}(U, \ZZ) \oplus \rH_{1}(V, \ZZ))/\im(\alpha) \ar@{=}[d] \ar[r]^-{\beta} & \rH_1(M, \ZZ) \ar[r] & \ker(\alpha) \ar@{=}[d] \ar[r] & 0. \\
& \ZZ & & \ZZ &
}
\end{displaymath}
De là on trouve un isomorphisme $\rH_1(M, \CC) \simeq \ZZ \oplus \ZZ$. Explicitement, ce groupe est engendré par les images dans $M$ des applications
\begin{displaymath}
\begin{aligned}
\sigma_i \colon [0, 1] &\to \cF \\ t &\longmapsto t\omega_i
\end{aligned} \qquad (i=1, 2),
\end{displaymath}
qui sont représentées dans la figure \ref{fig:homologiedutore}.

\begin{figure}[ht]
\centerline{\includegraphics[width=0.4\textwidth]{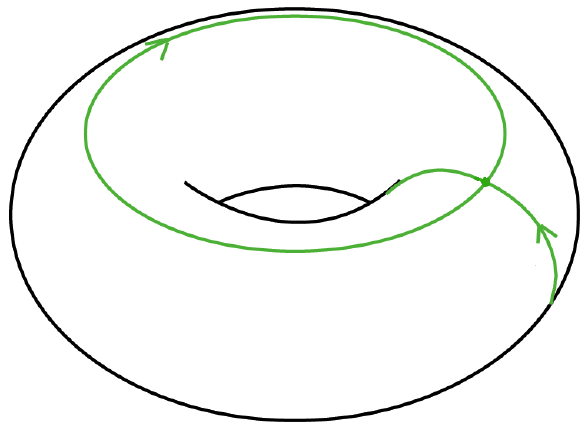}}
\caption{Homologie du tore $\rH_1(M, \CC)=\ZZ [\sigma_1] \oplus \ZZ [\sigma_2]$}
\label{fig:homologiedutore}
\end{figure}

On aura aussi besoin de connaître l'homologie d'un tore auquel on a ôté un point; notons-le $M^\ast$. Cet espace a le même type d'homotopie que le quotient du bord du parallélogramme $\cF$ par l'action de $\Lambda$ identifiant les côtés opposés, qui n'est rien d'autre que la réunion du cercle rouge et du cercle vert de la figure \ref{fig:recollement-tore}. On est ainsi réduit au calcul de l'homologie singulière d'un bouquet de deux cercles. Écrivons-le comme la réunion de deux ouverts $U$ et~$V$, chacun composé d'un des cercles en entier et de deux petits arcs sur l'autre cercle, de sorte que~$U$ et~$V$ aient le type d'homotopie d'un cercle, tandis que~$U \cap V$ soit contractile. La suite exacte de Mayer-Vietoris donne alors l'annulation de~$\rH_p(M^\ast, \ZZ)$ pour tout $p \geq 2$, par opposition au cas du tore complet, dont l'homologie en degré $2$ était isomorphe à $\ZZ$. En revanche, le premier groupe d'homologie reste inchangé, comme le montre la suite~exacte
\begin{displaymath}
\xymatrix @R=1.25ex @C=1.1ex
{
& & \ar[rr] 0 &&\ar[rr]^-{\beta}\rH_1(U, \ZZ) \oplus \rH_1(V, \ZZ)\ar@{=}[d]&& \rH_1(M^\ast, \ZZ) \ar@{-}[r] & *{} \ar@{-} `r/9pt[d] `[dd]\\
& & &&\ZZ\oplus\ZZ &&&&\\
& *{} \ar@{-}`/9pt[d] `d/9pt[d]& && && & *{} \ar@{-}[llllll] & \\
& *{} \ar[r] & \ar[rr]^-{\alpha} \rH_{0}(U \cap V, \ZZ)\ar@{=}[d] &&\ar[rr]\rH_0(U, \ZZ) \oplus \rH_0(V, \ZZ)\ar@{=}[d] && \rH_0(M^\ast, \ZZ)\ar@{=}[d]\ar[rr]&&0.\\
& & \ZZ &&\ZZ\oplus\ZZ &&\ZZ&&
}
\end{displaymath}
En effet, comme $\alpha$ est l'application injective $x \mapsto (-x, x)$, le morphisme connectant est nul, d'où un isomorphisme \hbox{$\rH_1(M^\ast, \ZZ) \simeq \ZZ\!\oplus\!\ZZ$}. Pour trouver des représentants explicites, il suffit de déformer les chemins $\sigma_1$ et $\sigma_2$ jusqu'à ce qu'ils ne rencontrent pas le point ôté.
\end{exemple}

\section{Cohomologie de de~Rham algébrique}\label{sec:5}

Sur les espaces topologiques sous-jacents aux variétés différentielles, on l'a dit, toute classe d'homologie singulière peut être représentée par une chaîne de classe $\mathcal{C}^\infty$. En intégrant des formes différentielles le long de ces chaînes, on obtient un accouplement à valeurs réelles entre l'homologie singulière et la cohomologie de de~Rham; c'est un accouplement parfait d'après un théorème dû à de~Rham lui-même. On pourrait avoir l'idée de définir les périodes comme les coefficients de cet accouplement par rapport à des bases quelconques de l'homologie singulière et de la cohomologie de de~Rham, mais dans ce cas tout nombre réel serait une période, car la cohomologie de de~Rham d'une variété différentielle n'a en général plus de structure que celle d'un~$\RR$\nobreakdash-espace vectoriel. De manière un peu surprenante, lorsque la variété en question est définie par des équations polynomiales, toute classe de cohomologie de de~Rham admet comme représentant une forme différentielle algébrique. L'espace de ces formes étant défini sur le même corps que les équations de la variété, on obtient un pendant algébrique de la cohomologie de de~Rham, encore en accouplement parfait avec l'homo\-logie singulière; c'est celui-là qui nous permettra de donner dans ce qui suit une interprétation cohomologique des périodes.

\subsection{Variétés algébriques affines}\label{sec:varietes}

Soient $n \geq 1$ un entier et $\mathbb{A}^n$ l'espace affine de dimension~$n$. On va s'intéresser aux \textit{variétés algébriques affines} $X \subset \mathbb{A}^n$ définies par l'annulation d'une famille de polynômes
\begin{displaymath}
f_1, \dots, f_m \in \QQ[x_1, \dots, x_n].
\end{displaymath}
Pour les propos de ce texte, il nous suffira la plupart du temps d'y penser comme les ensembles de leurs \emph{points complexes} 
\begin{displaymath}
X(\CC)=\{(z_1, \dots, z_n) \in \CC^n \mid f_j(z_1, \dots, z_n)=0 \text{ pour tout } j\}, 
\end{displaymath} sans oublier les équations les définissant, notamment le fait qu'elles sont toutes à coefficients rationnels. On appelle ces variétés \textit{algébriques} car les~$f_j$ sont des polynômes, par opposition à une variété disons différentielle, où l'on regarderait les zéros de fonctions de classe~$\mathcal{C}^\infty$ sur $\RR^n$ à la place; on les appelle \textit{affines} car on cherche les solutions aux équations~\hbox{$f_j=0$} dans l'espace affine $\CC^n$ plutôt que dans l'espace projectif~$\mathbb{P}^n(\CC)$, auquel cas il faudrait supposer que les~$f_j$ sont homogènes et on parlerait de variétés projectives.

Comme deux polynômes à coefficients rationnels prennent les mêmes valeurs sur l'ensemble $X(\CC)$ si leur différence est de la forme~$f_1P_1+\dots+f_mP_m$, pour des $P_i \in \QQ[x_1, \dots, x_n]$, il est naturel de définir l'\emph{anneau des fonctions} sur la variété $X$ comme le quotient
\begin{equation}\label{eqn:defnanneaufonctions}
A=\QQ[x_1, \dots, x_n] \slash (f_1, \dots, f_m)
\end{equation}
des fonctions sur l'espace affine $\mathbb{A}^n$ par l'idéal engendré par les poly\-nômes définissant $X$. Alors $X(\CC)$ s'identifie canoniquement à l'ensemble de morphismes d'anneaux de $A$ dans $\CC$:
\begin{equation}\label{eqn:foncteurpoints}
X(\CC)=\Hom(A, \CC).
\end{equation} En effet, pour tout point complexe $(z_1, \dots, z_n) \in X(\CC)$, le morphisme d'anneaux $\QQ[x_1, \dots, x_n] \to \CC$ qui envoie $x_i$ sur $z_i$ passe au quotient par l'idéal engendré par les $f_j$ et définit donc un morphisme \hbox{$A \to \CC$}. Réciproquement, comme les classes des $x_i$ dans $A$ satisfont aux équations polynomiales $f_j=0$, il en va de même pour leurs images par tout morphisme d'anneaux à valeurs dans $\CC$, d'où un élément de~$X(\CC)$. 

D'après le théorème des zéros (\textit{Nullstellensatz}) de Hilbert, si deux polynômes $P, Q \in \QQ[x_1, \ldots, x_n]$ prennent les mêmes valeurs partout sur~$X(\CC)$, alors il existe un entier~$r \geq 1$ tel que~$(P-Q)^r$ appartienne à l'idéal $(f_1, \ldots, f_m)$. Vu que les polynômes constants~$1$ et~$0$ prennent les mêmes valeurs sur l'ensemble vide, une conséquence remarquable de ce théorème est que, bien que les équations $f_j(x_1, \dots, x_n)=0$ puissent ne pas avoir de solutions rationnelles ou réelles, elles admettent toujours des solutions complexes dès que l'on exclut le cas trivial où l'idéal~$(f_1, \dots, f_m)$ est égal à l'anneau~$\QQ[x_1, \dots, x_n]$ tout entier \cite[Ch.\,I, \S 4]{perrin}. Par exemple, le polynôme~$x^2+1$ n'a pas de racines réelles, mais l'ensemble $X(\CC)=\{i, -i\}$ des points complexes de la variété~$X \subset \mathbb{A}^1$ qu'il définit est non vide. 

Soit $Y \subset \mathbb{A}^r$ une variété algébrique affine, définie par l'annulation d'une famille de polynômes $g_1, \dots, g_s \in \QQ[y_1, \dots, y_r]$, et soit
\[
B=\QQ[y_1, \dots, y_r] \slash (g_1, \dots, g_s)
\]
son anneau de fonctions. Un \emph{morphisme de variétés}~$h\colon X \to Y$ est une application $h \colon X(\CC) \to Y(\CC)$ dont les composantes 
\[
h_1, \dots, h_r \colon X(\CC) \longrightarrow \CC
\] sont les restrictions à $X(\CC)$ de polynômes dans $\QQ[x_1, \dots, x_n]$, vus comme des fonctions sur $\CC^n$. Concrètement, en choisissant de tels polynômes, encore notés \hbox{$h_1, \dots, h_r \in \QQ[x_1, \dots, x_n]$} par un léger abus, on peut penser au morphisme $h\colon X \to Y$ comme la donnée de~$r$ polynômes satisfaisant aux équations
\begin{equation}\label{eqn:annulation-morphisme}
g_i(h_1(x_1, \dots, x_n), \dots, h_r(x_1, \dots, x_n))=0
\end{equation}
pour tout $i=1, \dots, s$ et pour tout $n$-uplet $(x_1, \dots, x_n)$ dans le lieu des zéros des polynômes $f_j$. Une autre telle donnée $h_1', \dots, h_r'$ définit le même morphisme de variétés si et seulement si les différences~\hbox{$h_i'-h_i$} appartiennent à l'idéal $(f_1, \dots, f_m)$ pour tout~$i$. Par exemple, si \hbox{$X=\mathbb{A}^1$} est la droite affine et $Y \subset \mathbb{A}^2$ est la parabole d'équation $y=x^2$, l'application $h(z)=(z, z^2)$ est un morphisme de variétés $h \colon X \to Y$. De même, si $Z \subset \mathbb{A}^2$ est définie par $y^2=x^3$, l'application $m(z)=(z^2, z^3)$ est un morphisme de variétés $m \colon X \to Z$. 

On dit qu'un morphisme $h \colon X \to Y$ est un \emph{isomorphisme de variétés} s'il existe un morphisme de variétés $u \colon Y \to X$ tel que $u \circ h$ et~$h\circ u$ soient, respectivement, l'identité sur $X$ et sur $Y$. Par exemple, $h(z)=(z, z^2)$ ci-dessus est un isomorphisme de variétés, d'inverse la projection sur la première coordonnée $u(x, y)=x$; par contre, $m$ n'est pas un isomorphisme de variétés, bien que l'application $m \colon X(\CC) \to Z(\CC)$ soit bijective (pourquoi?). 

Par le biais de la bijection~\eqref{eqn:foncteurpoints}, un morphisme $h \colon X \to Y$ équivaut à la donnée d'un morphisme d'anneaux $h^\ast \colon B \to A$, à savoir celui qui envoie la classe de $y_i$ dans $B$ sur la classe du polynôme~$h_i$ dans~$A$. L'inversion des rôles de $A$ et~$B$ en passant des variétés à leurs anneaux des fonctions reflète l'intuition qu'une fonction sur $Y(\CC)$ donne lieu à une fonction sur $X(\CC)$ par précomposition avec $h$. Un morphisme de variétés $h$ est un isomorphisme si et seulement si le morphisme d'anneaux associé $h^\ast$ est un isomorphisme. 

\begin{definition}\label{def:varlisse} Soit $c$ un entier entre $0$ et $n$. On dit qu'une variété algébrique affine~\hbox{$X \subset \mathbb{A}^n$} définie par des polynômes~$f_1, \dots, f_m$ comme ci-dessus est \emph{lisse de dimension~$n-c$} si, pour tout point $P \in X(\CC)$, il existe un polydisque
\begin{displaymath}
D(P, \varepsilon)=\{z \in \CC^n \mid \|z-P\|< \varepsilon \}
\end{displaymath}
centré en $P$ et de rayon suffisamment petit $\varepsilon>0$ tel que l'intersection
\[
D(P, \varepsilon) \cap X(\CC)
\]
soit le lieu des zéros de $c$ polynômes parmi les $f_j$ (disons $f_1, \ldots, f_c$, quitte à les réordonner) et que la matrice jacobienne
\begin{displaymath}
\left(\frac{\partial f_i}{\partial x_j} \right)_{\substack{1 \leq i \leq c \\ 1 \leq j \leq n}}
\end{displaymath}
soit de rang $c$. Lorsque cette condition n'est pas satisfaite, on dit que la variété $X$ est \emph{singulière} en $P$.
\end{definition}

En particulier, une variété $X \subset \mathbb{A}^n$ définie par une seule équation non nulle~$f$ est lisse de dimension $n-1$ si et seulement si les polynômes
\begin{displaymath}
f, \frac{\partial f}{\partial x_1}, \dots, \frac{\partial f}{\partial x_n}
\end{displaymath}
n'ont pas de racine commune sur les nombres complexes. La condition de lissité dépend des équations définissant $X$ et pas seulement de l'ensemble $X(\mathbf{C})$. Par exemple, les polynômes $x$ et $x^2$ ont le même lieu des zéros dans $\mathbf{C}$, mais seulement le premier définit une variété variété lisse $X \subset \mathbb{A}^1$. Cet exemple illustre aussi le fait que, pour les variétés lisses, l'ensemble des polynômes s'annulant sur $X(\mathbf{C})$ coïncide exactement avec l'idéal $(f_1,\dots, f_m)$, c'est-à-dire que l'on peut prendre $r=1$ dans le théorème des zéros de Hilbert. 

\begin{exemple}[croisements normaux]\label{exmp:SNCD} La variété $X \subset \mathbb{A}^2$ définie par le polynôme $f(x, y)=xy$ est singulière car le point $(0, 0)$ est un zéro commun de $f$ et de ses dérivées $\sfrac{\partial f}{\partial x}=y$ et $\sfrac{\partial f}{\partial y}=x$. Cette variété est la réunion des lieux des zéros des polynômes $x$ et~$y$, qui sont chacun des variétés lisses de dimension $1$ s'intersectant au point~$(0, 0)$, lui-même une variété lisse de dimension $0$. C'est un exemple paradigmatique de \emph{diviseur à croisements normaux}, qui est plus généralement défini comme une réunion finie de variétés $X_i \subset \mathbb{A}^n$ lisses de dimension $n-1$, appelées les \emph{composantes irréductibles}, qui s'intersectent transversalement en des variétés lisses de la dimension \og attendue\fg, c'est\nobreakdash-à\nobreakdash-dire telles que toute intersection de $m$ composantes irréductibles est lisse de dimension $n-m$. Par exemple, trois droites dans~$\mathbb{A}^2$ forment un diviseur à croisements normaux si et seulement si elles ne contiennent pas de point d'intersection triple. La condition de transversalité signifie qu'en tout point $P$ dans l'intersection des points complexes de deux composantes irréductibles $X_i$ et $X_j$, la somme des espaces vectoriels tangents $T_P X_i(\CC)$ et $T_P X_j(\CC)$ est égale à $\CC^n$. Cela permet de définir algébriquement l'intersection comme le lieu des zéros $X_i \cap X_j \subset \mathbb{A}^n$ de la réunion des polynômes définissant $X_i$ et $X_j$, en excluant des situations, comme celle de deux courbes tangentes, où cette définition naïve ne donne pas lieu à une bonne théorie de l'intersection (par exemple, au théorème de Bézout). Grâce à un théorème de Hironaka connu sous le nom de \emph{résolution des singularités} \cite{Kollar}, on peut souvent réduire à l'étude des diviseurs à croisements normaux des questions ayant trait aux variétés algébriques singulières.\end{exemple}

\begin{remarque}\label{rem:Weilres} Pour l'étude des périodes, on pourrait considérer plus généralement le corps des nombres algébriques $\overline\QQ$ et des variétés algébriques affines $X$ définies par une famille de polynômes à coefficients dans  $\overline\QQ$. Pour la plupart de nos besoins, c'est commode mais pas strictement nécessaire: on peut toujours se ramener à des variétés définies sur $\QQ$ par un processus, appelé \emph{restriction de Weil}, similaire à celui qui permet d'identifier $\CC$ à $\RR^2$ en prenant des parties réelles et imaginaires, puis des conditions algébriques complexes du type \og $z \in \CC$ est non nul\fg à des conditions algébriques réelles du type \og la paire $(x, y) \in \RR^2$ est telle que $x^2+y^2$ est non nul\fg. \end{remarque}

Il n'a été question jusqu'à là que des points complexes d'une variété. Plus généralement, pour toute $\QQ$-algèbre (c’est-à-dire pour tout anneau commutatif avec unité $R$ contenant $\QQ$), on peut évaluer les polynômes $f_j$ en un point de $R^n$ puisque les seules opérations requises pour ce faire sont la multiplication et l’addition, présentes dans un anneau quelconque, et la multiplication par les coefficients des $f_j$, qui a un sens car $R$ contient $\QQ$. Les $R$-points de~$X$ sont alors définis comme le lieu des zéros dans $R^n$ des polynômes $f_j$, à savoir
\begin{displaymath}
X(R)=\{(z_1, \dots, z_n) \in R^n \mid f_j(z_1, \dots, z_n)=0 \text{ pour tout } j\}. 
\end{displaymath} Ces ensembles sont reliés entre eux comme suit: si $\varphi \colon R \to S$ est un morphisme de $\QQ$-algèbres, c’est-à-dire
un morphisme d’anneaux entre $\QQ$-algèbres, et si $(z_1, \dots, z_n) \in R^n$ appartient à $X(R)$, l'égalité 
\[
f_j(\varphi(x_1), \dots, \varphi(x_n))=\varphi(f_j(x_1, \dots, x_n))=0
\] est vraie pour tout $j$. Le point $(\varphi(x_1), \dots, \varphi(x_n)) \in S^n$ appartient donc à $X(S)$, d'où une application $\varphi_\ast\colon X(R) \to X(S)$. 

Avec ces notations, il y a encore une identification canonique 
\begin{equation}\label{eqn:foncteurpoints2}
X(R)=\Hom(A, R)
\end{equation} et la donnée de tous les ensembles $X(R)$ et de toutes les applications~$\varphi_\ast$ permet de retrouver l'anneau $A$ à isomorphisme près. Un point de vue plus puissant sur les variétés affines consiste alors à les définir par leurs anneaux de fonctions, sans mettre l'accent sur le plongement dans un espace affine donné $\mathbb{A}^n$. 

\subsection{Cohomologie de de~Rham algébrique} On dispose, sur l'anneau des polynômes à $n$ variables, des règles usuelles du calcul différentiel, notamment de l'application $\QQ$-linéaire
\begin{align}\label{eqn:diff}
d \colon \QQ[x_1, \ldots, x_n] &\longrightarrow \QQ[x_1, \dots, x_n]dx_1 \oplus \cdots \oplus \QQ[x_1, \dots, x_n]dx_n \nonumber \\
g &\longmapsto dg=\sum_{i=1}^n \Bigl(\frac{\partial g}{\partial x_i}\Bigr) dx_i
\end{align}
qui à un polynôme $g$ associe sa \textit{différentielle} $dg$. Cette application satisfait à la règle de Leibniz \begin{equation}\label{eqn:leibniz}
d(gh)=gdh+hdg.
\end{equation}
Pour qu'elle passe au quotient par l'idéal engendré par les polynômes $f_1, \ldots, f_m$ et définisse donc une application sur l'anneau $A$ des fonctions sur la variété $X$ correspondante, il faudrait que les relations obtenues en dérivant les $f_1, \ldots, f_m$ soient encore valables dans l'espace d'arrivée, ce qui bien sûr ne sera pas le cas sauf si on les force à l'être. On est ainsi amené à définir les $1$\nobreakdash-\textit{formes différentielles} sur la variété algébrique affine $X$ comme le $A$-module
\begin{displaymath}
\Omega^1_A=\left(A dx_1 \oplus \cdots \oplus Adx_n\right) \slash (df_1, \ldots, df_m),
\end{displaymath}
c'est-à-dire comme le quotient du $A$-module libre ayant pour base les symboles~$dx_1, \ldots, dx_n$ par le sous-module engendré par les différentielles des polynômes $f_j$. Une $1$-forme différentielle sur $X$ est donc représentée par une combinaison linéaire
\begin{equation}\label{eqn:writing1form}
\omega=\sum_{i=1}^n a_i dx_{i},
\end{equation}
où les $a_i \in A$ sont des fonctions sur $X$, et deux telles expressions repré\-sentent la même $1$-forme différentielle si et seulement si leur différence appartient à l'idéal engendré par les $df_j$. Par construction, la différentielle~\eqref{eqn:diff} induit une application $\QQ$-linéaire encore notée
\begin{displaymath}
d \colon A \to \Omega^1_A.
\end{displaymath}

On peut ensuite définir des formes différentielles de degré supérieur en considérant, pour chaque $p \geq 1$, la $p$\nobreakdash-ème puissance extérieure
\[
\Omega^p_A=\bw{p} \Omega^1_A
\]
du $A$-module $\Omega^1_A$. Concrètement, $\Omega^p_A$ est le $A$-module formé des combinaisons linéaires à coefficients dans $A$ de symboles
\begin{displaymath}
\omega_1 \wedge \cdots \wedge \omega_p, \quad \text{où }\ \omega_j \in \Omega^1_A \ \text{ pour tout }\ j=1, \ldots, p,
\end{displaymath}
et ces combinaisons linéaires sont soumises aux relations de bilinéarité
\begin{align}
\omega_1 \wedge &\cdots \wedge (a\omega_i+a' \omega_i') \wedge \cdots \wedge \omega_p \\
&=\omega_1 \wedge \cdots \wedge a\omega_i \wedge \cdots \wedge \omega_p+\omega_1 \wedge \cdots \wedge a'\omega_i' \wedge \cdots \wedge \omega_p 
\end{align}
pour tous $a, a' \in A$, ainsi qu'à la relation $\omega_1 \wedge \cdots \wedge \omega_p=0$ dès qu'il y a deux $1$-formes différentielles égales parmi les $\omega_j$. De ces contraintes on déduit aisément l'égalité
\[
\omega_{\sigma(1)} \wedge \cdots \wedge \omega_{\sigma(p)}=\varepsilon(\sigma)\omega_1\wedge \cdots \wedge \omega_p
\]
pour toute permutation $\sigma$ de l'ensemble $\{1, \dots, p\}$ de signature $\varepsilon(\sigma)$. On appelle \textit{$p$\nobreakdash-formes différentielles} sur $X$ les éléments de~$\Omega^p_A$.

Par récurrence, la différentielle $d \colon A \to \Omega^1_A$ s'étend de manière unique en des applications $\QQ$-linéaires
\begin{displaymath}
d^p \colon \Omega^p_A \to \Omega^{p+1}_A
\end{displaymath}
satisfaisant aux relations
\[
d^p \circ d^{p-1}=0 \qquad\text{et} \qquad d^p(\alpha \wedge \beta)=d^i\alpha \wedge \beta+(-1)^i \alpha \wedge d^{p-i}\beta
\]
pour tout $p$ et pour tous~$\alpha \in \Omega^i_A$ et $\beta \in \Omega^{p-i}_A$ (pour avoir des notations uniformes, on pose $\Omega^0=A$ et $d^0=d$, ainsi que \hbox{$\Omega^p=0$} et~\hbox{$d^p=0$} pour tout $p<0$). Notons que la seconde de ces relations généralise la règle de Leibniz~\eqref{eqn:leibniz}. En effet, pour construire $d^p$ on pose d'abord
\[
d^1\omega=\sum_{i=1}^n da_i \wedge dx_i
\] pour des $1$-formes différentielles comme dans \eqref{eqn:writing1form}, puis 
\[
d^p(\omega_1 \wedge \cdots \wedge \omega_p)=\sum_{i=1}^p (-1)^{i+1} \omega_1 \wedge \cdots \wedge d^1\omega_i \wedge \cdots \wedge \omega_p,  
\] et on vérifie que ces formules sont compatibles aux relations définissant $\Omega^p_A$. 
On obtient ainsi le \textit{complexe\footnote{Un \emph{complexe (cohomologique) d'espaces vectoriels} est une suite $(A^p, d^p)_{p \in \ZZ}$ d'espaces vectoriels $A^p$ et d'applications linéaires \hbox{$d^p\colon A^p \to A^{p+1}$} telles que la composition $d^{p}\circ d^{p-1}$ est nulle pour tout entier $p$. La cohomologie en degré $p$ d'un tel complexe est alors définie comme le quotient $\ker(d^p)/\mathrm{im}(d^{p-1})$.} de de~Rham algébrique}
\begin{equation}\label{eqn:complexedR}
\Omega^0_A \To{d^0} \Omega^1_A \To{d^1}\Omega^2_A \To{d^2}\cdots.
\end{equation}
Quand on voudra souligner les aspects géométriques plutôt qu'algébriques de la cohomologie de de Rham, on écrira $\Omega^p_X$ à la place de~$\Omega^p_A$; ces deux notations sont entièrement synonymes dans ces notes.

Même si tous les termes dans ce complexe sont des $A$\nobreakdash-modules, les différentielles sont seulement~$\QQ$\nobreakdash-linéaires et pas $A$-linéaires au vu de la règle de Leibniz. L'égalité $d^{p} \circ d^{p-1}=0$ implique, comme on l'a vu pour l'homologie singulière, que l'image de la différentielle $d^{p-1}$ est un sous-espace vectoriel du noyau de $d^{p}$. Il serait donc naturel de définir la cohomologie de de~Rham en degré de $p$ de $X$ comme l'espace quotient $\ker(d^p)/\im(d^{p-1})$. Il se trouve que cette définition n'est raisonnable\footnote{Sinon, la dimension de ce quotient peut être différente de celle de la cohomologie singulière des points complexes $X(\CC)$; d'après \cite{arapura}, c'est le cas par exemple pour le lieu des zéros $X \subset \AA^2$ du polynôme $x^5+y^5+x^2y^2$. Il n'y a donc pas d'isomorphisme de comparaison comme celui de la section \ref{sec:accoupl} avec cette définition.} que si la variété $X$ est lisse (définition \ref{def:varlisse}), auquel cas on peut démontrer que le~$A$-module~$\Omega^1_A$ est \emph{localement libre} de rang la dimension de $X$ (c'est le pendant algébrique de la notion de fibré vectoriel sur une variété différentielle). En particulier, le complexe de de~Rham est nul au\nobreakdash-delà de cette dimension. 

\begin{definition} Soit $X \subset \mathbb{A}^n$ une variété algébrique affine lisse. La \emph{cohomologie de de~Rham algébrique} en degré $p$ de $X$ est le quotient
\begin{displaymath}
\rH^p_{\dR}(X)=\frac{\ker(d^p \colon \Omega^p_A \to \Omega^{p+1}_A)}{\mathrm{im}(d^{p-1} \colon \Omega^{p-1}_A \to \Omega^p_A)}.
\end{displaymath}
\end{definition}

Les éléments dans le noyau de la différentielle s'appellent des formes \textit{fermées} et celles dans son image des formes \textit{exactes}; la cohomologie de de~Rham est donc l'espace des formes fermées modulo les formes exactes. On notera $[\omega]$ la classe d'une $p$-forme différentielle $\omega \in \Omega^p_A$ dans $\rH^p_{\dR}(X)$. La plupart du temps, on omettra l'exposant et on écrira $d$ au lieu de $d^p$. Bien que les termes dans le complexe de de~Rham algébrique soient, sauf dans des cas très particuliers comme celui de l'exemple~\ref{exmp:deRhamdim0}, des $\QQ$\nobreakdash-espaces vectoriels de dimension infinie, on peut démontrer par divers moyens (par exemple, en utilisant l'accouplement parfait du théorème \ref{thm:periods} et le fait que l'homologie singulière d'un CW-complexe de type fini est de dimension finie) que~$\rH^p_{\dR}(X)$ est toujours de dimension finie.

La cohomologie de de~Rham algébrique est fonctorielle (contravariante) par rapport aux morphismes de variétés affines lisses. Gardant les notations du numéro précédent, soient $X$ et $Y$ des telles variétés, d'anneaux de fonctions $A$ et $B$, et soit~$h\colon X \to Y$ un morphisme de variétés représenté par des polynômes $h_1, \dots, h_r$. Le morphisme d'anneaux~$h^\ast \colon B \to A$ donne lieu à une application $\QQ$-linéaire
\[
h^\ast \colon \Omega^1_B \longrightarrow \Omega^1_A
\]
déduite de l'application
\begin{align*}
Bdy_1\oplus \cdots\oplus Bdy_r &\longrightarrow \Omega^1_A \\
bdy_i &\longmapsto h^\ast(b)dh_i
\end{align*}
en observant que celle-ci factorise à travers le quotient par le sous-module engendré par les différentielles $dg_1, \dots, dg_s$. En effet, cette application envoie $dg_j$ sur l'élément
\[
\sum_{i=1}^r \frac{\partial g_j}{\partial y_i}\big(h_1(x_1, \dots, x_r), \dots, h_r(x_1, \dots, x_r)\big)dh_i,
\]
qui est nul dans $\Omega^1_A$ par la condition \eqref{eqn:annulation-morphisme} dans la définition de morphisme de variétés algébriques (en effet, comme le côté gauche de l'équation dans \loccit s'annule sur tous les points complexes $X(\CC)$, il est de la forme $f_1P_1+\dots+f_mP_m$, et sa dérivée, donnée par l'expression ci-dessus par la règle de la chaîne, est donc nulle dans $\Omega^1_A$ au vu des relations $f_j=0$ dans $A$ et $df_j=0$ dans $\Omega^1_A$). L'application ainsi obtenue s'étend, en prenant des puissances extérieures, en des applications $\QQ$-linéaires $h^\ast \colon \Omega^p_B \to \Omega^p_A$ pour tout $p$; elles commutent aux différentielles, d'où une application $\QQ$-linéaire
\begin{displaymath}
h^\ast \colon \rH^p_{\dR}(Y) \longrightarrow \rH^p_{\dR}(X)
\end{displaymath}
de la cohomologie de de~Rham algébrique de $Y$ vers celle de $X$.

\subsection{Premiers exemples}

En guise d'illustration, on reprend d'un point de vue algébrique les exemples \ref{exp:point} et \ref{exmp:lacet0} de la section précédente. On vérifie également que la cohomologie de la droite affine est concentrée en degré $0$, en accord avec le fait que l'espace topologique $\CC$ est contractile.

\begin{exemple}[les variétés de dimension $0$]\label{exmp:deRhamdim0} Soient $f \in \QQ[x]$ un poly\-nôme irréductible de degré $d \geq 1$ et $X \subset \mathbb{A}^1$ la variété algébrique affine définie par l'annulation de $f$; c'est une variété lisse de dimension $0$ car $f$ n'a pas de racine double. Les points complexes de $X$ sont les racines de $f$ dans $\CC$. L'anneau des fonctions sur $X$ est égal à
\begin{displaymath}
A=\QQ[x] \slash (f),
\end{displaymath}
qui est un corps car l'idéal engendré par un polynôme irréductible est maximal. Ce corps est une extension finie de $\QQ$, ayant pour base les fonctions $1, x, \dots, x^{d-1}$. Par définition, le $A$-module des formes différentielles sur~$X$ est le quotient $\Omega^1_A=Adx\slash (df)$. Or, au vu de l'égalité~$df=f' dx$ et du fait que~$f'$ est inversible dans $A$ par l'identité de Bézout, la~relation $df=0$ entraîne~$dx=0$ et donc $\Omega^p_A=0$ pour tout~$p \geq 1$ en prenant des puissances extérieures. Le complexe de de~Rham algébrique de~$X$ est ainsi réduit à $A$ et son seul groupe de cohomologie non nul est $\rH^0_{\dR}(X)=A$, vu comme $\QQ$-espace vectoriel.
\end{exemple}

\begin{exemple}[la droite affine]\label{exmp:cohomA1} Considérons le cas où $X=\mathbb{A}^1$ est la droite affine toute entière, c'est-à-dire qu'aucun polynôme ne s'annule. Le complexe de de~Rham algébrique est alors donné par
\begin{align}
d \colon \QQ[x] &\longrightarrow \QQ[x]dx \\
x^n &\longmapsto n x^{n-1} dx
\end{align}
et on doit calculer le noyau et le conoyau de cette flèche:
\begin{displaymath}
\rH^0_{\dR}(X)=\ker(d), \qquad \rH^1_{\dR}(X)=\QQ[x]dx \slash \im(d).
\end{displaymath}
Comme seuls les polynômes constants ont une dérivée nulle, en degré~$0$ on trouve $\rH^0_{\dR}(X)=\QQ$, et comme tout élément dans~$\QQ[x]$ y admet une primitive, la cohomologie en degré $1$ s'annule: $\rH^1_{\dR}(X)=0$. Ce résultat se généralise à l'espace affine de dimension $n$, dont le seul espace de cohomologie de~Rham non nul est à nouveau $\rH^0_{\dR}(\mathbb{A}^n)=\QQ$.
\end{exemple}

\begin{exemple}[la droite affine épointée]\label{exmp:Gm} Soit $X \subset \mathbb{A}^2$ le lieu des zéros du polynôme $xy-1$. L'égalité $xy=1$ est une façon algébrique d'exprimer le fait que $x$ n'est pas nul. Étant son inverse, $y$ est alors uniquement déterminé par $x$, de sorte que~$X$ n'est rien d'autre que la droite affine épointée~$\mathbb{G}_m=\mathbb{A}^1 \setminus \{0\}$, plongée dans le plan affine comme une hyperbole. Il s'agit d'une variété lisse de dimension $1$ dont l'anneau des fonctions est l'anneau des polynômes de Laurent
\begin{displaymath}
A=\QQ[x, y] \slash (xy-1) \simeq \QQ[x, x^{-1}]
\end{displaymath}
et le $A$-module des formes différentielles est donné par
\begin{displaymath}
\Omega^1_A=(Adx \oplus Ady) \slash (xdy+ydx) \simeq \QQ[x, x^{-1}]dx.
\end{displaymath}
En effet, les relations $xy=1$ dans $A$ et $xdy=-ydx$ dans $\Omega^1_A$ entraînent $dy=-x^{-2} dx$ et la différentielle~$dx$ suffit donc pour engendrer~$\Omega^1_A$. On en déduit que~$\Omega^p_A$ est nul pour tout $p \geq 2$ du fait qu'un produit extérieur de deux formes égales s'annule. Le complexe de de~Rham algébrique est donc le complexe à deux termes
\begin{align*}
d \colon \QQ[x, x^{-1}] &\longrightarrow \QQ[x, x^{-1}]dx \\
x^n &\longmapsto n x^{n-1} dx
\end{align*}
et on doit calculer le noyau et le conoyau de la différentielle:
\begin{displaymath}
\rH^0_{\dR}(X)=\ker(d), \qquad \rH^1_{\dR}(X)=\QQ[x, x^{-1}]dx \slash \im(d).
\end{displaymath}
La cohomologie en degré $0$ est réduite aux fonctions constantes. Quant au premier groupe de cohomologie, l'image de $d$ est l'espace vectoriel engendré par les formes $x^n dx$ pour tout $n \neq -1$; seule la forme différentielle $\sfrac{dx}{x}$, dont la primitive n'est pas un polynôme de Laurent, survit dans le quotient. On a donc des isomorphismes
\begin{displaymath}
\rH^0_{\dR}(X)\simeq \QQ, \qquad \rH^1_{\dR}(X)\simeq\QQ[\sfrac{dx}{x}].
\end{displaymath}
\end{exemple}

\subsection{Des variétés affines aux variétés projectives}\label{sec:varproj}

Soit $n \geq 1$ un entier. L'espace projectif complexe de dimension $n$ est le quotient $\PP^n(\CC)$ de $\CC^{n+1} \setminus \{0\}$ par l'action de $\CC^\times$ par homothétie, c'est-à-dire par la relation d'équivalence
\[
(x_0, \ldots, x_n) \sim (x_0', \ldots, x_n') \Longleftrightarrow \text{il existe $\lambda \in \CC^\times$ tel que $x_i'=\lambda x_i$}.
\]
On note $[x_0 \colon\cdots\colon x_n]$ la classe de $(x_0, \dots, x_n)$ dans ce quotient. On peut écrire $\PP^n(\CC)$ comme la réunion des $n+1$ sous-ensembles
\[
U_i=\{[x_0\colon\cdots\colon x_n] \in \PP^n(\CC) \,|\, x_i \neq 0 \} \qquad (i=0, \dots, n),
\]
qui sont chacun en bijection avec $\CC^n$ par les \emph{cartes}
\begin{align}
\varphi_i\colon U_i &\stackrel{\sim}{\longrightarrow} \CC^n \\
[x_0\colon\cdots\colon x_n] &\longmapsto \Bigl(\frac{x_0}{x_i}, \dots, \widehat{\frac{x_i}{x_i}}, \dots, \frac{x_n}{x_i}\Bigr),
\end{align}
où le chapeau indique que le terme $\sfrac{x_i}{x_i}$ est omis. Pour $i\neq j$, notons $(y_0, \dots, \hat{y}_i, \dots, y_n)$ les coordonnées sur~$\varphi_i(U_i)$. Le sous-ensemble 
\[
\varphi_i(U_i \cap U_j) \subset \CC^n
\] est alors formé des points où $y_j$ n'est pas nul et l'application $\varphi_j \circ \varphi_i^{-1}$ fournit une bijection, dite \emph{changement de cartes},
\begin{align*}
\varphi_i(U_i \cap U_j) &\stackrel{\sim}{\longrightarrow} \varphi_j(U_i \cap U_j) \\
(y_0, \dots, \hat{y}_i, \dots, y_j, \dots, y_n) &\longmapsto \Bigl(\frac{y_0}{y_j}, \dots, \frac{1}{y_j}, \dots, \widehat{\frac{y_j}{y_j}}, \dots, \frac{y_n}{y_j}\Bigr).
\end{align*}
En notant $(z_0, \dots, \hat{z}_j, \dots, z_n)$ les coordonnées sur $\varphi_j(U_j)$, ce changement de cartes permet d'identifier $z_k$ à $\sfrac{y_k}{y_j}$ pour $k \neq i$ et $z_i$ à $\sfrac{1}{y_j}$ sur le sous-espace $\varphi_j(U_i \cap U_j)$ défini par la condition $z_i \neq 0$.

Comme $\CC^n$ est l'ensemble des points complexes de l'espace affine~$\AA^n$, il est naturel de définir l'espace projectif $\PP^n$ sur $\QQ$ comme la variété algébrique obtenue en recollant $n+1$ copies de~$\AA^n$: partant de deux telles \emph{cartes affines}, d'anneaux de fonctions
\[
\QQ[y_0, \dots, \hat{y}_i, \dots, y_n] \quad \text{et} \quad \QQ[z_0, \dots, \hat{z}_j, \dots, z_n],
\]
on identifie les variétés affines définies par la non-annulation de $y_j$ et de $z_i$ par le biais de l'isomorphisme de variétés
\begin{align*}
\mathbb{A}^{j-1} \times \mathbb{G}_m \times \mathbb{A}^{n-j} &\stackrel{\sim}{\longrightarrow} \mathbb{A}^{i-1} \times \mathbb{G}_m \times \mathbb{A}^{n-i} \\ 
(\underbrace{y_0, \dots, \hat{y}_i, \dots,}_{j-1} y_j, \underbrace{\dots, y_n}_{n-j}) &\longmapsto (\underbrace{y_0 w, \dots,}_{i-1} w, \underbrace{\dots, \widehat{y_j w}, \dots, y_nw}_{n-i}), 
\end{align*} où $w$ désigne l'inverse de $y_j$. Sur les anneaux de fonctions, cet isomorphisme correspond à l'application
\[
\QQ[z_0, \dots, \hat{z}_j, \dots, z_n, t]/(z_it-1) \longrightarrow \QQ[y_0, \dots, \hat{y}_i, \dots, y_n, w]/(y_jw-1)
\]
qui envoie $z_k$ sur $y_kw$ pour $k \neq i$ et $z_i$ sur $w$.

Plus généralement, le lieu des zéros d'une famille de polynômes homogènes\footnote{La valeur d'un polynôme en un point de l'espace projectif n'est pas bien définie, car $(x_0, \dots, x_n)$ et $(\lambda x_0, \dots, \lambda x_n)$ représentent le même point. Par contre, si $f$ est un polynôme homogène de degré $d$, alors $f(\lambda x_0, \dots, \lambda x_n)=\lambda^d f(x_0, \dots, x_n)$ pour tout $\lambda$ et on peut donc parler du lieu des zéros de $f$.} à coefficients rationnels
\[
f_1, \dots, f_m \in \QQ[x_0, \dots, x_n]
\]
définit une \emph{variété algébrique projective} $X \subset \PP^n$, à laquelle on peut penser comme le résultat du recollement des $n+1$ cartes affines \hbox{$X_i \subset \AA^n$} définies par l'annulation des polynômes
\[
f_j(x_0, \dots, x_{i-1}, 1, x_{i+1}, \dots, x_n).
\]
Les points complexes de $X$ sont donnés par
\[
X(\CC)=\{[x_0 \colon \cdots\colon x_n] \in \PP^n(\CC) \,|\, f_j(x_0, \dots, x_n)=0 \text{ pour tout }j\}.
\]
Comme tout point $P \in X(\CC)$ appartient à une des cartes affines, la définition \ref{def:varlisse} permet d'étendre la notion de variété lisse aux variétés projectives. Par exemple, le lieu des zéros $X \subset \PP^n$ d'un polynôme homogène non nul $f$ est une variété projective lisse de dimension~$n-1$ si la seule racine commune des polynômes
\begin{displaymath}
\frac{\partial f}{\partial x_0}, \dots, \frac{\partial f}{\partial x_n}
\end{displaymath}
dans $\CC^{n+1}$ est le point $(0, \dots, 0)$; contrairement au cas affine, l'annulation de $f$ est alors automatique compte tenu de la relation d'Euler
\[
\deg(f)f=x_0 \frac{\partial f}{\partial x_0}+\cdots+x_n \frac{\partial f}{\partial x_n}.
\]

\subsection{Cohomologie de de~Rham d'une variété projective}

Soit $X \subset \PP^n$ une variété projective lisse. Si l'on disposait d'une définition de la cohomologie de de~Rham algébrique de $X$ pour laquelle il y aurait une suite exacte longue de Mayer-Vietoris, on pourrait la calculer à partir des cohomologies des cartes affines $X_i \subset \mathbb{A}^n$ et de leurs intersections. Une idée naturelle est alors de la \emph{définir} de sorte que Mayer-Vietoris soit satisfaite. Pour réduire les notations au minimum, on se contentera de traiter le cas où $X$ peut être recouverte par deux cartes affines~$U$ et $V$. C'est notamment le cas si la variété $X$ est de dimension $1$. Pour les courbes planes~\hbox{$X \subset \PP^2$}, on peut le démontrer en observant que la réunion de deux cartes affines de $\PP^2$ recouvre tout l'espace sauf un point où deux coordonnées s'annulent. Si $X(\CC)$ ne contient pas l'un des points $[1\colon 0\colon 0]$, $[0\colon 1\colon 0]$ ou $[0 \colon 0\colon 1]$, on prend pour recouvrement les traces sur $X$ des deux cartes en question; en cas contraire, on se ramène à cette situation par une translation. Pour une courbe générale, on peut choisir une fonction méromorphe non constante et prendre pour cartes le lieu où cette fonction n'a pas de pôles et le lieu où elle ne s'annule pas.\enlargethispage{-3\baselineskip}

Partant des complexes de de~Rham algébriques sur $U$, sur $V$ et sur~$U \cap V$, on est amené à considérer un \emph{complexe double}\footnote{Un complexe double d'espaces vectoriels est la donnée d'une famille $(A^{p, q})_{p, q \geq 0}$ d'espaces vectoriels et d'applications linéaires
\[
d^{p, q}_{\mathrm{ver}} \colon A^{p, q} \to A^{p, q+1} \quad \text{et} \quad d^{p, q}_{\mathrm{hor}} \colon A^{p, q} \to A^{p+1, q},
\]
les différentielles \emph{verticales} et les différentielles \emph{horizontales}, commutant entre elles et telles que les compositions $d^{p, q+1}_{\mathrm{ver}}\circ d^{p, q}_{\mathrm{ver}}$ et $d^{p+1, q}_{\mathrm{hor}}\circ d^{p, q}_{\mathrm{hor}}$ soient nulles pour tous~$p$ et~$q$ (autrement dit, chaque ligne et chaque colonne forme un complexe cohomologique). Le complexe total associé à un complexe double est obtenu en considérant les anti-diagonales, munies des applications
\[
\bigoplus_{p+q=n} A^{p, q} \longrightarrow \bigoplus_{r+s=n+1} A^{r, s}
\]
induites par $(-1)^p d^{p, q}_{\mathrm{ver}}$ et par $d^{p, q}_{\mathrm{hor}}$ sur le facteur $A^{p, q}$. Ce changement de signe garantit que la composition de deux telles applications successives est nulle.}.

C'est le complexe suivant:
\[
\xymatrix@R=.5cm{
\vdots & \vdots \\ \Omega^2_U \oplus \Omega^2_V \ar[u] \ar[r] & \Omega^2_{U \cap V} \ar[u] \\
\Omega^1_U \oplus \Omega^1_V \ar[u] \ar[r] & \Omega^1_{U \cap V} \ar[u] \\
\Omega^0_U \oplus \Omega^0_V \ar[r] \ar[u] & \Omega^0_{U \cap V} \ar[u]
}
\]
en degrés $p=1$ et $p=2$, où les flèches verticales sont données par les différentielles dans les complexes de de Rham algébriques et les flèches horizontales sont les différences $\iota_{U\cap V, U}^\ast-\iota_{U\cap V, V}^\ast$ des morphismes induits par les inclusions de $U \cap V$ dans $U$ et dans $V$. Le \emph{complexe total} associé à ce complexe double est alors égal à
\begin{equation}\label{eqn:complexetotal}
\Omega^0_U \oplus \Omega^0_V \longrightarrow \Omega^1_U \oplus \Omega^1_V \oplus \Omega^0_{U \cap V} \longrightarrow \Omega^2_U \oplus \Omega^2_V \oplus \Omega^1_{U \cap V} \longrightarrow \cdots,
\end{equation}
avec les signes des différentielles verticales changés une fois sur deux, c'est-à-dire que toutes les flèches $\Omega^q_U \oplus \Omega^q_V \to \Omega^{q+1}_U \oplus \Omega^{q+1}_V$ sont affectées d'un signe négatif. Par exemple, les deux premières flèches dans~\eqref{eqn:complexetotal} sont données par les matrices
\[
\setlength\arraycolsep{5pt}
\left(\begin{matrix}
-d^0 & 0 \\
0 & -d^0 \\
\iota_{U\cap V, U}^\ast & -\iota_{U\cap V, V}^\ast
\end{matrix}\right) \quad \text{et} \quad \left(\begin{matrix}
-d^1 & 0 & 0 \\
0 & -d^1 & 0 \\
\iota_{U\cap V, U}^\ast & -\iota_{U\cap V, V}^\ast & d^0
\end{matrix}\right),
\]
et on vérifie qu'avec ce choix de signes leur composition
\[
\setlength\arraycolsep{5pt}
\left(\begin{matrix}
d^1 \circ d^0 & 0 \\
0 & d^1 \circ d^0 \\
d^0\circ\iota_{U\cap V, U}^\ast -\iota_{U\cap V, U}^\ast\circ d^0 & \iota_{U\cap V, V}^\ast\circ d^0
-d^0\circ\iota_{U\cap V, V}^\ast
\end{matrix}\right)
\]
est bien nulle, vu que $d^1 \circ d^0$ l'est et que les applications induites par les restrictions commutent à la différentielle.

\begin{definition}\label{def:dRcasprojectif}
Soit $X \subset \PP^n$ une variété projective lisse obtenue en recollant deux cartes affines $U$ et $V$. La cohomologie de de~Rham algébrique de $X$ en degré $p$ est le quotient
\[
\rH^p_{\dR}(X)=\frac{\ker(\Omega^p_U \oplus \Omega^p_V \oplus \Omega^{p-1}_{U \cap V} \longrightarrow \Omega^{p+1}_U \oplus \Omega^{p+1}_V \oplus \Omega^p_{U \cap V})}{\mathrm{im}(\Omega^{p-1}_U \oplus \Omega^{p-1}_V \oplus \Omega^{p-2}_{U \cap V} \longrightarrow \Omega^p_U \oplus \Omega^p_V \oplus \Omega^{p-1}_{U \cap V})}.
\]
\end{definition}

Par définition, une classe en cohomologie de de~Rham peut donc être représentée par un triplet
\[
\omega=(\omega_U, \omega_V, \omega_{U\cap V}) \quad \text{avec}\quad \omega_U \in \Omega^p_U, \ \omega_V \in \Omega^p_V,\ \omega_{U \cap V} \in \Omega^{p-1}_{U\cap V}
\]
satisfaisant aux relations
\begin{align*}
&d\omega_U=d\omega_V=0, \quad d\omega_{U \cap V}+\iota^\ast_{U \cap V, U}(\omega_U)-\iota^\ast_{U \cap V, V}(\omega_V)=0,
\end{align*}
et deux tels représentants définissent la même classe si et seulement si leur différence est de la forme
\begin{equation}\label{eqn:ambigdR}
\bigl(d\eta_U, d\eta_V, d\eta_{U \cap V}+\iota^\ast_{U \cap V, V}(\eta_V)-\iota^\ast_{U \cap V, U}(\eta_U)\bigr)
\end{equation}
pour des éléments $\eta_U \in \Omega^{p-1}_U, \eta_V \in \Omega^{p-1}_V$ et $\eta_{U\cap V} \in \Omega^{p-2}_{U \cap V}$. 

C'est précisément ce qu'il faut pour avoir une suite exacte longue de Mayer-Vietoris 
\begin{displaymath}
\xymatrix @R=1.25ex @C=1.1ex
{
0 \ar[rr] && \rH^0_{\dR}(X) \ar[rr]^-{\beta} &&\rH_{\dR}^0(U) \oplus \rH_{\dR}^0(V) \ar[rr]^-{\alpha} && \rH^0_{\dR}(U\cap V) \ar@{-}[r] & *{} \ar@{-}`r/9pt[d] `/9pt[l] \\
& *{} \ar@{-}`/9pt[d] `d/9pt[d]& && && & *{} \ar@{-}[llllll] & \\
& *{} \ar[r] & \ar[rr]^-{\beta} \rH^1_{\dR}(X) &&\ar[rr]^-{\alpha}\rH_{\dR}^1(U) \oplus \rH_{\dR}^1(V) && \rH^1_{\dR}(U\cap V)\ar@{-}[r] & *{} \ar@{-} `r/9pt[d] `[l]\\
& *{} \ar@{-}`/9pt[d] `d/9pt[d]& && && & *{} \ar@{-}[llllll] & \\
& *{} \ar[r] & \ar[rr] \rH^2_{\dR}(X) && \cdots && &&
}
\end{displaymath}
dans laquelle les applications $\alpha$ et $\beta$ sont données par
\[
\alpha=\iota^\ast_{U \cap V, V}-\iota^\ast_{U \cap V, U}, \quad \beta([\omega_U, \omega_V, \omega_{U \cap V}])=([\omega_U], [\omega_V])
\]
et la flèche connectante envoie la classe d'une forme fermée \hbox{$\omega \in \Omega^{p-1}_{U \cap V}$} sur la classe du triplet $(0, 0, \omega)$ dans $\rH^p_{\dR}(X)$.

\begin{exemple}[la droite projective]\label{exmp:cohomologieP1} Dans le cas où $X=\PP^1$ est la droite projective, c'est-à-dire qu'aucun polynôme homogène ne s'annule, on peut choisir comme recouvrement les deux cartes affines standard. Écrivons donc $\PP^1=U \cup V$, avec
\begin{align*}
U&=\{[x \colon y] \in \PP^1 \mid y\neq 0 \}=\{[t \colon 1] \mid t \in \mathbb{A}^1\}, \\
V&=\{[x \colon y] \in \PP^1 \mid x\neq 0 \}=\{[1 \colon s] \mid s \in \mathbb{A}^1\}.
\end{align*}
Ces deux copies de la droite affine s'intersectent le long d'une droite affine épointée $\mathbb{G}_m$, sur laquelle les coordonnées~$t$ et $s$ sont reliées par l'isomorphisme~\hbox{$s \mapsto \sfrac{1}{t}$}; c'est ainsi que l'on recolle les deux cartes. En choisissant~$t$ comme coordonnée sur $U \cap V$, on trouve que le complexe total~\eqref{eqn:complexetotal} est donné dans cet exemple par
\begin{displaymath}
\QQ[t] \oplus \QQ[s] \To{a}\QQ[t]dt \oplus \QQ[s]ds \oplus \QQ[t, t^{-1}] \To{b} \QQ[t, t^{-1}]dt,
\end{displaymath}
où $a$ et $b$ sont les flèches\enlargethispage{-3\baselineskip}
\begin{align*}
a(f, g)&=(-f'(t)dt, -g'(s)ds, f(t)-g(\sfrac{1}{t})), \\
b(Pdt, Qds, h)&=(P(t)+Q(\sfrac{1}{t})t^{-2}+h'(t))dt.
\end{align*}
Il s'agit donc de calculer les espaces suivants:
\begin{gather*}
\rH^0_{\dR}(X)=\ker(a), \qquad \rH^1_{\dR}(X)=\ker(b)/\mathrm{im}(a), \\ \rH^2_{\dR}(X)=\QQ[t, t^{-1}]dt /\mathrm{im}(b).
\end{gather*}

Faisons-le d'abord à la main: 
\begin{itemize}
\item Si $(f, g) \in \QQ[t] \oplus \QQ[s]$ est dans le noyau de $a$, alors $f$ et $g$ sont des polynômes constants, vu que leurs dérivées sont nulles, qui prennent la même valeur, car $f(t)=g(1/t)$. On trouve donc $\rH^0_{\dR}(X)=\QQ$. 

\item Si $(Pdt, Qds, h) \in \QQ[t]dt \oplus \QQ[s]ds \oplus \QQ[t, t^{-1}]$ appartient au noyau de $b$, en choisissant pour $f$ une primitive de $-P$ et pour $g$ une primitive de $-Q$, on voit que $a(f, g)=(Pdt, Qds, h)$ en utilisant la condition $P(t)+Q(\sfrac{1}{t})t^{-2}+h'(t)=0$, d'où $\rH^1_{\dR}(X)=0$. 

\item Enfin, l'image de $b$ contient les formes $t^n dt=b(0, 0, \sfrac{t^{n+1}}{(n+1)})$ pour tout $n \neq -1$, mais pas la forme $dt/t$. En effet, l'équation 
\[
P(t)+Q(1/t)t^{-2}+h'(t)=\sfrac{1}{t} 
\] n'admet pas de solution $(P, Q, h) \in \QQ[t] \oplus \QQ[s] \oplus \QQ[t, t^{-1}]$, car aucun des trois termes dans le membre de gauche ne contient de monômes de degré $-1$. On trouve donc $\rH^2_{\dR}(X)=\QQ[dt/t]$. 

\end{itemize}

Une fois que l'on connaît les cohomologies de $\AA^1$ (exemple \ref{exmp:cohomA1}) et de $\mathbb{G}_m$ (exemple~\ref{exmp:Gm}), on peut retrouver ces groupes de cohomologie sans calcul par le biais de la suite de Mayer-Vietoris \begin{displaymath}
\xymatrix @R=1.25ex @C=1.1ex
{
0 \ar[rr] && \rH^0_{\dR}(\PP^1) \ar[rr] &&\rH_{\dR}^0(\AA^1) \oplus \rH_{\dR}^0(\AA^1)\ar@{=}[d] \ar[rr]^-{\alpha} && \rH^0_{\dR}(\mathbb{G}_m)\ar@{=}[d] \ar@{-}[r] & *{} \ar@{-}`r/9pt[d] `[dd] \\
& & &&\QQ \oplus\QQ &&\QQ&&\\
& *{} \ar@{-}`/9pt[d] `d/9pt[d]& && && & *{} \ar@{-}[llllll] & \\
& *{} \ar[r] & \ar[rr] \rH^1_{\dR}(\PP^1) &&\ar[rr] \rH_{\dR}^1(\AA^1) \oplus \rH_{\dR}^1(\AA^1)\ar@{=}[d] && \rH^1_{\dR}(\mathbb{G}_m)\ar@{=}[d] \ar@{-}[r] & *{} \ar@{-}`r/9pt[d] `[dd] \\
& & &&0 &&\QQ[\sfrac{dt}{t}]&& \\
& *{} \ar@{-}`/9pt[d] `d/9pt[d]& && && & *{} \ar@{-}[llllll] & \\
& *{} \ar[r] & \ar[rr] \rH^2_{\dR}(\PP^1) &&0. && &
}
\end{displaymath}
En effet, vu que $\alpha$ est l'application $(x, y) \mapsto y-x$ et que le second morphisme connectant est un isomorphisme, on trouve
\[
\rH^0_{\dR}(\PP^1)=\QQ, \quad \rH^1_{\dR}(\PP^1)=0, \quad \rH^2_{\dR}(\PP^1)=\QQ[\sfrac{dt}{t}].
\]
\end{exemple}

\subsection{L'exemple des courbes elliptiques}\label{exmp:courbeelliptique}

Comme on le verra dans la section \ref{exmp:int-elliptiques}, le pendant algébrique des tores complexes de l'exemple \ref{exmp:torecomplexe} sont les courbes planes définies par une équation de degré $3$. Considérons le polynôme\footnote{Les choix du coefficient dominant $4$ et des signes négatifs dans ce polynôme sont motivés par des raisons historiques, liées à l'uniformisation complexe des courbes elliptiques présentée dans la section \ref{exmp:int-elliptiques}.}
\begin{displaymath}
f(x)=4x^3-ax-b,
\end{displaymath}
où $a$ et $b$ sont des nombres rationnels, et la variété affine $X \subset \mathbb{A}^2$ définie par l'annulation de $y^2-f(x).$ Les dérivées partielles de ce dernier polynôme étant~$2y$ et $-f'$, la variété~$X$ est lisse de dimension~$1$ si et seulement si~$f$ et $f'$ n'ont pas de racine commune, c'est\nobreakdash-à\nobreakdash-dire si le discriminant de $f$ est non nul. On supposera dorénavant que c'est le cas. En termes des coefficients $a$ et~$b$, cette condition se traduit par la non-annulation du discriminant~$a^3-27b^2$ et on dit alors que~$X$ est une \textit{courbe elliptique affine}. Son anneau de fonctions est donné par 
\begin{displaymath}
A=\QQ[x, y] \slash (y^2-f(x)) \simeq \QQ[x] \oplus \QQ[x]y
\end{displaymath}
et son $A$-module des $1$-formes différentielles par
\begin{displaymath}
\Omega^1_A=(A dx \oplus A dy) \slash (2ydy-f'(x)dx).
\end{displaymath}

Pour calculer la cohomologie de de~Rham, on donne d'abord une présentation plus convenable de $\Omega^1_A$. Comme~$f$ et $f'$ sont premiers entre eux, par l'identité de Bézout il existe des polynômes $P, Q \in \QQ[x]$ satisfaisant à~\hbox{$Pf+Qf'=1$.} Regardons la forme différentielle
\begin{displaymath}
\omega=Pydx+2Qdy \in \Omega^1_A.
\end{displaymath}
En utilisant les relations $y^2=f(x)$ dans $A$ et $2ydy=f'(x)dx$ dans~$\Omega^1_A$, on trouve
\begin{align*}
y\omega&=Py^2dx+2yQdy=(Pf+Qf')dx=dx \\
f'\omega&=Pf'ydx+2Qf'dy=2(Py^2+Qf')dy=2dy.
\end{align*}
Introduire $\omega$ est ainsi la façon algébrique de parler de la forme différentielle $dx/y$, qui ne définirait pas \emph{a priori} un élément dans $\Omega^1_A$ car la fonction $y$ n'est pas inversible dans $A$. Les égalités ci-dessus montrent que les générateurs $dx$ et $dy$ de $\Omega^1_A$ s'expriment en termes de $\omega$ et, plus précisément, que toute $1$-forme différentielle s'écrit de manière unique comme $(R+Sy)\omega$ pour certains polynômes~$R, S \in \QQ[x]$. En particulier, $\Omega^p_A$ s'annule pour tout $p \geq 2$ et le complexe de de~Rham algébrique est formé par les deux termes
\begin{align*}
A &\To{d}\Omega^1_A \\ T+Uy &\longmapsto d(T+Uy).
\end{align*}

Il s'agit maintenant de déterminer quels éléments $(R+Sy)\omega \in \Omega^1_A$ ne sont pas de la forme \begin{align*}
d(T+Uy)&=T'dx+U'ydx+Udy \\
&=(T'y+U'y^2+\sfrac{Uf'}{2})\omega \\
&=(U'f+\sfrac{Uf'}{2}+T'y)\omega.
\end{align*}
En prenant $U=0$ et en choisissant pour $T$ une primitive de $S$, on voit d'abord que tous les éléments de la forme $Sy\omega$ appartiennent à l'image de $d$, puis qu'il en va de même pour tous les $R\omega$ avec $R$ un monôme de degré~$\geq 2$. En effet, pour $T=0$ et $U=x^r$, le terme dominant du polynôme qui multiplie $\omega$ est $(4r+6)x^{r+2}$, qui est de degré $\geq 2$ car~$r$ est un entier. Il s'ensuit que le premier groupe de cohomologie de de~Rham de $X$ est le $\QQ$-espace vectoriel de dimension~$2$ engendré par les formes~$\omega$ et $x\omega$:
\begin{displaymath}
\rH^1_{\dR}(X)\simeq \QQ\biggl[\frac{dx}{y}\biggr] \oplus \QQ\biggl[\frac{xdx}{y}\biggr].
\end{displaymath}

Ces deux formes différentielles se distinguent par leur comportement à l'infini. En général, ce que l'on appelle une \textit{courbe elliptique} tout court est le lieu des zéros $E \subset \mathbb{P}^2$ du polynôme homogène
\begin{equation}\label{eqn:projective}
x_1^2x_2-4x_0^3-ax_0x_2^2-bx_2^3
\end{equation}
dans le plan projectif de coordonnées $[x_0 \colon x_1 \colon x_2]$, qui s'obtient en rajoutant à la courbe elliptique affine le point à l'infini~$O=[0 \colon 1 \colon 0]$. Comme on l'a vu pour la droite projective, $E$ est obtenue en recollant deux cartes affines. Au nom des variables près, on récupère l'équation de départ, et donc la courbe elliptique affine $X$, en posant~$x_2=1$ dans~\eqref{eqn:projective}. D'un autre côté, en remplaçant $x_1=1$ et en renommant~$z$ et $t$ les variables $x_0$ et $x_2$, on trouve le polynôme
\begin{equation}\label{eqn:carte-affine-ell}
t-4z^3-azt^2-bt^3.
\end{equation}
Ce polynôme définit encore une variété affine lisse de dimension $1$, cette fois\nobreakdash-ci contenant le point à l'infini $O$ mais pas les trois zéros de la fonction $y$. Sur l'intersection de ces cartes affines, qui est donc la courbe elliptique privée du point à l'infini et des trois zéros de $y$, les deux systèmes de coordonnées sont reliées par le changement de carte
\begin{displaymath}
(x, y) \mapsto (z, t)=(x/y, 1/y).
\end{displaymath}

Une fonction rationnelle sur la courbe elliptique \textit{complète} $E$ a le même nombre de zéros que de pôles comptés avec multiplicité \hbox{\cite[Ch.\,VIII, Prop.\,2.7]{perrin}}. Comme les fonctions $x$ et $y$ ont, respectivement, deux et trois zéros simples sur~$X$, elles doivent avoir un pôle d'ordre $2$ et d'ordre $3$ au point à l'infini: en choisissant la variable $z$ comme coordonnée locale autour de $O$, il existe des séries entières
\begin{displaymath}
g(z)=c_0+c_1z+\cdots \quad\text{et}\quad h(z)=d_0+d_1z+\cdots,
\end{displaymath}
ne s'annulant pas en $z=0$ telles que $x=z^{-2} g(z)$ et $y=z^{-3} h(z)$. En remplaçant ces expressions dans $y^2=4x^3-ax-b$ on trouve que les premiers coefficients satisfont aux relations
\begin{equation}\label{eqn:coefficients}
4c_0^3=d_0^2 \quad \text{et} \quad 6c_0^2c_1=d_0d_1.
\end{equation}
Quitte à renormaliser $z$, on peut supposer $c_0=1$ et $d_0=2$. Dans cette coordonnée locale, la forme différentielle $\omega$ devient
\[
\frac{dx}{y}=\frac{d(z^{-2} g(z))}{z^{-3}h(z)}=\frac{-2g(z)+zg'(z)}{h(z)}dz=\bigl(-1+\cdots\bigr)dz,
\]
ce qui montre qu'elle n'a pas de pôle à l'infini; dans le langage classique, on dit que $\omega$ est une différentielle \textit{de première espèce}. Par contre, la forme $x\omega$ a un pôle double à l'infini, comme le calcul
\begin{displaymath}
x\frac{dx}{y}=\frac{-2g(z)^2+zg(z)g'(z)}{z^2h(z)}dz.
\end{displaymath}
le montre. Son résidu est le coefficient de $\sfrac{1}{z}$ dans la série
\begin{displaymath}
\frac{-2g(z)^2+zg(z)g'(z)g(z)}{z^2h(z)}=-\frac{1}{z^2}+\frac{d_1-3c_1}{2}\frac{1}{z}+\cdots,
\end{displaymath}
qui est nul par les relations \eqref{eqn:coefficients} ci-dessus. Ainsi, $x\sfrac{dx}{y}$ n'a pas de résidu; une telle forme s'appelle une différentielle de \textit{deuxième espèce}.

Calculons maintenant la cohomologie de de~Rham algébrique de la courbe elliptique projective $E \subset \PP^2$ en termes du recouvrement ci-dessus. On écrit donc~$E=U \cup V$, avec $U=E \setminus \{O\}$ et $V$ la courbe elliptique privée des trois points $[e_i \colon 0\colon1]$ avec $f(e_i)=0$. Les anneaux de fonctions sur $U$, sur $V$ et sur $U \cap V$ sont égaux à
\begin{align*}
A&=\QQ[x, y]/(y^2-4x^3+ax+b), \\
B&=\QQ[z, t]/(t-4z^3-azt^2-bt^3), \\
C&=\QQ[x, y, w]/(y^2-4x^3+ax+b, yw-1),
\end{align*}
et avec les notations du numéro précédent, il s'agit de calculer la cohomologie du complexe
\[
\xymatrix{
\Omega^0_U \oplus \Omega^0_V \ar[rrr]^-{\left(\begin{smallmatrix} -d & 0 \\ 0 & -d \\ \iota_{U\cap V, U}^\ast & -\iota_{U\cap V, V}^\ast \end{smallmatrix}\right)} & & & \Omega^1_U \oplus \Omega^1_V \oplus \Omega^0_{U \cap V} \ar[rr]^-{\left(\begin{smallmatrix} \iota_{U\cap V, U}^\ast \\ -\iota_{U\cap V, V}^\ast \\ d \end{smallmatrix}\right)} & & \Omega^1_{U \cap V}.
}
\]

Le noyau de la première flèche est isomorphe à $\QQ$, puisque les seules fonctions à dérivée nulle sur $A$ et $B$ sont les constantes et que leurs valeurs doivent coïncider pour que la paire appartienne au noyau de~$\iota_{U\cap V, U}^\ast-\iota_{U\cap V, V}^\ast$; on a donc un isomorphisme $\rH^0_{\dR}(E) \simeq \QQ$.

Pour déterminer la cohomologie en degré $1$, on démontrera l'analogue du fait que l'homologie singulière en degré $1$ d'un tore ne change pas en lui ôtant un point (exemple \ref{exmp:torecomplexe}), à savoir:

\begin{proposition}\label{prop:H1courbeelliptiqueaffine} L'application
\begin{equation}\label{eqn:1stand2ndkind}
\begin{aligned}
\rH^1_{\dR}(E) &\longrightarrow \rH^1_{\dR}(U) \\ [(\omega_U, \omega_V, g_{U \cap V})] &\longmapsto [\omega_U]
\end{aligned}
\end{equation}
est un isomorphisme.
\end{proposition}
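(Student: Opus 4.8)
La stratégie consiste à démontrer séparément la surjectivité et l'injectivité de l'application \eqref{eqn:1stand2ndkind}, en s'appuyant sur la description explicite du complexe total et sur la base $\{[dx/y],\,[x\,dx/y]\}$ de $\rH^1_{\dR}(U)$ dégagée plus haut. L'application est bien définie puisque deux représentants d'une même classe ont des premières composantes qui diffèrent d'une forme exacte sur $U$, et elle est manifestement $\QQ$-linéaire; il suffit donc de travailler au niveau des cocycles et des cobords du complexe total.

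Pour la surjectivité, je relèverais les deux générateurs. La forme $dx/y$ étant de première espèce, elle est régulière partout sur $E$, en particulier sur $V$, de sorte que le triplet $(dx/y, dx/y, 0)$ est un cocycle s'envoyant sur $[dx/y]$. Le cas de $x\,dx/y$ est plus délicat, car cette forme a un pôle double en $O$ et ne définit donc pas un élément de $\Omega^1_B$; l'idée est de lui retrancher la différentielle d'une fonction afin d'annuler ce pôle. Le point crucial est que, bien qu'aucune fonction régulière sur $U$ n'admette de pôle simple en $O$ (la fonction $x$ y a un pôle d'ordre $2$ et $y$ un pôle d'ordre $3$), la fonction $x^2/y=x^2 w$, à laquelle il est loisible d'avoir des pôles aux trois zéros de $y$ ôtés dans $V$, a un pôle simple en $O$. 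Un calcul dans la coordonnée locale $z$ donne $x^2/y=\frac{1}{2}z^{-1}+\cdots$, si bien que $\omega_V=x\,dx/y-2\,d(x^2/y)$ n'a plus de pôle en $O$: c'est ici qu'intervient de façon essentielle l'absence de résidu de $x\,dx/y$, qui garantit qu'aucun terme en $z^{-1}dz$ ne subsiste. Comme ses seuls pôles restants sont aux zéros de $y$, on a $\omega_V \in \Omega^1_B$, et le triplet $(x\,dx/y,\,\omega_V,\,-2x^2/y)$ est un cocycle relevant $[x\,dx/y]$.

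Pour l'injectivité, je supposerais qu'une classe $[(\omega_U, \omega_V, g)]$ s'envoie sur $0$, c'est-à-dire que $\omega_U=d\eta_U$ pour un $\eta_U \in A$. En retranchant le cobord associé à $(\eta_U, 0)$, on se ramène à un cocycle de la forme $(0, \omega_V, g)$ vérifiant $\iota^\ast_{U\cap V, V}(\omega_V)=dg$ sur $U \cap V$. Comme $O \in V$, la forme $\omega_V$ est régulière en $O$, donc $dg$ l'est aussi; or une fonction algébrique dont la différentielle n'a pas de pôle en $O$ n'en a pas davantage (il n'y a pas de logarithme parmi les fonctions rationnelles, comme on le voit sur le développement de Laurent), de sorte que $g$ est régulière en $O$. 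Ses seuls pôles étant aux zéros de $y$, on a $g \in B$ et $\omega_V=dg$ dans $\Omega^1_B$; le cocycle $(0, \omega_V, g)$ est alors le cobord de $(0, g) \in \Omega^0_U \oplus \Omega^0_V$, et la classe de départ est nulle.

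La principale difficulté est la surjectivité pour la différentielle de deuxième espèce $x\,dx/y$: il faut exhiber une fonction réalisant l'annulation du pôle en $O$, ce qui n'est possible qu'en autorisant des pôles aux trois points ôtés dans $V$ et repose de manière décisive sur la nullité du résidu en $O$ établie plus haut. C'est exactement le pendant, en cohomologie de de~Rham, du fait topologique (exemple~\ref{exmp:torecomplexe}) que le premier groupe d'homologie du tore ne change pas lorsqu'on lui ôte un point.
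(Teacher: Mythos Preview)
Your proposal is correct and follows essentially the same route as the paper: for surjectivity you lift $dx/y$ by the triple $(dx/y,dx/y,0)$ and $x\,dx/y$ by subtracting $d(2x^2/y)$ to cancel the pole at $O$, exactly as the paper does (your sign $g=-2x^2/y$ is in fact the one consistent with the cocycle convention $dg+\omega_U|_{U\cap V}-\omega_V|_{U\cap V}=0$ stated after Definition~\ref{def:dRcasprojectif}); for injectivity you reduce to a cocycle $(0,\omega_V,g)$ and show $g$ extends over $O$, which is a cosmetic rephrasing of the paper's argument that $\varphi_{U\cap V}=g_{U\cap V}+\eta_U$ extends to $V$.
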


\begin{proof} Si la classe de cohomologie dans $\rH^1_{\dR}(E)$ représentée par un triplet $(\omega_U, \omega_V, g_{U \cap V})$ appartient au noyau de cette application, alors il existe une fonction~$h_U$ avec \hbox{$dh_U=\omega_U$}. On déduit alors de la relation $dg_{U \cap V}+ \iota_{U\cap V, U}^\ast(\omega_U)- \iota_{U\cap V, V}^\ast(\omega_V)=0$ que la fonction $\varphi_{U \cap V}=g_{U \cap V}+\iota_{U\cap V, U}^\ast(h_U)$ satisfait à $d\varphi_{U \cap V}=\iota_{U\cap V, V}^\ast(\omega_V)$. Par continuité, la fonction~$\varphi_{U \cap V}$ s'étend en une fonction~$\varphi_V$ sur~$V$ satisfaisant à $d\varphi_V=\omega_V$ (en effet, si $\varphi_{U \cap V}$ avait un pôle sur~$V$, il en serait de même pour la forme $\omega_V$). La classe de départ est donc
\[
(\omega_U, \omega_V, g_{U \cap V})=(dh_U, d\varphi_V, \iota_{U\cap V, V}^\ast(\varphi_V)-\iota_{U\cap V, U}^\ast(h_U))
\] et les triplets de cette forme représentent la classe zéro en cohomologie. On a donc démontré que l'application est injective.

Pour établir la surjectivité, il suffit de démontrer que les générateurs $\sfrac{dx}{y}$ et $\sfrac{xdx}{y}$ de $\rH^1_{\dR}(U)$ sont dans l'image. Comme $\sfrac{dx}{y}$ n'a pas de pôle à l'infini, on peut considérer le triplet $(\sfrac{dx}{y}, \sfrac{dx}{y}, 0)$. Ce n'est plus possible pour la forme $\sfrac{xdx}{y}$, car elle a un pôle à l'infini, mais on peut prendre à la place un triplet
\[
(\sfrac{xdx}{y}, \sfrac{xdx}{y}-dg_{U \cap V}, g_{U \cap V})
\]
pour une fonction~$g_{U \cap V}$ telle que $\sfrac{xdx}{y}-dg_{U \cap V}$ n'ait pas de pôle à l'infini. Par exemple, $g_{U \cap V}=\sfrac{2x^2}{y}=\sfrac{1}{z}+\cdots$ fait l'affaire.
\end{proof}

Il nous reste à calculer la cohomologie en degré $2$; par analogie avec l'exemple \ref{exmp:torecomplexe}, on s'attend à trouver un espace de dimension~$1$. Vu que toute classe dans $\rH^2_{\dR}(E)$ est représentée par un élément de~$\Omega^1_C$, on commence par expliciter ce module. L'anneau $C$ est obtenu en inversant $y$ dans $A$; tous ses éléments s'écrivent donc de manière unique comme $\psfrac{P+Qy}{y^{2n}}$ pour des polynômes $P, Q \in \QQ[x]$ et un entier $n \geq 0$. De même, une forme différentielle admet une écriture unique $\psfrac{R+Sy}{y^{2n}}dx$. Par des manipulations similaires à celles qui nous ont permis de calculer la cohomologie de la courbe elliptique affine, on obtient alors un isomorphisme
\[
\rH^1_{\dR}(U \cap V)\simeq \QQ\biggl[\frac{dx}{y}\biggr] \oplus \QQ\biggl[\frac{xdx}{y} \biggr] \oplus \QQ\biggl[\frac{dx}{y^2}\biggr] \oplus \QQ\biggl[\frac{xdx}{y^2}\biggr]\oplus \QQ\biggl[\frac{x^2dx}{y^2}\biggr].
\]
Parmi ces classes, celles définies par une forme différentielle n'ayant pas de pôle sur $U$ ou sur $V$ ne contribuent pas à $\rH^2_{\dR}(E)$, puisqu'elles sont dans l'image de $\iota_{U\cap V, U}^\ast$ ou de $\iota_{U\cap V, V}^\ast$; c'est le cas, comme on le sait, pour les deux premières. Il en va de même pour les deux suivantes: en remplaçant $x=z^{-2}g(z)$ et $y=z^{-3}h(z)$, on voit qu'elles n'ont pas de pôle à l'infini. Par contre,
\begin{equation}\label{eqn:res-infty}
x^2\frac{dx}{y^2}=\frac{-2z^{-1}g(z)^3+g(z)^2g'(z)}{h(z)^2}dz=-\frac{1}{2}\frac{dz}{z}+\cdots
\end{equation}
a bien un pôle en ce point. L'espace $\rH^2_{\dR}(E)$ est, par conséquent, au plus engendré par la classe de $\sfrac{x^2dx}{y^2}$. Or, cette classe n'est pas nulle, car si la forme pouvait s'écrire comme
\[
dg_{U \cap V}+\iota_{U\cap V, V}^\ast(\omega_V)-\iota_{U\cap V, U}^\ast(\omega_U),
\]
alors elle aurait résidu zéro à l'infini. En effet, $\omega_V$ n'a pas de pôle en ce point, $\omega_U$ a résidu zéro car elle a au plus un pôle et que la somme des résidus d'une forme différentielle est toujours nulle, et $dg_{U \cap V}$ a résidu zéro en tout pôle, comme on le voit en dérivant le développement en série de Laurent de la fonction autour du point en question.

Ces propriétés de la cohomologie de de~Rham des courbes ellip\-tiques se généralisent comme suit aux courbes projectives lisses \hbox{$X \subset \PP^2$} définies par l'annulation d'un polynôme homogène $f$ de degré $d \geq 1$. Si $U$ désigne une des cartes affines recouvrant $X$, la formule \eqref{eqn:1stand2ndkind} définit encore une application injective\footnote{La démonstration de l'injectivité est identique à celle de la proposition \ref{prop:H1courbeelliptiqueaffine}, où le fait que $X$ soit une courbe elliptique n'était utilisé que pour la surjectivité.}
\begin{equation}\label{eqn:1stand2ndkind2}
\rH^1_{\dR}(X) \longrightarrow \rH^1_{\dR}(U).
\end{equation}
Cette application n'est pas surjective, à moins que $U$ soit obtenue en ôtant un seul point à $X$. En général, $D=X \setminus U$ est une variété lisse de dimension $0$; par exemple, si $U$ est la trace sur $X$ de la carte affine du plan projectif où $x_2$ est non nul, ce complémentaire s'identifie au lieu des zéros $D \subset \mathbb{A}^1$ du polynôme en une variable~$f(x, 1, 0)$. À l'aide du \emph{théorème de Riemann-Roch} \cite[Ch.\,VIII]{perrin}, on peut démontrer que l'espace $\rH^1_{\dR}(X)$ est de dimension $(d-1)(d-2)$ et caractériser l'image de~\eqref{eqn:1stand2ndkind2} comme le sous-espace des classes des formes différentielles de deuxième espèce, c'est-à-dire les formes sur~$U$ dont le résidu en tout point de $D(\CC)$ est nul. Parmi ces formes, la \og moitié\fg n'ont pas de pôles; ce sont les différentielles de première espèce. L'application~$\eqref{eqn:1stand2ndkind2}$ s'inscrit dans une suite exacte longue dite de~\emph{Gysin}
\begin{equation}\label{eqn:GysinDR}
0 \to \rH^1_{\dR}(X) \to \rH^1_{\dR}(U) \to \rH^0_{\dR}(D) \to \rH^2_{\dR}(X) \to 0,
\end{equation}
 où la flèche au milieu envoie la classe d'une forme différentielle $\omega$ sur~$U$ vers la fonction qui à un point de $D$ associe le résidu de $\omega$ en ce point. Celle-ci n'est pas surjective car la somme des résidus est toujours nulle. Si l'on considère ces variétés comme étant définies sur le corps des nombres complexes, la flèche de $\rH^0_{\dR}(D)$ dans~$\rH^2_{\dR}(X)$ peut être construite comme suit. On choisit un point auxiliaire $P \in X(\CC)$ en dehors de $D(\CC)$. Parmi les conséquences du théorème de Riemann-Roch, on trouve le fait que $V=X \setminus \{P\}$ est une courbe affine\footnote{C'est plus compliqué que cela n'en a l'air: bien que $X \subset \PP^2$ soit une courbe projective plane, $X \setminus \{P\}$ se plonge dans un espace affine $\mathbb{A}^n$ mais on ne peut pas en général choisir $n=2$; il faudrait pour ce faire être capable de trouver une droite dans $\PP^2$ qui n'intersecte $X$ qu'au point $P$, ce qui n'existe que très rarement...} et que, pour n'importe quelle fonction \hbox{$h \colon D(\CC) \to \CC$}, il existe une~$1$\nobreakdash-forme différentielle $\omega_h$ sur~$X$ n'ayant des pôles qu'en~$P$ et les points de $D(\CC)$, avec les résidus prescrits par $h$ le long de $D$. Prenant~$U \cup V$ pour recouvrement affine de $X$, le triplet $(0, 0, \omega_h)$ définit une classe dans $\rH^2_{\dR}(X)$ qui ne dépend pas du choix de~$P$; c'est l'image de $h$ par l'application $\rH^0_{\dR}(D) \to \rH^2_{\dR}(X)$. Enfin, l'espace~$\rH^2_{\dR}(X)$ est de dimension $1$; comme on le verra dans la remarque~\ref{rem:classefonda}, il peut être identifié canoniquement à~$\QQ$.

\section{L'accouplement de périodes}\label{sec:accoupl}

Ayant construit l'homologie singulière et la cohomologie de de Rham algébrique, qui sont le cadre naturel pour le domaine d'intégration et l'intégrande dans la définition des périodes, il ne nous reste plus qu'à interpréter le processus d'intégration comme un accouplement entre ces deux espaces vectoriels. 

\subsection{L'isomorphisme de comparaison de Grothendieck} 

Soit $X \subset \mathbb{A}^n$ une variété affine lisse, définie par l'annulation des polynômes à coefficients rationnels $f_1, \dots, f_m$. Muni de la topologie de sous-espace de $\CC^{n}$, l'ensemble de ses points complexes
\begin{displaymath}
X(\CC)=\{(z_1, \ldots, z_n) \in \CC^n \mid f_j(z_1, \ldots, z_n)=0 \text{ pour tout } j\}
\end{displaymath}
est un espace topologique (même une variété complexe) auquel on peut associer des groupes d'homologie singulière comme dans la section \ref{sec:homsing}. Ils portent dans ce contexte le nom suivant: 

\begin{definition} L'\textit{homologie de Betti} de~$X$ est l'homologie singulière de $X(\CC)$ à coefficients rationnels, c'est-à-dire le~$\QQ$-espace vectoriel
\[
\rH_p^\Betti(X)=\rH_p(X(\CC), \QQ).
\]
De même, la \textit{cohomologie de Betti} de $X$ est la cohomologie singulière de $X(\CC)$ à coefficients rationnels, c'est-à-dire
\[
\rH^p_\Betti(X)=\rH^p(X(\CC), \QQ)=\Hom(\rH_p^\Betti(X), \QQ).
\]
\end{definition}

D'après un théorème de Łojasiewicz \cite{Loj64}, l'espace topologique $X(\CC)$ a le type d'homotopie d'un CW-complexe de type fini; l'homo\-logie et la cohomologie de Betti d'une variété affine lisse~$X$ sont donc des $\QQ$-espaces vectoriels de dimension finie. 

Rappelons que tous les éléments de $\rH_p^\Betti(X)$ admettent pour représentants des $n$\nobreakdash-chaînes lisses le long desquelles on peut intégrer des formes différentielles: pour chaque combinaison linéaire $\sigma=\sum n_i f_i$ d'applications $f_i \colon \Delta^p \to X(\CC)$ de classe~$\mathcal{C}^\infty$, on définit
\begin{displaymath}
\int_\sigma \omega=\sum n_i \int_{\Delta^p} f_i^\ast \omega.
\end{displaymath} L'hypothèse de lissité sur $f_i$ garantit que le tiré en arrière $f_i^\ast \omega$ est bien défini et que son intégrale sur le simplexe $\Delta^p$ converge. Par exemple, sur la droite affine épointée $X=\mathbb{G}_m$, le tiré en arrière de la forme différentielle $\omega=dx/x$ par un lacet $f \colon [0, 1] \to X(\CC)$ de classe $\mathcal{C}^\infty$ est la forme $f^\ast \omega=f'(t)/f(t)dt$ sur $[0, 1]$. 

Il résulte de la formule de Stokes que la valeur de cette intégrale ne dépend de $\sigma$ et de $\omega$ qu'à travers leurs classes en homologie de Betti et en cohomologie de de~Rham. En effet, si l'on remplace la forme différentielle $\omega$ et le cycle $\sigma$ par d'autres représentants $\omega+d\eta$ et \hbox{$\sigma+\partial \tau$} des mêmes classes, l'intégrale ne change pas:
\begin{align*}
\int_{\sigma+\partial \tau} (\omega+d\eta)&=\int_\sigma \omega+\int_{\sigma} d\eta+\int_{\partial \tau} \omega+\int_{\partial \tau} d\eta \\
&=\int_\sigma \omega+\int_{\partial\sigma} \eta+\int_{\tau} d\omega+\int_{\partial^2 \tau} \eta \\
&=\int_\sigma \omega.
\end{align*}
Ci-dessus, la deuxième égalité découle de la formule de Stokes, et la troisième, des annulations $\partial \sigma=0$ (car la chaîne~$\sigma$ est un cycle),~\hbox{$d\omega=0$} (car la forme $\omega$ est exacte) et $\partial^2=0$.

\begin{thm}[Grothendieck]\label{thm:periods} Soit $X \subset \mathbb{A}^n$ une variété affine lisse. L'intégration induit un accouplement parfait
\begin{equation}\label{eqn:periodspairingGro}
\begin{aligned}
\rH^p_{\dR}(X) \otimes \rH_p^\Betti(X) &\longrightarrow \CC \\
([\omega], [\sigma]) &\longmapsto \int_{\sigma} \omega.
\end{aligned}
\end{equation}
\end{thm}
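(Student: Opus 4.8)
Le plan est de déduire la perfection de l'accouplement du théorème de de~Rham \og différentiable\fg, au prix d'une comparaison entre formes algébriques et formes lisses. Dire que \eqref{eqn:periodspairingGro} est parfait revient à dire que les deux espaces ont même dimension (finie) et que la matrice des périodes $\bigl(\int_{\sigma_j}\omega_i\bigr)$ est inversible, autrement dit que l'application $\CC$-linéaire
\begin{displaymath}
c \colon \rH^p_{\dR}(X) \otimes_\QQ \CC \longrightarrow \rH^p(X(\CC), \CC), \qquad [\omega] \longmapsto \Bigl([\sigma] \mapsto \int_\sigma \omega\Bigr),
\end{displaymath}
est un isomorphisme. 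Le théorème de de~Rham affirme que, sur la variété différentielle réelle sous-jacente à $X(\CC)$, l'intégration réalise déjà un accouplement parfait entre l'homologie singulière et la cohomologie de de~Rham des formes de classe $\mathcal{C}^\infty$ ; comme une forme algébrique fermée n'est qu'une forme holomorphe, donc une forme lisse $d$-fermée, et que $\int_\sigma \omega$ ne dépend que de sa classe lisse, la perfection de \eqref{eqn:periodspairingGro} équivaut exactement à ce que $c$ soit un isomorphisme, c'est-à-dire à ce que toute classe de cohomologie lisse admette un représentant algébrique, unique modulo les formes algébriques exactes.

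Pour établir cela, je factoriserais $c$ en passant par la cohomologie de de~Rham \emph{holomorphe} de $X(\CC)$, celle du complexe des formes différentielles holomorphes globales. Le lemme de Poincaré holomorphe dit que le complexe de faisceaux $0 \to \CC \to \mathcal{O} \to \Omega^1 \to \cdots$ est une résolution du faisceau constant $\CC$ sur $X(\CC)$, exactement comme le complexe des formes $\mathcal{C}^\infty$ en est une résolution fine ; les deux calculent donc $\rH^p(X(\CC), \CC)$, et la flèche naturelle de la cohomologie holomorphe vers la cohomologie lisse est un isomorphisme. Pour ramener l'hypercohomologie du complexe holomorphe à la cohomologie de ses \emph{sections globales}, j'utiliserais que toute variété affine lisse donne un espace de \emph{Stein} : par le théorème~B de Cartan, la cohomologie des faisceaux cohérents y est nulle en degré strictement positif (c'est l'analogue analytique du fait que, $X$ étant affine, la définition naïve de $\rH^p_{\dR}(X)$ par les seules formes algébriques globales est déjà la bonne). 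Il ne reste alors qu'à comparer la cohomologie des formes \emph{algébriques} globales à celle des formes \emph{holomorphes} globales.

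C'est là, prévisiblement, que gît toute la difficulté : il y a infiniment plus de formes holomorphes que de formes algébriques, et il faut montrer que cet excédent ne crée ni ne détruit de classe de cohomologie. L'idée de Grothendieck est de compactifier. Par la résolution des singularités de Hironaka, on plonge $X$ dans une variété projective lisse $\overline X$ dont le complémentaire $D = \overline X \setminus X$ est un diviseur à croisements normaux (exemple \ref{exmp:SNCD}). Les formes algébriques sur $X$ deviennent des formes rationnelles sur $\overline X$ à pôles le long de $D$ ; on contrôle ces pôles en se ramenant au complexe de de~Rham à \emph{pôles logarithmiques} le long de $D$, et la comparaison entre ce modèle algébrique sur la variété \emph{projective} $\overline X$ et son analogue analytique découle alors du principe GAGA de Serre, valable précisément parce que $\overline X$ est projective. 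Une fois $c$ reconnu comme isomorphisme, la compatibilité de l'intégration avec le passage aux classes (formule de Stokes, déjà vérifiée ci-dessus) fournit la perfection annoncée.
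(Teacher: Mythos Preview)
Ta proposition est correcte et suit exactement la décomposition indiquée dans le texte: le lemme de Poincaré (de~Rham/Dolbeault) pour identifier la cohomologie singulière à la cohomologie de de~Rham analytique, puis le théorème de comparaison de Grothendieck entre cohomologie de de~Rham analytique et algébrique. Tu donnes en plus le détail de l'argument de Grothendieck pour cette seconde étape (compactification par Hironaka, pôles logarithmiques, GAGA), ce que le texte se contente d'évoquer en renvoyant à \cite{GrodR} et en mentionnant plus loin que l'on peut \og compactifier par des variétés projectives\fg.
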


Être \emph{parfait} signifie que les $\QQ$-espaces vectoriels~$\rH^p_{\dR}(X)$ et $\rH_p^\Betti(X)$ ont même dimension et que, quel que soit le choix des bases $\{[\omega_i]\}$ de~$\rH^p_{\dR}(X)$ et~$\{[\sigma_j]\}$ de $\rH_p^\Betti(X)$, la matrice
\begin{displaymath}
\biggl(\int_{\sigma_j} \omega_i \biggr)
\end{displaymath}
est inversible; on l'appelle \textit{matrice des périodes}. La donnée d'un tel accouplement permet d'identifier canoniquement l'espace vectoriel complexe~$\rH^p_{\dR}(X) \otimes \CC$ aux applications~$\QQ$\nobreakdash-linéaires de $\rH_p^\Betti(X)$ dans~$\CC$, c'est-à-dire à l'espace $\rH^p_\Betti(X) \otimes \CC$, au sens où l'application
\begin{equation}\label{eqn:periodscompisom}
\begin{aligned}
\mathrm{comp}\colon \rH^p_{\dR}(X) \otimes \CC &\longrightarrow \rH^p_\Betti(X) \otimes \CC\\
[\omega] &\longmapsto \biggl([\sigma] \longmapsto \int_{\sigma} \omega\biggl)
\end{aligned}
\end{equation}
est un isomorphisme de $\CC$-espaces vectoriels; on l'appelle l'\emph{isomor\-phisme de comparaison} de Grothendieck. Comme les $\QQ$-espaces vectoriels $\rH^p_{\dR}(X)$ et~$\rH^p_\Betti(X)$ ont même dimension, ils sont aussi isomorphes, mais on ne sait écrire un isomorphisme naturel (par exemple, un qui commute avec les applications induites en cohomologie par tous les morphismes de variétés...) qu'après avoir étendu les scalaires aux nombres complexes.

Le théorème \ref{thm:periods} combine deux résultats: le premier est le \emph{lemme de Poincaré}, originellement dû à de~Rham \cite{deRham} pour les variétés différentielles et à Dolbeault \cite{Dolbeault} pour les variétés complexes, qui affirme que l'intégration fournit un accouplement parfait entre l'homologie de Betti et la cohomologie de de~Rham $\mathcal{C}^\infty$ ou \textit{analytique} de la variété complexe~$X(\CC)$, c'est\nobreakdash-à\nobreakdash-dire celle où les coefficients des formes différentielles ne sont pas des polynômes mais des fonctions de classe $\mathcal{C}^\infty$ ou holomorphes quelconques. Le deuxième est un théorème de Grothendieck~\cite{GrodR} d'après lequel la cohomologie de de~Rham analytique coïncide avec la cohomologie de de~Rham algébrique telle qu'elle a été définie dans ces notes. Cela n'a rien d'évident puisque le complexe de de~Rham analytique est bien plus gros. Pour le plan complexe épointé, par exemple, il s'agit du complexe à deux termes
\[
\mathcal{H}(\CC^\times) \to \mathcal{H}(\CC^\times)dz, 
\]
où $\mathcal{H}(\CC^\times)$ désigne l'espace des fonctions holomorphes sur $\CC^\times$, c'est\nobreakdash-à\nobreakdash-dire toutes les \emph{séries} de Laurent $\sum_{n \in \ZZ} a_n z^n$, par opposition au complexe de de~Rham algébrique qui ne fait intervenir que des polynômes de Laurent (celles où il n'y a qu'un nombre fini de coefficients en degré négatif). Les deux complexes calculent néanmoins la même cohomologie, vu qu'une fonction dont la dérivée s'annule est constante et que la seule obstruction pour qu'une série de Laurent admette une autre série de Laurent pour primitive est le terme $\sfrac{1}{z}$.

\begin{exemple}[les nombres algébriques]\label{exmp:algebriques} En degré $0$, l'accouplement de périodes associe à la classe d'une fonction $g$ sur la variété $X$ et à la classe d'un point complexe $P \in X(\CC)$ la valeur $g(P) \in \CC$. Le résultat est particulièrement intéressant lorsque $X \subset \mathbb{A}^1$ est le lieu des zéros d'un polynôme irréductible $f \in \QQ[x]$ de degré $d$. D'un côté, la cohomologie de de~Rham algébrique de $X$ est égale à
\begin{displaymath}
\rH^0_{\dR}(X)=\QQ[x] \slash (f)=\QQ[1] \oplus \QQ[x] \oplus \dots\oplus \QQ[x^{d-1}],
\end{displaymath}
comme on l'a vu dans l'exemple \ref{exmp:deRhamdim0}. De l'autre, $X(\CC)$ est l'ensemble fini $\{\alpha_1, \ldots, \alpha_d\}$ formé des $d$ racines distinctes de~$f$, de sorte que l'homologie de Betti $\rH_0^\Betti(X)$ est le $\QQ$-espace vectoriel de dimension~$d$ engendré par les classes de ces points. On voit déjà que les deux espaces vectoriels ont même dimension. La matrice de l'accouplement de périodes par rapport à ces bases est composée des valeurs des fonctions $1, x, \ldots, x^{d-1}$ aux points $\alpha_1, \ldots, \alpha_d$:
\begin{displaymath}
\left(\begin{matrix}
1 & & \alpha_1 & & \alpha_1^2 & & \cdots & & \alpha_1^{d-1} \\
1 & & \alpha_2 & & \alpha_2^2 & & \cdots & & \alpha_2^{d-1} \\
\vdots & & \vdots & & \vdots & & & & \vdots \\
1 & & \alpha_d & & \alpha_d^2 & & \cdots & & \alpha_d^{d-1}
\end{matrix}\right).
\end{displaymath}
C'est une matrice de Vandermonde, de déterminant $$\prod_{1 \leq i<j \leq d} (\alpha_j-\alpha_i),$$ qui est non nul car toutes les racines sont distinctes; il s'agit bien d'un accouplement parfait. Les nombres algébriques sont donc les périodes des variétés de dimension $0$ sur $\QQ$; ils ne se montrent pas tous seuls mais toujours en compagnie de leurs \textit{conjugués}!
\end{exemple}

\begin{exemple}[le nombre $2\pi i$]\label{example:nombrepi} Soit $X=\mathbb{G}_m$ la droite affine épointée. Comme on l'a vu dans l'exemple~\ref{exmp:Gm}, la cohomologie de de~Rham algébrique $\rH^1_{\dR}(X)$ est le $\QQ$-espace vectoriel de dimension $1$ engendré par la classe de la forme différentielle~$\sfrac{dx}{x}$. Par ailleurs, l'espace des points complexes est le plan complexe épointé~\hbox{$X(\CC)=\CC^\times$,} dont le premier groupe d'homologie de Betti~$\rH_1^\Betti(X)$ est, d'après l'exemple~\ref{exmp:lacet0}, le $\QQ$-espace vectoriel de dimension $1$ engendré par un lacet $\sigma$ tournant une fois autour de zéro dans le sens anti-horaire. Par rapport à ces bases, la matrice des périodes a pour seul coefficient
\begin{displaymath}
\int_\sigma \frac{dx}{x}=\int_0^1 \frac{d(e^{2\pi i t})}{e^{2\pi i t}}=2\pi i,
\end{displaymath}
qui est un nombre transcendant d'après le théorème de Lindemann. Bien que la cohomologie de de~Rham et l'homologie de Betti soient des $\QQ$-espaces vectoriels, l'accouplement de périodes~\eqref{eqn:periodspairingGro} ne prend donc pas de valeurs rationnelles, ni même algébriques.
\end{exemple}

\subsection{Le cas des variétés projectives} 

La définition de l'accouplement de périodes s'étend aux variétés projectives lisses $X \subset \PP^n$. À nouveau, on se contentera d'expliquer le cas où $X$ est recouverte par deux cartes affines $U$ et $V$. On a vu, après la définition \ref{def:dRcasprojectif}, que toute classe en cohomologie de de~Rham est alors représentée par un triplet $\omega=(\omega_U, \omega_V, \omega_{U \cap V})$. De même, on peut calculer l'homologie singulière de l'espace topologique $X(\CC)$ par le biais du complexe double\vspace*{-10pt}
\[
\xymatrix@R=.5cm{
\vdots \ar[d] & \vdots \ar[d] \\ C_2(U(\CC)) \oplus C_2(V(\CC)) \ar[d] & C_2(U(\CC) \cap V(\CC)) \ar[d] \ar[l] \\
C_1(U(\CC)) \oplus C_1(V(\CC)) \ar[d] & C_1(U(\CC) \cap V(\CC)) \ar[d] \ar[l] \\
C_0(U(\CC)) \oplus C_0(V(\CC)) & C_0(U(\CC) \cap V(\CC)), \ar[l]
}
\]
où les flèches verticales sont données par les applications bord dans les complexes de chaînes singulières et les flèches horizontales sont les différences $(\iota_{U \cap V, U})_\ast-(\iota_{U \cap V, V})_\ast$ des morphismes induits par les inclusions. Après passage au complexe total, n'importe quelle classe en homologie de Betti $\rH_p^\Betti(X)$ peut donc être représentée par un triplet $\sigma=(\sigma_U, \sigma_V, \sigma_{U \cap V})$ avec\vspace*{-3pt}
\begin{align}
& \sigma_U \in C_p(U(\CC)), \ \sigma_V \in C_p(V(\CC)),\ \sigma_{U \cap V} \in C_{p-1}(U(\CC) \cap V(\CC))
\end{align}
satisfaisant aux relations
\begin{equation}\label{eqn:repsBettiMV}
\begin{split}
\partial\sigma_{U \cap V}=0,\quad\partial\sigma_U&=-(\iota_{U \cap V, U})_\ast(\sigma_{U\cap V}),\\
\partial\sigma_V&=(\iota_{U \cap V, V})_\ast(\sigma_{U\cap V}),
\end{split}
\end{equation}
et deux tels représentants définissent la même classe d'homologie si et seulement si leur différence est de la forme\vspace*{-3pt}
\begin{equation}\label{eqn:BettiMVcoboundaries}
\bigl(\partial \tau_U+(\iota_{U \cap V, U})_\ast(\tau_{U\cap V}), \partial\tau_V-(\iota_{U \cap V, V})_\ast(\tau_{U \cap V}), \partial\tau_{U \cap V}\bigr)
\end{equation} pour des chaînes singulière $\tau_U \in C_{p+1}(U(\CC))$, $\tau_V \in C_{p+1}(V(\CC))$ et~$\tau_{U \cap V} \in C_p(U(\CC) \cap V(\CC))$. Comme d'habitude, on pourra supposer que toutes les chaînes intervenant dans ces formules sont lisses. 

En termes de ces représentants, la flèche $\beta$ et la flèche connectante dans la suite exacte de Mayer-Vietoris\vspace*{-5pt}
\begin{displaymath}
\xymatrix @R=1.25ex @C=1.1ex
{
& &&&\cdots \ar[rr] && \rH_{p+1}^\Betti(X) \ar@{-}[r] & *{} \ar@{-}`r/9.5pt[d] `/9.5pt[l] \\
& *{} \ar@{-}`/9.5pt[d] `d/9.5pt[d]& && && & *{} \ar@{-}[llllll] & \\
& *{} \ar[r] & \ar[rr]^-{\alpha} \rH_{p}^\Betti(U \cap V) &&\ar[rr]^-{\beta}\rH_p^\Betti(U) \oplus \rH_p^\Betti(V) && \rH_p^\Betti(X) \ar[r] &\cdots
}
\end{displaymath} de la section \ref{sec:homotopyMayerVietoris} sont, respectivement, données par\vspace*{-3pt}
\[
\beta([\sigma_U, \sigma_V])=[(\sigma_U, \sigma_V, 0)] \quad \text{et par} \quad [(\sigma_U, \sigma_V, \sigma_{U \cap V})] \mapsto [\sigma_{U \cap V}], 
\] qui sont bien définies grâce aux relations \eqref{eqn:repsBettiMV}. 

Munis de cette description de l'homologie de Betti d'une variété projective recouverte par deux cartes affines, on définit l'intégrale de~$\omega=(\omega_U, \omega_V, \omega_{U \cap V})$ le long de $\sigma=(\sigma_U, \sigma_V, \sigma_{U \cap V})$ par la formule
\[
\int_\sigma \omega=\int_{\sigma_U} \omega_U+\int_{\sigma_V} \omega_V+\int_{\sigma_{U \cap V}} \omega_{U \cap V}.
\]
Le fait que cette définition ne dépende que de la classe de~$\omega$ en coho\-mologie, c'est-à-dire que l'intégrale d'un élément du type~\eqref{eqn:ambigdR} le long de n'importe quelle classe $\sigma$ soit nulle, résulte du calcul
\begin{align*}
\int_{\sigma_U} d\eta_U&+\int_{\sigma_V} d\eta_V+\int_{\sigma_{U \cap V}} d\eta_{U \cap V}\\
&\hspace*{1.8cm}+\int_{\sigma_{U \cap V}}\iota^\ast_{U \cap V, U}(\eta_U)-\int_{\sigma_{U \cap V}}\iota^\ast_{U \cap V, V}(\eta_V) \\
&=\int_{\partial \sigma_U} \eta_U+\int_{\partial\sigma_V} \eta_V+\int_{\partial\sigma_{U \cap V}} \eta_{U \cap V} \\
&\hspace*{1.8cm}+\int_{(\iota_{U \cap V, U})_\ast(\sigma_{U \cap V})} \eta_U-\int_{(\iota_{U \cap V, V})_\ast(\sigma_{U \cap V})} \eta_V \\
&=0,
\end{align*}
où l'on a utilisé la formule de Stokes, la formule du changement de variables et les identités \eqref{eqn:repsBettiMV}. Un calcul similaire montre que la valeur de l'intégrale ne dépend de $\sigma$ qu'à travers sa classe en homologie, c'est-à-dire que l'intégrale de $\omega$ le long de tout élément du type~\eqref{eqn:BettiMVcoboundaries} est nulle, d'où un accouplement bien défini
\begin{align*}
\rH^p_{\dR}(X) \otimes \rH_p^\Betti(X) &\longrightarrow \CC \\
([\omega], [\sigma]) &\longmapsto \int_{\sigma} \omega.
\end{align*}
On peut déduire du théorème \ref{thm:periods} pour les variétés affines que c'est également un accouplement parfait; c'est l'approche originale de Grothendieck. En pratique, on procède souvent à l'inverse: on démontre que l'accouplement de périodes sur les variétés affines est parfait en les \emph{compactifiant} par des variétés projectives.

\begin{exemple}[périodes de la droite projective]\label{exmp:classecanonique} Soit $X=\PP^1$ la droite projective, recouverte par les cartes $U=X \setminus \{0\}$ et $V=X \setminus \{\infty\}$. D'après l'exemple \ref{exmp:cohomologieP1}, l'espace $\rH^2_{\dR}(X)$ est engendré par la classe du triplet $(0, 0, \sfrac{dt}{t})$. D'un autre côté, comme $U(\CC)$ et $V(\CC)$ sont contractiles et comme leur intersection est $\CC^\times$, l'application
\begin{align*}
\rH_2(X(\CC), \ZZ) &\longrightarrow \rH_1(\CC^\times, \ZZ) \\
[(\sigma_U, \sigma_V, \sigma_{U \cap V})] &\longmapsto [\sigma_{U \cap V}]
\end{align*}
est un isomorphisme par la suite exacte de Mayer-Vietoris. Notant~$\sigma$ le lacet de la figure \ref{fig:lacetengendrant}, la classe du triplet $(0, 0, \sigma)$ est donc un générateur de $\rH_2(X(\CC), \ZZ)$ et \emph{a fortiori} de l'homologie de Betti $\rH_2^\Betti(X)$. Par rapport à ces bases, la période vaut
\[
\int_\sigma \frac{dt}{t}=2\pi i.
\]
\end{exemple}

\begin{remarque}[la classe fondamentale]\label{rem:classefonda}
Soit $X \subset \PP^2$ une courbe projective lisse. Comme toute surface topologique compacte, l'espace des points complexes $X(\CC)$ peut être trian\-gulé, par exemple en le réalisant comme le quotient d'un polygone régulier à $4g$ côtés par une relation d'équivalence identifiant des côtes opposés\footnote{La vidéo \url{https://www.youtube.com/watch?v=ZrQT-I78YPs} montre ce recollement en action dans le cas d'un octogone.} et en triangulant celui-ci depuis son centre \cite[Ex.\,3.31]{hatcher}. L'orientation de $\CC$ induit une orientation canonique sur chaque triangle par rapport à laquelle une combinaison linéaire avec des signes bien choisis est une $2$-chaîne singulière sans bord, dont la classe d'homologie ne dépend pas du choix de la triangulation. Il y a donc une classe canonique dans le groupe~$\rH_2(X(\CC), \ZZ)$ que l'on appellera la \emph{classe fondamentale}; c'est celle que l'on considère dans l'exemple \ref{exmp:classecanonique}. On peut ensuite choisir le générateur de $\rH^2_{\dR}(X)$ de sorte que l'intégrale le long de cette classe soit égale à $2\pi i$, d'où un isomorphisme canonique $\rH^2_{\dR}(X) \simeq \QQ$.
\begin{figure}[ht]
\centering
\includegraphics[width=0.6\textwidth]{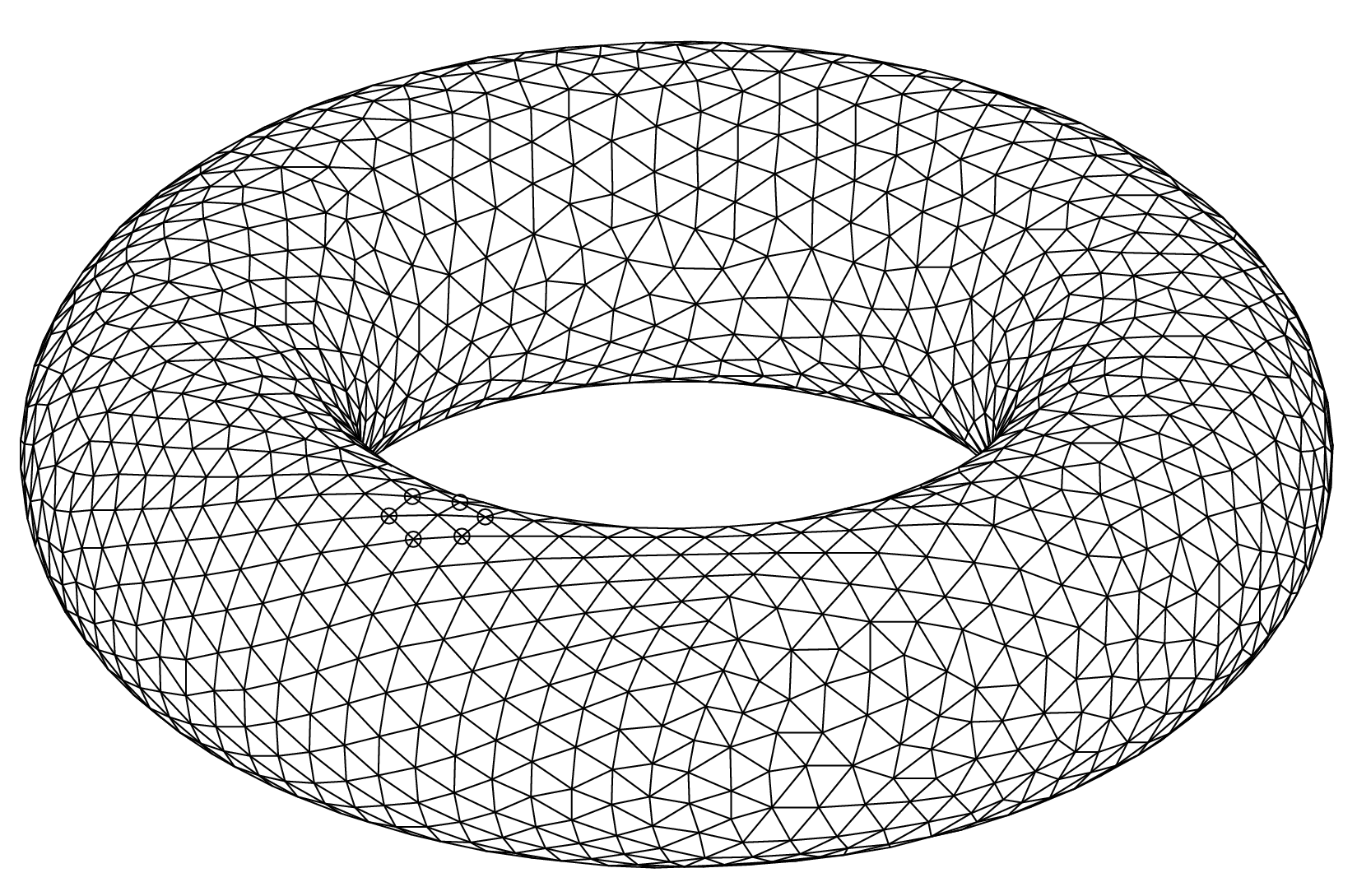}
\caption{Une triangulation d'un tore complexe}
\label{fig:triang}
\end{figure}
\end{remarque}

\begin{remarque}[torsion à la Tate]\label{rem:torsionTate} Soient $X \subset \mathbb{P}^2$ une courbe projective lisse et $U=X\setminus D$ le complémentaire d'un ensemble fini non vide $D$. L'inclusion de $U$ dans $X$ induit une application
\[
\rH_1^\Betti(U) \longrightarrow \rH_1^\Betti(X)
\]
qui est surjective (car on peut toujours bouger un $1$-cycle dans $X(\CC)$ pour qu'il ne rencontre pas $D(\CC)$ sans changer sa classe d'homologie) et s'inscrit dans une suite exacte longue
\[
0 \to \rH_2^{\Betti}(X) \to \rH_0^{\Betti}(D)\to \rH_1^{\Betti}(U) \to \rH_1^{\Betti}(X) \to 0,
\]
l'analogue en homologie de Betti de la suite exacte de Gysin~\eqref{eqn:GysinDR}. Rappelons que l'espace $\rH_0^\Betti(D)$ est une somme de copies de $\QQ$ indexées par les points de $D(\CC)$ et qu'une flèche $\rH_2^{\Betti}(X) \to \rH_0^{\Betti}(D)$ est caractérisée par l'image de la classe fondamentale de la remarque~\ref{rem:classefonda}, en l'occurrence l'élément $(1, \dots, 1)$. Quant à la flèche $\rH_0^{\Betti}(D)\to \rH_1^{\Betti}(U)$, elle envoie une combinaison linéaire de points de $D(\CC)$ vers la combinaison linéaire correspondante de classes de petits lacets, orientés positivement, autour de ces points. À ce stade, on constate que les suites exactes de Gysin en cohomologie de de~Rham et en homologie de Betti ne sont \emph{pas} compatibles avec l'accouplement de périodes: l'application $\rH^0_{\dR}(D) \to \rH^2_{\dR}(X)$ envoie une fonction $h$ vers la classe d'une forme différentielle $\omega_h$ ayant les résidus prescrits par $h$ le long de $D$. Or, le théorème des résidus de Cauchy donne l'égalité
\[
\int_{X(\CC)} \omega_h=2\pi i \sum_{z\in D(\CC)} h(z)
\]
et les deux accouplements diffèrent donc par un facteur de $2\pi i$! Pour y remédier, on introduit la \emph{torsion à la Tate}: les espaces vectoriels restent les mêmes, mais on écrit $\rH^0_{\dR}(D)(-1)$ et $\rH^0_{\Betti}(D)(-1)$ pour se rappeler que l'accouplement de périodes est multiplié par\footnote{En termes de l'isomorphisme de comparaison \eqref{eqn:periodscompisom}, on change la position du $\QQ$\nobreakdash-espace vectoriel $\mathrm{comp}^{-1}(\rH^0_{\Betti}(D))$ dans le $\CC$-espace vectoriel $\rH^0_{\dR}(D) \otimes \CC$. } $2\pi i$.
\end{remarque}

\subsection{Les intégrales elliptiques}\label{exmp:int-elliptiques}

Soit $X=E \setminus \{O\}$ la courbe elliptique affine d'équation \begin{displaymath}
y^2=4x^3-ax-b.
\end{displaymath}
Le premier groupe de cohomologie de de~Rham $\rH^1_{\dR}(X)$ est, d'après les calculs de la section \ref{exmp:courbeelliptique}, le $\QQ$-espace vectoriel de dimension $2$ engendré par les classes des formes différentielles
\[
\omega=\frac{dx}{y} \quad \text{et}\quad \eta=\frac{xdx}{y}, 
\]
dont seule la première s'étend en une différentielle sans pôle sur la courbe elliptique complète $E \subset \PP^2$. Ceci signifie que $E$ est une courbe projective lisse de \textit{genre} $1$~\cite{popescu}. D'après le théorème d'uniformisation pour les surfaces de Riemann, pour lequel on renvoie au beau livre collectif~\cite{unif}, le revêtement universel de
\begin{displaymath}
E(\CC)=\{(x, y) \in \CC^2 \mid y^2=4x^3-ax-b \} \cup O
\end{displaymath}
est le plan complexe $\CC$ et on récupère $E(\CC)$ comme le quotient par un réseau $\Lambda=\ZZ \omega_1 \oplus \ZZ \omega_2.$ Le premier groupe d'homologie de Betti de $\rH_1^\Betti(E)$ est, par conséquent, le $\QQ$-espace vectoriel engendré par les lacets $\sigma_1$ et $\sigma_2$ de l'exemple \ref{exmp:torecomplexe} et on trouve la matrice des périodes
\begin{displaymath}
\left(\begin{matrix}
\int_{\sigma_1} \omega & & \int_{\sigma_1} \eta \vspace{2mm} \\
\int_{\sigma_2} \omega & & \int_{\sigma_2} \eta
\end{matrix}\right).
\end{displaymath}

Le réseau $\Lambda$ ci-dessus est étroitement lié aux périodes de la courbe elliptique. En effet, étant donné un point $P \in E(\CC)$, on peut considérer l'intégrale $\int_O^P \omega \in \CC$ de la forme différentielle de première espèce le long d'un chemin $\gamma$ joignant le point à l'infini et $P$. Si~$\gamma'$ est un autre tel chemin, la différence $\gamma'-\gamma$ est une $1$-chaîne singulière sans bord dans $E(\CC)$ et définit ainsi une classe en homologie $\rH_1(E(\CC), \ZZ)$. Or, ce groupe étant le groupe libre de rang $2$ engendré par $\sigma_1$ et $\sigma_2$, il existe des entiers $n_1, n_2 \in \ZZ$ tels que
\begin{displaymath}
[\gamma'-\gamma]=n_1 \sigma_1+n_2\sigma_2.
\end{displaymath}
Compte tenu de l'égalité
\begin{displaymath}
\int_{\gamma'} \omega=\int_\gamma \omega+n_1\int_{\sigma_1} \omega+n_2\int_{\sigma_2} \omega,
\end{displaymath}
pour un autre choix de chemin allant de $O$ à $P$ la valeur de l'intégrale change par un élément du réseau
\begin{displaymath}
\Lambda=\ZZ \biggl(\int_{\sigma_1} \omega \biggr) \oplus \ZZ \biggl(\int_{\sigma_2} \omega \biggr),
\end{displaymath}
de sorte que l'application
\begin{align*}
E(\CC) &\longrightarrow \CC \slash \Lambda \\
P &\longmapsto \int_0^P \frac{dx}{y} \pmod\Lambda
\end{align*}
est bien définie. Il s'agit en fait d'un isomorphisme! Toute fonction méromorphe sur le tore $E(\CC)$ se relève en une fonction doublement périodique sur $\CC$ de \textit{périodes} $\int_{\sigma_i} \sfrac{dx}{y}.$

Pour construire l'application inverse, on commence par associer à un réseau $\Lambda$ de $\CC$ sa fonction $\wp$ de Weierstrass, définie par la série
\begin{displaymath}
\wp(z)=\frac{1}{z^2}+\sum_{\lambda \in \Lambda \setminus \{0\}} \biggl[\frac{1}{(z-\lambda)^2} -\frac{1}{\lambda^2}\biggr].
\end{displaymath}
Cette fonction et sa dérivée $\wp'$ sont reliées par l'équation algébrique \begin{displaymath}
\left(\wp'\right)^2=4\wp^3-g_2(\Lambda) \wp-g_3(\Lambda),
\end{displaymath}
dont les coefficients sont les nombres complexes
\begin{equation}\label{eqn:modularg2g3}
g_2(\Lambda)=60\sum_{\lambda \in \Lambda \setminus \{0\}} \frac{1}{\lambda^4}, \quad g_3(\Lambda)=140\sum_{\lambda \in \Lambda \setminus \{0\}}\frac{1}{\lambda^6}.
\end{equation}
On en déduit une application
\begin{align}
e \colon \CC \slash \Lambda &\longrightarrow \{(x, y) \in \CC^2 \mid y^2=4x^3-g_2(\Lambda) x-g_3(\Lambda) \} \cup O. \\
z \ (\text{mod } \Lambda) &\longmapsto \begin{cases} (\wp(z), \wp'(z)) & z \neq 0, \\ O & z=0. \end{cases}
\end{align}

Observons que le tiré en arrière par cette application de la différentielle de première espèce $\omega=dx/y$ n'est rien d'autre que
\begin{displaymath}
e^\ast(\sfrac{dx}{y})=\sfrac{d(\wp(z))}{\wp'(z)}=dz.
\end{displaymath}
Par conséquent, si l'on choisit pour générateurs $\sigma_1, \sigma_2 \in \rH_1(E(\CC), \ZZ)$ les projections dans $\CC/\Lambda$ des chemins $t \mapsto \omega_1 t$ et $t \mapsto \omega_2 t$ respectivement, la formule de changement de variables donne
\begin{displaymath}
\int_{\sigma_1} \omega =\int_0^{\omega_1} dz=\omega_1, \qquad \int_{\sigma_2} \omega=\int_0^{\omega_2} dz=\omega_2.
\end{displaymath}
Quant à la différentielle de deuxième espèce $\eta=x dx/y$, son tiré en arrière par l'application $e$ vaut
\begin{displaymath}
e^\ast(xdx/y)=\wp(z)\sfrac{d(\wp(z))}{\wp'(z)}=\wp(z)dz.
\end{displaymath}
Cette forme différentielle ayant un pôle au point à l'infini (qui est la classe de n'importe quel point du réseau dans le quotient $\CC/\Lambda$), il~faut choisir pour l'intégrer des représentants de $\sigma_1$ et $\sigma_2$ qui l'évitent, par exemple les projections des chemins
\[
t \mapsto t\omega_1+ \sfrac{\omega_2}{2} \quad \text{et} \quad t \mapsto t\omega_2+\sfrac{\omega_1}{2}.
\]
Les égalités suivantes sont alors vraies:
\begin{align*}
\int_{\sigma_1} x\frac{dx}{y}&=\int_{\sfrac{\omega_2}{2}}^{\omega_1+\sfrac{\omega_2}{2}} \wp(z)dz=\zeta(\sfrac{\omega_2}{2})-\zeta(\omega_1+\sfrac{\omega_2}{2}) \\
\int_{\sigma_2} x\frac{dx}{y} &=\int_{\sfrac{\omega_1}{2}}^{\omega_2+\sfrac{\omega_1}{2}} \wp(z)dz=\zeta(\sfrac{\omega_1}{2})-\zeta(\omega_2+\sfrac{\omega_1}{2}),
\end{align*}
où $\zeta(z)$ est une primitive de $-\wp(z)$. Le choix standard pour cette primitive est la fonction $\zeta$ de Weierstrass
\begin{equation}\label{eqn:defWeierstrass}
\zeta(z)=\frac{1}{z}+\sum_{\lambda \in \Lambda \setminus \{0\}}\biggl(\frac{1}{z-\lambda}+\frac{1}{\lambda}+\frac{z}{\lambda^2}\biggr).
\end{equation}
Il s'agit d'une fonction holomorphe sur $\CC \setminus \Lambda$, à pôles simples de résidu~$1$ aux points de~$\Lambda$. Puisque l'on peut substituer $-\lambda$ à $\lambda$ dans l'expression ci-dessus sans changer la somme, on a affaire à une fonction impaire. En intégrant la relation de périodicité $\wp(z+\lambda)=\wp(z)$ pour chaque $\lambda \in \Lambda$, on trouve que la fonction $\zeta(z)-\zeta(z+\lambda)$ est constante. Notant $\eta(\lambda)$ sa valeur, il vient
\begin{displaymath}
\int_{\sigma_1} x\,\frac{dx}{y}=\eta(\omega_1), \qquad \int_{\sigma_2} x\,\frac{dx}{y}=\eta(\omega_2).
\end{displaymath}
Les nombres $\eta_1=\eta(\omega_1)$ et $\eta_2=\eta(\omega_2)$ s'appellent classiquement les \emph{quasi-périodes} de la fonction $\wp$ de Weierstrass, mais ils sont tout autant des périodes que $\omega_1$ et $\omega_2$ au sens de ces notes.

\begin{proposition}[relation de Legendre]\label{prop:legendre} L'égalité
\begin{displaymath}
\omega_1\eta_2-\omega_2\eta_1=2\pi i
\end{displaymath}
est satisfaite. En particulier, l'accouplement de périodes sur une courbe elliptique est parfait.
\end{proposition}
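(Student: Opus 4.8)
Le but est de démontrer que le déterminant de la matrice des périodes vaut $2\pi i$. En effet, par rapport aux bases $\{[\omega], [\eta]\}$ de $\rH^1_{\dR}(E)$ et $\{\sigma_1, \sigma_2\}$ de $\rH_1^\Betti(E)$, cette matrice a pour déterminant $\omega_1\eta_2-\omega_2\eta_1$, où l'on a posé $\omega_i=\int_{\sigma_i} \sfrac{dx}{y}$ et $\eta_i=\int_{\sigma_i} x\sfrac{dx}{y}$, comme on l'a calculé juste avant l'énoncé. Le plan est de calculer directement cette quantité à l'aide de la fonction $\zeta$ de Weierstrass \eqref{eqn:defWeierstrass}, en exploitant son unique pôle dans un parallélogramme fondamental.

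D'abord, je fixerais un parallélogramme fondamental $\mathcal{P}$ de sommets $z_0$, $z_0+\omega_1$, $z_0+\omega_1+\omega_2$ et $z_0+\omega_2$, avec $z_0=-\sfrac{(\omega_1+\omega_2)}{2}$, de sorte que le seul point du réseau $\Lambda$ à l'intérieur de $\mathcal{P}$ soit l'origine et qu'aucun ne tombe sur le bord. Comme $\zeta(z)=\sfrac{1}{z}+\cdots$ a un pôle simple de résidu $1$ en ce point, le théorème des résidus donne $\oint_{\partial \mathcal{P}} \zeta(z)\,dz=2\pi i$, pourvu que l'on oriente $\partial \mathcal{P}$ dans le sens direct.

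Ensuite, je calculerais l'intégrale de contour en regroupant les côtés opposés. Sur les deux côtés parallèles à $\omega_1$, le changement de variable $z\mapsto z+\omega_2$ et la quasi-périodicité $\zeta(z+\omega_2)=\zeta(z)-\eta_2$ (établie avant l'énoncé) font apparaître le terme $\eta_2\omega_1$; de même, les côtés parallèles à $\omega_2$ contribuent $-\eta_1\omega_2$. En sommant, on obtiendrait
\[
\omega_1\eta_2-\omega_2\eta_1=\oint_{\partial \mathcal{P}} \zeta(z)\,dz=2\pi i,
\]
ce qui est la relation voulue. Le principal point de vigilance n'est pas le calcul lui-même, routinier, mais le suivi des signes et des orientations: il faut s'assurer que le parallélogramme est positivement orienté et que la convention $\zeta(z)-\zeta(z+\lambda)=\eta(\lambda)$ du texte conduit bien au signe $+2\pi i$ plutôt qu'à son opposé.

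Pour la dernière assertion, il suffirait d'observer que la matrice des périodes, étant de déterminant $2\pi i\neq 0$, est inversible; l'accouplement \eqref{eqn:periodspairingGro} est donc parfait au sens du théorème \ref{thm:periods}, puisque l'on sait déjà que $\{[\omega], [\eta]\}$ et $\{\sigma_1, \sigma_2\}$ sont des bases des espaces concernés. Une variante plus conceptuelle consisterait à reconnaître $\omega_1\eta_2-\omega_2\eta_1$ comme l'image du cup-produit $[\omega]\cup[\eta]$ dans $\rH^2_{\dR}(E)\simeq\QQ$ via les relations bilinéaires de Riemann et la normalisation de la classe fondamentale (remarque \ref{rem:classefonda}); l'obstacle y serait toutefois le traitement du pôle de $\eta$ à l'infini, qui empêche de représenter $[\eta]$ par une forme lisse de type $(1,0)$ et force à modifier le représentant près de $O$.
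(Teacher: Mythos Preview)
Your proposal is correct and follows essentially the same approach as the paper: integrate $\zeta(z)$ around a fundamental parallelogram containing only the pole at the origin, apply the residue theorem to get $2\pi i$, and pair off opposite sides using the quasi-periodicity $\zeta(z+\omega_i)=\zeta(z)-\eta_i$. Your remark that the only delicate point is the bookkeeping of signs and orientations is exactly right, and the cup-product variant you mention at the end is precisely the alternative the paper develops later in the section \emph{Retour sur la relation de Legendre}.
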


\begin{proof} La relation de Legendre résulte du calcul de l'intégrale de $\zeta(z)$ le long du parallélogramme de la figure \ref{fig:Leg}.
\begin{figure}[ht]
\centering
\includegraphics[width=0.5\textwidth]{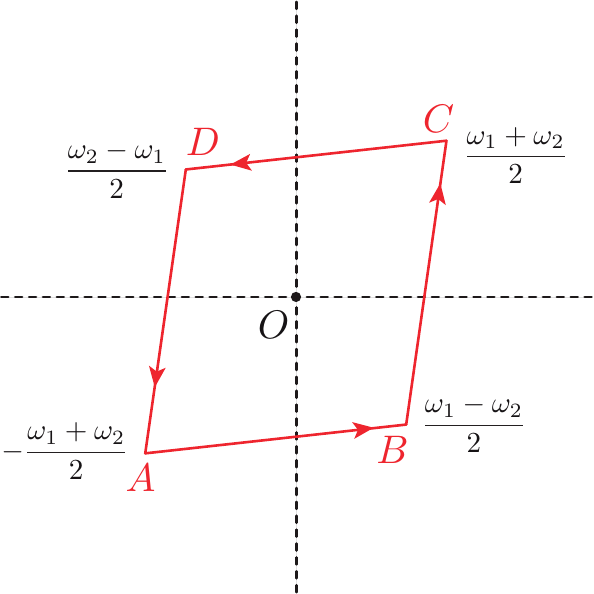}
\caption{Domaine d'intégration pour démontrer la relation de Legendre}
\label{fig:Leg}
\end{figure}
En effet, le seul pôle de $\zeta(z)$ à l'intérieur de ce contour étant $z=0$, on a \begin{align*}
2\pi i&=\int_{ABCD} \zeta(z) dz \\
&=\int_{AB} \zeta(z) dz+\int_{BC} \zeta(z) dz+\int_{CD} \zeta(z) dz+\int_{DA} \zeta(z) dz.
\end{align*}
d'après le théorème des résidus. Ces quatre termes sont reliés par les identités $\zeta(z+\omega_i)=\zeta(z)-\eta_i$. En prenant garde que l'orientation des côtés change, il vient
\begin{displaymath}
\int_{BC} \zeta(z)dz=-\int_{DA} \zeta(z+\omega_1)dz=-\int_{DA} \zeta(z)dz-\omega_2\eta_1
\end{displaymath}
et, de même,
\begin{displaymath}
\int_{CD} \zeta(z)dz=-\int_{AB} \zeta(z+\omega_2)dz=-\int_{AB} \zeta(z)dz+\omega_1\eta_2.
\end{displaymath}
Par conséquent,
\begin{displaymath}
2\pi i=\omega_1\eta_2 -\omega_2 \eta_1,
\end{displaymath}
ce que l'on voulait démontrer.
\end{proof}

\begin{remarque}\label{remq:isomcan} Comme on l'expliquera dans la section \ref{sec:Legendre}, la raison pour laquelle $2\pi i$ apparaît dans la relation de Legendre est que l'accouplement des périodes entre $\rH^2_{\dR}(E)$ et $\rH_2^\Betti(E)$ est donné par $2\pi i$ dans des bases bien choisies. Par rapport au recouvrement de $E$ par les cartes affines $U$ et $V$ de la section \ref{exmp:courbeelliptique}, la classe fondamentale (remarque \ref{rem:classefonda}) dans~$\rH_2(E(\CC), \ZZ)$ est représentée par le triplet $(0, 0, \sigma)$, où~$\sigma$ est un petit lacet, orienté au sens anti-horaire, autour du point à l'infini qui n'encercle aucun des trois zéros de la fonction $y$. En effet, la classe d'un tel lacet appartient au noyau de la flèche
\[
\rH_1(U(\CC)\cap V(\CC), \ZZ) \longrightarrow \rH_1(U(\CC), \ZZ) \oplus \rH_1(V(\CC), \ZZ)
\]
et est donc un générateur de $\rH_2(E(\CC), \ZZ)$ par la suite exacte de Mayer-Vietoris. Par ailleurs, $\rH^2_{\dR}(E)$ est le $\QQ$-espace vectoriel de dimension $1$ engendré par la classe du triplet $(0, 0, \sfrac{x^2dx}{y^2}$). Comme le résidu de cette forme à l'infini est $-\sfrac{1}{2}$ d'après le calcul \eqref{eqn:res-infty}, il est plus naturel de la renormaliser et de choisir $ \sfrac{-2x^2dx}{y^2}$ comme générateur. Par le théorème des résidus, on trouve alors la période
\[
\int_\sigma -2x^2\,\frac{dx}{y^2}=2\pi i.
\]
En général, une fois que l'on sait que la cohomologie de de~Rham en degré $2$ d'une courbe projective lisse est un $\QQ$-espace vectoriel de dimension $1$, son générateur canonique est la seule forme différentielle dont l'intégrale le long de la classe fondamentale est $2\pi i$.
\end{remarque}

Pour chaque entier $k \geq 2$, la \textit{série d'Eisenstein}
\begin{equation}\label{eqn:Eisenstein}
G_{2k}(\tau)=\sum_{\substack{m, n \in \ZZ \\ (m, n) \neq (0, 0)} } \frac{1}{(m+n\tau)^{2k}}
\end{equation}
converge absolument vers une fonction holomorphe sur le demi-plan de Poincaré $\mathfrak{h}=\{\tau \in \CC \mid \mathrm{Im}(\tau)>0\}$. C'est une forme modulaire de poids $2k$: pour toute matrice $\left(\begin{smallmatrix} a & b \\ c & d \end{smallmatrix}\right) \in \mathrm{SL}_2(\ZZ)$, agissant sur $\mathfrak{h}$ par transformation de Möbius, la relation
\begin{align}\label{eqn:modulariteG}
G_{2k}\Bigl(\frac{a\tau+b}{c\tau+d}\Bigr)&=(c\tau+d)^{2k} \sum_{\substack{m, n \in \ZZ \\ (m, n) \neq (0, 0)}} \frac{1}{[(md+nb)+(mc+na)\tau]^{2k}} \nonumber \\
&=(c\tau+d)^{2k} G_{2k}(\tau)
\end{align}
est satisfaite (la seconde égalité résulte du fait que, pour toute paire \hbox{$(r, s) \in \ZZ^2$,} il existe une et une seule solution $(m, n) \in \ZZ^2$ aux équations $r=md+nb$ et~\hbox{$s=mc+na$}). Si $k=1$, la série
\begin{displaymath}
G_2(\tau)=2\sum_{m=1}^{\infty} \frac{1}{m^2} +2\sum_{n=1}^\infty \sum_{m \in \ZZ} \frac{1}{(m+n\tau)^2}
\end{displaymath}
converge, mais pas absolument, vers une fonction holomorphe sur $\mathfrak{h}$ qui n'est, cette fois-ci, que quasi-modulaire:
\begin{equation}\label{eqn:modulariteG2}
G_{2}\Bigl(\frac{a\tau+b}{c\tau+d}\Bigr)=(c\tau+d)^{2}G_{2}(\tau)-2\pi i c(c\tau+d).
\end{equation}

Les valeurs de ces fonctions en $\tau=\sfrac{\omega_2}{\omega_1}$ sont reliées aux périodes de la courbe elliptique associée au réseau $\Lambda=\ZZ\omega_1 \oplus \ZZ\omega_2$ par 
\begin{displaymath}
G_4(\tau)=\frac{g_2(\Lambda)}{60}\,\omega_1^4, \quad G_6(\tau)=\frac{g_3(\Lambda)}{140}\,\omega_1^6, \quad G_2(\tau)=-\omega_1\eta_1.
\end{displaymath}
Si $g_2(\Lambda)$ et $g_3(\Lambda)$ sont des nombres algébriques, ce qui est notre cas, le corps engendré par les quatre périodes de la courbe elliptique est donc une extension finie du corps
\[
\QQ(2\pi i, G_2(\tau), G_4(\tau), G_6(\tau)).
\]
Pour $\tau$ fixé, on ne sait pas calculer en général le degré de transcendance de ce corps, mais on peut démontrer (théorème \ref{thm:transfonct}) que les \emph{fonctions} $G_2(\tau), G_4(\tau)$ et $G_6(\tau)$ sont algébriquement indépendantes

\begin{exemple}[valeurs bêta]\label{exmp:Fermat} Soit $d \geq 2$ un entier. La \emph{courbe de Fermat} affine de degré $d$ est la variété $X \subset \mathbb{A}^2$ définie par l'équation
\[
x^d+y^d=1.
\]
Comme les polynômes $dx^{d-1}$, $dy^{d-1}$ et $x^d+y^d-1$ n'ont pas de zéro commun, c'est une variété lisse de dimension $1$, dont l'anneau de \hbox{fonctions} et le module de $1$-formes différentielles sont donnés par
\begin{align*}
A&=\QQ[x, y]/(x^d+y^d-1), \\ \Omega^1_A&=(Adx \oplus A dy)/(x^{d-1}dx+y^{d-1}dy).
\end{align*}

Les relations définissant $\Omega^1_A$ entraînent que l'élément $dx$ est divisible par $y^{d-1}$, vu que l'égalité
\[
y^{d-1}(ydx-xdy)=(y^d+x^d)dx=dx
\]
y est satisfaite. Pour des entiers $1 \leq r, s \leq d-1$, considérons les formes différentielles
\[
\omega_{r, s}=x^{r-1}y^{s-1}\frac{dx}{y^{d-1}} \in \Omega^1_A.
\]
Le choix de ces formes est motivé par la formule \eqref{eqn:betafunction} pour les valeurs de la fonction bêta en des arguments rationnelles, qui à un multiple rationnel près sont leur intégrale le long du chemin
\begin{align*}
\sigma \colon [0, 1] &\longrightarrow X(\CC) \\
t &\longmapsto (t^{\sfrac{1}{d}}, (1-t)^{\sfrac{1}{d}}),
\end{align*}
les racines $d$-ièmes étant celles réelles positives:
\begin{align*}
\int_{\sigma} \omega_{r, s}&=\int_0^1 t^{\psfrac{r-1}{d}} (1-t)^{\psfrac{s-d}{d}} d(t^{\sfrac{1}{d}}) \\
&=\frac{1}{d}\int_0^1 t^{\sfrac{r}{d}-1}(1-t)^{\sfrac{s}{d}-1}dt \\
&=\frac{1}{d}\,\mathrm{B}(\sfrac{r}{d}, \sfrac{s}{d}).
\end{align*}

Le chemin $\sigma$ ne définit pas une classe en homologie singulière puisque ce n'est pas un lacet. Pour en faire un sans trop modifier les périodes, on se servira du fait que la courbe de Fermat a beaucoup d'automorphismes. Posons~\hbox{$\zeta=\exp(\sfrac{2\pi i}{d})$} et notons $\mu_d(\CC)$ le groupe des racines $d$-ièmes de l'unité, c'est-à-dire le groupe cyclique d'ordre $d$ engendré par $\zeta$. Les applications $f, g \colon \CC^2 \to \CC^2$ données~par
\[
f(x, y)=(\zeta x, y) \quad\text{et}\quad g(x, y)=(x, \zeta y)
\]
laissent le sous-espace $X(\CC) \subset \CC^2$ stable. Si l'on considère la courbe de Fermat comme étant définie sur le corps cyclotomique $\QQ(\zeta)$ plutôt que sur $\QQ$, pour pouvoir ainsi multiplier les coordonnées par des racines $d$-ièmes de l'unité, ce sont même des automorphismes de la variété algébrique $X$. Considérons la chaîne singulière
\[
\kappa=\sigma-g_\ast \sigma+f_\ast g_\ast \sigma-f_\ast \sigma,
\]
où $f_\ast$ et $g_\ast$ désignent les applications induites par $f$ et $g$ sur~$C_1(X(\CC))$. Par exemple, $g_\ast \sigma$ est le chemin $t \mapsto (t^{\sfrac{1}{d}}, \zeta(1-t)^{\sfrac{1}{d}})$. Les signes ci-dessus sont choisis pour que le bord de $\kappa$ s'annule:
\[
\scalebox{.95}{$\partial \kappa=(1, 0)-(0, 1)+(0, \zeta)-(1, 0)+(\zeta, 0)-(0, \zeta)+(0, 1)-(\zeta, 0)=0.$}
\]
Par conséquent, $\kappa$ représente une classe en homologie de Betti:
\[
[\kappa] \in \rH_1^\Betti(X).
\]

Les automorphismes $f$ et $g$ n'agissent pas seulement sur les chaînes singulières, mais aussi sur les formes différentielles, et le calcul
\begin{align*}
(f^i \circ g^j)^\ast \omega_{r, s}&=(\zeta^ix)^{r-1}(\zeta^j y)^{s-1}\frac{d(\zeta^i x)}{(\zeta^j y)^{d-1}} \\
&=\zeta^{ir+js} \omega_{r, s}
\end{align*}
montre que l'on a affaire à des vecteurs propres pour cette action. Plus précisément, les éléments $(\zeta^i, \zeta^j)$ du groupe $G=\mu_d(\CC) \times \mu_d(\CC)$ agissent sur $X$ au travers des automorphismes $f^i\circ g^j$ et l'action induite sur les formes $\omega_{r, s}$ est la multiplication par le caractère
\[
\chi_{r, s}\colon G \to \CC^\times, \qquad \chi_{r, s}(\zeta^i, \zeta^j)=\zeta^{ir+js}.
\]
Jointe à la formule du changement de variables, cette propriété permet de calculer les intégrales sur le lacet $\kappa$:
\begin{equation}\label{eqn:periodesFermat}
\begin{aligned}
\int_{\kappa} \omega_{r, s}&=\int_\sigma \omega_{r, s}-\int_{g_\ast \sigma}\omega_{r, s}+\int_{f_\ast g_\ast \sigma}\omega_{r, s}-\int_{f_\ast \sigma}\omega_{r, s} \\
&=\int_\sigma \omega_{r, s}-\int_{\sigma} g^\ast \omega_{r, s}+\int_{\sigma}f^\ast g^\ast \omega_{r, s}-\int_{\sigma}f^\ast \omega_{r, s} \\
&=\frac{(1-\zeta^r)(1-\zeta^s)}{d} \,\mathrm{B}(\sfrac{r}{d}, \sfrac{s}{d}).
\end{aligned}
\end{equation}
Comme ces périodes sont toutes non nulles, il s'ensuit que $[\omega_{r, s}]$ est un élément non nul de $\rH^1_{\dR}(X)$. De plus, les classes $[\omega_{r, s}]$ sont linéairement indépendantes pour $1 \leq r, s \leq d-1$ car le groupe $G$ y agit par multiplication par des caractères \emph{distincts}.

Soit $\bar X \subset \PP^2$ la complétion projective de la courbe de Fermat affine, c'est-à-dire la courbe projective plane d'équation
\[
x_0^d+x_1^d=x_2^d.
\]
C'est une courbe lisse, reliée à celle de départ par le changement de coordonnées $x=\sfrac{x_0}{x_2}$ et $y=\sfrac{x_1}{x_2}$. Le complémentaire de~$X$ dans~$\overline{X}$ est donc la variété affine $D$ de dimension $0$ définie par l'équation $x^d+y^d=0$, dont les points complexes sont les $[1\colon \xi^a \colon 0]$, avec $\xi=\exp(\sfrac{\pi i}{d})$ et~$a=1, 3, \dots, 2d-1$. La dimension de $\rH^1_{\dR}(\overline X)$ étant égale à $(d-1)(d-2)$ par le théorème de Riemann-Roch, la suite exacte longue de Gysin
\[
0 \to \rH^1_{\dR}(\overline{X}) \to \rH^1_{\dR}(X) \to \rH^0_{\dR}(D) \to \rH^2_{\dR}(X) \to 0
\]
montre que la cohomologie de de~Rham de $X$ est de dimension
\begin{align*}
\dim \rH^1_{\dR}(X) &=\dim \rH^1_{\dR}(\overline{X})+\dim \rH^0_{\dR}(D)-\dim \rH^2_{\dR}(X) \\
&=(d-1)(d-2)+d-1\\
&=(d-1)^2.
\end{align*}
Vu que l'on a trouvé autant de classes de cohomologie linéairement indépendantes, les $[\omega_{r, s}]$ forment une base de~$\rH^1_{\dR}(X)$.

Lesquelles s'étendent à $\overline X$? Dans les coordonnées $u=\sfrac{x_1}{x_0}=\sfrac{y}{x}$ et~\hbox{$v=\sfrac{x_2}{x_0}=\sfrac{1}{x}$,} le point à l'infini $[1\colon \xi^a\colon 0]$ est donné par $u=\xi^a$ et $v=0$, et il s'agit de calculer le résidu de
\[
\omega_{r, s}=x^{r-1}y^{s-1}\frac{dx}{y^{d-1}}=-v^{d-r-s}u^{s-d}\frac{dv}{v}
\]
en ce point. Si $r+s<d$, la forme n'a pas de pôle (première espèce), tandis que si $r+s>d$, elle a un pôle de résidu zéro (deuxième espèce); les formes soumis à ces contraintes constituent donc une base de~$\rH^1_{\dR}(\bar X)$. Par contre, si $r+s=d$, la forme $\omega_{r, s}$ a pour résidu~\hbox{$\xi^{as}\neq 0$}. On remarquera que, pour ces valeurs des paramètres, la formule de réflexion \eqref{eqn:functionalgamma} pour la fonction gamma implique que les périodes~\eqref{eqn:periodesFermat} sont des multiples algébriques de $2\pi i$, à savoir
\[
\int_{\kappa} \omega_{r, s}=-\frac{\xi^r+\xi^s}{d} 2\pi i.
\]
\end{exemple}

\section{Une variante relative}\label{sec:variante-relative}

L'homologie singulière, telle qu'elle a été introduite dans la définition~\ref{def:homologsing}, ne prend compte que des chaînes sans bord, par exemple les lacets. Or, dans la définition de période on s'autorise également à intégrer sur des chemins qui joignent deux points distincts; c'est notam\-ment le cas des logarithmes~$\log(q)$, représentés par l'intégrale de la forme différentielle~$\sfrac{dx}{x}$ le long d'un chemin allant de $1$ à $q$. Pour pouvoir également traiter cette situation, on introduit une variante \textit{relative} de l'homologie singulière et de la cohomologie de de~Rham.

\subsection{Homologie singulière relative}

Soient $M$ un espace topologique et $\iota \colon N \hookrightarrow M$ l'inclusion d'un sous-espace. Les homomorphismes $\iota_\ast \colon C_p(N) \to C_p(M)$ obtenus en regardant les chaînes singulières dans $N$ comme des chaînes singulières dans l'espace plus grand~$M$ sont injectifs pour tout $p$ et commutent aux applications bord, donnant lieu à un \textit{complexe~double}\vspace*{-5pt}\enlargethispage{-2\baselineskip}
\begin{displaymath}
\xymatrix@R=.6cm{
\vdots \ar[d]_{\partial_2} & \vdots \ar[d]_{\partial_2} \\
C_1(N) \ar[r]^{\iota_\ast} \ar[d]_{\partial_1} & C_1(M)\ar[d]_{\partial_1} \\
C_0(N) \ar[r]^{\iota_\ast} & C_0(M).
}
\end{displaymath}

Comme d'habitude, on le transforme en un complexe simple en considérant les applications
\begin{align}
C_p(M) \oplus C_{p-1}(N) &\longrightarrow C_{p-1}(M) \oplus C_{p-2}(N) \\
(\sigma, \tau) &\longmapsto ( \partial_p(\sigma)-\iota_\ast(\tau), -\partial_{p-1}(\tau)).
\end{align}

\begin{definition} L'\emph{homologie singulière de $M$ relative à $N$} en degré $p$ est le quotient
\begin{displaymath}
\rH_p(M, N; \ZZ)=\frac{\ker\bigl(C_p(M) \oplus C_{p-1}(N) \to C_{p-1}(M)\oplus C_{p-2}(N)\bigl)}{\mathrm{im}\bigl(C_{p+1}(M) \oplus C_{p}(N) \to C_{p}(M)\oplus C_{p-1}(N)\bigr)}.
\end{displaymath}
\end{definition}

Les éléments de ce groupe sont représentés par des paires $(\sigma, \tau)$ où~$\tau$ est un $(p-1)$-cycle dans $N$ et $\sigma$ est une $p$-chaîne singulière dans~$M$ satisfaisant à $\partial_p(\sigma)=\iota_\ast(\tau)$. Or, comme les applications $\iota_\ast$ sont injectives, cette condition détermine uniquement $\tau$; les classes d'homologie relative sont dont représentées par des chaînes singulières~$\sigma$ dans~$M$ dont le bord ne s'annule pas nécessairement mais est soumis à la contrainte d'être inclus dans $N$. Par exemple, des chemins qui ne sont pas des lacets peuvent donner lieu à des éléments dans~$\rH_1(M, N; \ZZ)$, mais il faut alors que les points initiaux et finaux du chemin appartiennent au sous-espace~$N$.

Ces groupes d'homologie relative sont reliés à l'homologie de $M$ et de $N$ par une suite exacte longue
\begin{equation}\label{eqn:suiteexactelonguerelative}
\xymatrix @R=1.25ex @C=1.1ex
{
& &&&\cdots \ar[rr] && \rH_2(M, N; \ZZ) \ar@{-}[r] & *{} \ar@{-}`r/9pt[d] `/9pt[l] \\
& *{} \ar@{-}`/9pt[d] `d/9pt[d]& && && & *{} \ar@{-}[llllll] & \\
& *{} \ar[r] & \ar[rr]^-{\alpha} \rH_{1}(N, \ZZ) &&\ar[rr]^-{\beta}\rH_1(M, \ZZ) && \rH_1(M, N; \ZZ) \ar@{-}[r] & *{} \ar@{-} `r/9pt[d] `[l]\\
& *{} \ar@{-}`/9pt[d] `d/9pt[d]& && && & *{} \ar@{-}[llllll] & \\
& *{} \ar[r] & \ar[rr]^-{\alpha} \rH_{0}(N, \ZZ) &&\ar[rr]^-{\beta}\rH_0(M, \ZZ)&& \rH_0(M, N; \ZZ)\ar[rr]&&0,
}
\end{equation}
où les flèches connectant une ligne avec la suivante sont les applications bord, à savoir:
\begin{align*}
\rH_p(M, N; \ZZ) &\longrightarrow \rH_{p-1}(N, \ZZ). \\
[\sigma] &\longmapsto [\partial_p \sigma]
\end{align*}
En effet, $\partial_p \sigma$ est, par définition de l'homologie relative \hbox{$\rH_p(M, N; \ZZ)$}, une $(p-1)$-chaîne singulière dans $N$, et cette chaîne est un cycle car la composition~$\partial_{p-1} \circ \partial_p$ s'annule. Les applications $\alpha$ sont celles induites par l'inclusion $\iota\colon N \hookrightarrow M$ et les applications $\beta$ consistent à regarder les chaînes sans bord dans $M$ comme un cas particulier des chaînes dont le bord prend des valeurs dans $N$.

Soient $(M, N)$ et $(M', N')$ des paires formées d'un espace topologique et d'un sous-espace. Un \emph{morphisme de paires} est une fonction continue $f\colon M \to M'$ satisfaisant à $f(N) \subset N'$. Un tel morphisme induit des homomorphismes 
\[
f_\ast\colon C_p(M) \to C_p(M') \quad \text{et} \quad f_\ast \colon C_p(N) \to C_p(N')
\] pour tout $p$ qui commutent aux applications bord. Si le bord de $\sigma$ appartient à $N$, le bord de $f_\ast(\sigma)$ appartient donc à $N'$, d'où un homomorphisme entre groupes d'homologie relative
\begin{equation}\label{eqn:fonctorBettirel}
f_\ast \colon \rH_p(M, N; \ZZ) \longrightarrow \rH_p(M', N'; \ZZ).
\end{equation}
Autrement dit, l'homologie relative est fonctorielle par rapport aux morphismes de paires d'espaces topologiques.

\subsection{Cohomologie de de~Rham relative}

On dispose d'une construction similaire en cohomologie de~de~Rham. Soit $X \subset \mathbb{A}^n$ une variété affine lisse définie par l'annulation des poly\-nômes $f_1, \ldots, f_m$ et soit $D \subset \mathbb{A}^n$ une autre variété lisse définie par ces mêmes équations plus d'autres; on dit alors que~$D$ est une \emph{sous-variété fermée} de~$X$ et on note $\iota \colon D \hookrightarrow X$ l'inclusion, qui est un morphisme de variétés algébriques. Les applications de restriction $\iota^\ast \colon \Omega^p_X \to \Omega^p_D$ se combinent dans un complexe double\vspace*{-5pt}
\begin{equation}\label{eqn:complexedRrelatif}
\begin{array}{c}
\xymatrix@R=.6cm{
\vdots & \vdots \\
\Omega^1_X \ar[r]^{\iota_\ast} \ar[u]^{d} & \Omega^1_D \ar[u]_{d} \\
\Omega^0_X \ar[r]^{\iota_\ast} \ar[u]^{d} & \Omega^0_D \ar[u]_{d},
}
\end{array}
\end{equation}
à partir duquel on peut former un complexe simple en considérant les applications
\begin{align*}
\Omega^p_X \oplus \Omega^{p-1}_D &\longrightarrow \Omega^{p+1}_X \oplus \Omega^{p}_D \\
(\omega, \eta) &\longmapsto (d\omega, \iota^\ast(\omega)-d\eta).
\end{align*}

\begin{definition} Pour $X$ et $D$ comme ci-dessus, la \emph{cohomologie de de~Rham de $X$ relative à $D$} en degré~$p$ est le quotient
\[
\rH^p_{\dR}(X, D)=\frac{\ker\bigl(\Omega^p_X \oplus \Omega^{p-1}_D \to \Omega^{p+1}_X \oplus \Omega^{p}_D\bigr)}{\mathrm{im}\bigl(\Omega^{p-1}_X \oplus \Omega^{p-2}_D \to \Omega^{p}_X \oplus \Omega^{p-1}_D\bigr)}.
\]
\end{definition} Une classe de cohomologie relative est donc représentée par une forme fermée $\omega$ sur $X$ dont la restriction à la sous-variété $D$ est exacte (c'est par exemple le cas pour toute forme de degré la dimension de~$X$) et par une \og primitive\fg $\eta$ de cette restriction. Comme l'homologie relative, ces espaces s'inscrivent dans une suite exacte longue
\begin{displaymath}
\xymatrix @R=1.25ex @C=1.1ex
{
0 \ar[rr] && \rH^0_{\dR}(X, D) \ar[rr] &&\rH_{\dR}^0(X) \ar[rrr]^-{\iota^\ast} &&& \rH^0_{\dR}(D) \ar@{-}[r] & *{} \ar@{-}`r/9pt[d] `/9pt[l] \\
& *{} \ar@{-}`/9pt[d] `d/9pt[d] & && && && *{} \ar@{-}[lllllll] & \\
& *{} \ar[r] & \ar[rr] \rH^1_{\dR}(X, D) &&\ar[rrr]^-{\iota^\ast}\rH_{\dR}^1(X) &&& \rH^1_{\dR}(D) \ar@{-}[r] & *{} \ar@{-} `r/9pt[d] `[l]\\
& *{} \ar@{-}`/9pt[d] `d/9pt[d]& && && && *{} \ar@{-}[lllllll] & \\
& *{} \ar[r] & \ar[rr] \rH^2_{\dR}(X, D) && \cdots && &&&
}
\end{displaymath}
dans laquelle les flèches $\rH^p_{\dR}(X, D) \to \rH^p_{\dR}(X)$ envoient la classe d'une paire $(\omega, \eta)$ vers la classe de $\omega$, et les morphismes connectants envoient la classe d'une forme $\eta$ sur $D$ vers celle de la paire $(0, \eta)$.

Soient $(X, D)$ et $(X', D')$ des paires formées d'une variété affine et d'une sous-variété fermée, toutes les deux supposées lisses. Un morphisme de variétés \hbox{$f\colon X \to X'$} satisfaisant à $f(D) \subset D'$ induit, pour tout~$p$, des applications 
\[
f^\ast\colon \Omega^p_{X'} \to \Omega^p_{X} \quad \text{et}  \quad f_\ast \colon \Omega^p_{D'} \to \Omega^p_{D}
\] qui commutent à la différentielle. Si la restriction à $D'$ d'une forme fermée $\omega'$ sur $X'$ est exacte, la restriction à $D$ de la forme fermée $f^\ast \omega$ est donc exacte aussi, d'où une application linéaire
\begin{equation}\label{eqn:fonctorDRrel}
f^\ast \colon \rH^p_{\dR}(X', D') \longrightarrow \rH^p_{\dR}(X, D).
\end{equation}
Autrement dit, la cohomologie de de~Rham relative est fonctorielle par rapport aux morphismes de paires de variétés affines lisses.

\subsection{L'accouplement de périodes}

Par analogie avec le cas déjà traité, on définit l'homologie de Betti relative comme le $\QQ$-espace vectoriel 
\[
\rH_p^\Betti(X, D)=\rH_p(X(\CC), D(\CC); \QQ)
\] et la cohomologie de Betti relative comme son dual
\[
\rH^p_\Betti(X, D)=\mathrm{Hom}(\rH_p^\Betti(X, D), \QQ). 
\]L'accouplement des périodes s'étend alors en un accouplement parfait
\begin{equation}\label{eqn:accouplementrelatif}
\begin{aligned}
\rH^p_{\dR}(X, D) \otimes \rH_p^\Betti(X, D) &\longrightarrow \CC \\
([(\omega, \eta)], [\sigma])&\longmapsto \int_{\sigma} \omega+\int_{\partial \sigma} \eta, 
\end{aligned} 
\end{equation} que l'on peut encore réinterpréter comme un isomorphisme 
\begin{equation}\label{eqn:isomcomprel}
\mathrm{comp}\colon \rH^p_{\dR}(X, D) \otimes \CC \stackrel{\sim}{\longrightarrow} \rH^p_\Betti(X, D) \otimes \CC.  
\end{equation}
Voyons quelques exemples.

\begin{exemple}[les logarithmes]\label{exmp:log-cohomologique} Soit~$q >1$ un nombre rationnel. Considérons la droite affine épointée $X=\mathbb{G}_m$ et la sous-variété~\hbox{$D \subset X$} formée des points $1$ et~$q$, c'est\nobreakdash-à\nobreakdash-dire le lieu des zéros du polynôme $(x-1)(x-q)$ dans $\mathbb{A}^1$. C'est une sous-variété lisse de dimension~$0$ avec anneau de fonctions
\begin{align*}
B&=\QQ[x] \slash (x-1)(x-q) \\ &=\QQ[x] \slash (x-1) \oplus \QQ[x] \slash (x-q) \\ &=\QQ \oplus \QQ.
\end{align*}
Dans ce cas, le complexe double \eqref{eqn:complexedRrelatif} est donné par
\begin{displaymath}
\xymatrix{
\QQ[x, x^{-1}]dx \\
\QQ[x, x^{-1}] \ar[r] \ar[u]^d & \QQ \oplus \QQ,
}
\end{displaymath}
où la flèche verticale est la différentielle et la flèche horizontale envoie un polynôme de Laurent $g$ sur la paire $(g(1), g(q))$ des valeurs qu'il prend en $1$ et en $q$. Le complexe simple associé est alors
\begin{align*}
\QQ[x, x^{-1}] &\To{a} \QQ[x, x^{-1}]dx \oplus \QQ \oplus \QQ \\
g &\longmapsto (g'(x)dx, g(1), g(q))
\end{align*}
et il faut calculer le noyau et le conoyau de cette flèche. Si $g$ est dans le noyau, on a \hbox{$g'(x)=0$,} donc $g$ est constant, mais comme cette valeur constante doit être égale à $g(1)=g(q)=0$, la seule possibilité est~\hbox{$g=0$}, d'où $\rH^0_{\dR}(X, D)=0$. Pour calculer le conoyau, on est amené à déterminer quels éléments de la base
\begin{displaymath}
(x^n dx, 0, 0), \quad (0, 1, 0), \quad (0, 0, 1)
\end{displaymath}
sont linéairement indépendants modulo l'image de $a$. D'un côté, les calculs $a(1)=(0, 1, 1)=(0, 1, 0)+(0, 0, 1)$ et
\begin{displaymath}
a(\spfrac{x^{n+1}}{n+1})=(x^n dx, 0, 0)+\frac{1}{n+1}\,(0, 1, 0)+\frac{q^{n+1}}{n+1}\,(0, 0, 1)
\end{displaymath}
pour $n \neq -1$ montrent que le conoyau est engendré par $(dx/x, 0, 0)$ et $(0, 1, 0)$. Comme visiblement aucune combinaison linéaire de ces éléments n'est dans l'image de $a$, ils en forment une base. En calculant l'image de $(x-q)/(q-1)$ on peut enfin remarquer que $(dx/(q-1), 0, 0)$ et $(0, 1, 0)$ définissent la même classe de cohomologie relative, d'où la présentation suivante:
\begin{displaymath}
\rH^1_{\dR}(X, D)=\QQ\biggl[\frac{dx}{q-1}\biggr] \oplus \QQ\biggl[\frac{dx}{x}\biggr].
\end{displaymath}

Par ailleurs, comme $X(\CC)$ est le plan complexe épointé, dont l'homologie singulière a été calculée dans l'exemple \ref{exmp:lacet0}, et comme \hbox{$D(\CC)$} est formé de deux points, l'homologie de Betti relative $\rH_1^\Betti(X, D)$ se déduit de la suite exacte longue
\begin{displaymath}
\xymatrix @R=1.25ex @C=1.1ex
{
& *{} & 0 \ar[rr] &&\ar[rr]^-{\beta}\rH_1(\CC^\times, \ZZ) \ar@{=}[d]&& \rH_1(\CC^\times, \{1, q\}; \ZZ) \ar@{-}[r] & *{} \ar@{-} `r/9pt[d] `[dd]\\
& & &&\ZZ &&&&\\
& *{} \ar@{-}`/9pt[d] `d/9pt[d]& && && & *{} \ar@{-}[llllll] & \\
& *{} \ar[r] & \ar[rr]^-{\alpha} \rH_{0}(\{1, q\}, \ZZ)\ar@{=}[d] &&\ar[rr]^-{\beta}\rH_0(\CC^\times, \ZZ) \ar@{=}[d] && \rH_0(\CC^\times, \{1, q\}; \ZZ)\ar[rr]&&0, \\
& & \ZZ\oplus\ZZ &&\ZZ && &&
}
\end{displaymath}
qui montre qu'elle est engendrée par le lacet~$\sigma_1$ qui tourne une fois autour de zéro, comme dans l'exemple \ref{exmp:lacet0}, et par le chemin droit~$\sigma_0$ allant de $1$ à $q$ (c'est pour pouvoir considérer ce dernier que l'on a introduit la variante relative). La matrice de l'accouplement de périodes par rapport à ces bases est donnée par
\begin{equation}\label{eqn:periodslog}
\def\arraystretch{1.3}
\begin{array}{c|ccc}
\int & \frac{1}{q-1}\,dx & & \sfrac{dx}{x} \\ \hline
\sigma_0 & 1 & & \log(q) \\ \hline
\sigma_1 & 0 & & 2\pi i \end{array}
\end{equation}
On voit que c'est un accouplement parfait car le déterminant vaut~$2\pi i$.

Cette matrice reflète le fait que le logarithme est une fonction \emph{multiforme}: pour obtenir un nombre complexe bien déterminé, on n'a pas défini $\log(q)$ comme une fonction de $q$, mais plutôt comme une fonction d'un chemin $\gamma$ allant de $1$ vers $q$, à savoir
\begin{displaymath}
\gamma \longmapsto \int_\gamma \frac{dx}{x}.
\end{displaymath}
(L'espace de ces chemins n'est rien d'autre que le \emph{revêtement universel} de $\CC^\times$ pointé en $1$.) Un choix standard pour $q>1$ consiste à prendre la ligne droite~$\sigma_0$ entre $1$ et $q$, mais on aurait pu aussi bien choisir n'importe quel autre chemin $\gamma$. Vu que celui-ci définit une classe d'homologie relative dans~$\rH_1(\CC^\times, \{1, q\}; \ZZ)$ et que les classes de $\sigma_0$ et $\sigma_1$ en forment une base, la classe $[\gamma]$ est une combinaison linéaire à coefficients entiers de $[\sigma_0]$ et~$[\sigma_1]$, et on peut montrer que cette combinaison linéaire est
\[
[\gamma]=[\sigma_0]+n[\sigma_1],
\]
où $n$ désigne le nombre de fois que $\gamma$ tourne autour de $0$. Il vient
\begin{displaymath}
\int_\gamma \frac{dx}{x}=\int_{\sigma_0} \frac{dx}{x}+n\int_{\sigma_1}\frac{dx}{x}=\log(q)+n2\pi i,
\end{displaymath}
où l'on voit apparaître la \textit{monodromie} du logarithme. 
\end{exemple}

Cette variante relative de l'homologie singulière et de la cohomologie de de~Rham n'est toujours pas suffisante pour engendrer toutes les périodes de Kontsevich-Zagier, mais on n'en est pas loin! Il faut s'autoriser à prendre pour~$D$ des sous-variétés qui ne sont pas lisses; heureusement, les singularités les plus bénignes s'avèrent suffisantes. Rappelons que $D \subset X$ est un diviseur à croisements normaux s'il est réunion de variétés lisses $D_1, \dots, D_m$ de codimension~$1$ dont toutes les intersections sont lisses de la dimension attendue (exemple \ref{exmp:SNCD}). On peut alors former le complexe double
\[
\xymatrix@R=.5cm{
\vdots & \vdots & \vdots \\
\Omega^1_X \ar[r] \ar[u] & \bigoplus_i \Omega^1_{D_i} \ar[u] \ar[r] & \bigoplus_{i<j} \Omega^1_{D_i \cap D_j} \ar[u] \\
\Omega^0_X \ar[u] \ar[r] & \bigoplus_i \Omega^0_{D_i} \ar[r] \ar[u] & \bigoplus_{i<j} \Omega^0_{D_i \cap D_j} \ar[r] \ar[u] & \cdots
}
\]
dans lequel les flèches verticales sont les différentielles dans le complexe de de~Rham de chaque composante irréductible $D_i$ et les flèches horizontales sont des sommes alternées des applications induites par les différentes inclusions. Par exemple, tous les signes sont positifs dans les premières flèches~$\Omega^p_X \to \bigoplus \Omega^p_{D_i}$, mais~$\Omega^p_{D_j} \to \Omega^p_{D_i \cap D_j}$ a signe positif et $\Omega^p_{D_i} \to \Omega^p_{D_i \cap D_j}$ a signe négatif dans les deuxièmes. De là on passe au complexe total
\[
\Omega^0_X \longrightarrow \Omega^1_X \oplus \bigoplus_i \Omega^0_{D_i} \longrightarrow \Omega^2_X \oplus \bigoplus_i \Omega^1_{D_i} \oplus \bigoplus_{i<j} \Omega^0_{D_i \cap D_j}\longrightarrow \cdots
\]
en modifiant, comme d'habitude, une fois sur deux le signe des différentielles horizontales. La \emph{cohomologie de de~Rham de $X$ relative à~$D$} est la cohomologie $\rH^p_{\dR}(X, D)$ de ce complexe. Ses éléments sont donc représentés par une $p$-forme sur $X$, des $(p-1)$-formes sur les composantes irréductibles $D_i$, des $(p-2)$-formes sur les intersections doubles~$D_i \cap D_j$, et ainsi de suite. En degré maximal, c'est\nobreakdash-à\nobreakdash-dire pour~$p$ égal à la dimension de $X$, toute $p$-forme sur $X$ définit une classe de cohomologie relative; réciproquement, toute classe de cohomologie relative peut être représentée par une telle forme \cite[Prop.\,3.3.19]{huber-muller}. On a déjà rencontré un exemple de ce phénomène en étudiant les logarithmes: la classe de cohomologie relative $(0, 1, 0)$, où la fonction constante $1$ est vue comme une $0$-forme sur un point, peut être représentée par la $1$-forme $dx/(q-1)$ sur $X=\mathbb{G}_m$. 

\begin{exemple}[les dilogarithmes]\label{exmp:dilogs} Le \emph{dilogarithme} est la fonction définie sur le disque $|z|<1$ par la série entière
\[
\mathrm{Li}_2(z)=\sum_{n=1}^\infty \frac{z^n}{n^2},
\]
où elle converge absolument. La représentation intégrale
\begin{equation}\label{eqn:integraldilog}
\mathrm{Li}_2(z)=\int_{[0, 1]^2} \frac{zdxdy}{1-zxy}
\end{equation}
montre que ses valeurs en des arguments algébriques $\alpha$ sont des périodes. Supposons, pour simplifier, que $\alpha$ est un rationnel non nul de module $|\alpha|<1$ et étudions la géométrie que cette intégrale suggère.

Soit $Z \subset \mathbb{A}^2$ le lieu des zéros du polynôme $f(x, y)=1-\alpha xy$, c'est\nobreakdash-à\nobreakdash-dire la droite affine épointée $\mathbb{G}_m$ plongée dans $\AA^2$ comme l'ensemble des $(x, \sfrac{1}{\alpha x})$. Vu que l'intégrande a des pôles le long de~$Z$, il est naturel de considérer comme espace ambiant la variété
\[
X=\mathbb{A}^2 \setminus Z.
\]
Il s'agit d'une variété affine lisse de dimension $2$ sur $\QQ$ car, quitte à introduire une nouvelle variable $t$, on peut y penser comme le lieu des zéros dans $\AA^3$ du polynôme $tf(x, y)-1$. Son anneau de fonctions est donc donné par la \emph{localisation}
\[
A=\{\sfrac{g}{f^n} \,|\, g \in \QQ[x, y],\, n \geq 0 \}
\]
et toute $2$-forme différentielle sur $X$ s'écrit comme $\Psfrac{g}{f^n} dx\wedge dy$. En~particulier, l'intégrande définit une classe de cohomologie 
\begin{equation}\label{eqn:integrandDR}
[\omega] \in \rH^2_{\dR}(X), \qquad \omega=\Psfrac{\alpha}{f} dx \wedge dy.
\end{equation}

Grâce à l'hypothèse $|\alpha|<1$, le domaine d'intégration $[0, 1]^2$ est inclus dans~$X(\CC)$, et on peut le voir comme une $2$-chaîne singulière en découpant le carré en deux triangles. Ce n'est pas un cycle, mais son bord est contenu dans les points complexes de la sous-variété
\[
D=\{xy(1-x)(1-y)=0\} \cap X,
\]
qui est un diviseur à croisements normaux sur $X$: la réunion de deux droites affines $D_1$ et $D_2$ et de deux droites affines épointées $D_3$ et $D_4$ sans intersection triple, telles que les montre la figure \ref{fig:16}. 

\begin{figure}[htb]
\centering\smaller\smaller
	\begin{tikzpicture}[scale=.5]
\def\a{3};
\draw[domain={\a/2+\a/32}:{2*\a-\a/8}, smooth, variable=\x, dashed] plot ({\x+\a/8}, {\a*\a/\x+\a/8});
\draw (0*\a,-.25*\a) -- (0,2*\a);	
\draw (1*\a,-.25*\a) -- (1*\a,2*\a);

\draw (-.25*\a,0) -- (2*\a,0);	
\draw (-.25*\a,1*\a) -- (2*\a,1*\a);
\path[pattern=north east lines, pattern color=gray] (0,0) -- (\a, 0) -- (\a, \a) -- (0, \a) -- (0,0);

\draw[red] (\a,\a+\a/4) circle (2*\a pt);
\draw[red] (\a+\a/4,\a) circle (2*\a pt);

\node[anchor=north] at (0*\a,-.25*\a) {$D_1$};
\node[anchor=north] at (1*\a,-.25*\a) {$D_3$};

\node[anchor=east] at (-.25*\a,0) {$D_2$};
\node[anchor=east] at (-.25*\a,1*\a) {$D_4$};

\node[anchor=north] at (1.5*\a,.73*\a) {$\alpha x y = 1$};

\end{tikzpicture}
\caption{Géométrie suggérée par la représentation intégrale du dilogarithme}
\label{fig:16}
\end{figure}
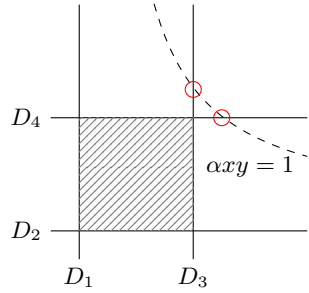

Le domaine d'intégration définit donc une classe en homologie de Betti relative~$\rH_2^\Betti(X, D)$. Comme la forme $\omega$ est de degré maximal, elle définit aussi une classe en cohomologie de de~Rham relative $\rH_{\dR}^2(X, D)$, la période associée à ces deux classes étant précisément
\[
\int_{[0, 1]^2} \omega=\mathrm{Li}_2(\alpha).
\]

Calculons le reste de la matrice des périodes, en commençant par la cohomologie de $X$. Comme $\AA^2$ n'a de la cohomologie qu'en degré~$0$, on déduit de la suite exacte longue
\[
\cdots \rightarrow \rH^2_{\dR}(\mathbb{A}^2) \to \rH^2_{\dR}(X) \xrightarrow{\mathrm{Res}_Z} \rH^1_{\dR}(Z)(-1) \to \rH^3_{\dR}(\mathbb{A}^2) \rightarrow \cdots
\] (analogue en dimension supérieure de la suite exacte longue de Gysin de la section \ref{exmp:courbeelliptique}) 
que le \emph{résidu le long de $Z$} est un isomorphisme
\[
\mathrm{Res}_Z\colon \rH^2_{\dR}(X) \stackrel{\sim}{\longrightarrow} \rH^1_{\dR}(Z)(-1).
\]
Il s'ensuit que $\rH^2_{\dR}(X)$ est de dimension $1$ et que la classe~\eqref{eqn:integrandDR} en est une base. Pour calculer son résidu, on écrit 
\[
\frac{\alpha dx\wedge dy}{f}=-\frac{dx}{x} \wedge \frac{df}{f} 
\] en utilisant l'égalité $df=-\alpha y dx-\alpha x dy$ et l'annulation de $dx \wedge dx$, puis on pose 
\[
\mathrm{Res}_Z([\omega])=\text{restriction à $Z$ de }-[\sfrac{dx}{x}].
\]
De même, on déduit de l'isomorphisme $\rH^1_{\dR}(X) \simeq \rH^0_{\dR}(Z)(-1)$ que la cohomologie $\rH^1_{\dR}(X)$ est engendrée par la classe de la forme différentielle $\sfrac{df}{f}$, dont le résidu le long de $Z$ vaut $1$.

L'analogue en homologie de Betti du calcul ci-dessus repose sur la suite exacte longue de Gysin
\[
\cdots \to \rH_3^{\Betti}(\mathbb{A}^2) \to \rH_1^{\Betti}(Z)(-1) \To{\mathrm{T}} \rH_2^{\Betti}(X) \to \rH_2^{\Betti}(\mathbb{A}^2) \to \cdots,
\]
qui induit un isomorphisme
\[
\mathrm{T} \colon \rH_1^{\Betti}(Z)(-1) \stackrel{\sim}{\longrightarrow} \rH_2^{\Betti}(X).
\]
C'est l'application qui envoie la classe d'un lacet engendrant~$\rH_1^{\Betti}(Z)$ vers la classe de son \emph{tube} dans $X$, que l'on peut par exemple représenter par la $2$-chaîne singulière
\[
T=\{(x, y) \in \CC^2 \mid |x|>\varepsilon \text{ et } |\alpha y-1/x|>\varepsilon\}
\]
pour $\varepsilon >0$ fixé. Comme les suites exactes de Gysin sont compatibles avec l'accouplement de périodes (c'est à cet effet que l'on avait introduit la torsion à la Tate dans la remarque \ref{rem:torsionTate}), on trouve\footnote{On peut aussi calculer directement l'intégrale en faisant le changement de variables $u=x$ et $v=f/x$ et en appliquant le théorème de Fubini à l'intégrale de la $2$-forme~$du/u \wedge dv/v$ sur le domaine défini par $|u|>\varepsilon$ et $|v|>\varepsilon$.}
\begin{equation}\label{eqn:coeff2pii}
\int_{T} \omega=2\pi i \int_{\sigma} \mathrm{Res}_Z(\omega)=2\pi i \int_{\sigma} \frac{dx}{x}=(2\pi i)^2.
\end{equation}
De même, $\rH_1^\Betti(X)\simeq \rH_0^{\Betti}(Z)(-1) $ est l'espace de dimension~$1$ engendré par le tube d'un point dans $Z$, c'est-à-dire par un lacet dans $X$ tournant autour de l'hyperbole pointillée de la figure \ref{fig:16}.

Passons maintenant à la cohomologie de $D$. Puisque les composantes irréductibles de $D$ sont des variétés affines de dimension $1$ sans intersection triple, le complexe de de~Rham de $D$ est égal à
\[
\bigoplus_i \Omega^0_{D_i} \longrightarrow \bigoplus_i \Omega^1_{D_i} \oplus \bigoplus_{i<j} \Omega^0_{D_i \cap D_j},
\]
où les flèches vers la première somme sont $-d$, l'opposé de la différentielle, et les flèches vers la seconde sont les restrictions des fonctions avec signe positif si l'on restreint de $D_i$ à $D_i \cap D_j$ et négatif si l'on restreint de $D_j$ à $D_i \cap D_j$. Toute classe dans $\rH^1_{\dR}(D)$ est donc représentée par un $8$-uplet dans
\[
\bigoplus_{i=1}^4 \rH^1_{\dR}(D_i) \oplus \bigoplus_{i<j} \rH^0_{\dR}(D_i\cap D_j).
\]
Comme on l'avait fait pour les variétés projectives recouvertes par deux cartes affines, on en déduit une suite exacte longue
\begin{displaymath}
\xymatrix @R=1.25ex @C=1.1ex
{
0 \ar[rr] & & \ar[rr] \rH_{\dR}^0(D)\ar@{=}[d] &&\ar[rr] \bigoplus_{i=1}^4 \rH^0_{\dR}(D_i) \ar@{=}[d]&& \bigoplus_{i<j} \rH^0_{\dR}(D_i\cap D_j)\ar@{=}[d] \ar@{-}[r] & *{} \ar@{-} `r/9pt[d] `[dd]\\
& & \QQ &&\QQ^4 && \QQ^4 && \\
& *{} \ar@{-}`/9pt[d] `d/9pt[d]& && && & *{} \ar@{-}[llllll] & \\
& *{} \ar[r] & \ar[rr] \rH_{\dR}^1(D) &&\ar[rr] \bigoplus_{i=1}^4 \rH^1_{\dR}(D_i) \ar@{=}[d] && 0 && \\
& & &&\QQ^2 && &&
}
\end{displaymath}
du type Mayer-Vietoris. Dans cette suite, la flèche $\QQ^4 \to \QQ^4$ est de rang $3$, car elle s'identifie à $(a, b, c, d) \mapsto (a-b, a-d, b-c, d-c)$, dont l'image est contenue dans l'hyperplan $r-s+t-u=0$. Il s'ensuit que l'espace $\rH^1_{\dR}(D)$ est de dimension $3$ et qu'il est engendré par les générateurs de la cohomologie de $D_3$ et de $D_4$ et par n'importe quel élément de~$\bigoplus_{i<j} \rH^0_{\dR}(D_i\cap D_j)$ qui n'est pas dans l'image de la flèche ci-dessus. Les classes des $8$-uplets
\begin{align*}
&(0, 0, \eta_3, 0, 0, 0, 0, 0),\ (0, 0, 0,\eta_4, 0, 0, 0, 0),\ (0, 0, 0, 0, 0, 0, 0, 1)
\end{align*}
avec $\eta_3=\sfrac{dy}{(y-\alpha^{-1})}$ et $\eta_4=\sfrac{dx}{(x-\alpha^{-1})}$ sont donc des générateurs; avec un léger abus, on les notera $[\eta_3]$, $[\eta_4]$ et~$[g]$. De plus, comme $\sfrac{df}{f}$ se restreint aux formes $\eta_3$ et~$\eta_4$ sur $D_3$ et sur~$D_4$ respectivement, l'application $\rH^1_{\dR}(X) \to \rH^1_{\dR}(D)$ induite par les inclusions des composantes irréductibles de $D$ dans~$X$ est injective. L'homologie de Betti $\rH_1^{\Betti}(D)$ est aussi un espace de dimension $3$, une base étant donnée par les classes du bord du carré~$[0, 1]^2$ et des deux lacets $\tau_3$ et $\tau_4$ autour des points manquants dans~$D_3(\CC)$ et~$D_4(\CC)$ qui, rappelons-le, sont tous les deux des plans complexes épointés.

Combiné avec la suite exacte de cohomologie relative
\[
0 \to \rH^1_{\dR}(X) \to \rH^1_{\dR}(D) \to \rH^2_{\dR}(X, D) \to \rH^2_{\dR}(X) \to 0,
\]
ce qui précède montre que $\rH^2_{\dR}(X, D)$ est l'espace de dimension $3$ ayant pour base les classes $[\omega]$, $[g]$ et $[\eta_3]$ (comme $[\eta_3]+[\eta_4]$ appartient à l'image de $\rH^1_{\dR}(X)$, les classes de $\eta_3$ et de $\eta_4$ sont l'opposée l'une de l'autre dans la cohomologie relative). Comme on l'a dit dans la discussion générale, on peut trouver des $2$-formes sur $X$ représentant les classes de $[g]$ et de $[\eta_3]$. Pour ce faire, on considère le diagramme\enlargethispage{-2\baselineskip}
\begin{equation}\label{eqn:complexedoublezigzag}
\begin{aligned}
\xymatrix{
\Omega^2_X & & \\
\Omega^1_X \ar[u]^{d} \ar[r] & \bigoplus_i \Omega^1_{D_i} & \\
& \bigoplus_i \Omega^0_{D_i} \ar[r] \ar[u]^{-d} & \bigoplus_{i<j} \Omega^0_{D_i \cap D_j}.
}
\end{aligned}
\end{equation}
Vu que l'élément $(0, 0, 0, -x)$ de $ \bigoplus_i \Omega^0_{D_i}$ s'envoie par restriction sur l'élément $(0, 0, 0, 1)$ de $\bigoplus_{i<j} \Omega^0_{D_i \cap D_j}$ et que son image  $(0, 0, 0, dx)$ dans~$\bigoplus_i \Omega^1_{D_i}$ par l'opposé de la différentielle est la restriction à $D_4$ de la $1$-forme $ydx$ sur $X$, dont la différentielle vaut $-dx \wedge dy$, on trouve que la~$2$\nobreakdash-forme $-dx\wedge dy$ représente la même classe que $g$ en cohomologie relative. Un argument similaire, avec l'élément $-\alpha x dy/(1-\alpha xy)$ de~$\Omega^1_X $ à la place, montre que $\eta_3$ représente la même classe de cohomologie relative que $\alpha dx\wedge dy/(1-\alpha xy)^2$. Une base de $\rH^2_{\dR}(X, D)$ est donc donnée par les classes de
\begin{equation}\label{eqn:basesdRrelatif}
dx\wedge dy, \quad \frac{\alpha dx\wedge dy}{(1-\alpha xy)^2}, \quad \frac{\alpha dx\wedge dy}{1-\alpha xy}.
\end{equation}

Par ailleurs, la suite exacte longue d'homologie relative
\[
0 \to \rH_2^{\Betti}(X) \to \rH_2^{\Betti}(X, D) \to \rH_1^{\Betti}(D) \to \rH_1^{\Betti}(X) \to 0
\]
montre que $\rH_2^{\Betti}(X, D)$ est de dimension $3$: il est engendré par les classes du tube $T$, du carré $[0, 1]^2$ et d'un troisième cycle $\sigma$ dont le bord intersecte $D_3(\CC)$ et $D_4(\CC)$ le long des lacets $\tau_3$ et $\tau_4$ avec des orientations opposées. Par rapport aux bases que l'on vient de décrire, la matrice des périodes est donnée par
\begin{equation}\label{eqn:periodsdilog}
\def\arraystretch{1.2}
\begin{array}{c|ccc}
\int & dx\wedge dy & \frac{\alpha dx\wedge dy}{(1-\alpha xy)^2} & \frac{\alpha dx\wedge dy}{1-\alpha xy} \\ \hline
[0, 1]^2 & 1 & -\log(1-\alpha) & \mathrm{Li}_2(\alpha) \\ \hline
\sigma & 0 & 2\pi i & 2\pi i\log(\alpha) \\ \hline
T & 0 & 0 & (2\pi i)^2
\end{array}
\end{equation}
Il y a divers moyens\footnote{Une autre possibilité consiste à observer que l'intégrande est l'image de $\eta_3$ par la suite exacte longue de cohomologie relative et que le bord du domaine d'intégration intersecte $D_3(\CC)$ en l'intervalle $[0, 1]$, d'où 
\[
\int_{[0, 1]^2}  \frac{\alpha dx\wedge dy}{(1-\alpha xy)^2}=\pm \int_0^1\frac{dy}{y-\alpha^{-1}}=\pm\log\big(\frac{1-\alpha^{-1}}{-\alpha^{-1}}\big)=\pm \log(1-\alpha),
\] avec un signe $\pm$ qui dépend des orientations, mais qui doit être négatif puisque l'intégrale est un nombre réel positif et que $\log(1-\alpha)$ est négatif pour $|\alpha|<1$.}  de calculer le coefficient $-\log(1-\alpha)$, par exemple en utilisant les identités 
\[
z \frac{d}{dz} \mathrm{Li}_2(z)=\int_{[0,1]^2} \frac{z dx\wedge dy}{(1-\alpha xy)^2}=-\log(1-z), 
\] obtenues en comparant les dérivées terme à terme de la série entière $\sum_{n=1}^\infty z^n/n^2$ et sous le signe intégral dans la représentation \eqref{eqn:integraldilog}. On avait déjà calculé le coefficient $(2\pi i)^2$ dans \eqref{eqn:coeff2pii}, et les autres intégrales le long de $T$ sont nulles puisque ce cycle est dans l'image de $\rH_2^{\Betti}(X)$ par la suite exacte longue d'homologie relative et que les formes différentielles en question ont une image nulle dans $\rH^2_{\dR}(X)$.

Plutôt que calculer les coefficients restant en paramétrant explicitement la chaîne $\sigma$, comme il est fait dans \cite[Prop.\,15,6.3]{huber-muller} dans un cas très proche, je vais conclure cet exemple en expliquant pourquoi la matrice des périodes est triangulaire supérieure, avec un bloc de taille $1$ de période $1$ et un bloc de taille $2$, qui est au facteur $2\pi i$ près la même matrice des périodes que dans l'exemple \ref{exmp:log-cohomologique}, sur la diagonale. L'explication repose sur la suite exacte longue de Gysin
\[
\cdots \to \rH^2_{\dR}(\mathbb{A}^2, D) \to \rH^2_{\dR}(X, D) \to \rH^1_{\dR}(Z, Z \cap D)(-1) \to \cdots,
\] cette fois-ci en cohomologie relative. Rappelons que $Z$ est isomorphe à la droite affine épointée $\mathbb{G}_m$ par le morphisme $(x, y) \mapsto (x, y/\alpha)$ et que l'intersection $Z \cap D$ est formée des points $(1, 1/\alpha)$ et $(1/\alpha, 1)$, d'où des isomorphismes de $\QQ$-espaces vectoriels
\[
\rH^p_{\dR}(Z, Z \cap D) \simeq \rH_{\dR}^p(\mathbb{G}_m, \{1, 1/\alpha\}). 
\] En particulier, le terme $\rH^0_{\dR}(Z, Z \cap D)(-1)$ est nul par l'exemple~\ref{exmp:log-cohomologique}. Pour des raisons de dimension, on sait que $\rH^3_{\dR}(\mathbb{A}^2, D)$ s'annule aussi, et comme ceux-ci sont les termes par lesquels la suite de Gysin continuerait à gauche et à droite, on trouve en fait une suite exacte courte 
\[
0 \to \rH^2_{\dR}(\mathbb{A}^2, D) \to \rH^2_{\dR}(X, D) \to \rH^1_{\dR}(\mathbb{G}_m, \{1, 1/\alpha\})(-1) \to 0. 
\] Un calcul sans malice avec le complexe double analogue de~\eqref{eqn:complexedoublezigzag} montre que le premier terme est de dimension $1$, avec base 
\[
\rH^2_{\dR}(\mathbb{A}^2, D)=\QQ[dx\wedge dy].
\] Les deux autres classes dans la base \eqref{eqn:basesdRrelatif} de $\rH^2_{\dR}(\mathbb{A}^2, D)$ s'envoient sur les classes\footnote{En effet, le deuxième élément de la base représente la même classe que~$\eta_3$, qui a résidu $0$ en $1$ et $1$ en $1/\alpha$, et on sait d'après l'exemple~\ref{exmp:log-cohomologique} que $(0, 0, 1)$ et $-dx/(\alpha-1)$ représentent la même classe dans $\rH^1_{\dR}(\mathbb{G}_m, \{1, 1/\alpha\})$. } $[dx/(\alpha-1)]$ et $-[dx/x]$ dans $\rH^1_{\dR}(\mathbb{G}_m, \{1, 1/\alpha\})$. 

L'homologie de Betti relative $\rH_2^\Betti(\mathbb{A}^2, D)$ est aussi de dimension~$1$, une base étant donnée par la classe du carré $[0, 1]^2$; la matrice des périodes a donc pour seul coefficient $\int_{[0, 1]^2} dx\wedge dy=1$. On peut ensuite vérifier que les classes $\sigma$ et $T$ sont, au signe près, les images par la première flèche dans la suite exacte courte
\[
0 \to \rH_1^{\Betti}(\mathbb{G}_m, \{1, 1/\alpha\})(-1)  \to \rH_2^{\Betti}(X, D) \to \rH_2^{\Betti}(\mathbb{A}^2, D) \to 0
\] des classes $[\sigma_0]$ et $[\sigma_1]$ dans la base standard de l'exemple~\ref{exmp:log-cohomologique}. La compatibilité de ces deux suites exactes avec l'accouplement de périodes, pourvu que l'on tienne compte du facteur $2\pi i$ dans la torsion à la Tate, explique la forme de la matrice \eqref{eqn:periodsdilog}. 
\end{exemple} 

\subsection{Comparaison des définitions}

Après avoir étendu la cohomologie de de~Rham relative aux diviseurs à croisements normaux, on a atteint notre but de donner une interprétation cohomologique des périodes.

\begin{theoreme}\label{thm:comparison} Le sous-ensemble des nombres complexes qui apparaissent comme coefficients de l'accouplement~\eqref{eqn:accouplementrelatif} coïncide avec l'ensemble des périodes de Kontsevich-Zagier (définition \ref{defn:periodeKZ}).
\end{theoreme}

Ce résultat, énoncé en passant au début de l'article de Kontsevich et Zagier, est démontré dans \cite[Th.\,12.2.1]{huber-muller}. Pour voir que les coefficients de l'accouplement sont des périodes au sens élémentaire, on identifie $X(\CC) \subset \CC^n$ à un sous-ensemble de $\RR^{2n}$ et on utilise des résultats de Łojasiewicz et de Hironaka \cite{hironaka} pour démontrer que toute classe en homologie de Betti relative $\rH_p^{\Betti}(X, D)$ peut être représentée par un $p$-simplexe singulier $\QQ$-semi-algébrique $\sigma \colon \Delta^p \to \RR^{2n}$, c'est-à-dire dont le graphe est un sous-ensemble $\QQ$-semi-algébrique de $\RR^{p+1} \times \RR^{2n}$. C'est en quelque sorte une élaboration du théorème d'approximation de Weierstrass 

Pour donner une idée d'où gît la difficulté pour obtenir l'inclusion réciproque et de comment la contourner, prenons $\alpha=1$ dans l'exemple \ref{exmp:dilogs}. L'intégrale~\eqref{eqn:integraldilog} est encore convergente, de valeur
\[
\zeta(2)=\int_{[0, 1]^2} \frac{dxdy}{1-xy},
\]
mais on ne peut plus la représenter comme période d'un groupe de cohomologie relative car le bord du domaine d'intégration rencontre le lieu des pôles de l'intégrande au point $(1, 1)$. Un moyen de les séparer est d'effectuer une construction géométrique appelée \emph{éclatement}: on remplace le plan affine $\mathbb{A}^2$ par la sous-variété $Y \subset \mathbb{A}^2 \times \PP^1$ d'équation
\begin{equation}\label{eqn;blowup}
(x-1)s=(y-1)t,
\end{equation}
où $[s\colon t]$ désignent les coordonnées homogènes sur $\PP^1$. Comme cette équation admet une unique solution pour tout $(x, y)$ distinct de $(1, 1)$ fixé, la projection sur le premier facteur $\pi \colon Y \to \AA^2$ est un isomorphisme en dehors de ce point; en revanche, la condition étant vide pour $(x, y)=(1, 1)$, sa préimage consiste en une copie de la droite projective, que l'on appelle le \emph{diviseur exceptionnel} et on note $E$. Outre~$E$, la préimage dans~$Y$ du lieu des pôles de l'intégrande contient la droite affine $F$ d'équation~\hbox{$s=-yt$} (en remplaçant dans \eqref{eqn;blowup} on trouve $(1-x)yt=(y-1)t$, puis $xy=1$ car~$t$ ne peut pas être nul) et leur réunion est un diviseur à croisements normaux. On montre ensuite que le tiré en arrière par~$\pi$ de la forme $dxdy/(1-xy)$ n'a pas de pôles le long de $E$ et que le bord de la préimage du domaine d'intégration ne rencontre pas $F$. Par conséquent, on peut voir l'intégrale de départ comme une période de $Y \setminus F$ relative au diviseur à croisement normaux de composantes irréductibles $E$ et les préimages des composantes du diviseur de départ.

\begin{figure}[htb]
\centering\smaller
	\begin{tikzpicture}[scale=.6]
\def\a{2.5};
\def\b{3};

\draw[domain={\a/2}:2*\a, smooth, variable=\x, dashed] plot ({\x}, {\a*\a/\x});
\draw (0*\a,-.25*\a) -- (0,2*\a);	
\draw (1*\a,-.25*\a) -- (1*\a,2*\a);

\draw (-.25*\a,0) -- (2*\a,0);	
\draw (-.25*\a,1*\a) -- (2*\a,1*\a);
\path[pattern=north east lines, pattern color=gray] (0,0) -- (\a, 0) -- (\a, \a) -- (0, \a) -- (0,0);

\draw[red] (\a,\a) circle (2*\a pt);

\draw[domain={\a/4}:2*\a, smooth, variable=\x] plot ({\x+\b*\a}, {(\a/2)*\a/\x});

\draw[domain={7*\a/32}:3*\a/2, smooth, variable=\x, dashed] plot ({\x+\b*\a + \a/2}, {(\a/2)*\a/\x-\a/4});

\draw (\b*\a,-.25*\a) -- (\b*\a,2*\a);	
\draw (\a+\b*\a,-.25*\a) -- (\a+\b*\a,.625*\a);

\draw (-.25*\a+\b*\a,0) -- (2*\a+\b*\a,0);	
\draw (-.25*\a+\b*\a,1*\a) -- (.625*\a+\b*\a,1*\a);

\draw[pattern=north east lines, pattern color=gray, domain={\a/2}:\a, smooth, variable=\x] (\b*\a,0) -- (\b*\a,\a) -- plot ({\x+\b*\a}, {(\a/2)*\a/\x}) -- (\a+\b*\a, 0);



\draw[->] (7/8*\b*\a, 3/4*\a) -- (11/16*\b*\a, 3/4*\a);

\node[anchor=south] at (0.78*\b*\a,.8*\a) {$\pi$};

\node[anchor=north] at (2*\a,.5*\a) {\small $xy = 1$};
\node[anchor=north] at (1.14*\b*\a,2*\a) {$\textcolor{red}{E}$};
\node[anchor=north] at (1.30*\b*\a,2*\a) {$F$};

\end{tikzpicture}
\caption{Éclatement du point $(1, 1)$ dans le plan affine $\AA^2$}
\label{fig:eclatement}
\end{figure}
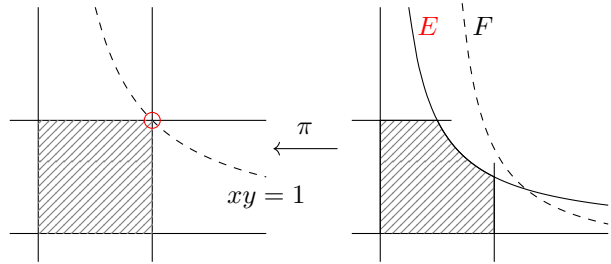

\section{Périodes exponentielles et cohomologie}\label{sec:periodexp}

De même que l'on peut réaliser les périodes comme les coefficients de l'accouplement d'intégration entre la cohomologie de de~Rham et l'homologie de Betti d'une variété algébrique, les périodes exponentielles interviennent lorsque l'on compare une cohomologie de nature algébrique avec une homologie de nature topologique associées à des variétés munies de la donnée supplémentaire d'une fonction.

Soient donc $X\subset \AA^n$ une variété affine lisse sur $\QQ$ et~$f$ une fonction sur $X$, c'est-à-dire un élément de l'anneau $A$ défini dans~\eqref{eqn:defnanneaufonctions}. Le~\emph{complexe de de~Rham tordu}
\begin{equation}\label{eqn:complexedRtordu}
\Omega^0_A \To{d_f}\Omega^1_A \To{d_f}\Omega^2_A \To{d_f}\cdots
\end{equation}
est le complexe ayant les mêmes termes que le complexe de de~Rham algébrique \eqref{eqn:complexedR}, mais dans lequel la différentielle usuelle $d$ a été modifiée tenant compte de la fonction en
\[
d_f(\omega)=d\omega-df \wedge \omega.
\]
Grâce aux propriétés du complexe de de~Rham usuel, notamment les égalités $d\circ d=0$ et $d(\alpha \wedge \beta)=d\alpha \wedge \beta-\alpha \wedge d\beta$ pour $\alpha \in \Omega^1_A$ et $\beta \in \Omega^p_A$ quelconques et le fait qu'un produit extérieur de deux formes égales s'annule, le calcul
\begin{align*}
(d_f \circ d_f)(\omega)&=d(d\omega-df \wedge \omega)-df \wedge (d\omega-df \wedge \omega) \\
&=d^2\omega-d^2f \wedge \omega+df\wedge d\omega-df \wedge d\omega+df \wedge df \wedge \omega \\
&=0
\end{align*}
montre que \eqref{eqn:complexedRtordu} est bien un complexe. Introduire la différentielle $d_f$ est un moyen algébrique de parler de la fonction exponentielle $e^{-f}$: formellement, si à la place de $\Omega^p_A$ on considère le module $e^{-f}\Omega^p_A$ et on y étend la différentielle en préservant la règle de Leibniz et la formule usuelle pour la dérivée de $e^{-f}$, il vient
\[
d(e^{-f}\omega)=e^{-f}d_f(\omega).
\]
\begin{definition} La \emph{cohomologie de de~Rham tordue} en degré $p$ de la paire $(X, f)$ est le quotient
\begin{displaymath}
\rH^p_{\dR}(X, f)=\frac{\ker(d_f \colon \Omega^p_A \to \Omega^{p+1}_A)}{\mathrm{im}(d_f \colon \Omega^{p-1}_A \to \Omega^p_A)}.
\end{displaymath}
\end{definition}

Passons maintenant à l'analogue topologique de cette construction. En prenant les points complexes, la fonction $f$ donne lieu à une application \hbox{$f \colon X(\CC) \to \CC$.} Pour chaque réel $r \geq 0$, posons
\[
S_r=\{z \in \CC \,|\, \mathrm{Re}(z)\geq r\},
\]
de sorte que $f^{-1}(S_r)$ est le sous-espace fermé de $X(\CC)$ où $f$ prend des valeurs de partie réelle supérieure ou égale à $r$.

\begin{definition} L'\emph{homologie à décroissance rapide} en degré $p$ de la paire $(X, f)$ est la limite de groupes d'homologie relative
\begin{equation}\label{eqn:defirapiddecay}
\rH_p^\rrd(X, f)=\lim_{r \to +\infty} \rH_p(X(\CC), f^{-1}(S_r); \ZZ).
\end{equation}
\end{definition}

Cette formule mérite une explication. D'abord, comme pour tous réels $r' \geq r$ le sous-espace $f^{-1}(S_{r'})$ est contenu dans $f^{-1}(S_r)$, l'application identité induit un morphisme de paires
\[
(X(\CC), f^{-1}(S_{r'})) \longrightarrow (X(\CC), f^{-1}(S_r)),
\]
d'où des applications dites \emph{de transition}
\[
\pi_{r'}^r \colon \rH_p(X(\CC), f^{-1}(S_{r'}); \ZZ) \longrightarrow \rH_p(X(\CC), f^{-1}(S_{r}); \ZZ)
\]
par fonctorialité de l'homologie relative \eqref{eqn:fonctorBettirel}. La limite dans la définition de l'homologie à décroissance rapide est alors le groupe des
\[
\sigma=(\sigma_r)_{r \geq 0} \in \prod_{r\geq 0} \rH_n(X(\CC), f^{-1}(S_r); \ZZ)
\]
qui sont compatibles aux applications de transition, c'est-à-dire qui satisfont à la condition~$\pi_{r'}^r(\sigma_{r'})=\sigma_r$ pour tous $r' \geq r$. Ce n'est pas une \og vraie\fg limite, en ce sens que l'on peut démontrer que les inclusions $f^{-1}(S_{r'})\hookrightarrow f^{-1}(S_r)$ sont des équivalences d'homotopie pour~$r$ assez grand et les applications de transition des isomorphismes. Si l'on se donne une $p$-chaîne singulière sur $X(\CC)$ dont le bord est contenu dans un sous-espace où la partie réelle de~$f$ est très positive, il y a donc un seul moyen de la prolonger en une classe d'homologie à décroissance rapide; introduire la limite est une manière de contourner le choix d'un tel sous-espace et de pouvoir parler de cycles \emph{non compacts} dans $X(\CC)$. Comme d'habitude, on pourra représenter les classes d'homologie à décroissance rapide par des chaînes lisses. 

\begin{thm}[Deligne, Bloch-Esnault, Hien-Roucairol] Soient $X \subset \mathbb{A}^n$ une variété affine lisse et $f$ une fonction sur $X$. L'intégration induit un accouplement parfait
\begin{equation}\label{eqn:periodspairing}
\begin{aligned}
\rH^p_{\dR}(X, f) \otimes \rH_p^\rrd(X, f) &\longrightarrow \CC \nonumber \\
([\omega], [\sigma]) &\longmapsto \lim_{r \to +\infty} \int_{\sigma_r} e^{-f} \omega.
\end{aligned}
\end{equation}
\end{thm}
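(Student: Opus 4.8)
Le plan est d'établir successivement que l'accouplement est bien défini, qu'il prend place entre espaces de dimension finie, et qu'il est parfait. Pour le premier point, je reprendrais le calcul qui précède le théorème~\ref{thm:periods}, en tenant compte du facteur $e^{-f}$ grâce à l'identité $d(e^{-f}\eta)=e^{-f}d_f(\eta)$ déjà signalée. Étant donnés une forme $d_f$-fermée $\omega$ et une classe à décroissance rapide représentée par un système compatible $(\sigma_r)$, la différence $\sigma_{r'}-\sigma_r$ pour $r'\geq r$ s'écrit $\partial\tau+\rho$, où $\rho$ est une chaîne contenue dans $f^{-1}(S_r)$; la formule de Stokes et l'annulation $d(e^{-f}\omega)=e^{-f}d_f(\omega)=0$ donnent alors
\[
\int_{\sigma_{r'}} e^{-f}\omega-\int_{\sigma_r} e^{-f}\omega=\int_\rho e^{-f}\omega.
\]
Or, sur $f^{-1}(S_r)$ on dispose de la majoration $|e^{-f}|=e^{-\mathrm{Re}(f)}\leq e^{-r}$, de sorte que ce terme tend vers zéro: la suite est de Cauchy et la limite existe. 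Le même argument, appliqué au bord $\partial\sigma_r\subset f^{-1}(S_r)$, montre que remplacer $\omega$ par $\omega+d_f\eta$ ne change pas la limite; c'est exactement la décroissance rapide qui rend l'accouplement bien défini.

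Ensuite, je réinterpréterais le complexe de de~Rham tordu $(\Omega^\bullet_A, d_f)$ comme le complexe de de~Rham de la connexion de rang un $\nabla=d-df$ sur le fibré trivial, c'est-à-dire du $\mathcal{D}$-module exponentiel attaché à $e^{-f}$. La finitude de $\rH^p_{\dR}(X, f)$ résulte alors de celle de la cohomologie de de~Rham des $\mathcal{D}$-modules holonomes, ce module étant holonome. Du côté topologique, la finitude de $\rH_p^\rrd(X, f)$ provient de ce que, pour $r$ assez grand, la paire $(X(\CC), f^{-1}(S_r))$ a le type d'homotopie d'une paire de CW-complexes finis et que les applications de transition sont des isomorphismes, comme il a été signalé après la définition.

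Le cœur de la preuve, et l'obstacle principal, est la perfection de l'accouplement. Je la scinderais en deux étapes, sur le modèle du théorème~\ref{thm:periods}. D'abord, un théorème de comparaison dû à Deligne identifie la cohomologie de de~Rham \emph{algébrique} tordue à une cohomologie de de~Rham \emph{analytique} à croissance modérée de la connexion $\nabla$. La nouveauté par rapport au cas classique est que cette connexion présente en général une singularité \emph{irrégulière} le long du bord: il faut choisir une compactification $\overline{X}$ sur laquelle $f$ s'étende en une application vers $\PP^1$ et où la réunion du diviseur des pôles et du bord forme un diviseur à croisements normaux (résolution des singularités de Hironaka), puis contrôler la structure formelle de $\nabla$ au voisinage de ce diviseur. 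Ensuite, la dualité entre l'homologie à décroissance rapide et cette cohomologie de de~Rham est le théorème de Bloch-Esnault, dans le cas des courbes, et de Hien-Roucairol en toute dimension: à chaque classe de de~Rham on sait associer un cycle à décroissance rapide qui lui est dual, en analysant le comportement asymptotique de $e^{-f}$ près de la singularité irrégulière, ce qui fournit l'inversibilité de la matrice des périodes.

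La véritable difficulté se concentre donc dans le phénomène de Stokes. Près du diviseur à l'infini, l'espace des directions se scinde en secteurs où $\mathrm{Re}(f)\to+\infty$, dans lesquels $e^{-f}$ décroît rapidement, et en secteurs où $\mathrm{Re}(f)\to-\infty$, dans lesquels il croît exponentiellement; la non-dégénérescence de l'accouplement repose sur un appariement fin de ces secteurs, codé par la structure de Stokes de la connexion. C'est cette théorie, développée par Hien, qui remplace ici la dualité de Poincaré du lemme de de~Rham classique et qui constitue le point technique le plus exigeant.
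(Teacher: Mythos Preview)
The paper does not prove this theorem. After stating it, the text only explains informally why the integrals converge (exponential decay of $e^{-f}$ along $\partial\sigma$ beats the polynomial growth of the algebraic form $\omega$) and then refers the reader to the original sources: Deligne's letter to Malgrange for $X=\mathbb{A}^1$, Bloch--Esnault in dimension one, and Hien--Roucairol in general. There is thus no argument in the paper to compare your proposal against.

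Your sketch is a faithful outline of how the proof runs in those references: well-definedness via Stokes and the identity $d(e^{-f}\eta)=e^{-f}d_f\eta$, finiteness from holonomicity of the exponential $\mathcal{D}$-module, and perfection via a comparison theorem together with the Stokes-structure analysis near the irregular divisor on a good compactification. One caveat in your convergence argument: bounding $|e^{-f}|\le e^{-r}$ on $f^{-1}(S_r)$ is not by itself enough to conclude that $\int_\rho e^{-f}\omega\to 0$, since the volume of $\rho$ and the size of $\omega$ could in principle grow with $r$; the honest way out, which the paper alludes to, is that the transition maps are isomorphisms for $r$ large, so one may fix a single non-compact representative with boundary going to infinity in the rapid-decay sectors and invoke the ``exponential beats polynomial'' estimate there. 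This is a minor presentational point and does not affect the overall correctness of your outline.
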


Ce théorème remonte à une lettre de Deligne à Malgrange en~1976, où il explique comment le déduire, pour $X=\mathbb{A}^1$ la droite affine, de l'exemple \ref{exmp:gamma} ci-dessous \cite[p.\,17]{deligne-malgrange}; il est dû à Bloch-Esnault \cite{bloch-esnault} en dimension $1$ et à Hien-Roucairol \cite{hien-roucairol} en dimension quelconque. La~définition de l'homologie à décroissance rapide est faite pour que ces intégrales convergent. Il n'y aurait évidemment pas de problème si~$\sigma$ était sans bord, mais il n'y a pas en général assez de cycles; comme le montre l'exemple ci-dessous, la cohomologie $\rH^n_{\dR}(X, f)$ peut être non triviale même si $X(\CC)$ est contractile! Or, comme la partie réelle de~$f$ est très positive sur le bord $\partial\sigma$, la fonction $e^{-f}$ décroît plus rapidement que n'importe quel polynôme le long de ce bord; la forme $\omega$ étant algébrique, cela assure la convergence. On dispose également de variantes relatives de la cohomologie de de~Rham tordue et de l'homologie à décroissance rapide \cite{jossen}, qui sont par exemple nécessaires pour interpréter cohomologiquement la représentation intégrale \eqref{eqn:gamma-per-exponentielle} de la constante $\gamma$ d'Euler; avec ces variantes relatives et une version légèrement modifiée de la définition~\ref{def:perexp}, Commelin, Habegger et Huber ont démontré que les coefficients de l'accouplement \eqref{eqn:periodspairing} sont exactement les périodes exponentielles~\cite{ominimal}.

\subsection{Exemples}

Si $f$ est la fonction nulle, la cohomologie de de~Rham tordue et l'homologie à décroissance rapide de la paire $(X, f)$ coïncident avec la cohomologie de de~Rham et l'homologie singulière de $X$ vu que, dans ce cas, $df$ est nul et $f^{-1}(S_r)$ est vide pour $r$ assez grand. C'est encore le cas si $f$ est constante, mais lorsque cette constante est non nulle, l'accouplement \eqref{eqn:periodspairing} est affecté d'un facteur exponentiel; pour cette raison, les exponentielles des nombres algébriques sont des périodes exponentielles de variétés de dimension $0$.

\begin{exemple}[valeurs gamma]\label{exmp:gamma} Lorsque $X=\mathbb{A}^1$ est la droite affine, une fonction $f$ sur $X$ est un élément de l'anneau~\hbox{$A=\QQ[x]$,} autrement dit un polynôme à coefficients rationnels. Considérons l'exemple où~$f=\nobreak x^n$ pour un entier $n \geq 2$. Le complexe de de~Rham tordu \eqref{eqn:complexedRtordu} est dans ce cas le complexe à deux termes
\begin{align*}
d_f \colon \QQ[x] &\longrightarrow \QQ[x]dx \\
P &\longmapsto (P'-nx^{n-1}P) dx
\end{align*}
et il s'agit de calculer le noyau et le conoyau de $d_f$:
\begin{displaymath}
\rH^0_{\dR}(X, f)=\ker(d_f), \qquad \rH^1_{\dR}(X, f)=\QQ[x]dx \slash \im(d_f).
\end{displaymath}
Comme l'équation différentielle $P'=nx^{n-1}P$ n'a pour solutions que les fonctions~$c\exp(x^n)$, qui ne sont pas de polynômes si $c$ est non nul, l'application $d_f$ est injective. Pour calculer son conoyau, on remarque d'abord qu'un élément dans l'image de $d_f$ est de la forme $Qdx$ pour un polynôme $Q$ de degré au moins $n-1$; il s'ensuit que $dx, xdx, \dots, x^{n-2}dx$ définissent des classes linéairement indépendantes dans le conoyau de $d_f$. Il s'agit également d'une famille génératrice: pour chaque entier $m \geq n-1$, l'égalité
\[
d_f(-\sfrac{x^{m-n-1}}{n})=x^m dx-\frac{m-n-1}{n}\,x^{m-n-2}dx
\]
montre que la classe de $x^m dx$ modulo l'image de $d_f$ est un multiple de celle de $x^{m-n-2}dx$, ce qui par récurrence permet de l'écrire comme une combinaison linéaire des classes de $dx, \dots, x^{n-2}dx$. On a donc:
\begin{equation}\label{eqn:basegamma}
\rH^1_{\dR}(\mathbb{A}^1, x^n)=\QQ [dx] \oplus \QQ [xdx] \oplus \dots \oplus \QQ [x^{n-2}dx].
\end{equation}

Passons maintenant à l'homologie à décroissance rapide. Pour un nombre réel $r$ suffisamment grand, le sous-espace de $\CC$ où $\mathrm{Re}(x^n)\geq r$ est la réunion de $n$ régions disjointes concentrées autour des demi\nobreakdash-droites de pente les racines de l'unité d'ordre $n$ (elles sont dessinées en bleue pour $n=5$ dans la figure \ref{fig:rapid-decay}). Ces régions étant contractiles, le groupe d'homologie relative intervenant dans la limite \eqref{eqn:defirapiddecay} est isomorphe à celui du plan complexe relatif à $n$ points.
\begin{figure}[ht]
\centerline{\includegraphics[width=0.6\textwidth]{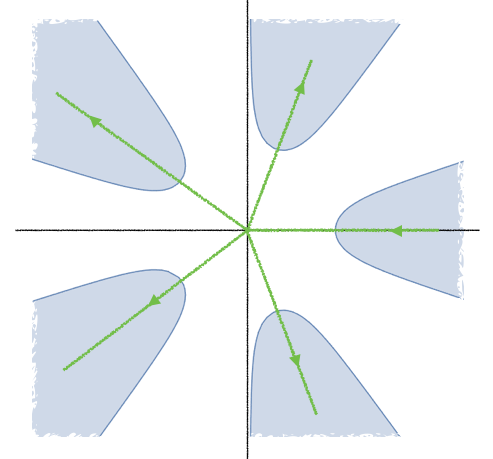}}
\caption{Cycles à décroissance rapide}
\label{fig:rapid-decay}
\end{figure}
Ce dernier peut être calculé par le biais de la suite exacte longue
\begin{displaymath}
\xymatrix @R=1.25ex @C=1.1ex
{
& *{} & &&\ar[rr]
\rH_1(\CC, \ZZ)\ar@{=}[d]&& \rH_1(\CC, n\text{ pts}; \ZZ) \ar@{-}[r] & *{} \ar@{-} `r/9pt[d] `[dd]\\
& & &&0 &&&&\\
& *{} \ar@{-}`/9pt[d] `d/9pt[d]& && && & *{} \ar@{-}[llllll] & \\
& *{} \ar[r] & \ar[rr]
\rH_0(n\text{ pts}, \ZZ)\ar@{=}[d] &&\ar[rr] \rH_0(\CC, \ZZ) \ar@{=}[d] && \rH_0(\CC, n\text{ pts}; \ZZ)\ar@{=}[d]\ar[rr]&&0,\\
& & \ZZ^n &&\ZZ &&0&&
}
\end{displaymath}
qui montre qu'il est isomorphe à $\ZZ^{n-1}$. Il s'ensuit que $\rH_n^\rrd(\mathbb{A}^1, x^n)$ est également de dimension $n-1$. Posons $\zeta=\exp(\sfrac{2\pi i}{n})$ et considérons, pour chaque réel $r \geq 0$, les 1-simplexes singuliers
\begin{equation}
\begin{aligned}
\tau_{i, r}\colon [0, 1] &\longrightarrow \CC \\
t &\longmapsto \zeta^i t r
\end{aligned} \qquad (i=0, \dots, n-1),
\end{equation}
dont le bord est $(\zeta^i r)-(0)$. Le bord de la chaîne singulière
\[
\sigma_{i, r}=\tau_{i, r}-\tau_{0, r}
\]
(dessinée en vert dans la figure \ref{fig:rapid-decay}) est donc contenu dans $\mathrm{Re}(x^n)\geq r$, d'où des classes d'homologie à décroissance rapide
\[
[\sigma_i]=([\sigma_{i, r}])_{r \geq 0} \in \rH_1^{\rrd}(\mathbb{A}^1, x^n).
\]

À l'aide du calcul
\begin{align*}
\lim_{r\to+\infty}\int_0^{\zeta^i r} e^{-x^n} x^{j-1}dx&=\int_0^{\zeta^i \infty} e^{-x^n} x^{j-1}dx\\
&=\int_0^\infty e^{-x^n}(\zeta^i x)^{j-1}d(\zeta^i x) \\
&=\frac{\zeta^{ij}}{n} \int_0^\infty e^{-s} s^{\sfrac{j}{n}-1} ds \\
&=\frac{\zeta^{ij}}{n}\Gamma(\sfrac{j}{n}),
\end{align*}
on trouve que la matrice de l'accouplement de périodes exponentielles par rapport à la base \eqref{eqn:basegamma} de la cohomologie de de~Rham tordue et aux classes $[\sigma_i]$ d'homologie à décroissance rapide est
\begin{displaymath}
P=\left(\begin{matrix}
\Psfrac{(\zeta^{ij}-1)}{n}\Gamma(\sfrac{j}{n})
\end{matrix}\right)_{1 \leq i, j \leq n-1}.
\end{displaymath}

Vérifions par un calcul de déterminant que cette matrice est inversible, ce qui montrera en même temps que les classes $[\sigma_i]$ forment une base de $\rH_1^{\rrd}(\mathbb{A}^1, x^n)$ et que l'accouplement \eqref{eqn:periodspairing} est parfait dans le cas qui nous occupe:
\begin{align*}
\det P&=n^{1-n}\cdot \det\left(\begin{matrix} \zeta^{ij}-1
\end{matrix}\right)_{1 \leq i, j \leq n-1}\cdot \prod_{j=1}^{n-1} \Gamma(\sfrac{j}{n}) \\
&=n^{\sfrac{1}{2}-n}\cdot (2\pi)^{\sfrac{(n-1)}{2}} \cdot \det\left(\begin{matrix} \zeta^{ij}-1
\end{matrix}\right)_{1 \leq i, j \leq n-1},
\end{align*}
où la seconde égalité découle de la formule de multiplication pour la fonction gamma \eqref{eqn:functionalgamma}. Pour montrer que le déterminant restant n'est pas nul, on observe qu'il est égal au déterminant de Vandermonde
\[
\det\left(\begin{matrix} \zeta^{ij} \end{matrix}\right)_{0 \leq i, j \leq n-1}=\prod_{0 \leq i < j \leq n-1} (\zeta^j -\zeta^i).
\]
C'est un exercice amusant de calculer sa valeur exacte.
\end{exemple}

\begin{exemple}[fonctions de Bessel] Soit $X=\mathbb{G}_m$ la droite affine épointée, comme dans l'exemple \ref{exmp:Gm}; les fonctions sur $X$ sont les polynômes de Laurent $f \in \QQ[x, x^{-1}]$. Prenons~$f=x+\sfrac{1}{x}$. Le complexe de de~Rham tordu de la paire $(X, f)$ est le complexe à deux termes
\begin{equation}\label{eqn:diffDRtordu}
\begin{aligned}
d_f \colon \QQ[x, x^{-1}] &\longrightarrow \QQ[x, x^{-1}]dx \\
P &\longmapsto (P'-P+\sfrac{P}{x^2}) dx.
\end{aligned}
\end{equation}
La différentielle $d_f$ étant injective, comme dans l'exemple précédent, la seule cohomologie potentiellement non nulle est en degré $1$:
\[
\rH^1_{\dR}(X, f)=\mathrm{coker}(d_f).
\]

D'un côté, tout élément de $\QQ[x, x^{-1}]dx$ peut s'écrire comme une combinaison linéaire de $dx/x$ et $dx$ modulo l'image de~$d_f$, car l'égalité
\[
x^ndx=nx^{n-1}dx+x^{n-2}dx-d_f(x^n)
\]
est vraie pour tout entier $n$. D'un autre côté, les classes de ces deux formes différentielles sont linéairement indépendantes dans le conoyau de~$d_f$, puisque les seules valeurs $a, b \in \QQ$ pour lesquelles l'équation
\[
\frac{a}{x}+b=P'-P+\frac{P}{x^2}
\]
a une solution dans les polynômes de Laurent sont $a=b=0$. On a ainsi trouvé une base:
\begin{displaymath}
\rH^1_{\dR}(X, f)=\QQ\biggl[\frac{dx}{x}\biggr] \oplus \QQ[dx].
\end{displaymath}

Calculons l'homologie à décroissance rapide. Pour $r$ assez grand, le sous-espace $f^{-1}(S_r) \subset \CC^\times$ consiste en deux régions contractiles disjointes autour des points $\sfrac{1}{r}$ et $r$ sur l'axe réel positif. On est donc réduit à calculer l'homologie du plan complexe épointé relative à deux points; la suite exacte longue
\begin{displaymath}
\xymatrix @R=1.25ex @C=1.1ex
{
& *{} & \rH_1(2\text{ pts}, \ZZ)\ar@{=}[d]\ar[rr] &&\ar[rr]
\rH_1(\CC^\times, \ZZ)\ar@{=}[d]&& \rH_1(\CC^\times, 2\text{ pts}; \ZZ) \ar@{-}[r] & *{} \ar@{-} `r/9pt[d] `[dd]\\
& & 0 &&\ZZ &&&&\\
& *{} \ar@{-}`/9pt[d] `d/9pt[d]& && && & *{} \ar@{-}[llllll] & \\
& *{} \ar[r] & \ar[rr]
\rH_0(2\text{ pts}, \ZZ)\ar@{=}[d] &&\ar[rr]^-{\beta} \rH_0(\CC^\times, \ZZ) \ar@{=}[d] && \rH_0(\CC^\times, 2\text{pts}; \ZZ)\ar@{=}[d]\ar[rr]&&0\\
& & \ZZ^2 &&\ZZ &&0&&
}
\end{displaymath}
montre que ce groupe est isomorphe à $\ZZ^2$. On en déduit que $\rH^{\rrd}_1(X, f)$ est de dimension $2$. Le lacet $\sigma$ engendrant $\rH_1(\CC^\times, \ZZ)$ y fournit un élément, et un autre est donné par la collection des $1$-simplexes
\begin{equation}
\begin{aligned}
\tau_{i, r}\colon [0, 1] &\longrightarrow \CC^{\times} \\
t &\longmapsto rt+\psfrac{1-t}{r}
\end{aligned}
\end{equation}
pour $r>0$, qui est la manière de regarder la droite réelle positive~$\RR_{>0}$ comme un cycle à décroissance rapide.

Les intégrales le long de $\sigma$ se calculent facilement à l'aide du théorème des résidus:
\begin{align}
\oint e^{-(x+\sfrac{1}{x})} \frac{dx}{x}&=\sum_{n=0}^{\infty} \frac{(-1)^n}{n!} \oint \Bigl(x+\frac{1}{x}\Bigr)^n \frac{dx}{x}=2\pi i \sum_{n=0}^\infty \frac{1}{(n!)^2} \\
\oint e^{-(x+\sfrac{1}{x})} dx &=\sum_{n=0}^{\infty} \frac{(-1)^n}{n!} \oint \Bigl(x+\frac{1}{x}\Bigr)^n dx=-2\pi i \sum_{n=1}^\infty \frac{1}{n!(n-1)!}.
\end{align}
Une manière de montrer que le déterminant de la matrice des périodes est non nul consiste à exprimer ses coefficients comme des valeurs spéciales des \emph{fonctions de Bessel modifiées}
\begin{align*}
I_0(z)&=\frac{1}{2\pi i}\oint e^{-\frac{z}{2}(x+\sfrac{1}{x})} \frac{dx}{x}=\sum_{n=0}^\infty \frac{(\sfrac{z}{2})^{2n}}{(n!)^2} \\
K_0(z)&=\frac{1}{2}\int_0^{\infty} e^{-\frac{z}{2}(x+\sfrac{1}{x})} \frac{dx}{x}
\end{align*}
et de leurs dérivées. Ces fonctions sont des solutions $\CC$-linéairement indépendantes de l'équation différentielle du second ordre
\[
z^2\frac{d^2u}{dz}+z\frac{du}{dz}-z^2u=0
\]
pour une fonction $u$ de la variable $z$. Par conséquent, leur wronskien
\[
W(z)=I_0(z)K_0'(z)-I_0(z)'K_0(z)
\]
satisfait à l'équation différentielle
\begin{align*}
W'(z)&=I_0(z)K_0''(z)-I_0''(z)K_0(z)\\
&=I_0(z)[K_0(z)-z^{-1}K_0(z)']-[I_0(z)-z^{-1}I_0(z)']K_0(z)\\
&=-\frac{1}{z}W(z).
\end{align*}
Il s'ensuit qu'il existe un nombre complexe non nul $c$ (non nul car les solutions sont linéairement indépendantes) tel que $W(z)=\sfrac{c}{z}$, et en regardant le développement asymptotique des fonctions de Bessel à l'infini on trouve que ce nombre vaut $c=-1$.

En termes des fonctions de Bessel modifiées, la matrice de l'accouplement des périodes exponentielles est donnée par
\begin{equation}
\def\arraystretch{1.2}
\setlength\arraycolsep{3pt}
\begin{array}{c|ccc}
\int & \sfrac{dx}{x} & & dx \\ \hline
\sigma & 2\pi i I_0(2) & & -2\pi i I_0'(2) \\ \hline
 \RR_{>0} & 2K_0(2) & & -2K_0'(2)
 \end{array}
\end{equation}
Seule l'identité faisant intervenir la dérivée de $K_0(z)$ n'est pas évidente: pour la démontrer, on peut par exemple dériver sous le signe intégral et utiliser le fait que $dx$ et $\sfrac{dx}{x^2}$ définissent la même classe en cohomologie de de~Rham tordue (prendre $P=1$ dans \eqref{eqn:diffDRtordu}):
\[
K_0'(z)=-\frac{1}{4}\int_0^\infty e^{-\frac{z}{2}(x+\sfrac{1}{x})} \Bigl(x+\frac{1}{x}\Bigr)\frac{dx}{x}=-\frac{1}{2}\int_0^\infty e^{-\frac{z}{2}(x+\sfrac{1}{x})}dx.
\]
Par conséquent, le déterminant de la matrice des périodes vaut
\begin{equation}\label{eqn:detBessel}
\det \begin{pmatrix}
2\pi i I_0(2) & & -2\pi i I_0'(2) \\
2K_0(2) & \hspace{2mm} & -2K_0'(2)
\end{pmatrix}=-4\pi i W(2)=2\pi i.
\end{equation}
C'est l'analogue de la relation de Legendre (proposition \ref{prop:legendre}). La transcendance d'aucun des coefficients de cette matrice n'est connue, mais on sait que le quotient $\sfrac{I_0(2)}{I_0'(2)}$ des périodes associées au cycle $\sigma$ est transcendant d'après le théorème de Siegel-Shidlovskii \cite{bertrand}.
\end{exemple}

\section{Retour sur la relation de Legendre}\label{sec:Legendre}

Comme on le verra dans la section suivante, munis de l'interprétation cohomologique des périodes, on peut reformuler la conjecture de Kontsevich-Zagier en disant que toutes les relations algébriques entre périodes sont \emph{d'origine géométrique}. Pour illustrer ce que cela peut signifier en pratique, on revient suivant Deligne \cite[pp.\,24-25]{deligne900} sur la relation de Legendre pour les périodes d'une courbe elliptique. Cette relation n'étant pas linéaire, on doit d'abord comprendre à quelle structure sur la cohomologie correspond le produit de périodes.

\subsection{Le cup-produit}\label{sec:cup-produit}

Soient~$M$ et $M'$ des espaces topologiques et~$M \times M'$ leur produit. Un $p$-simplexe singulier \hbox{$\sigma\colon \Delta^p \to M \times M'$} donne lieu, par projection, à des $p$-simplexes $\sigma_M$ et~$\sigma_{M'}$ dans~$M$ et dans~$M'$. Si $a$ et $b$ sont des entiers positifs avec $a+b=p$, en composant avec les applications
\begin{align*}
(t_0, \dots, t_a) &\longmapsto (t_0, \dots, t_a, 0, \dots, 0) \text{ de $\Delta^a$ dans~$\Delta^p$} \\ (t_0, \dots, t_b) &\longmapsto (0, \dots, 0, t_0, \dots, t_b) \text{ de $\Delta^b$ dans~$\Delta^p$},
\end{align*}
on en déduit un $a$\nobreakdash-simplexe $^a\sigma_M$ dans $M$ (la \textit{face avant} de $\sigma_M$) et un $b$\nobreakdash-simplexe $\sigma_{M'}^b$ dans $M'$ (la \textit{face arrière} de $\sigma_{M'}$). On vérifie ensuite que l'application
\begin{align*}
\rH_p(M \times M', \ZZ) &\longrightarrow \rH_a(M, \ZZ) \otimes \rH_b(M', \ZZ) \\
[\sigma] &\longmapsto [^a\sigma_M] \otimes [\sigma_{M'}^b]
\end{align*}
est bien définie, c'est-à-dire que si le $p$-simplexe $\sigma$ est un cycle, il en va de même pour le $a$-simplexe $^a\sigma_M$ et pour le $b$-simplexe $\sigma_{M'}^b$, et que les bords $\partial \tau$ sont envoyés sur $0$. Par dualité, la cohomologie singulière à coefficients rationnels est munie des applications \emph{produit externe}
\begin{equation}\label{eqn:exterior}
\rH^a(M, \QQ) \otimes \rH^b(M', \QQ) \longrightarrow \rH^{a+b}(M \times M', \QQ).
\end{equation}

\begin{theoreme}[formule de Künneth] Pour tout $p$, les applications produit externe induisent un isomorphisme
\begin{displaymath}
\bigoplus_{a+b=p} \rH^a(M, \QQ) \otimes \rH^b(M', \QQ) \simeq \rH^p(M \times M', \QQ).
\end{displaymath}
\end{theoreme}

Ce théorème est, par exemple, démontré dans \cite[Thm.\,3.16]{hatcher}. À ce stade, un avantage de dualiser pour travailler en cohomologie plutôt qu'en homologie est que, dans le cas où $M$ et $M'$ sont égaux, on dispose d'une opération de \emph{cup-produit}
\begin{displaymath}
\smile\colon \rH^a(M, \QQ) \otimes \rH^b(M, \QQ) \longrightarrow \rH^{a+b}(M, \QQ),
\end{displaymath}
définie par composition du produit externe \eqref{eqn:exterior} avec l'application
\[
\mathrm{diag}^\ast \colon \rH^{a+b}(M \times M, \QQ) \to \rH^{a+b}(M, \QQ)
\]
déduite par fonctorialité de la diagonale $\mathrm{diag} \colon M \longrightarrow M \times M$, qui envoie un point $x$ de $M$ vers le point $\mathrm{diag}(x)=(x, x)$ de $M \times M$. 

\begin{theoreme}[dualité de Poincaré]\label{thm:dualitePoincare} Si $M$ est une variété complexe compacte connexe de dimension $d$, la composition
\begin{displaymath}
\smile\colon \rH^p(M, \QQ) \otimes \rH^{2d-p}(M, \QQ) \longrightarrow \QQ(-d)
\end{displaymath}
du cup-produit avec l'isomorphisme canonique \hbox{$\rH^{2d}(M, \QQ) \simeq \QQ(-d)$} induit par l'analogue de la classe fondamentale (remarque \ref{rem:classefonda}) en dimension supérieure est un accouplement parfait pour tout $p$.
\end{theoreme}

On renvoie à \cite[Chap.\,3.2]{hatcher} pour la construction de la classe fondamentale et la démonstration de ce théorème. La dualité de Poincaré est notamment valable pour les points complexes d'une variété \emph{projective} lisse $X \subset \PP^n$ pourvu que $X(\CC)$ soit connexe, et permet d'identifier canoniquement le dual $\QQ$-linéaire de~\hbox{$\rH^p_{\Betti}(X)=\rH^p(X(\CC), \QQ)$} avec la cohomologie $\rH^{2d-p}_{\Betti}(X)(d)$. La torsion à la Tate est là pour nous rappeler que, pour que ces identifications soient compatibles à l'isomorphisme de comparaison, il faut multiplier les périodes par~$(2\pi i)^{-d}$. 

Il y a aussi une variante relative de la formule de Künneth:
\begin{equation}\label{eqn:Kunnethrelative}
\begin{aligned}
\bigoplus_{a+b=p} \rH^a(M, &N; \QQ) \otimes \rH^b(M', N'; \QQ) \\
&\simeq \rH^p(M \times M', (N \times M') \cup (M \times N'); \QQ),
\end{aligned}
\end{equation}
grâce à laquelle on peut étendre la dualité de Poincaré aux points complexes des variétés affines lisses. Plus précisément, on associe à toute paire $(X, D)$ formée d'une variété lisse~$X$ de dimension $d$ et d'un diviseur à croisements normaux $D$ sur $X$ une autre paire $(X', D')$ et un accouplement parfait
\begin{equation}\label{eqn:Poincarevarietesaffines}
\rH^d_{\Betti}(X, D) \otimes \rH^{2d-p}_{\Betti}(X', D') \longrightarrow \QQ(-d).
\end{equation}
Pour ce faire, on utilise le théorème de résolution des singularités de Hironaka pour trouver une variété projective lisse $\overline{X}$ et un morphisme de variétés $j\colon X \hookrightarrow \overline{X}$ identifiant $X(\CC)$ à un ouvert dans~$\overline{X}(\CC)$ de complémentaire les points complexes d'un diviseur à croisements normaux $Z \subset \overline{X}$. Si $D$ est vide, on pose $(X', D')=(\overline{X}, Z)$; en général, on considère l'adhérence $\overline{D} \subset \overline{X}$ de $D$ dans $X$ et on pose 
\[
(X', D')=(\overline{X} \setminus \overline{D}, Z \setminus (Z \cap \overline{D})), 
\] voir par exemple \cite[Thm.\,2.4.5]{huber-muller}. Par le biais de~\eqref{eqn:Poincarevarietesaffines}, le dual de la cohomologie relative $\rH^d_{\Betti}(X, D)$ s'identifie à $ \rH^{2d-p}_{\Betti}(X', D')(d)$, ce qui est encore une forme de dualité de Poincaré pour les variétés affines. Par exemple, si~\hbox{$X=\mathbb{G}_m$} est la droite affine épointé, on prend pour~$\overline{X}=\mathbb{P}^1$ la droite projective et pour $Z$ le diviseur formé des points $0$ et $\infty$, et on identifie le dual de $\rH^1_\Betti(\mathbb{G}_m)$ à $\rH^1_\Betti(\mathbb{P}^1, \{0, \infty\})(1)$. 

Toutes ces constructions ont des analogues en cohomologie de de~Rham algébrique. Soient $X \subset \mathbb{A}^n$ et $Y \subset \mathbb{A}^r$ des variétés affines lisses, d'anneaux de fonctions
\[
A=\QQ[x_1, \dots, x_n]/(f_1, \dots, f_m), \quad B=\QQ[x_1, \dots, x_r]/(g_1, \dots, g_s).
\]
Le produit de $X$ et $Y$ est la variété affine lisse $X \times Y \subset \mathbb{A}^{n+r}$ définie par l'annulation des polynômes
\[
f_i(x_1,\dots, x_n) \quad \text{et}\quad g_j(x_{n+1},\dots, x_{n+r}).
\]
Son anneau de fonctions s'identifie canoniquement au produit tensoriel $A \otimes B$ et les projections sur chaque facteur
\[
\mathrm{pr}_X \colon X \times Y \to X \quad \text{et}\quad \mathrm{pr}_Y \colon X \times Y \to Y
\]
correspondent aux inclusions $a \mapsto a \otimes 1$ et $b \mapsto 1 \otimes b$. Étant données des formes différentielles $\omega \in \Omega^a_X$ sur $X$ et $\eta \in \Omega^b_Y$ sur $Y$, on obtient par fonctorialité des formes différentielles
\begin{displaymath}
\mathrm{pr}_X^\ast \omega \in \Omega^a_{X \times Y} \quad \text{et}\quad \mathrm{pr}_Y^\ast \eta \in \Omega^b_{X \times Y}
\end{displaymath}
sur~$X \times Y$. Comme l'application $\mathrm{pr}_X^\ast$ commute à la différentielle, si les formes~$\omega$ et $\eta$ sont fermées, alors $\mathrm{pr}_X^\ast \omega \wedge \mathrm{pr}_Y^\ast \eta$ l'est aussi, est si $\omega$ ou $\eta$ sont exactes, $\mathrm{pr}_X^\ast \omega \wedge \mathrm{pr}_Y^\ast \eta$ l'est aussi; l'application
\begin{align*}
\rH_{\dR}^a(X) \otimes \rH_{\dR}^b(Y) &\longrightarrow \rH_{\dR}^{a+b}(X\times Y) \\
[\omega] \otimes [\eta] &\longmapsto \bigl[\mathrm{pr}_X^\ast \omega \wedge \mathrm{pr}_Y^\ast \eta\bigl]
\end{align*}
est donc bien définie. Comme dans le cas de la cohomologie singulière, si~$X$ et~$Y$ sont égales, on en déduit un cup-produit
\[
\smile \colon \rH_{\dR}^a(X) \otimes \rH_{\dR}^b(X) \longrightarrow \rH_{\dR}^{a+b}(X)
\]
en composant avec $\mathrm{diag}^\ast$. Cette application n'est pas très intéressante, il faut l'avouer, pour les variétés affines, car leur cohomologie s'annule au-delà de la dimension, mais on peut l'étendre par recollement en un cup-produit non trivial sur les variétés projectives lisses. On se contentera d'en donner une recette pour les~courbes.

\subsection{Le cas des courbes}

Soit $X \subset \PP^2$ une courbe projective lisse. En combinaison avec l'isomorphisme canonique $\rH^2_{\dR}(X) \simeq \QQ(-1)$ de la remarque \ref{remq:isomcan}, le cup\nobreakdash-produit en cohomologie de de~Rham fournit un accouplement
\[
\smile \colon \rH^1_{\dR}(X) \otimes \rH^1_{\dR}(X) \longrightarrow \QQ(-1).
\]
Après avoir représenté les éléments de~$\rH^1_{\dR}(X)$ par des différentielles de deuxième espèce, c'est\nobreakdash-à\nobreakdash-dire par des formes différentielles sur une carte affine de~$X$ ayant résidu zéro en chaque pôle à l'infini, il peut être calculé comme suit. Soient $\omega$ et $\eta$ de telles formes, avec des pôles dans l'ensemble fini $D \subset X$. Dans une coordonnée locale $z$ autour de chaque point~$P$ de $D$, la forme $\omega$ s'écrit
\[
\omega=\biggl(\frac{a_{-r}}{z^r}+\cdots+\frac{a_{-2}}{z^2}+a_0+a_1z+\cdots\biggr)dz.
\]
Il existe donc une primitive méromorphe de $\omega$, bien définie à une constante près. En notant $\int \omega$ n'importe quel choix d'une telle primitive, le cup-produit est donné par la formule
\[
[\omega]\smile [\eta]=\sum_{P \in D} \mathrm{Res}_P \bigl(\int \omega)\eta,
\]
qui est indépendante de ces choix car $\eta$ n'a pas de résidu.

\begin{exemple}\label{exmp:cupproduitdRell} Pour calculer le cup-produit de $\omega=\sfrac{dx}{y}$ et $\eta=x\sfrac{dx}{y}$ sur une courbe elliptique $E$, le seul pôle à considérer est d'après la section~\ref{exmp:courbeelliptique} le point~$O$ à l'infini. Au vu des expressions locales
\[
\omega=\bigl(-1+\cdots\bigr)dz, \quad \eta=\Bigl(-\frac{1}{z^2}+\cdots\Bigr)dz
\]
autour de ce point, en choisissant la primitive $\int \omega=-z+\cdots$ on trouve le cup-produit
\[
[\omega]\smile [\eta]=\mathrm{Res}_O\Bigl(\frac{dz}{z}+\cdots\Bigr)=1.
\]
\end{exemple}

D'un autre côté, l'évaluation en la classe fondamentale fournit un isomorphisme canonique $\rH_\Betti^2(X)\simeq \QQ(-1)$, à travers lequel le cup-produit en cohomologie de Betti donne lieu à une application
\begin{equation}\label{eqn:CupProductCurve}
\smile \colon \rH^1_{\Betti}(X) \otimes \rH^1_{\Betti}(X) \longrightarrow \QQ(-1)
\end{equation}
qui s'avère être le dual du produit d'intersection topologique: si~$[\sigma]^\ast$ et $[\tau]^\ast$ sont les duaux des classes de lacets $\sigma$ et $\tau$ dans~$X(\CC)$, on peut bouger $\sigma$ et $\tau$ dans leur classe d'homologie jusqu'à ce qu'ils se rencontrent transversalement et compter le nombre de points d'intersection, avec un signe positif $i_P(\sigma, \tau)=1$ si autour de ce point $\sigma$ et~$\tau$ définissent la même orientation que $1$ et $i$ dans le plan complexe et négatif $i_P(\sigma, \tau)=-1$ sinon. Le cup-produit est alors donné par
\begin{equation}\label{eqn:cup-prod-top}
[\sigma]^\ast\smile [\tau]^\ast=\sum_{P \in \sigma \cap \tau} i_P(\sigma, \tau), 
\end{equation} voir par exemple \cite[A.2.3]{Bost}. 

\begin{exemple}\label{exmp:cupproduitBettiell} Écrivons $E(\CC)=\CC/\ZZ\omega_1 \oplus \ZZ \omega_2$ et soient $\sigma_1$ et $\sigma_2$ les lacets engendrant $\rH_1(E(\CC), \ZZ)$ décrits dans l'exemple \ref{exmp:torecomplexe}. Comme la formule \eqref{eqn:cup-prod-top} définit visiblement une forme bilinéaire alternée, pour déterminer la matrice du cup-produit il suffit de calculer l'intersection topologique de $\sigma_1$ et $\sigma_2$. Supposons que $\tau=\sfrac{\omega_2}{\omega_1}$ est dans le demi-plan de Poincaré. Alors les cycles $\sigma_1$ et $\sigma_2$ définissent la même orientation que $1$ et $i$ autour de leur seul point d'intersection et le cup-produit est donné par la matrice
\begin{displaymath}
\left(\begin{matrix}
[\sigma_1]\smile[\sigma_1] & \vspace{2mm} & [\sigma_1]\smile [\sigma_2] \\
[\sigma_2]\smile [\sigma_1] & & [\sigma_2]\smile [\sigma_2]
\end{matrix}\right)=\left(\begin{matrix}
0 & 1 \\
-1 & 0\end{matrix}\right).
\end{displaymath}
En accord avec la dualité de Poincaré (théorème \ref{thm:dualitePoincare}), le déterminant de cette matrice n'est pas nul.
\end{exemple}

De plus, les cup-produits en cohomologie de de~Rham et en cohomologie de Betti sont compatibles avec l'isomorphisme de comparaison~\eqref{eqn:periodscompisom}, en ce sens que le diagramme
\begin{equation}\label{eqn:commutative-cup-produit}
\xymatrix{
(\rH^1_{\dR}(X) \otimes \CC) \otimes (\rH^1_{\dR}(X) \otimes \CC) \ar[d] \ar[rr]^{\hspace{2cm}\smile} && \CC \ar[d]^{\cdot 2\pi i} \\ (\rH^1_{\Betti}(X) \otimes \CC) \otimes (\rH^1_{\Betti}(X) \otimes \CC) \ar[rr]^{\hspace{2.5cm}\smile} && \CC
}\end{equation}
déduit par extension des scalaires aux nombres complexes est commutatif. Examinons le contenu de cette commutativité dans le cas où~$X$ est une courbe elliptique. En composant la première flèche horizontale avec la multiplication par $2\pi i$ on trouve
\[
2\pi i \left([\omega]\smile [\eta]\right)=2\pi i
\]
d'après l'exemple \ref{exmp:cupproduitdRell}. D'un autre côté, l'isomorphisme de comparaison envoie $[\omega]$ sur $\omega_1[\sigma_1]^\ast+\omega_2[\sigma_2]^\ast$, et $[\eta]$ sur $\eta_1[\sigma_1]^\ast+\eta_2[\sigma_2]^\ast$, de~sorte que l'autre manière de parcourir le diagramme donne
\[
\left(\omega_1[\sigma_1]^\ast+\omega_2[\sigma_2]^\ast \right) \smile \left(\eta_1[\sigma_1]^\ast+\eta_2[\sigma_2]^\ast \right)=\omega_1\eta_2-\omega_2\eta_1
\]
vu l'exemple \ref{exmp:cupproduitBettiell}. Par conséquent, cette quantité vaut $2\pi i$\footnote{Des constructions similaires, avec la cohomologie à décroissance rapide et la cohomologie de de~Rham tordue, donnent une interprétation géométrique de la relation \eqref{eqn:detBessel} entre les valeurs spéciales des fonctions de Bessel modifiées.}!

\section{Vers une théorie de Galois pour les périodes}\label{sec:period-conjecture}

Si l'on se donne une période par une représentation intégrale à la Kontsevich-Zagier, la première étape pour prédire ses propriétés de transcendance consiste à trouver une variété algébrique $X$ telle que la matrice de l'accouplement entre un groupe de cohomologie de de~Rham et un groupe d'homologie de Betti de $X$, éventuellement relatives à une sous-variété, contienne la période en question parmi ses coefficients (c'est toujours possible grâce au théorème \ref{thm:comparison}). Ces espaces vectoriels étant rarement de dimension $1$, on trouve d'autres périodes chemin faisant; pour les nombres algébriques, ces compagnons n'étaient rien d'autre que leurs conjugués au sens de la théorie de Galois, c'est-à-dire les autres racines du polynôme minimal (exemple~\ref{exmp:algebriques}). Par le biais de la théorie des motifs, on peut associer aux périodes des \textit{groupes de Galois motiviques} qui conjecturalement permutent ces nombres en respectant toutes leurs relations algébriques. Puisque les périodes sont en général des nombres transcendants, on doit s'attendre à ce que ces groupes ne soient plus des groupes finis agissant sur des ensembles finis, comme en théorie de Galois classique, mais plutôt des groupes de matrices agissant sur des espaces vectoriels: des groupes algébriques linéaires.

\subsection{Le groupe de Galois motivique}

Soit $n \geq 1$ un entier. Le \emph{groupe général linéaire} $\GL_n$ est le lieu des zéros dans l'espace affine $\mathbb{A}^{n^2+1}$ de coordonnées $x_{11}, x_{12}, \dots, x_{nn}, y$ du polynôme $\det(x_{ij})y-1$, qui est la façon algébrique d'exprimer la contrainte que la matrice $(x_{ij})$ soit inversible. C'est une variété affine lisse de dimension $n^2$, d'anneau des fonctions
\[
A=\QQ[x_{11}, \dots, x_{nn}, y]/(\det(x_{ij})y-1).
\]
De plus, cette variété est munie d'une structure de \emph{groupe algébrique}, notamment des morphismes de variétés
\[
m \colon \GL_n \times \GL_n \longrightarrow \GL_n \quad \text{et}\quad \iota \colon \GL_n \longrightarrow \GL_n
\]
le produit et l'inversion des matrices, satisfaisant aux axiomes usuels dans la définition de groupe. Par exemple, la multiplication correspond au morphisme entre anneaux de fonctions
\[
m^\ast\colon A \longrightarrow A \otimes A, \qquad x_{ij} \longmapsto \sum_{\ell=1}^n x_{i\ell} \otimes x_{\ell j}, \quad y \mapsto y \otimes y,
\]
où l'on reconnaît la formule classique pour les coefficients de la matrice produit et le fait que le déterminant est multiplicatif.

En utilisant le point de vue plus abstrait sur les variétés affines évoqué à la fin de la section \ref{sec:varietes}, on associe à tout $\QQ$\nobreakdash-espace vectoriel de dimension finie~$V$ un groupe algébrique affine $\GL_V$ sur~$\QQ$; pour une~$\QQ$-algèbre $R$, ses $R$-points sont les automorphismes $R$\nobreakdash-linéaires
\[
V \otimes R \longrightarrow V \otimes R.
\]
Moyennant le choix d'une base de $V$, on retrouve la description précédente. Sans ce choix, on peut décrire les fonctions sur~$\GL_V$ comme la $\QQ$\nobreakdash-algèbre obtenue en inversant le déterminant dans l'anneau engendré par les fonctions constantes et par les \emph{coefficients matriciels}
\[
[V, v, \varphi] \quad \text{pour tous } v \in V\text{ et } \varphi \in V^\ast=\Hom(V, \QQ),
\]
modulo les relations de bilinéarité
\begin{align*}
[V, \lambda v+\lambda' v', \varphi]=\lambda[V, v, \varphi]+\lambda'[V, v', \varphi] \\
[V, v, \lambda\varphi+\lambda'\varphi']=\lambda[V, v, \varphi]+\lambda'[V, v, \varphi']
\end{align*}
pour tous $\lambda, \lambda' \in \QQ$. Pour ce faire, on interprète le coefficient matriciel associé à $v$ et à $\varphi$ comme la fonction
\begin{align*}
\GL_V(R) &\longrightarrow R, \\ g &\longmapsto \varphi(g(v)). 
\end{align*}
Si l'on identifie $\GL_V$ à $\GL_n$ par le choix d'une base $v_1, \dots, v_n$, les fonctions $x_{ij}$ correspondent aux symboles $[V, v_j, v_i^\ast]$. Les coefficients matriciels $[V, v, \varphi]$ sont donc les polynômes de degré $1$ en les $x_{ij}$. 

Pour obtenir des polynômes de degré supérieur, on remplace l'espace vectoriel $V$ dans les coefficients matriciels par des constructions tensorielles. Si~$g$ est un automorphisme de $V$, alors $g \otimes g$ est un automorphisme du produit tensoriel~$V \otimes V$; un élément $v_1 \otimes v_2$ de $V \otimes V$ et un élément $\varphi_1 \otimes \varphi_2$ de $(V \otimes V)^\ast=V^\ast \otimes V^\ast$ définissent la fonction 
\begin{align*}
\GL_V(R) &\longrightarrow R, \\ g &\longmapsto \varphi_1(g(v_1))\varphi(g(v_2)), 
\end{align*} que l'on note $[V \otimes V, v_1 \otimes v_2, \varphi_1 \otimes \varphi_2]$. La structure d'anneau sur les coefficients matriciels est alors donnée par
\[
[V, v_1, \varphi_1]\cdot [V, v_2, \varphi_2]=[V \otimes V, v_1 \otimes v_2, \varphi_1 \otimes \varphi_2].
\]
De ce point de vue, le déterminant est la fonction
\[
\Bigl[V^{\otimes n}, \sum_{\sigma \in \mathfrak{S}_n} \varepsilon(\sigma) v_{\sigma(1)} \otimes \cdots \otimes v_{\sigma(n)},\,v_1^\ast\otimes\cdots\otimes v_n^\ast\Bigr]
\]
pour n'importe quel choix de base $v_1, \dots, v_n$ de $V$ (le résultat n'en dépend pas). Cette description n'est pas encore tout à fait satisfaisante, parce que ni les fonctions constantes ni l'inverse du déterminant ne sont pour l'instant des coefficients matriciels. Pour traiter les constantes, on peut faire jouer au symbole $[\QQ, 1, 1]$, où le second $1$ désigne l'application identité, le rôle de la fonction constante de valeur~$1$. Pour inverser le déterminant, on recourt au dual. Tout espace vectoriel $H$ est muni d'applications
\[
H \otimes H^\ast \to \QQ \quad \text{et}\quad \QQ \to H^\ast \otimes H,
\]
définies par évaluation et en envoyant $1$ vers l'élément correspondant à l'application identité à travers l'isomorphisme $H^\ast \otimes H\simeq \mathrm{End}(H)$, et que ces applications sont duales l'une de l'autre. Il est alors naturel de considérer parmi les coefficients matriciels tous les produits tensoriels de $V$ et de $V^\ast$, et pas seulement de $V$, et d'imposer aux symboles obtenus les relations de compatibilité avec les applications ci\nobreakdash-dessus
\[
[H \otimes H^\ast, h \otimes \varphi, a\cdot \mathrm{Id}_H]=[\QQ, \varphi(h), a]
\]
pour tous $h \in H, \varphi \in H^\ast$ et $a \in \QQ$. Par exemple, si l'espace vectoriel de départ $V$ est de dimension $1$ et $v$ est un vecteur non nul, en combinant la règle du produit avec cette relation on trouve
\[
[V, v, v^\ast]\cdot[V^\ast, v^\ast, v]=[V \otimes V^\ast, v \otimes v^\ast, v^\ast \otimes v]=[\QQ, 1, 1],
\]
ce qui signifie précisément que la fonction $[V, v, v^\ast]$ est inversible!

Soient $p$ un entier, $X$ une variété algébrique affine lisse et $D \subset X$ un diviseur à croisements normaux (éventuellement vide), tout étant défini par des polynômes à coefficients rationnels. Notons
\[
V_{\dR}=\rH_{\dR}^p(X, D) \quad\text{et}\quad V_\Betti=\rH_\Betti^p(X, D)
\]
la cohomologie de de~Rham et la cohomologie de Betti de $X$ relatives à~$D$. Dans une approximation grossière, le \emph{motif}~$\rH^p(X, D)$ est le triplet formé par ces deux espaces vectoriels, que l'on appelle la \emph{réalisation de Betti} et la \emph{réalisation de de~Rham} du motif, et par l'isomorphisme de comparaison $\mathrm{comp}\colon V_{\dR} \otimes \CC \to V_{\Betti} \otimes \CC$ entre leurs complexifiés. On dit que deux tels triplets sont isomorphes lorsqu'il existe des isomorphismes \og de nature géométrique\fg entre les espaces vectoriels sous-jacents compatibles à l'isomorphisme de comparaison. 

Le \emph{lego des motifs} consiste alors à identifier des briques simples et à comprendre comment elles se combinent pour donner lieu à des motifs plus compliqués. La plus simple est sans doute le motif de la variété $X \subset \AA^1$ de dimension $0$ définie par l'équation $x=0$, dont la cohomologie de Betti et la cohomologie de de~Rham sont isomorphes à $\QQ$, avec isomorphisme de comparaison l'identité; on le note $\QQ(0)$ et on l'appelle le \emph{motif trivial}. Tout de suite après vient le motif de la droite affine épointée $X=\mathbb{G}_m$, dont la cohomologie de Betti et la cohomologie de~Rham sont encore isomorphes à $\QQ$, mais dont l'isomorphisme de comparaison est, cette fois-ci, la multiplication par~$2\pi i$; on le note~$\QQ(-1)$ et on l'appelle le \emph{motif de Tate}. Ce motif a des nombreuses incarnations: il est, par exemple, isomorphe à $\rH^1(\mathbb{G}_m, \{1\})$ ou à $\rH^2(\PP^1)$ si l'on inclut les variétés projectives dans cette discussion. Pour donner une idée de comment ces motifs se recombinent, le fait que la matrice des périodes \eqref{eqn:periodslog} soit triangulaire supérieure avec les nombres~$1$ et $2\pi i$ sur la diagonale reflète le fait que le motif~$\rH^1(\mathbb{G}_m, \{1, q\})$ s'insère dans une suite exacte
\[
0 \to \QQ(0) \to \rH^1(\mathbb{G}_m, \{1, q\}) \to \QQ(-1) \to 0,
\]
où les morphismes proviennent des suites exactes longues de cohomologie relative et sont donc de nature géométrique; on dit que le motif~$\rH^1(\mathbb{G}_m, \{1, q\})$ est une \emph{extension} de $\QQ(-1)$ par $\QQ(0)$. De même, la discussion à la fin de l'exemple \eqref{exmp:dilogs} montre que le motif $\rH^2(X, D)$ issue de la représentation intégrale du dilogarithme est une extension de~$\rH^1(\mathbb{G}_m, \{1, 1/\alpha\})$ par $\QQ(0)$.  

La théorie des motifs permet d'associer à $\rH^p(X, D)$ un \emph{groupe de Galois motivique}, qui est un sous-groupe\vspace*{-3pt}
\begin{equation}\label{eqn:groupedeGaloismotivique}
G \subset \GL_{\rH_\Betti^p(X, D)}
\end{equation}
défini par des équations polynomiales. Plutôt que le groupe, on construit son algèbre de fonctions, à partir de laquelle on peut le retrouver grâce à la relation \eqref{eqn:foncteurpoints2}. Comme pour le groupe général linéaire, outre l'objet $\rH^p(X, D)$, on considère toutes ses puissances tensorielles $\rH^p(X, D)^{\otimes a}$, qui dans la description grossière correspondent aux espaces vectoriels $V_{\dR}^{\otimes a}$ et $V_{\Betti}^{\otimes a}$ et à l'isomorphisme~$\mathrm{comp}^{\otimes a}$. Le point clé est que cette construction est encore \og géométrique\fg, en ce sens que la formule de Künneth \eqref{eqn:Kunnethrelative} permet de voir ces espaces comme des morceaux de la cohomologie d'une variété algébrique; par exemple, le carré tensoriel est un facteur direct
\[
\rH^p(X, D)^{\otimes 2} \subset \rH^{2p}(X\times X, D \times X \cup X \times D).
\]
De même, en utilisant la dualité de Poincaré \eqref{eqn:Poincarevarietesaffines}, on peut donner un sens géométrique au dual $\rH^p(X, D)^\ast$, et donc à toutes ses puissances tensorielles en la combinant avec la formule de Künneth. La seule subtilité est que le dual s'identifie à $\rH^{2d-p}(X', D')(d)$ et qu'il faudrait savoir interpréter géométriquement cette torsion à la Tate par des puissances négatives de $(2\pi i)^{-d}$, ce qui est un peu problématique si ce nombre, comme on le croit, n'est pas une période effective (remarque~\ref{noneffectif}). La solution consiste à l'inverser formellement, c'est\nobreakdash-à\nobreakdash-dire à \og inventer\fg un motif $\QQ(1)$ dont les périodes sont les multiples rationnels de $\sfrac{1}{2\pi i}$. Plus généralement, on appelle \emph{construction tensorielle} sur le motif $\rH^p(X, D)$ toute somme directe finie
\[
\bigoplus_i \rH^p(X, D)^{\otimes a_i} \otimes (\rH^p(X, D)^\ast)^{\otimes b_i}
\]
pour des entiers $a_i, b_i \geq 0$. La théorie des motifs permet alors de sélectionner une classe $\mathcal{C}$ de sous-quotients de ces constructions tensorielles et de morphismes entre eux. Par analogie avec $\GL_V$, on considère le $\QQ$-espace vectoriel engendré par des symboles
\[
[H, v, \varphi],
\]
où $H$ est un objet dans $\mathcal{C}$ (en particulier, la donnée d'un espace vectoriel $H_{\Betti}$) et $v$ et $\varphi$ sont des vecteurs dans $H_\Betti$ et dans $H_\Betti^\ast$, modulo les relations de bilinéarité et les relations
\begin{equation}\label{eqn:relations-tannakiennes}
[H, v, \transp{f_B}(\varphi)]=[H', f_B(v), \varphi]
\end{equation}
pour tout morphisme $f \colon H \to H'$ dans la classe $\mathcal{C}$ (un tel morphisme est en particulier la donnée d'une application linéaire $f_B \colon H_\Betti \to H'_\Betti$ et $\transp{f_B}\colon (H_\Betti')^\ast \to (H_\Betti')^\ast$ désigne sa transposée).

Tout le sel est dans le choix de la classe $\mathcal{C}$: si, par exemple, $\mathcal{C}$ contenait \emph{tous} les sous-espaces et tous les morphismes entre eux, la relation~\eqref{eqn:relations-tannakiennes} impliquerait que toute fonction est constante et on trouverait le groupe trivial. Grosso modo, la solution consiste à autoriser seulement des constructions d'algèbre linéaire (noyau, conoyau, dual, etc.) sur des morphismes d'origine géométrique tels que les applications~\eqref{eqn:fonctorBettirel} induites par des morphismes de paires de variétés affines ou les duaux des morphismes connectant dans la suite exacte longue \eqref{eqn:suiteexactelonguerelative}. Par exemple, le motif d'une variété qui, comme la courbe de Fermat (exemple \ref{exmp:Fermat}), a beaucoup d'automorphismes aura un petit groupe de Galois car les relations imposent que ce groupe soit contenu dans le commutant des matrices induites par tous ces automorphismes. Un autre exemple de relation, si l'on discutait le motif d'une courbe projective $X \subset \PP^2$ de genre $g$, serait donné par la compatibilité au cup-produit \eqref{eqn:CupProductCurve}, qui implique que les éléments du groupe de Galois motivique de $\rH^1(X)$ doivent respecter la forme symplectique standard à un scalaire près (ce scalaire étant relié au fait que le diagramme \eqref{eqn:commutative-cup-produit} n'est commutatif que si l'on multiplie par $2\pi i$ à droite). Ce groupe est donc contenu dans le groupe 
\[
\mathrm{GSp}_{2g}=\{g \in \GL_{2g} \mid \transp{g}Jg=\mu(g)J \} 
\]
des similitudes symplectiques, avec $J=\left(\begin{smallmatrix} 0 & I_g \\ -I_g  & 0 \end{smallmatrix}\right)$ la matrice de la forme symplectique standard et $\mu(g)$ un scalaire; pour une courbe \emph{générique} c'est la seule contrainte.

\subsection{La conjecture des périodes de Grothendieck}

La dimension du groupe de Galois motivique est déjà censée être un invariant riche, si l'on croit à cette conjecture de Grothendieck qui, comme on l'expliquera à la fin de la section \ref{sec:periodesformelles}, est à peu de choses près équivalente à celle de Kontsevich-Zagier:

\begin{conjecture}[Grothendieck]\label{conj:perGroth} Le degré de transcendance du corps engendré par les coefficients de l'accouplement de périodes entre la cohomologie de de~Rham $\rH_\dR^p(X, D)$ et l'homologie de Betti $\rH^\Betti_p(X, D)$ est égal à la dimension du groupe de Galois motivique \eqref{eqn:groupedeGaloismotivique}.
\end{conjecture}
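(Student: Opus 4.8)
Il s'agit d'une conjecture, et l'on ne sait en établir qu'une moitié; décrivons le cadre naturel où la formuler, la partie inconditionnelle, puis l'obstacle. Le point de départ est le formalisme tannakien (inconditionnel, au sens des motifs de Nori ou d'André). Les constructions tensorielles sur le motif $M=\rH^p(X, D)$ et les morphismes d'origine géométrique entre elles engendrent une catégorie tannakienne $\langle M\rangle^\otimes$ munie de deux foncteurs fibres à valeurs dans les $\QQ$\nobreakdash-espaces vectoriels, la réalisation de Betti $\omega_\Betti$ et la réalisation de de~Rham $\omega_{\dR}$, avec $V_\Betti=\omega_\Betti(M)$ et $V_{\dR}=\omega_{\dR}(M)$. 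Le groupe de Galois motivique $G$ de l'énoncé n'est autre que $\mathrm{Aut}^\otimes(\omega_\Betti)$, plongé dans $\GL_{V_\Betti}$ via son action sur $V_\Betti$. La première étape est d'introduire le \emph{torseur des périodes}
\[
P_M=\underline{\mathrm{Isom}}^\otimes(\omega_{\dR}, \omega_\Betti),
\]
schéma affine sur $\QQ$ dont les points sont les isomorphismes tensoriels entre les deux foncteurs fibres; c'est un torseur sous $G$, de même dimension que lui. L'isomorphisme de comparaison $\mathrm{comp}$ fournit un point complexe $\varpi\in P_M(\CC)$, dont les coordonnées dans un système d'équations définissant $P_M$ sur $\QQ$ sont, à des combinaisons $\QQ$\nobreakdash-linéaires près, les coefficients de la matrice de l'accouplement~\eqref{eqn:accouplementrelatif}, et engendrent donc exactement le corps des périodes de l'énoncé.

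La direction accessible est la majoration $\mathrm{trdeg}\le\dim G$, et je la traiterais d'abord car elle est inconditionnelle. Elle repose sur un principe général: le degré de transcendance sur $\QQ$ du corps engendré par les coordonnées d'un point $\CC$\nobreakdash-rationnel d'une variété définie sur $\QQ$ est majoré par la dimension de l'adhérence de Zariski de ce point, elle-même au plus égale à la dimension de la variété ambiante. Appliqué au point $\varpi$ du torseur $P_M$, et compte tenu de l'égalité $\dim P_M=\dim G$, cela donne aussitôt l'inégalité voulue.

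Tout le contenu de la conjecture réside alors dans l'inégalité réciproque $\mathrm{trdeg}\ge\dim G$, c'est-à-dire dans l'assertion que le point des périodes $\varpi$ est \emph{Zariski-dense} dans $P_M$, ou encore que le plus petit sous-schéma fermé de $P_M$ défini sur $\overline\QQ$ contenant $\varpi$ est $P_M$ tout entier. Géométriquement, cela revient à dire qu'il n'existe aucune relation algébrique entre les périodes hormis celles qu'impose la structure de torseur, c'est-à-dire d'origine géométrique — ce qui n'est rien d'autre que la traduction dans ce langage de la conjecture de Kontsevich-Zagier. C'est ici que gît la difficulté, et je m'attends à ce qu'elle soit, et de loin, l'étape infranchissable: on ne dispose aujourd'hui d'aucun procédé général pour produire des relations transcendantes, et les rares cas connus (la transcendance de $2\pi i$, les résultats de Chudnovsky sur les courbes elliptiques à multiplication complexe, ou le théorème de Huber-Wüstholz pour les périodes en une variable, cf. théorème~\ref{thm:HW}) reposent tous sur des théorèmes profonds de théorie de la transcendance — typiquement le théorème du sous-groupe analytique de Wüstholz — appliqués au cas par cas, après des dévissages géométriques ad hoc. Établir la minoration en toute généralité demanderait une percée d'une tout autre ampleur, capable de contrôler simultanément l'ensemble des relations algébriques entre périodes; c'est précisément là que s'arrête, pour l'instant, ce que l'on sait faire.
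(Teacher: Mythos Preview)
The statement is a \emph{conjecture}, and the paper does not prove it; there is therefore no proof to compare against. Your proposal correctly recognises this and offers instead a sketch of the surrounding framework: the Tannakian setup, the torsor of periods $P_M$, the unconditional inequality $\mathrm{trdeg}\le\dim G$ (coming from the fact that the comparison point $\varpi$ lies on a $\QQ$-variety of dimension $\dim G$), and the identification of the hard direction with Zariski-density of $\varpi$, i.e.\ with the Kontsevich--Zagier conjecture. This is exactly the picture the paper develops in \S\ref{sec:periodesformelles}, where the torsor of periods and the comparison point \eqref{eqn:comp-pointcomplexe-torseur} are introduced and the equivalence with conjecture~\ref{conj:KZbis} is spelled out. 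Your explicit isolation of the easy inequality $\mathrm{trdeg}\le\dim G$ is a helpful addition that the paper leaves implicit.
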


La seule trace de cette conjecture dans les écrits \emph{publiés} de Grothendieck est une note de bas de page dans l'article \cite{GrodR} où il démontre l'isomorphisme entre les cohomologies de de~Rham algébrique et analytique: après avoir mentionné le théorème de Schneider sur la transcendance des périodes $\omega_1$ et $\omega_2$ d'une courbe elliptique définie sur $\overline\QQ$, il dit qu'on s'attend à ce que ces deux périodes soient algébriquement indépendantes si la courbe elliptique n'a pas multiplication complexe et que cette conjecture s'étend de manière évidente aux quatre périodes et plus généralement aux $4g^2$ périodes d'une courbe projective lisse de genre~$g$. Pour un historique de la conjecture, je renvoie le lecteur à la lettre de André en appendice à \cite{bertolin}.

\begin{exemple}[le groupe de Galois de $2\pi i$]\label{exmp:Galoispi} Pour $X=\mathbb{G}_m$, la cohomologie $\rH^1_{\Betti}(X)$ est de dimension $1$. Le groupe de Galois motivique est donc un sous-groupe de $\mathrm{GL}_1$, avec coordonnée la fonction
\[
x=\bigl[\rH^1_{\Betti}(X), [\sigma]^\ast, [\sigma]\bigr],
\]
où $\sigma$ est le générateur usuel de l'homologie de $\CC^\times$. Hormis lui-même, les seuls sous-groupes algébriques de~$\mathrm{GL}_1$ sont les racines de l'unité d'un certain ordre, définies par l'annulation du polynôme $x^n-1$. Puisque le produit des fonctions est donné par le produit tensoriel de motifs et que la fonction $1$ correspond au motif trivial, il faut donc décider si $\rH^1(X)^{\otimes n}$ peut être isomorphe à $\QQ(0)$ pour un entier~$n \geq 1$. Or, vu que les périodes de $\rH^1(X)^{\otimes n}$ sont les multiples rationnels de~$(2\pi i)^n$ et que les périodes de $\QQ(0)$ sont les nombres rationnels, un tel isomorphisme impliquerait que $(2\pi i)^n$ est rationnel; la transcendance de $\pi$ entraîne donc que le groupe de Galois motivique est~$\GL_1$ en entier. Ce qui rend la théorie intéressante est que l'on peut s'en passer pour établir ce fait; dans une approximation plus fine de la notion de motif, les espaces $V_B$ et $V_{\dR}$ sont munis d'une \emph{filtration par le poids} compatible au produit tensoriel, pour laquelle~$\rH^1(X)$ est pur de poids $2$, tandis que $\QQ(0)$ est pur de poids zéro. Quoi qu'il en soit, on conclut que le groupe de Galois motivique de $\rH^1(X)$ est~$\GL_1$; sa dimension est égale au degré de transcendance du corps $\QQ(2\pi i)$, comme le prédit la conjecture de Grothendieck, et ses~$\QQ$\nobreakdash-points sont $\GL_1(\QQ)=\QQ^\times$, en accord avec l'heuristique de la section \ref{sec:Galoistheoryheuristics}.
\end{exemple}

\begin{exemple}[les logarithmes]\label{eqn:Galoislogs} Soit $q>1$ un nombre rationnel. D'après l'exemple \ref{exmp:log-cohomologique}, le logarithme de $q$ est une période du motif $\rH^1(\mathbb{G}_m, \{1, q\})$ de dimension $2$; son groupe de Galois motivique est donc un sous-groupe de $\GL_2$. Le fait que ce motif soit une extension de $\QQ(-1)$ par $\QQ(0)$, dont les groupes de Galois motiviques sont~$\GL_1$ et le groupe trivial respectivement, impose la contrainte que le groupe consiste en des matrices de la forme $\begin{psmallmatrix}1 & a \\0 & b \end{psmallmatrix}$, avec $b$ non nul. On remarque qu'il n'y a que deux tels groupes: ou bien $a$ est nul ou bien il prend n'importe quelle valeur. Le premier cas signifierait que~$\rH^1(\mathbb{G}_m, \{1, q\})$ est isomorphe à la somme directe de~$\QQ(0)$ et~$\QQ(-1)$ (on dit alors que l'extension est \emph{scindée}), donc que $\log(q)$ peut s'écrire comme une combinaison linéaire à coefficients rationnels de $1$ et de $2\pi i$, pour que la matrice des périodes devienne $\begin{psmallmatrix}1 & 0 \\0 & 2\pi i \end{psmallmatrix}$ dans une base convenable. Or, comme $\log(q)$ est réel et irrationnel, ce n'est pas possible. Le groupe de Galois de~$\log(q)$ est donc le groupe des matrices $\begin{psmallmatrix}1 & a \\0 & b \end{psmallmatrix}$, et ses conjugués devraient être les périodes
\begin{displaymath}
\begin{psmallmatrix}1 & a \\0 & b \end{psmallmatrix} \cdot \log(q)=b\log(q)+a 2\pi i,
\end{displaymath}
avec $a$ et $b$ rationnels et $b$ non nul. En accord avec l'heuristique basée sur la monodromie, on trouve $\log(q)+n2\pi i$ parmi les conjugués de~$\log(q)$. Comme le groupe de Galois motivique est de dimen\-sion~$2$, la conjecture des périodes de Grothendieck prédit que les nombres~$2\pi i$ et $\log q$ sont algébriquement indépendants.
\end{exemple}

\subsection{Le cas des courbes elliptiques}\label{sec:grothendieck-elliptic}

Soit $E \subset \PP^2$ une courbe elliptique d'équation affine
\[
y^2=4x^3-g_2x-g_3,
\]
comme dans la section \ref{exmp:int-elliptiques}, sauf qu'on s'autorise à prendre pour $g_2$ et~$g_3$ des nombres \emph{algébriques}. L'ensemble des points complexes
\begin{displaymath}
E(\CC)=\CC\slash \Lambda
\end{displaymath}
est muni de la loi de groupe abélien:
\begin{displaymath}
z \, (\text{mod } \Lambda)+ z'\, (\text{mod } \Lambda)=z+z'\, (\text{mod } \Lambda).
\end{displaymath}
On s'intéresse aux applications holomorphes \hbox{$E(\CC) \to E(\CC)$} qui sont aussi des homomorphismes de groupes. Par exemple, si $\alpha$ est un nombre complexe stabilisant le réseau $\Lambda$, c'est-à-dire pour lequel on a l'inclusion \hbox{$\alpha \Lambda \subset \Lambda$}, alors la fonction \og multiplication par $\alpha$\fg de $\CC$ dans $\CC$ induit une telle application sur le quotient. En utilisant que~$\CC$ est le revêtement universel de~$E(\CC)$, on peut montrer qu'elles sont toutes de cette forme; la question de les caractériser devient ainsi: quels sont les nombres complexes $\alpha \in \CC$ satisfaisant à $\alpha \Lambda \subset \Lambda$?

Les nombres entiers ont toujours cette propriété; pour la plupart des réseaux, ce sont les seuls. En effet, écrivons $\Lambda=\ZZ \oplus \ZZ\tau$ avec~$\tau$ dans le demi-plan de Poincaré et supposons qu'il y existe~\hbox{$\alpha \in \CC \setminus \ZZ$} satisfaisant à~$\alpha \Lambda \subset \Lambda$, c'est-à-dire tel que $\alpha$ et $\alpha \tau$ appartiennent à~$\Lambda$. On peut alors trouver des entiers $a, b, c, d \in \ZZ$ tels~que
\begin{equation}\label{eqn:definingCM}
\alpha=a+b\tau, \qquad \alpha \tau=c+d\tau,
\end{equation}
avec $b$ non nul car autrement $\alpha$ serait entier. De là, en divisant la seconde égalité par la première, il vient que $\tau$ satisfait à l'équation
\begin{equation}\label{eqn:quad1}
b\tau^2+(a-d)\tau-c=0,
\end{equation}
c'est-à-dire qu'il s'agit d'un nombre algébrique de degré $2$. Réciproquement, si $\tau$ est un tel nombre, le réseau $\Lambda=\ZZ \oplus \ZZ \tau$ est stable par multiplication par $\tau$. Par ailleurs, en remplaçant $\tau=\psfrac{\alpha-a}{b}$ dans la seconde égalité dans \eqref{eqn:definingCM} on obtient
\begin{equation}\label{eqn:quad2}
\alpha^2-(a+d)\alpha+ad-bc=0,
\end{equation}
ce qui implique que $\alpha$ lui-même doit être un nombre algébrique de degré $2$ et, qui plus est, un entier algébrique (racine d'un polynôme unitaire à coefficients entiers). Il n'est pas réel, puisque les équations quadratiques \eqref{eqn:quad1} et~\eqref{eqn:quad2} ont même discriminant et que celui de la première est négatif car~$\tau$ n'est pas réel. On dit alors que la courbe elliptique $E$ est à \emph{multiplication complexe} par le corps quadratique imaginaire $\QQ(\tau)$.

Supposons que la courbe elliptique $E$ soit à multiplication complexe et revenons à l'étude de ses périodes. Comme les points complexes~$E(\CC)$ peuvent être uniformisés par le réseau $\Lambda=\ZZ\oplus \ZZ\tau$ avec
\begin{equation}\label{eqn:reseauCM}
\tau=\frac{\int_{\sigma_2} \sfrac{dx}{y}}{\int_{\sigma_1} \sfrac{dx}{y}}=\frac{\omega_2}{\omega_1},
\end{equation}
et comme ce quotient est algébrique, les deux périodes ci-dessus sont algébriquement dépendantes. Masser \cite[Lem.\,3.1]{masser} a remarqué qu'il y a une deuxième relation linéaire à coefficients algébriques parmi les périodes de $E$, à savoir
\begin{equation}\label{eqn:masser}
c\eta_2+b\tau \eta_1=\beta \omega_2
\end{equation}
pour les entiers $b$ et $c$ dans \eqref{eqn:definingCM} et un nombre algébrique $\beta$ dans le même corps que les coefficients $g_2, g_3$ de l'équation de $E$. De plus, $\beta$ est nul si et seulement si $g_2$ ou $g_3$ est nul.

\begin{thm}[Chudnovsky]\label{thm:chudnovsky} Soit $E$ une courbe elliptique définie sur~$\overline\QQ$. Le degré de transcendance du corps engendré par les périodes de $E$ est au moins $2$:
\begin{displaymath}
\mathrm{deg tr}\,\overline\QQ(\omega_1, \omega_2, \eta_1, \eta_2) \geq 2.
\end{displaymath}
En particulier, lorsque la courbe elliptique $E$ est à multiplication complexe, ce degré de transcendance est égal à $2$.
\end{thm}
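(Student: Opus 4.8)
The plan is to separate the two assertions. The lower bound $\mathrm{degtr}\geq 2$ is the real content, so let me first dispose of the upper bound in the complex-multiplication case, which is purely formal. Assume $E$ has complex multiplication, so that $\tau=\omega_2/\omega_1$ is algebraic by \eqref{eqn:reseauCM}. The Legendre relation (proposition~\ref{prop:legendre}) $\omega_1\eta_2-\omega_2\eta_1=2\pi i$ becomes, after substituting $\omega_2=\tau\omega_1$, the identity $\eta_2=\tau\eta_1+2\pi i/\omega_1$. Masser's relation~\eqref{eqn:masser} $c\eta_2+b\tau\eta_1=\beta\omega_2$ then turns into an algebraic linear relation among $\eta_1$, $\omega_1$ and $2\pi i/\omega_1$, which expresses $\eta_1$ as an element of $\overline\QQ(\omega_1,2\pi i)$; feeding this back shows that $\omega_2,\eta_1,\eta_2$ all lie in the same field. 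Hence
\[
\overline\QQ(\omega_1,\omega_2,\eta_1,\eta_2)\subseteq\overline\QQ(\omega_1,2\pi i),
\]
which has transcendence degree at most $2$, and the lower bound forces equality. (The denominators that occur are nonzero, since $b\neq 0$, $c\neq 0$ and $\tau$ lies in the upper half-plane; the degenerate case $\beta=0$, i.e. $g_2g_3=0$, reduces in the same way, because then $\eta_2$ is already an algebraic multiple of $\eta_1$.)

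It remains to prove $\mathrm{degtr}\geq 2$, which is a deep statement in the theory of algebraic independence. Schneider's theorem already yields that $\omega_1$ is transcendental, so $\mathrm{degtr}\geq 1$ for free; the whole difficulty lies in upgrading transcendence to the algebraic independence of two of the periods. I would work on the universal vectorial extension $E^\natural$ of $E$, a commutative algebraic group of dimension $2$ fitting in an exact sequence $0\to\mathbb{G}_a\to E^\natural\to E\to 0$, whose exponential map is assembled from $(z,\wp(z),\zeta(z))$ and whose period lattice is generated by the vectors $(\omega_i,\eta_i)$ (the auxiliary factor $2\pi i$ being supplied by Legendre). The classical strategy is an auxiliary-function argument: using Siegel's lemma, construct a nonzero polynomial $P$ with algebraic coefficients so that $F(z)=P\bigl(z,\wp(z),\zeta(z)\bigr)$ vanishes to high order along a long arithmetic progression of lattice points; extrapolate to control $|F|$ at further integer multiples of the periods; and invoke a \emph{zero estimate} on $E^\natural$ of Masser--Wüstholz or Philippon type to reach a contradiction unless two of the periods are algebraically independent.

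The hard part — and Chudnovsky's decisive contribution — is exactly this final conversion of diophantine inequalities into a lower bound on transcendence degree. Under the contrary hypothesis that the four periods generate a field of transcendence degree $1$, one must control not a single transcendental number but an entire ideal of polynomial relations, and the one-variable mechanism of Gel'fond--Schneider no longer closes the argument. The remedy is a \emph{criterion for algebraic independence} (Chudnovsky's, later streamlined by Philippon): one produces, at many scales, polynomials that are very small at the relevant pair $(\omega_1,\eta_1)$ and yet cannot all vanish simultaneously, and the criterion converts this into $\mathrm{degtr}\geq 2$. Making the growth estimates, a multiplicity/zero estimate on $E^\natural$ sharp enough to detect the second dimension, and the algebraic-independence criterion fit together is where essentially all the labour lies; it is for this reason that the statement resisted proof until Chudnovsky.
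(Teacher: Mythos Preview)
The paper does not actually prove this theorem: it is stated as a deep result of Chudnovsky and then used (e.g.\ to deduce the algebraic independence of $2\pi i$ and $\Gamma(1/4)$), with no \verb|\begin{proof}...\end{proof}| block. So there is no ``paper's own proof'' to compare against for the lower bound $\mathrm{degtr}\geq 2$.

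Your handling of the CM upper bound is exactly what the paper sets up in the paragraphs preceding the theorem: $\tau=\omega_2/\omega_1$ algebraic from~\eqref{eqn:reseauCM}, the Legendre relation (proposition~\ref{prop:legendre}), and Masser's relation~\eqref{eqn:masser} together force $\overline\QQ(\omega_1,\omega_2,\eta_1,\eta_2)$ into a field of transcendence degree at most~$2$. One small point: after substituting Legendre into Masser you need $(b+c)\tau\neq 0$, not just $b\neq 0$ and $c\neq 0$; the cleanest fix is to solve Masser directly for $\eta_2$ using $c\neq 0$ (which does follow from $\tau\notin\RR$ and the quadratic $b\tau^2+(a-d)\tau-c=0$), obtaining $\eta_2\in\overline\QQ(\omega_1,\eta_1)$ and hence $\overline\QQ(\omega_1,\omega_2,\eta_1,\eta_2)=\overline\QQ(\omega_1,\eta_1)$.

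For the lower bound, your outline is the right shape of Chudnovsky's argument --- auxiliary function built from $(z,\wp,\zeta)$ on the vectorial extension $E^\natural$, a zero/multiplicity estimate, and then a criterion for algebraic independence to pass from ``many small values'' to $\mathrm{degtr}\geq 2$. This is an accurate bird's-eye view, but of course it is a sketch and not a proof; the paper makes the same choice and simply quotes the result.
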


Par exemple, la courbe elliptique $y^2=4x^3-4x$ est à multiplication complexe par le corps $\QQ(i)$. D'un point de vue algébrique, la multiplication par $i$ est donnée par le morphisme de variétés $(x, y) \mapsto (-x, iy)$, dont le carré au sens de la composition est $(x, y) \mapsto (x, -y)$, la multiplication par $-1$ pour la loi de groupe sur la courbe. On peut dans ce cas calculer directement les périodes, en considérant la projection des cycles $\sigma_1$ et $\sigma_2$ vers la coordonnée~$x$. En utilisant l'identité \eqref{eqn:Eulerformula} reliant la fonction bêta et la fonction gamma et la formule de réflexion \eqref{eqn:functionalgamma} pour la fonction gamma, on trouve
\begin{equation}\label{eqn:CSd=4}
\omega_1=\int_1^\infty \frac{dx}{\sqrt{4x^3-4x}}=\int_0^1 t^{\sfrac{1}{4}-1}(1-t)^{\sfrac{1}{2}-1}=\frac{\Gamma(\sfrac{1}{4})^2}{2\sqrt{2\pi}},
\end{equation}
puis $\omega_2=i\omega_1$. La relation de Legendre et \eqref{eqn:masser} montrent alors que le corps engendré par les périodes est égal à $\QQ(2\pi i, \Gamma(\sfrac{1}{4}))$. Par le théorème de Chudnovsky, les nombres~$2\pi i$ et $\Gamma(\sfrac{1}{4})$ sont donc algébriquement indépendants \cite{walds, walds-ell}, comme le prédit la conjecture de Lang\nobreakdash-Rohrlich (exemple \ref{exmp:valeursgamma}). De même, en considérant la courbe elliptique $y^2=4x^3-1$, à multiplication complexe par le corps $\QQ(\rho)$, on en déduit l'indépendance algébrique des nombres $2\pi i$ et $\Gamma(\sfrac{1}{3})$.

La formule \eqref{eqn:CSd=4} est un cas particulier d'une \emph{formule de Lerch-Chowla-Selberg} \cite{gross} exprimant les périodes des courbes elliptiques à multiplication complexe $E$ en termes de valeurs gamma. Pour l'énoncer, on aura besoin de quelques rappels sur l'arithmétique des corps quadratiques imaginaires. Soit $K$ un corps quadratique imaginaire de discriminant $d$ et soit $\mathcal{O}_K$ l'anneau des entiers de $K$. Un tel corps est de la forme $K=\QQ(\sqrt{-D})$ pour un unique entier $D \geq 1$ sans facteur carré, et son discriminant vaut~$d=D$ si $D$ est congru à~3 modulo 4 et $d=4D$ autrement. L'anneau des entiers est égal~à
\[
\mathcal{O}_K=\begin{cases} \ZZ[\sqrt{-D}] & \text{si }D\equiv 1, 2 \text{ mod }4,\\
\ZZ\bigl[\frac{1+\sqrt{-D}}{2}\bigl] & \text{si } D\equiv 3\text{ mod }4.\end{cases}
\]
On note~$w$ le cardinal du groupe des unités~$\mathcal{O}_K^\times$, qui vaut $w=2$ sauf dans les cas exceptionnels $d=3$ (entiers de Gauss) et $d=4$ (entiers d'Eisenstein), pour lesquels on a $w=6$ et $w=4$ respectivement. Le corps $K$ se plonge dans le corps cyclotomique $\QQ(e^{\sfrac{2\pi i}{d}})$, d'où un homomorphisme de restriction
\[
\varepsilon\colon \mathrm{Gal}(\QQ(e^{\sfrac{2\pi i}{d}})/\QQ) \longrightarrow \mathrm{Gal}(K/\QQ).
\]
La source et la cible s'identifiant canoniquement à $(\ZZ/d\ZZ)^\times$ et $\{\pm 1\}$ respectivement, on en déduit un \emph{caractère de Dirichlet}
\[
\varepsilon\colon (\ZZ/d\ZZ)^\times \longrightarrow \{\pm 1\}.
\]
Il peut être calculé en termes des symboles de Legendre \cite[p.\,20]{saitokato}; par exemple, si $d$ est impair et si $d=p_1\cdots p_r$ est sa décomposition en facteurs premiers, $\varepsilon$ est donné par $(\frac{\cdot}{p_1})\cdots (\frac{\cdot}{p_r})$. On notera encore $\varepsilon \colon \ZZ \to \{\pm 1\}$ l'application qui à un entier $a$ associe $\varepsilon(a\,\mathrm{ mod }\,d)$ si $a$ et $d$ sont premiers entre eux et zéro sinon.

Par ailleurs, un \emph{idéal fractionnaire} de $K$ est un sous-ensemble~\hbox{$\mathfrak{a}\!\subset\! K$} de la forme~$x^{-1}I$, où~$x$ est un élément non nul de $K$ et $I$ est un idéal de~$\mathcal{O}_K$. Deux idéaux fractionnaires non nuls $\mathfrak{a}$ et $\mathfrak{b}$ sont dits équivalents s'il existe des éléments non nuls $a, b \in \mathcal{O}_K$ vérifiant~\hbox{$a\mathfrak{a}=b\mathfrak{b}$}. Le produit d'idéaux munit l'ensemble $\mathrm{Cl}(K)$ des classes d'équivalence pour cette relation d'une structure de groupe. D'après un théorème fondamental en théorie algébrique des nombres, ce groupe est fini; son cardinal $h=|\mathrm{Cl}(K)|$ s'appelle le \emph{nombre de classes} de $K$ et mesure combien la propriété de factorisation unique est mise en défaut dans l'anneau $\mathcal{O}_K$. Par exemple, $h$ vaut $1$ pour les corps $\QQ(i)$ et $\QQ(\rho)$, mais $2$ pour le corps $\QQ(\sqrt{-5})$, car l'élément $6$ y admet les factorisations non équivalentes $2\cdot 3$ et $(1+\sqrt{-5})\cdot(1-\sqrt{-5})$.

En regardant un idéal fractionnaire $\mathfrak{a}$ comme un réseau dans le plan complexe, on peut lui associer l'invariant
\[
\Delta(\mathfrak{a})=g_2(\mathfrak{a})^3-27g_3(\mathfrak{a})^2,
\]
où $g_2(\mathfrak{a})$ et $g_3(\mathfrak{a})$ sont les invariants définis dans \eqref{eqn:modularg2g3}. Le nombre complexe $\Delta(\mathfrak{a})\Delta(\mathfrak{a}^{-1})$ ne dépend que de la classe de $\mathfrak{a}$ dans $\mathrm{Cl}(K)$, car si l'on change de représentant $\Delta(\mathfrak{a})$ est multiplié par $a^{-12}$ pour un élément non nul $a \in\mathcal{O}_K$, et $\Delta(\mathfrak{a}^{-1})$ par $a^{12}$.

\begin{thm}[Lerch-Chowla-Selberg] Pour tout corps quadratique imaginaire $K$ de discriminant $d$, l'égalité suivante est vraie:
\begin{displaymath}
\prod_{[\mathfrak{a}] \in \mathrm{Cl}(K)} \Delta(\mathfrak{a})\Delta(\mathfrak{a}^{-1})=\biggl(\frac{2\pi i}{d}\biggr)^{12h}
\prod_{a=1}^{d-1} \Gamma(\sfrac{a}{d})^{6w \varepsilon(a)}.
\end{displaymath}
\end{thm}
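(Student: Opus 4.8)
Le plan est de confronter, au voisinage de $s=1$, les deux façons d'évaluer la fonction zêta de Dedekind $\zeta_K(s)=\zeta(s)L(s,\varepsilon)$ de $K$: son comportement analytique livrera le logarithme du membre de gauche, sa factorisation via le caractère $\varepsilon$ livrera les valeurs gamma du membre de droite. Je commencerais par décomposer $\zeta_K$ selon les classes d'idéaux, $\zeta_K(s)=\sum_{[\mathfrak{a}]\in\mathrm{Cl}(K)}\zeta_K(s,[\mathfrak{a}])$, la fonction zêta partielle $\zeta_K(s,C)=\sum_{\mathfrak{b}\in C}N(\mathfrak{b})^{-s}$ parcourant les idéaux entiers d'une classe. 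En représentant un idéal de la classe par un réseau $\ZZ\omega_1\oplus\ZZ\omega_2$ et en posant $\tau_\mathfrak{a}=\sfrac{\omega_2}{\omega_1}\in\mathfrak{h}$, un calcul standard, reposant sur $N(\mu)=|\mu|^2$ et sur le covolume $|\omega_1|^2\mathrm{Im}(\tau_\mathfrak{a})=N(\mathfrak{a})\sqrt{|d|}/2$, identifie chaque fonction zêta partielle à une série d'Eisenstein non holomorphe, de sorte que
\[ \zeta_K(s)=\frac1w\Bigl(\frac{2}{\sqrt{|d|}}\Bigr)^s\sum_{[\mathfrak{a}]\in\mathrm{Cl}(K)}E(\tau_\mathfrak{a},s),\qquad E(\tau,s)=\sum_{(m,n)\neq(0,0)}\frac{\mathrm{Im}(\tau)^s}{|m\tau+n|^{2s}}, \]
le facteur $\sfrac{1}{w}$ venant du groupe des unités $\mathcal{O}_K^\times$.

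J'appliquerais ensuite la première \emph{formule limite de Kronecker}, qui fournit le développement de Laurent
\[ E(\tau,s)=\frac{\pi}{s-1}+2\pi\bigl(\gamma-\log2-\log(\sqrt{\mathrm{Im}(\tau)}\,|\eta(\tau)|^2)\bigr)+O(s-1), \]
où $\eta$ est la fonction êta de Dedekind. Le terme dominant, sommé sur les $h$ classes, redonne la formule du nombre de classes $\mathrm{Res}_{s=1}\zeta_K=L(1,\varepsilon)=\frac{2\pi h}{w\sqrt{|d|}}$; le terme constant fait intervenir $\sum_{[\mathfrak{a}]}\log(\sqrt{\mathrm{Im}(\tau_\mathfrak{a})}\,|\eta(\tau_\mathfrak{a})|^2)$. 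Pour relier cette somme au membre de gauche, j'utiliserais l'identité modulaire $\Delta(\ZZ\oplus\ZZ\tau)=(2\pi)^{12}\eta(\tau)^{24}$ jointe à l'homogénéité $\Delta(\mu\Lambda)=\mu^{-12}\Delta(\Lambda)$, ainsi que le fait que, $K$ étant imaginaire, $\mathfrak{a}^{-1}=N(\mathfrak{a})^{-1}\overline{\mathfrak{a}}$ entraîne $\Delta(\mathfrak{a}^{-1})=N(\mathfrak{a})^{12}\overline{\Delta(\mathfrak{a})}$. Il vient
\[ \Delta(\mathfrak{a})\Delta(\mathfrak{a}^{-1})=N(\mathfrak{a})^{12}|\Delta(\mathfrak{a})|^2=\Bigl(\frac{2}{\sqrt{|d|}}\Bigr)^{12}(2\pi)^{24}\bigl(\sqrt{\mathrm{Im}(\tau_\mathfrak{a})}\,|\eta(\tau_\mathfrak{a})|^2\bigr)^{24}, \]
ce qui exprime $\log\prod_{[\mathfrak{a}]}\Delta(\mathfrak{a})\Delta(\mathfrak{a}^{-1})$ (réel positif, comme il se doit) à l'aide de la somme de Kronecker et de constantes explicites.

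Il resterait à évaluer le membre analytique. En écrivant $\zeta(s)=\frac{1}{s-1}+\gamma+\cdots$ et $L(s,\varepsilon)=L(1,\varepsilon)+L'(1,\varepsilon)(s-1)+\cdots$, le terme constant de $\zeta_K$ en $s=1$ vaut $\gamma L(1,\varepsilon)+L'(1,\varepsilon)$; en l'identifiant à celui fourni par Kronecker, les contributions en $\gamma$ se compensent et l'on obtient la somme en $\log|\eta|$ en fonction de $L'(1,\varepsilon)$. Pour faire surgir les valeurs gamma, j'exploiterais l'équation fonctionnelle de $L(s,\varepsilon)$ (caractère réel primitif impair de conducteur $d$) afin de ramener $L'(1,\varepsilon)/L(1,\varepsilon)$ à $L'(0,\varepsilon)/L(0,\varepsilon)$ modulo des constantes élémentaires ($\gamma$, $\log2$, $\log\pi$, $\log d$), puis la formule de Lerch $\zeta'(0,x)=\log(\Gamma(x)/\sqrt{2\pi})$ appliquée à $L(s,\varepsilon)=d^{-s}\sum_{a=1}^{d-1}\varepsilon(a)\zeta(s,\sfrac{a}{d})$, qui donne
\[ L'(0,\varepsilon)=-\log d\cdot L(0,\varepsilon)+\sum_{a=1}^{d-1}\varepsilon(a)\log\Gamma(\sfrac{a}{d}), \]
le terme en $\log(2\pi)$ disparaissant puisque $\sum_a\varepsilon(a)=0$. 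En combinant le tout, en exponentiant et en substituant $L(1,\varepsilon)=\frac{2\pi h}{w\sqrt{|d|}}$, les constantes doivent se réorganiser exactement en $(2\pi i/d)^{12h}\prod_a\Gamma(\sfrac{a}{d})^{6w\varepsilon(a)}$.

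La principale difficulté sera double. D'un point de vue analytique, c'est l'établissement même de la formule limite de Kronecker — que l'on obtient en prolongeant $E(\tau,s)$ par une transformation de type Poisson sur la fonction thêta, ou via la théorie des séries d'Eisenstein analytiques réelles — puis le suivi scrupuleux de \emph{toutes} les normalisations ($w$, $h$, $\sqrt{|d|}$ contre $d$, le passage de $2\pi$ à $2\pi i$) nécessaire pour atterrir sur les exposants précis $12h$ et $6w\varepsilon(a)$: c'est là que le nombre d'unités $w$, introduit dans la zêta partielle, et le facteur $24$ de $\eta^{24}$ se conjuguent. D'un point de vue conceptuel, le point délicat est l'identification $\Delta(\mathfrak{a}^{-1})=N(\mathfrak{a})^{12}\overline{\Delta(\mathfrak{a})}$, qui rend le produit réel et explique pourquoi c'est $\Delta(\mathfrak{a})\Delta(\mathfrak{a}^{-1})$, et non $|\Delta(\mathfrak{a})|^2$ seul, qui est invariant de classe et se prête au calcul.
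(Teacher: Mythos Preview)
The paper does not actually prove this theorem: it is stated without proof (with a reference to \cite{gross}) and immediately followed by a discussion of its consequences for the periods of CM elliptic curves. There is therefore nothing in the paper to compare your argument against.

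That said, your sketch is the classical proof of the Chowla--Selberg formula, and the outline is sound: write $\zeta_K(s)$ as a sum of partial zeta functions over ideal classes, identify each with a real-analytic Eisenstein series, apply Kronecker's first limit formula to extract the $\log|\eta(\tau_{\mathfrak a})|$ terms, and on the other side use $\zeta_K(s)=\zeta(s)L(s,\varepsilon)$ together with the functional equation and Lerch's identity $\zeta'(0,x)=\log\bigl(\Gamma(x)/\sqrt{2\pi}\bigr)$ to produce the gamma values. The identification $\Delta(\mathfrak a^{-1})=N(\mathfrak a)^{12}\overline{\Delta(\mathfrak a)}$ is exactly the right bridge between the Kronecker side and the left-hand side of the formula. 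One small point of care: in the paper's statement $d$ is used where one would normally write $|d_K|$ (note the product $\prod_{a=1}^{d-1}$ and the fact that $(2\pi i/d)^{12h}$ is real positive either way since $i^{12h}=1$); you should make sure your bookkeeping of $d$ versus $|d|$ and of the sign of the character is consistent with this convention when you chase the constants through to the exponents $12h$ and $6w\varepsilon(a)$.
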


Examinons maintenant les conséquences de cette formule pour les périodes des courbes elliptiques. D'après~\eqref{eqn:reseauCM}, le réseau des périodes d'une courbe elliptique $E$ à multiplication complexe par $K$ est de la forme $\ZZ\omega_1 \oplus \ZZ\omega_2=\omega_1\mathfrak{a}$ pour un idéal fractionnaire $\mathfrak{a}$, et le fait que ces courbes soient définies sur $\overline\QQ$ entraîne l'algébricité de $\Delta(\omega_1\mathfrak{a})$. Les nombres $\omega_1^{12}$ et $\Delta(\mathfrak{a})$ diffèrent donc par un multiple algébrique non nul et on déduit de la formule de Lerch-Chowla-Selberg la relation
\begin{displaymath}
\int_{\sigma} \frac{dx}{y} \sim_{\overline \QQ^\times} \sqrt{\pi} \prod_{a=1}^d \Gamma(\sfrac{a}{d})^{w \varepsilon(a)/4h}
\end{displaymath}
pour toute classe non nulle d'homologie de Betti $[\sigma] \in \rH_1^\Betti(E)$, où le symbole $\sim_{\overline \QQ^\times}$ indique que le côté droit et le côté gauche diffèrent par un multiple algébrique non nul.

Lorsque $E$ est sans multiplication complexe, il n'y a pas de source évidente de relations algébriques entre les périodes de~$E$.

\begin{conjecture}\label{conj:periodsnonCM} Les quatre périodes d'une courbe elliptique sur $\overline \QQ$ sans multiplication complexe sont algébriquement indépendantes.
\end{conjecture}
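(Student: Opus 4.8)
Le plan est de ramener cet énoncé à la conjecture des périodes de Grothendieck (conjecture~\ref{conj:perGroth}) pour le motif $\rH^1(E)$. Avec les notations de la section~\ref{exmp:int-elliptiques}, la matrice des périodes de ce motif, relativement aux bases $\{\sfrac{dx}{y}, x\sfrac{dx}{y}\}$ de $\rH^1_{\dR}(E)$ et $\{\sigma_1, \sigma_2\}$ de $\rH_1^\Betti(E)$, est
\[
\begin{pmatrix} \omega_1 & \eta_1 \\ \omega_2 & \eta_2 \end{pmatrix}.
\]
Le corps qu'engendrent ses quatre coefficients est précisément $\overline\QQ(\omega_1, \omega_2, \eta_1, \eta_2)$, et il contient déjà $2\pi i$, lequel n'est autre que le déterminant $\omega_1\eta_2 - \omega_2\eta_1$ par la relation de Legendre (proposition~\ref{prop:legendre}). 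Comme ce corps est engendré par quatre éléments sur $\overline\QQ$, son degré de transcendance est trivialement au plus~$4$; l'indépendance algébrique des quatre périodes équivaut donc à ce qu'il vaille exactement~$4$. Je chercherais alors à invoquer la conjecture~\ref{conj:perGroth}, qui prédit que ce degré de transcendance coïncide avec la dimension du groupe de Galois motivique de $\rH^1(E)$; tout reposerait ainsi sur le calcul de cette dimension.

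Je calculerais ensuite ce groupe $G \subset \GL_{\rH^1_{\Betti}(E)}$, qui agit sur un espace de dimension~$2$. Le motif $\rH^1(E)$ étant simple et polarisable, $G$ est réductif et agit irréductiblement; de plus, la compatibilité au cup-produit (section~\ref{sec:cup-produit}, exemple~\ref{exmp:cupproduitBettiell}) le contraint à préserver, à un scalaire près, la forme symplectique standard, mais comme $\mathrm{GSp}_2 = \GL_2$ cette contrainte est vide. Or un sous-groupe réductif de $\GL_2$ qui agit irréductiblement et contient les homothéties est soit $\GL_2$ tout entier, soit contenu dans le normalisateur d'un tore non déployé; ce second cas équivaut à l'existence d'un nombre algébrique $\alpha \notin \ZZ$ stabilisant le réseau $\Lambda$ (section~\ref{sec:grothendieck-elliptic}), c'est-à-dire à une multiplication complexe, que l'hypothèse exclut. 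On obtiendrait ainsi $G = \GL_2$, de dimension~$4$, de sorte que la conjecture~\ref{conj:perGroth} entraînerait l'indépendance algébrique recherchée.

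La partie véritablement difficile --- et, en vérité, la seule étape ouverte --- sera l'inégalité $\mathrm{deg tr}\,\overline\QQ(\omega_1, \omega_2, \eta_1, \eta_2) \geq 4$, qui est exactement la conjecture des périodes de Grothendieck dans ce cas. Les méthodes de transcendance disponibles, appuyées sur le théorème du sous-groupe analytique de W\"ustholz, ne l'atteignent pas: le meilleur résultat connu est le théorème de Chudnovsky (théorème~\ref{thm:chudnovsky}), qui ne garantit que $\mathrm{deg tr} \geq 2$. Il manque donc deux unités, et aucun procédé connu ne permet de déduire l'indépendance algébrique de la seule grandeur du groupe de Galois motivique: ce pont entre la géométrie et la transcendance est précisément le contenu inaccessible de la conjecture de Grothendieck. À défaut d'une telle percée, l'énoncé demeure conjectural --- ce qui explique qu'il soit ici formulé comme une conjecture.
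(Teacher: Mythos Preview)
Votre analyse est correcte et coïncide avec le traitement du texte: l'énoncé est une conjecture, que le texte présente comme une instance de la conjecture des périodes de Grothendieck en affirmant que le groupe de Galois motivique de $\rH^1(E)$ est $\GL_2$ dans le cas non CM. Vous allez même un peu plus loin en esquissant pourquoi ce groupe est $\GL_2$ (via la classification des sous-groupes réductifs irréductibles de $\GL_2$), ce que le texte se contente d'affirmer.
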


Le seul résultat connu dans cette direction est un théorème de Masser \cite{masser}, qui dit que le $\overline\QQ$-espace vectoriel engendré par
\[
1, \omega_1, \omega_2, \eta_1, \eta_2, 2\pi i
\]
est de dimension $4$ ou $6$ selon que la courbe elliptique est à multiplication complexe ou pas; il s'agit d'un précurseur du théorème \ref{thm:HW} caractérisant toutes les relations linéaires entre périodes d'une variable. Le théorème \ref{thm:chudnovsky} et la conjecture \ref{conj:periodsnonCM} sont bien des instances de la conjecture des périodes de Grothendieck puisque le groupe de Galois motivique de $\rH^1(E)$ est de dimension $2$ si~$E$ est à multiplication complexe (il s'agit du groupe de matrices de la forme $\left(\begin{smallmatrix} a & -Db \\ b & a \end{smallmatrix} \right)$ si $K=\QQ(\sqrt{-D}$)) et qu'il est égal à $\GL_2$ sinon.

\subsection{L'anneau des périodes formelles}\label{sec:periodesformelles}

Il est conjecturé, on l'a dit, que le groupe de Galois motivique agit sur les périodes en préservant toutes les relations algébriques parmi ces nombres. Dans leur expression $\int_\sigma \omega$, seul l'accouplement d'intégration~$\int$ n'est pas de nature algébrique; en l'oubliant, on arrive à la notion de \emph{période formelle}. Il s'agit d'une variante de la construction des coefficients matriciels pour le groupe $\GL_V$. Si, à la place d'un seul espace vectoriel $V$, on s'en donne deux $V$ et $W$, on peut considérer la variété affine $\mathrm{Isom}_{W, V}$ dont les points à valeurs dans une $\QQ$-algèbre~$R$ sont les isomorphismes $R$-linéaires
\[
W \otimes R \longrightarrow V \otimes R.
\]
Cette variété est munie d'une action librement transitive
\[
\mathrm{Isom}_{W, V} \times \GL_V \longrightarrow \mathrm{Isom}_{W, V}
\]
donnée par composition sur les $R$-points; on dit que $\mathrm{Isom}_{W, V}$ est un \emph{torseur} sous $\GL_V$. L'analogue des coefficients matriciels sont les symboles $[W, V, \omega, \varphi]$, avec $w \in W$ et $\varphi \in V^\ast$, vus comme des fonctions
\begin{align*}
\mathrm{Isom}_{W, V}(R) &\longrightarrow R \\ f &\longmapsto \varphi(f(w)).
\end{align*}

L'idée est d'appliquer cette construction au cas où $W$ et $V$ sont la réalisation de de~Rham et la réalisation de Betti d'un motif $\rH^p(X, D)$, puis de définir une sous-variété de $\mathrm{Isom}_{W, V}$, le \emph{torseur des périodes}, en imposant aux coefficients matriciels des relations de nature géométrique. La conjecture de Kontsevich-Zagier suggère de considérer le $\QQ$-espace vectoriel $\mathbf{P}$ défini par des générateurs et des relations comme suit. Les générateurs sont les symboles
\begin{displaymath}
[\rH^p(X, D), \omega, \sigma],
\end{displaymath}
consistant en
\begin{itemize}
\item un motif $\rH^p(X, D)$, où $X$ est une variété affine lisse et $D \subset X$ est un diviseur à croisements normaux (qui peut être vide) définis sur~$\QQ$,
\item une classe de cohomologie de de~Rham relative $\omega \in \rH^p_{\dR}(X, D)$,
\item une classe d'homologie singulière relative $\sigma \in \rH_p^\Betti(X, D)$.
\end{itemize}
Quant aux relations, il y en a de trois types:
\begin{enumeratei}
\item\label{enum:i}
\emph{Additivité:} ces symboles sont bilinéaires en $\omega$ et~$\sigma$.

\item\label{enum:ii}
\emph{Changement de variables:} pour chaque morphisme de variétés $f \colon X \to X'$ tel que $f(D)$ soit contenu dans $D'$, on a la relation
\begin{equation}\label{eqn:KZformel2}
[\rH^p(X, D), f^\ast \omega, \sigma]=[\rH^p(X', D'), \omega, f_\ast \sigma],
\end{equation}
où $f^\ast\colon \rH^n_{\dR}(X', D') \to \rH^n_{\dR}(X, D)$ et $f_\ast\colon \rH_n^{\Betti}(X, D) \to \rH_n^{\Betti}(X', D')$ désignent les applications induites par $f$ en cohomologie de de~Rham relative~\eqref{eqn:fonctorDRrel} et en homologie singulière relative \eqref{eqn:fonctorBettirel}.

\item\label{enum:iii}
\emph{Formule de Stokes:} on a la relation
\begin{displaymath}
[\rH^p(X, D), d\omega, \sigma]=[\rH^{p-1}(D^{(1)}, D^{(2)}), \omega, \partial\sigma],
\end{displaymath}
où $D^{(1)}$ désigne la réunion disjointe des composantes irréductibles $D_i$ de~$D$ et $D^{(2)}\subset D^{(1)}$ la réunion disjointe, indexée par $i$, des réunions des intersections $D_i \cap D_j$ pour tout $j \neq i$.
\end{enumeratei}

\begin{exemple}[retour sur les logarithmes] Soient $a, b \geq 1$ des nombres rationnels et $X=\mathbb{G}_m$. Compte tenu de l'exemple~\ref{exmp:log-cohomologique}, le logarithme de $ab$ correspond à la période formelle
\[
\log^{\mathfrak{m}}(ab)=[\rH^1(X, \{1, ab\}), \sfrac{dx}{x}, [1, ab]].
\]
Voici ce que deviennent les relations de l'exemple \ref{exmp:KZ-log} en ces termes. Comme $\{1, ab\}$ est contenu dans $\{1, a, ab\}$, en appliquant \eqref{eqn:KZformel2} au morphisme identité $f \colon X \to X$, une représentation équivalente est 
\[
\log^{\mathfrak{m}}(ab)=[\rH^1(X, \{1, a, ab\}), \sfrac{dx}{x}, [1, ab]],
\]
l'avantage étant que l'on peut alors écrire la relation
\[
[1, ab]=[1, a]+[a, ab]
\]
dans l'homologie de Betti $\rH_1^\Betti(X, \{1, a, ab\})$. Par bilinéarité, on trouve
\begin{align*}
\log^{\mathfrak{m}}(ab)&=[\rH^1(X, \{1, a, ab\}), \sfrac{dx}{x}, [1, a]]\\
&\hspace{2cm}+[\rH^1(X, \{1, a, ab\}), \sfrac{dx}{x}, [a, ab]] \\
&=[\rH^1(X, \{1, a\}), \sfrac{dx}{x}, [1, a]]\\
&\hspace{2cm}+[\rH^1(X, \{ a, ab\}), \sfrac{dx}{x}, [a, ab]].
\end{align*}
Il ne reste qu'à transformer le dernier terme en $\log^{\mathfrak{m}}(b)$. Pour ce faire, on applique \eqref{eqn:KZformel2} au morphisme $f\colon X \to X$ donné par la multiplication par $a$: comme $f$ envoie $\{1, b\}$ sur $\{a, ab\}$ et qu'il satisfait à $f^\ast(\sfrac{dx}{x})=\sfrac{dx}{x}$ et à~$f_\ast[1, b]=[a, ab]$, on trouve
\[
[\rH^1(X, \{ a, ab\}), \sfrac{dx}{x}, [a, ab]]=[\rH^1(X, \{1, b\}), \sfrac{dx}{x}, [1, b]]
\]
puis, tout compte fait, $\log^{\mathfrak{m}}(ab)=\log^{\mathfrak{m}}(a)+\log^{\mathfrak{m}}(b)$.
\end{exemple}

Un point central dans l'approche motivique des périodes est que l'espace vectoriel~$\mathbf{P}$ est muni d'une structure d'anneau, reflétant le fait que l'ensemble des \emph{vraies} périodes est une $\QQ$-algèbre (lemme~\ref{lem:periodalgebra}). La définition du produit repose sur la formule de Künneth \eqref{eqn:Kunnethrelative} et sa variante en cohomologie de de~Rham relative, qui permettent d'interpréter géométriquement des classes de cohomologie ou d'homologie:
\begin{align*}
[\rH^p(X, D),\,&\omega, \sigma]\cdot [\rH^{p'}(X', D'),\,\omega', \sigma'] \\ &=[\rH^{p+p}(X \times X', D \times X' \cup X \times D'), \omega \otimes \omega', \sigma \otimes \sigma'].
\end{align*}
En inversant la version formelle de $2\pi i$, comme on l'a fait pour les périodes non effectives (remarque \ref{noneffectif}), on arrive à l'algèbre $\widehat{\mathbf{P}}$. Comme les intégrales satisfont aux relations \eqref{enum:i}, \eqref{enum:ii}, \eqref{enum:iii} ci-dessus, l'application qui à une période formelle $[\rH^p(X, D),\,\omega, \sigma]$ associe le nombre complexe $\int_\sigma \omega$ induit une application $\QQ$-linéaire
\begin{displaymath}
\mathrm{per} \colon \widehat{\mathbf{P}} \longrightarrow \CC.
\end{displaymath}
La conjecture de Kontsevich-Zagier se traduit alors en l'énoncé:

\begin{conjecture}[Kontsevich-Zagier]\label{conj:KZbis} L'application $\mathrm{per}$ est injective.
\end{conjecture}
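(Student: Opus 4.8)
Comme le souligne le texte qui précède, l'énoncé est \emph{conjectural} et \og désespérément hors de portée\fg{}; il n'est donc pas question d'en donner une démonstration, mais d'indiquer la stratégie qui le relie aux formulations déjà rencontrées et de localiser la difficulté. Le plan est double : d'une part établir que l'injectivité de $\mathrm{per}$ équivaut à la conjecture de Kontsevich-Zagier sous sa forme initiale (conjecture~\ref{conj:KZ}), d'autre part l'interpréter, via le torseur des périodes, comme la conjecture de périodes de Grothendieck (conjecture~\ref{conj:perGroth}) pour tous les motifs à la fois.

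Pour la première réduction, je commencerais par mettre en place le dictionnaire entre les deux points de vue. Le théorème~\ref{thm:comparison} identifie l'image de $\mathrm{per}$ à l'anneau des périodes de Kontsevich-Zagier et montre que les générateurs $[\rH^p(X, D), \omega, \sigma]$ suffisent à produire toute période ; il reste à vérifier que les trois familles de relations imposées à $\mathbf{P}$ --- additivité~\eqref{enum:i}, changement de variables~\eqref{enum:ii} et formule de Stokes~\eqref{enum:iii} --- correspondent exactement aux trois règles du calcul intégral gouvernant la conjecture~\ref{conj:KZ}. Ceci étant admis, l'anneau $\widehat{\mathbf{P}}$ est par construction l'objet universel dans lequel on a \emph{forcé} ces règles et rien d'autre : un élément non nul du noyau de $\mathrm{per}$ serait précisément une relation entre périodes qui ne se déduit pas des trois règles, et réciproquement. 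L'équivalence des conjectures~\ref{conj:KZ} et~\ref{conj:KZbis} est donc de nature essentiellement structurelle, le seul ingrédient substantiel étant le théorème de comparaison, compatible aux trois règles.

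La seconde réduction est plus conceptuelle. On interprète $\mathrm{Spec}(\widehat{\mathbf{P}})$ comme le \emph{torseur des périodes} sous le groupe de Galois motivique (pro-algébrique) $G$, à l'image de la variété $\mathrm{Isom}_{W, V}$ introduite ci-dessus, l'isomorphisme de comparaison fournissant un point complexe de ce torseur et $\mathrm{per}$ n'étant rien d'autre que l'évaluation en ce point. Comme le produit sur $\widehat{\mathbf{P}}$, défini par la formule de Künneth, traduit le produit des périodes (lemme~\ref{lem:periodalgebra}), l'application $\mathrm{per}$ est un morphisme d'anneaux et son noyau est un idéal premier ; dire qu'il est nul revient à dire que le point de comparaison est \emph{générique} au sens de Zariski. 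Motif par motif, cette généricité équivaut à l'égalité entre le degré de transcendance de l'anneau des périodes et la dimension du groupe de Galois motivique, c'est-à-dire à la conjecture de Grothendieck~\ref{conj:perGroth}. Prouver la conjecture~\ref{conj:KZbis} revient ainsi à établir celle de Grothendieck pour tous les motifs simultanément.

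Tout ce qui précède est formel : la difficulté véritable, concentrée dans la dernière réduction, est de nature transcendante. Il s'agit de démontrer qu'aucune relation algébrique \og inattendue\fg{} ne lie les périodes, or la théorie de la transcendance ne sait aujourd'hui traiter que des cas très particuliers --- les périodes d'une seule variable grâce au théorème du sous-groupe analytique de Wüstholz (théorème~\ref{thm:HW}), ou les périodes des courbes elliptiques à multiplication complexe via le théorème de Chudnovsky (théorème~\ref{thm:chudnovsky}). Aucune méthode générale ne permet de minorer le degré de transcendance par la dimension du groupe de Galois motivique, et c'est précisément là, et non dans le formalisme tannakien, que gît l'obstacle principal. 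En ce sens, le présent énoncé n'est pas tant un théorème à démontrer qu'une reformulation éclairante du problème ouvert de départ.
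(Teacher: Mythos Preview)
Ta discussion est correcte et reprend essentiellement le même fil que celui de l'article : il s'agit d'une conjecture, non d'un théorème, et l'article n'en donne pas de démonstration mais en explique le lien avec la conjecture de Grothendieck via le torseur des périodes et le point complexe $\mathrm{comp}$, exactement comme tu le fais. Ta première réduction (équivalence avec la conjecture~\ref{conj:KZ} par universalité de $\widehat{\mathbf{P}}$) est un ajout pertinent que l'article laisse implicite, et ta seconde réduction (généricité de $\mathrm{comp}$, égalité des dimensions du torseur et du groupe) est précisément l'argument développé dans le dernier paragraphe de la section~\ref{sec:periodesformelles}.
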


Le groupe de Galois motivique de $\rH^p(X, D)$ agit sur les périodes \emph{formelles} modelées sur ce motif par\footnote{L'action est définie comme $\transp{g^{-1}}(\sigma)$, plutôt que $g(\sigma)$ comme dans~\eqref{eqn:actionGalois}, parce que l'on travaille maintenant avec des automorphismes de la cohomologie de Betti, par opposition à l'homologie. La transposée permet d'agir sur $\sigma$, qui est un élément de l'espace vectoriel dual, et l'inverse d'avoir l'égalité $\transp{h^{-1}}\circ \transp{g^{-1}}=\transp{(hg)^{-1}}$.}
\[
g \cdot [\rH^p(X, D), \omega, \sigma]=[\rH^p(X, D), \omega, \transp{g^{-1}}(\sigma)].
\]
Si la conjecture de Kontsevich-Zagier était vraie, on obtiendrait une action bien définie du groupe de Galois motivique sur les vraies périodes, c'est-à-dire sur les nombres complexes obtenus à partir du symbole $[\rH^p(X, D), \omega, \sigma]$ en calculant l'intégrale $\int_\sigma \omega$. Sans supposer la conjecture connue, on peut dans certaines situations exploiter des calculs explicites de cette action pour déduire des résultats sur les périodes; c'est par exemple la démarche de Brown pour démontrer son théorème sur les valeurs zêta multiples (exemple \ref{exmp:zeta(2)}).

Expliquons enfin le lien entre la conjecture de Kontsevich-Zagier et celle de Grothendieck. Pour chaque motif $\rH^p(X, D)$, il y a un point complexe distingué dans le torseur des périodes, à savoir l'isomorphisme de comparaison \eqref{eqn:isomcomprel} de Grothendieck 
\begin{equation}\label{eqn:comp-pointcomplexe-torseur}
\mathrm{comp} \in \mathrm{Isom}_{\rH^p_{\dR}(X, D), \rH^p_\Betti(X, D)}(\CC), 
\end{equation} en lequel la fonction $[\rH^p(X, D), \omega, \sigma]$ prend la valeur $\int_\sigma \omega$. Dire que l'application $\mathrm{per}$ est injective (conjecture \ref{conj:KZbis}) équivaut donc à dire que l'évaluation des fonctions sur le torseur des périodes en le point $\mathrm{comp}$ est injective. Sur une variété connexe $Y$, les points complexes ayant cette propriété sont ceux qui ne sont pas contenus dans aucune sous-variété fermée stricte $Z \subsetneq Y$ définie par des polynômes à coefficients rationnels; on les appelle \emph{génériques}. Par exemple, les fonctions sur la droite affine $Y=\mathbb{A}^1$ sont les polynômes $\QQ[x]$ et les points $z \in Y(\CC)=\CC$ pour lesquels l'application $P \mapsto P(z)$ de~$\QQ[x]$ dans~$\CC$ est injective sont les nombres transcendants, ceux qui ne sont pas contenus dans des sous-variétés $\{f=0\}$ avec $f \in \QQ[x]$. Encore une caractérisation équivalente des points génériques consiste à dire que le degré de transcendance de $\QQ(z)$ est égal à $1$, la dimension de~$Y$. Dans le cas du point complexe \eqref{eqn:comp-pointcomplexe-torseur}, le corps engendré par les coordonnées de $\mathrm{comp}$ est précisément le corps engendré par les coefficients de l'accouplement de périodes entre la cohomologie de de~Rham $\rH_\dR^p(X, D)$ et l'homologie de Betti $\rH^\Betti_p(X, D)$. Dire que c'est un point générique équivaut donc à dire que le torseur des périodes est connexe et que sa dimension est égale au degré de transcendance de ce corps. Or, comme le groupe de Galois motivique agit librement transitivement sur le torseur des périodes, les dimensions de ces deux variétés sont les mêmes et on retrouve la conjecture \ref{conj:perGroth}.

\subsection{Transcendance fonctionnelle}\label{sec:functionaltrans}

Par comparaison avec le théorème d'Hermite-Lindemann sur la transcendance des \emph{valeurs spéciales} de l'exponentielle $e^\alpha$, pour un nombre algébrique non nul $\alpha$, il est très simple de voir que la \emph{fonction} exponentielle est transcendante. Supposons \emph{a contrario} qu'il existe un polynôme non nul $P \in \CC[x, y]$ satisfaisant à $P(z, e^z)=0$ pour tout~$z \in \CC$ et choisissons-le de degré minimal en~$y$. Pour aboutir à une contradiction, une possibilité consiste à dériver par rapport à~$z$ à de multiples reprises, jusqu'à arriver à une relation de plus petit degré; il s'agit là d'une preuve formelle, en ce sens qu'elle n'utilise que la définition de~$e^z$ comme série entière et les propriétés de la dérivation. Un autre argument utilise le fait que l'exponentielle est périodique de période $2\pi i$, d'où en particulier l'égalité $e^{2\pi i n}=1$ pour tout entier~$n$. Le polynôme en une variable $P(x, 1) \in \CC[x]$ ayant une infinité de racines, il est forcément nul, et $P$ peut donc s'écrire sous la forme $P=(y-1)Q$ avec $Q \in \CC[x, y]$ de degré strictement plus petit en~$y$. La fonction entière $Q(z, e^z)$ s'annule alors sur l'ouvert du plan complexe où $e^z$ ne prend pas la valeur $1$, d'où $Q(z, e^z)=0$ pour tout $z \in \CC$, contredisant la minimalité de $P$. Le lecteur averti n'aura pas manqué de remarquer que cette seconde preuve repose sur le même principe que celle du lemme XXVIII: si le graphe de la fonction exponentielle était une courbe algébrique dans $\CC^2$, elle ne pourrait couper la droite $y=1$ qu'en un nombre fini de points!

Encore plus frappant est le cas des périodes d'une courbe elliptique: la conjecture \ref{conj:periodsnonCM} est complètement ouverte à l'heure actuelle, mais on peut démontrer de manière élémentaire son analogue pour les séries d'Eisenstein introduites dans~\eqref{eqn:Eisenstein}.

\begin{thm}\label{thm:transfonct} Les fonctions $G_2(\tau)$, $G_4(\tau)$ et $G_6(\tau)$ sont algébriquement indépendantes sur $\CC(\tau)$.
\end{thm}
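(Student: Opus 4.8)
The plan is to deduce the statement from the \emph{a priori} weaker assertion that $G_2,G_4,G_6$ are algebraically independent over $\CC$, by pairing it with a transcendence property of $\tau$ that holds unconditionally. Writing $q=e^{2\pi i\tau}$, each $G_{2k}$ is a single-valued holomorphic function of $q$ near the cusp $q=0$, so $\CC(G_2,G_4,G_6)$ consists of functions meromorphic there; by contrast $\tau=\frac{1}{2\pi i}\log q$ has infinite monodromy around $q=0$ and is therefore transcendental over that field. Computing $\operatorname{trdeg}_{\CC}\CC(\tau,G_2,G_4,G_6)$ in the two orders ``$\tau$ first'' and ``the $G$'s first'' then yields the identity $\operatorname{trdeg}_{\CC(\tau)}\CC(\tau)(G_2,G_4,G_6)=\operatorname{trdeg}_{\CC}\CC(G_2,G_4,G_6)$, so it suffices to prove independence over $\CC$.

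First I would treat the two honest modular forms $G_4,G_6$. Any polynomial relation among them can be made weight-homogeneous: applying the modular law to $\gamma_N=\left(\begin{smallmatrix}1&0\\N&1\end{smallmatrix}\right)$ multiplies the weight-$w$ part of the relation by $(N\tau+1)^{w}$, and since for fixed $\tau$ the quantity $N\tau+1$ takes infinitely many values as $N$ ranges over $\ZZ$, the resulting polynomial in $N\tau+1$ must vanish, forcing each weight-homogeneous part to vanish separately. A homogeneous relation of weight $w$ involves only the monomials $G_4^{\,i-3k}G_6^{\,j+2k}$, which after dividing by one of them become a polynomial in $t=G_6^{2}/G_4^{3}$; as $t$ is non-constant (because $G_4^{3}$ and $G_6^{2}$ are linearly independent, visible from their first two Fourier coefficients), this polynomial is zero, so $G_4,G_6$ are algebraically independent over $\CC$.

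The heart of the matter is to show $G_2$ is transcendental over $L=\CC(G_4,G_6)$. Suppose not, and take a relation $\sum_{k=0}^{n}A_k(G_4,G_6)\,G_2^{\,k}=0$ with $A_k\in\CC[G_4,G_6]$, $A_n\neq0$, and $n\geq1$ minimal. Applying $\gamma_N$ and using the \emph{quasimodular} law $G_2(\gamma_N\tau)=(N\tau+1)^{2}G_2(\tau)-2\pi i\,N(N\tau+1)$, then substituting $u=N\tau+1$ (so $N=(u-1)/\tau$) and decomposing each $A_k=\sum_w A_{k,w}$ into weight-homogeneous parts, turns the relation into
\begin{displaymath}
\sum_{k,w} A_{k,w}(G_4,G_6)\,u^{w}\Bigl(u^{2}G_2-\tfrac{2\pi i}{\tau}(u^{2}-u)\Bigr)^{k}=0,
\end{displaymath}
valid for the infinitely many values $u=N\tau+1$, hence an identity of polynomials in $u$. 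The quasimodular anomaly has thereby injected the quantity $1/\tau$; since $\tau$ is transcendental over $\CC(G_2,G_4,G_6)$ (established above), I may read off the coefficient of each power of $u$, and then of each power of $1/\tau$, concluding that the associated polynomials in $G_2,G_4,G_6$ vanish as functions. Extracting the coefficient of the top power $u^{d}$, with $d=\max(w+2k)$, and within it the top power of $1/\tau$, isolates a single coefficient $\pm A_{k^{\ast},w^{\ast}}(G_4,G_6)$, which must vanish as a function; by the previous paragraph it is the zero polynomial, contradicting its choice.

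Combining the two parts gives algebraic independence of $G_2,G_4,G_6$ over $\CC$, and the reduction of the first paragraph upgrades this to independence over $\CC(\tau)$, which is the theorem. The main obstacle is the last step: the real difficulty is precisely the failure of $G_2$ to be modular, and the whole strategy is to convert this defect into a tool, since the inhomogeneous term $-2\pi i\,c(c\tau+d)$ is exactly what introduces $1/\tau$ and lets the transcendence of $\tau$ separate the anomalous contributions and pin down the leading coefficient. An alternative for this step would be to invoke Ramanujan's system, which makes $G_2$ satisfy a Riccati equation over $L$, and then to argue that the associated second-order Fuchsian equation governing elliptic periods as functions of $j$ has infinite monodromy and hence no algebraic solution; but the transformation argument above keeps everything elementary and self-contained.
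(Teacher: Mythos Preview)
Your proof is correct and takes a genuinely different route from the paper's. The paper works directly over $\CC(\tau)$: it eliminates the variable $\tau$ from a putative relation via $\tau\mapsto\tau+1$ and a minimal-degree argument, uses the single inversion $\tau\mapsto-1/\tau$ to force weight-homogeneity, and finishes the $G_4,G_6$ case by evaluating at the special points $\tau=i$ and $\tau=\rho$ (where $G_6$, resp.\ $G_4$, vanishes); for $G_2$ it constructs an explicit auxiliary polynomial $Q$ from the $\tau\mapsto-1/\tau$ transformation and reads off one coefficient. You instead dispose of $\tau$ once and for all by the monodromy/periodicity argument at $q=0$, reducing to independence over $\CC$ via additivity of transcendence degree, and then use the whole family $\gamma_N=\left(\begin{smallmatrix}1&0\\N&1\end{smallmatrix}\right)$ to convert relations into polynomial identities in $u=N\tau+1$; the quasimodular anomaly injects $1/\tau$, and the transcendence of $\tau$ over $\CC(G_2,G_4,G_6)$ lets you strip off its powers and isolate a nonzero $A_{k^\ast,w^\ast}$. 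Your argument avoids the special values at $i,\rho$ (replacing them by the non-constancy of $G_6^{2}/G_4^{3}$, visible from two Fourier coefficients) and packages both steps into one uniform device, at the price of invoking the preliminary transcendence of $\tau$; the paper's version is more hands-on, using only the two generators $S,T$ of $\mathrm{SL}_2(\ZZ)$ and a direct coefficient extraction.
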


\begin{proof} Démontrons dans un premier temps que~$G_4$ et~$G_6$ sont algébriquement indépendantes, c'est-à-dire qu'il n'existe pas de polynôme non nul $P \in \CC[x_0, x_1, x_2]$ satisfaisant à
\begin{equation}\label{eqn:annul}
P(\tau, G_4(\tau), G_6(\tau))=0.
\end{equation}
On utilisera la propriété de modularité \eqref{eqn:modulariteG}. Par l'absurde, supposons qu'il existe un polynôme non nul satisfaisant à \eqref{eqn:annul} et choisissons\nobreakdash-en un de degré minimal en $x_0$. Comme les fonctions $G_4$ et~$G_6$ sont invariantes par la translation $\tau \mapsto \tau+1$, le polynôme
\begin{displaymath}
P(x_0+1, x_1, x_2)-P(x_0, x_1, x_2)
\end{displaymath}
satisfait à \eqref{eqn:annul} également. Le degré en $x_0$ de ce dernier étant strictement plus petit, on en déduit qu'il est nul.  Pour $x_1$ et $x_2$ fixés, le polynôme en une variable $P(x_0, x_1, x_2)$ prend donc les mêmes valeurs en $x_0$ et $x_0+1$, ce qui implique qu'il est constant; il s'ensuit que $P$ ne dépend pas de $x_0$. Par ailleurs, au vu des identités
\[
G_4(\sfrac{-1}{\tau})=\tau^4 G_4(\tau) \quad \text{et} \quad G_6(\sfrac{-1}{\tau})=\tau^6 G_6(\tau),
\]
on peut supposer que tous les monômes $c x_1^n x_2^m$ dans $P$ ont le même \og degré \fg $3n+2m$ (c'est-à-dire que $P$ est homogène si l'on donne poids $3$ à la variable $x_1$ et poids $2$ à la variable $x_2$) et le choisir de \og degré\fg  minimal. Parmi ces monômes, il ne peut pas y avoir de multiples de~$x_1^n$: si tel était le cas, on aurait une relation du type
\begin{displaymath}
G_4(\tau)^n+G_6(\tau)Q(G_4(\tau), G_6(\tau))=0,
\end{displaymath}
ce qui n'est pas possible car $G_6(i)=0$ mais $G_4(i) \neq 0$. De même, en évaluant en l'argument $\rho=\exp(2\pi i/3)$, pour lequel $G_4(\rho)=0$ mais $G_6(\rho)\neq 0$, on voit que $P$ ne contient pas de multiples de~$x_2^n$. Le polynôme $P$ est ainsi de la forme $x_1x_2 R(x_1, x_2)$ et on obtient une relation $R(G_4(\tau), G_6(\tau))=0$ avec $R$ homogène de degré strictement plus petit que celui de $P$. Cette contradiction achève la preuve de l'indépendance algébrique de $G_4$ et $G_6$.

Pour démontrer que $G_2$, $G_4$ et $G_6$ sont algébriquement indépendants, supposons qu'il existe $P \in \CC[x_0, x_1, x_2, x_3]$ non nul tel~que
\begin{equation}\label{eqn:annul2}
P(\tau, G_2(\tau), G_4(\tau), G_6(\tau))=0
\end{equation}
et choisissons-le de degré minimal en $x_0$ parmi ceux de degré minimal en $x_1$. On fera usage de la relation de quasi-modularité \eqref{eqn:modulariteG2} satisfaite par $G_2$. Comme $G_2(\tau+1)=G_2(\tau)$, le même raisonnement qu'auparavant montre que~$P$ ne dépend pas de~$x_0$. Il suffit alors de voir que $P$ ne dépend pas de~$x_1$ non plus, car un tel $P$ donnerait lieu à une relation algébrique non triviale entre $G_4$ et $G_6$ et il n'y en a pas d'après la première partie de la preuve. Écrivons
\begin{displaymath}
P=P_n x_1^n+P_{n-1} x_1^{n-1}+\cdots+P_0,
\end{displaymath}
avec des polynômes $P_i \in \CC[x_2, x_3]$ et $P_n$ non nul, et posons
\begin{multline*}
Q=P_n(x_2, x_3)P(\sfrac{-1}{x_0}, x_0^2x_1-2\pi i x_0, x_0^4 x_2, x_0^6 x_3) \\
-x_0^{2n}P_n(x_0^4 x_2, x_0^6 x_3)P(x_0, x_1, x_2, x_3).
\end{multline*}
La relation $G_2(-1/\tau)=G_2(\tau)-2\pi i \tau$ implique que le polynôme $Q$ satisfait encore à \eqref{eqn:annul2}, puisque chacun des deux termes s'annule. Comme $\deg_{x_1}(Q)<\deg_{x_1}(P)$, il vient \hbox{$Q=0$}. Si $n \geq 1$, le calcul des puissances impaires de $x_0$ dans le coefficient de $x_1^{n-1}$ dans $Q$ donne
\begin{displaymath}
2\pi i n P_n(x_2, x_3) P_n(x_0^4 x_2, x_0^6 x_3)x_0^{2n-1}=0,
\end{displaymath}
en contradiction avec la non-nullité de $P_n$. On a ainsi $n=0$, ce qui revient à dire que $P$ ne dépend que de $x_2$ et $x_3$.
\end{proof}

Ce théorème peut aussi être interprété comme une égalité entre le degré de transcendance du corps engendré par des \emph{fonctions de périodes} et la dimension d'un groupe algébrique, en l'occurrence le groupe de Galois différentiel \cite[Ch.\,6]{chambi} de l'équation différentielle dont elles sont solution. La raison pour laquelle cette dimension est~$3$ et pas~$4$, comme il est attendu pour les nombres, est la relation de Legendre: du point de vue fonctionnel, la valeur $2\pi i$ du déterminant de la matrice des périodes est tout autant une constante que n'importe quel autre nombre complexe et force le groupe de Galois différentiel à être égal à $\mathrm{SL}_2$. On peut en fait toujours relier le degré de transcendance du corps engendré par des fonctions de périodes d'une famille de motifs à un paramètre à la dimension du groupe de Galois de l'équation différentielle à laquelle elles satisfont \cite{Yves1, Bourbaki}; cette idée joue un rôle important dans la preuve par Ayoub de la variante géométrique de la conjecture de Kontsevich-Zagier (théorème \ref{thm:Ayoub}).\enlargethispage{-2\baselineskip}

\backmatter
\bibliographystyle{smfplain}
\bibliography{xups19-01}
\end{document}